\numberwithin{equation}{subsection}
\theoremstyle{plain}
\newtheorem{thm}{Theorem}[section]
\newtheorem{cor}[thm]{Corollary}
\newtheorem{lemma}[thm]{Lemma}
\newtheorem{prop}[thm]{Proposition}
\newtheorem{assumpt}[thm]{Assumptions}
\theoremstyle{remark}
\newtheorem{rem}[thm]{Remark}
\newtheorem{ex}[thm]{Example}
\theoremstyle{definition}
\newtheorem{defi}[thm]{Definition}
\newcommand\ignore[1]{}
\DeclareMathOperator{\Aut}{Aut}
\DeclareMathOperator{\Hom}{Hom}
\DeclareMathOperator{\HH}{H}
\DeclareMathOperator{\rank}{rank}
\DeclareMathOperator{\rk}{rk}
\DeclareMathOperator{\conv}{conv}
\DeclareMathOperator{\Pic}{Pic}
\DeclareMathOperator{\Proj}{Proj}
\newcommand\CC{{\mathbb{C}}}
\newcommand\PP{{\mathbb{P}}}
\newcommand\RR{{\mathbb{R}}}
\newcommand\QQ{{\mathbb{Q}}}
\newcommand\ZZ{{\mathbb{Z}}}
\newcommand\NN{{\bf{N}}}
\newcommand\cY{{\mathcal Y}}
\newcommand\cO{{\mathcal O}}
\newcommand\cV{{\mathcal V}}
\newcommand\cN{{\mathcal N}}
\newcommand\cC{{\mathcal C}}
\newcommand\cR{{\mathcal R}}
\newcommand\cE{{\mathcal E}}
\newcommand\cA{{\mathcal A}}
\newcommand\cG{{\mathcal G}}
\newcommand\cL{{\mathcal L}}
\newcommand\cS{{\mathcal S}}
\newcommand\ra{{\ \rightarrow\ }}
\newcommand\iso{{\ \cong\ }}
\begin{document}
\title{Adjunction for varieties with a $\CC^*$ action}
\author[Romano]{Eleonora A. Romano}
\address{Institute of Mathematics,
University of Warsaw, 
PL-02-097 Warszawa
}
\email{elrom@mimuw.edu.pl, J.Wisniewski@uw.edu.pl}

\author[Wi{\'s}niewski]{Jaros{\l}aw A. Wi{\'s}niewski}

\begin{abstract}
  Let $X$ be a complex projective manifold, $L$ an ample line bundle
  on $X$, and assume that we have a $\CC^*$ action on $(X,L)$. We
  classify such triples $(X,L,\CC^*)$ for which the closure of a general
  orbit of the $\CC^*$ action is of degree $\leq 3$ with
  respect to $L$ and, in addition, the source and the sink of the
  action are isolated fixed points, and the $\CC^*$ action on the normal
  bundle of every fixed point component has weights $\pm 1$. We treat
  this situation by relating it to the classical adjunction theory. As
  an application, we prove that contact Fano manifolds of dimension 11
  and 13 are homogeneous if their group of automorphisms is reductive
  of rank $\geq 2$.
\end{abstract}

\thanks{The project has been supported by Polish National Science
  Center grants 2013/08/A/ST1/00804 and 2016/23/G/ST1/04282. Thanks to Joachim Jelisiejew, Gianluca Occhetta,
  {\L}ukasz Sienkiewicz, Luis Sol{\'a} Conde, Andrzej Weber for
  discussions; and to the referees for their valuable comments which improved the exposition of the present paper.}
\maketitle
\tableofcontents

\setcounter{section}{-1}
\section{Introduction}
\subsection{A view on manifolds with a $\CC^*$ action}
Let us recall that Amplitude Modulation (AM) and Frequency Modulation (FM) are
two different technologies of broadcasting radio signals. AM works by modulating
the amplitude of the signal with constant frequency. In FM technology the
information is encoded by varying the frequency of the wave with amplitude being
constant. In the present paper we adopt the idea of
  passing the information via either AM or FM technology to deal with
  varieties with a $\CC^*$ action.

Given a complex projective variety $X$ with an ample line bundle $L$
and an action of $\CC^*$ on $(X,L)$, we can study this set up
in two ways: (1) by examining the amplitude of $L$ on curves on $X$ (AM
technology) and (2) by understanding the weights of a linearization of the
action of $\CC^*$ on $L$ over the connected components of the fixed point set
of this action (FM technology).

The structure of $X$ with a $\CC^*$ action can be encoded in a graph whose
vertices are components of the fixed point locus, and the edges are orbits
whose closures meet the respective components. Given a linearization $\mu_L$ of
the line bundle $L$, to each component of the fixed point locus one can associate
the weight in $\Hom(\CC^*,\CC^*)=\ZZ$ with which $\CC^*$ acts on fibers of
$L$ over the component in question. Now, the radio analogy goes as follows: one
can relate the values of $\mu_L$ (frequencies of $L$) on the components of the fixed
point set of the $\CC^*$ action with the degree (the volume) of $L$ on the
closures of orbits joining respective fixed point set components.

Namely, given a $\CC^*$ equivariant morphism $f:\PP^1\rightarrow X$ we get the
following identity (see Lemma \ref{AMvsFM}):
\begin{equation}\tag{AM$\leftrightarrow$FM} \label{eqAMvsFM}
\delta\cdot\deg f^*L=\mu_L(f(0))-\mu_L(f(\infty))
\end{equation}
where $0,\infty$ are the fixed points of the action of $\CC^*$ on $\PP^1$, and
$\delta=\delta(T_0\PP^1)$ is the weight of the $\CC^*$ action on the tangent of
$\PP^1$ at $0$. Thus, the left hand side of the above equality measures
the amplitude of the line bundle $L$, while the right hand side measures the
difference of the weights of the $\CC^*$ action on the fibers of $f^*L$ over
the fixed points. 
In view of the (\ref{eqAMvsFM}) equality we define the \textit{bandwidth} of a pair $(X,L)$ as the degree of the closure a general orbit of the $\CC^*$ action with respect to $L$, and we are interested in classifying some pairs $(X,L)$ admitting a $\CC^*$ action of small bandwidth. 

In \cite{FUJITA,IONESCU} Ionescu and Fujita proved classification results for polarized pairs $(X,L)$ by looking at the \textit{nef value} $\tau=\tau(X,L):=\min\{t\in \mathbb{R}: K_X+tL \ \ {\rm is\  nef}\}$ (see Theorem \ref{ionescu}).
In this paper, assuming that we have a nontrivial $\CC^*$ action on $(X,L)$, we will make use of the (\ref{eqAMvsFM}) equality to study the positivity of the divisor $K_X+tL$, so that we are able to compute the nef value of $(X,L)$ or find an estimate of it. Combining this information with classical results from adjunction theory, we obtain a first classification result for bandwidth one and two varieties (see Theorem \ref{bwleq2classification}). As a main application of our approach we study pairs $(X,L)$ of bandwidth three which emerged naturally in the context of the LeBrun-Salamon conjecture (see Theorem \ref{bw3classification}). To this end, the technique consists again in relating new methods and properties due to the torus actions arising from Bia{\l}ynicki-Birula decomposition (c.f. Theorem \ref{thm_ABB-decomposition}) with the more classical adjunction machinery.  

\subsection{Motivation and contents of the paper}
The celebrated Le\-Brun-Salamon conjecture in Riemannian geometry asserts that
the only positive quaternion-K{\"a}hler manifolds are Wolf spaces. Its
algebro-geometric counter-part asserts that the closed orbits in
projectivizations of adjoint representations of simple algebraic group are the
only Fano contact manifolds. Recently, in \cite{B_W} the combinatorics of torus
action has been used to prove the conjecture in low dimensions. In the present
paper we use the techniques of a $\CC^*$ action on pairs $(X,L)$ as above,  to
prove the following extension of previous results, see also Theorem
\ref{contact_dimleq13} for a more detailed formulation. \par\bigskip
\noindent {\bf Theorem.} {\it Let $X_\sigma$ be a Fano contact manifold of dimension
$\leq 13$ and $\Pic{X_\sigma}=\ZZ L_{\sigma}$. If the group of contact automorphisms $G$ is reductive of rank
$\geq 2$ then $X_\sigma$ is the closed orbit in the projectivization of the adjoint
representation of a simple algebraic group.}
\par\bigskip
It is known that the contact manifold coming from a
quaternion-K\"ahler manifold admits K\"ahler-Einstein metric, so that when dealing with LeBrun-Salamon conjecture the varieties in question have the group of the 
contact automorphisms reductive, hence this assumption on $G$ is not restrictive (see \cite{SALAMON}). Moreover, earlier results were for contact Fano manifolds $X$ with $\dim
X\leq 9$ and without lower bound on the rank of the group of its automorphisms. Notice that, being the dimension of the Lie algebra of $G$ equal to $h^0(X,L)$ (see for instance \cite[Lemma 4.5]{B_W}), then  the assumption on the rank is true if e.g. $h^0(X,L)>3$.
We refer to
Section \ref{sect-contact-mnflds}, where after recalling past and recent results in the context of 
the LeBrun-Salamon conjecture, we apply new methods from adjunction theory for varieties with a $\CC^*$ action to solve the conjecture under the assumptions of the above theorem. 

Indeed, following the strategy of \cite{B_W}, to deal with LeBrun-Salamon
conjecture we need to classify polarized pairs $(X,L)$ of small 
  bandwidth. 
 As will be explained in Subsection \ref{$SL_3$ action on contact manifolds}, such pairs $(X,L)$ appear in our analysis as subvarieties of the initial Fano contact manifold, and we need to study them to collect all the combinatorial data of the action as a crucial step to show the above theorem. To this end, we use tools and new methods developed in the previous sections, concerning adjunction theory for varieties admitting a $\CC^*$ action. In this framework, the main
technical result of the paper is Theorem \ref{bw3classification}
describing polarized pairs $(X,L)$ with an action of $\CC^*$ of bandwidth
three which satisfies some technical assumptions that are natural for
the application to contact manifolds. Denoting by $n$ the dimension of $X$ with $n\geq 3$, the result is the following list of
possibilities:
\begin{enumerate}[leftmargin=*]
  \item $(X,L)=(\PP(\cV),\cO(1))$ is a scroll over $\PP^1$,
    where $\cV$ is either $\cO(1)^{n-1}\oplus\cO(3)$ or
    $\cO(1)^{n-2}\oplus\cO(2)^2$, or
  \item $(X,L)$ is a quadric bundle $(\PP^1\times\mathcal{Q}^{n-1}, \cO(1,1))$, or
  \item $n\geq 6$ is divisible by 3 and $X$ is Fano, $\rho_X=1$, $-K_X=\frac{2}{3}nL$.
\end{enumerate}

In order to obtain the above classification, in Section
\ref{section-adjunction} we relate the classical adjunction theory
(see \cite{B_S, FUJITA-Sendai, IONESCU}) and Mori theory (see \cite{KMM,
  KOLLAR_MORI}) with a combinatorial description of a manifold with a
$\CC^*$ action.  In fact, types (1) and (2) of pairs $(X,L)$ in the above list are described in terms of their adjunction morphism. Type (2)
in the above list leads to contact manifolds which are homogeneous with
respect to $SO$ groups, as described in the Appendix of the present
paper. Type (3) in the same list have been recently classified in  \cite{OSCRW2} by using different methods from birational and projective geometry. In total, there are four of these varieties, all of them are rational homogeneous; we refer to \cite[Theorem 6.8]{OSCRW2} for their complete list. 
In the recent preprint \cite{OSCRW} the varieties of type (3) are related to contact manifolds homogeneous with respect to four exceptional simple groups of $F_4$, $E_6$, $E_7$ and $E_8$ type. Dealing also with such cases in which $G$ is of exceptional type, in \cite[Theorem 6.1]{OSCRW2} LeBrun-Salamon conjecture has been proved in arbitrary dimension, under certain assumptions on the rank of the maximal torus. 
\subsection{Notation}
The following notation is used throughout the article.
\begin{itemize}[leftmargin=*]
\item $X$ is a complex projective normal variety of dimension $n$. For the most
part of the paper we assume $X$ smooth with an ample line bundle $L$, so that
$(X,L)$ is a \textit{polarized pair}.
\item Given a polarized pair $(X,L)$ we denote by $\tau=\tau(X,L)$ the
\textit{nef value}, namely $\tau(X,L):=\min\{t \in\RR: K_X+tL \ \ {\rm is\ nef}\}$,
moreover $\phi_{\tau}:=\phi_{K_X+\tau L}\colon X\rightarrow X'$ is the
\textit{adjunction} (or \textit{adjoint}) \textit{morphism}.
\item We denote by $H=(\CC^*)^r$ an algebraic torus of arbitrary rank $r$, acting on $X$.  Moreover, we denote by $M=\Hom_{alg}(H,\CC^*)\iso\ZZ^r$ the set
of characters (or weights) of $H$. 
\item $X^{H}=\bigsqcup_{i\in I} Y_i$ is the fixed locus of the $H$ action, where $I$
is a set indexing its connected components; by $\cY=\{Y_i\}$ we denote the set
of the irreducible fixed point components of $X^{H}$.
\item For an arbitrary line bundle $\cL \in \Pic X$ we denote by $\mu_\cL\colon H\times \cL\rightarrow \cL$ (or simply by $\mu$) a linearization of the action
of $H$ on $\cL$. By abuse, we continue to denote by $\mu_\cL\colon \cY\ra M\iso\ZZ^r$ the associated map on the set of fixed point components, which we call
\textit{fixed point weight map}, see Definition \ref{mu-map}.
\item Given a $\CC^*$ action on $X$, and a nef line bundle $\cL \in \Pic{X}$
admitting a linearization $\mu=\mu_\cL$, the \textit{bandwidth} of the
triple $(X,\cL,\CC^*)$ is defined as $|\mu|=\mu_{\max}-\mu_{\min}$ where
$\mu_{\max}$ and $\mu_{\min}$ denote the maximal and minimal value of the
function $\mu_\cL$, see Definition \ref{bandwidth}.
\end{itemize}

\section{Preliminaries}
In the present section we recall basic definitions and properties of
adjunction and Mori theory as well as regarding varieties with a $\CC^*$
action. We refer the reader to \cite{KOLLAR_MORI} for a detailed
exposition on Mori theory, and to \cite{B_S, FUJITA-Sendai, IONESCU} for an account
on adjunction theory.
We work over the field of complex numbers, with projective, irreducible, reduced varieties.

\subsection{Adjunction and Mori theory}%
\label{subsection-prelim-adjunct}
Let $X$ be a normal projective variety of arbitrary dimension $n$.
Let us denote by $\NN^1(X)$ (respectively $\NN_1(X)$) the $\RR$-spaces
of Cartier divisors (respectively, 1-cycles on $X$), modulo numerical
equivalence.  We denote by $\rho_{X}:=
\dim{\NN_{1}(X)}=\dim{\NN^{1}(X)}$ the \textit{Picard
  number} of $X$, and by $[\cdot]$ the numerical equivalence class in
$\NN_1(X)$, and in $\NN^1(X)$.
The intersection of divisors and curves
determines a nondegenerate bilinear pairing of these two
$\RR$-spaces. We consider cones $\cC(X)\subset\NN_1(X)$ and
$\cA(X)\subset\NN^1(X)$ spanned by classes of effective curves and
classes of ample divisors, respectively. Their closures (in the standard
topology on $\RR$-spaces) are dual in terms of the intersection
product.

A \textit{contraction} of $X$ is a surjective morphism with connected
fibers $\phi\colon X\to Y$ onto a normal projective variety. Any
contraction yields a surjective linear map $\phi_*\colon \NN_1(X)\to
\NN_1(Y)$ given by the push-forward of 1-cycles, and the pull-back of
Cartier divisors $\phi^*\colon \NN^1(Y)\to\NN^1(X)$ such that
$\phi^*([D])=[\phi^*(D)]$.

The case of our main interest is the following situation.
\begin{assumpt}\label{gen-assumpt}
  Let $(X,L)$ be a polarized manifold, namely $X$ is a smooth
  projective variety of dimension $n$ and $L$ is an ample line bundle
  on it. In addition we assume that the variety $X$ admits a nontrivial $\CC^*$ action, that is $\CC^*\times X\rightarrow X$, with
  a linearization $\mu\colon \CC^*\times L\rightarrow L$.%
  \footnote[3]{Note that in Section \ref{sect-contact-mnflds} we
    consider the case when the variety is a contact manifold
    $X_\sigma$ of dimension $2n+1$ with an action of a torus
    $\widehat{H}$ of rank $\geq 2$.  }
\end{assumpt}

For the polarized pair $(X,L)$ we define its {\em nef value} as
follows:
$$\tau=\tau(X,L):=\min\{t\in \mathbb{R}: K_X+tL \ \ {\rm is\  nef}\}.$$
We note that if $X$ admits a $\CC^*$ action then it is uniruled, hence $K_X$ is
not nef, so that $\tau>0$. Thus, by Kawamata rationality theorem (see
\cite[Theorem 4.1.1]{KMM}) one has $\tau \in \QQ$. Moreover, Kawamata-Shokurov
Base Point free Theorem provides the adjunction morphism
$$\phi_{\tau}:=\phi_{K_X+\tau L}: X\rightarrow X'$$ such that $K_X+\tau
L=\phi_{\tau}^* L'$ for some $\QQ$-Cartier ample divisor $L'$ on $X'$. The
variety $X'$ is normal and $\phi_{\tau}$ has connected fibers, namely
$\phi_{\tau}$ is a contraction of $X$. In fact
\begin{equation}\label{proj}
X'=\Proj\left(\bigoplus_{m\geq 0}\HH^0(X,m(K_X+\tau
  L))\right)
\end{equation}
where $m$ is such that $m(K_X+\tau L)$ is Cartier.

The following result is due to Ionescu and Fujita (see \cite{IONESCU} and also
\cite{FUJITA-Sendai}), and will be crucial for proving the results in
Sections \ref{section-bandwidth} and \ref{Class_bandwidth3}.

\begin{thm}\label{ionescu}
  Let $(X,L)$ be a polarized pair. Then
  $\tau\leq n+1$ with equality only for projective space, that is if
  $(X,L)=(\PP^n,\cO(1))$.
    \begin{enumerate}[leftmargin=-2pt]
    \item Suppose that $n\geq 2$ and $\tau<n+1$. Then $\tau\leq n$ with
      equality only if
    \begin{enumerate}
     \item either $X$ is a smooth quadric, that is $(X,L)=(\mathcal{Q}^n,\cO(1))$, or
     \item $(X,L)$ is a $\PP^{n-1}$-bundle over a smooth curve with
       $L$ relative $\cO(1)$.
    \end{enumerate}

  \item Suppose that $n\geq 3$ and $\tau<n$. Then $\tau\leq n-1$ with
    equality only if one of the following holds:
    \begin{enumerate}
    \item $(X,L)$ is a del Pezzo manifold, that is $-K_X=(n-1)L$; see
      \cite{FUJITA, ISK} for their complete classification.
    \item $(X,L)$ is a quadric bundle over a smooth curve with $L$
      relative $\cO(1)$.
    \item $(X,L)$ is a $\PP^{n-2}$-bundle over a smooth surface with
      $L$ relative $\cO(1)$.
    \item The adjoint morphism $\phi_{n-1}\colon X\rightarrow X'$ is a
      birational morphism contracting a finite number of disjoint
      divisors $E_i\iso\PP^{n-1}$ to smooth points of $X'$ and
      $L_{|E_i}\iso\cO(1)$; there exists an ample line bundle $L'$
      over $X'$ such that $\phi_{n-1}^*L'=K_X+(n-1)L$\ignore{, and $X'$ does
      not contain divisors $\tilde{E}_i\iso\PP^{n-1}$ such that
      $L'_{|\tilde{E}_i}\iso\cO(1)$. The polarized pair $(X',L')$ is
      called a reduction of $(X,L)$}.
    \end{enumerate}
    \end{enumerate}
\end{thm}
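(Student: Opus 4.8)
The plan is to translate the nef value into the language of extremal rays and then invoke the classification of extremal contractions of small relative dimension. Since $X$ carries a $\CC^*$ action it is uniruled, so $K_X$ is not nef and $\tau>0$; by the rationality theorem \cite[Theorem 4.1.1]{KMM} we have $\tau\in\QQ$, and $K_X+\tau L$ is nef but not ample. By the cone and contraction theorems it is therefore trivial on some extremal ray $R$, and I may choose a rational curve $C$ generating $R$ of minimal anticanonical degree $\ell(R)=-K_X\cdot C$. From $(K_X+\tau L)\cdot C=0$ I read off the fundamental identity $\tau=\ell(R)/(L\cdot C)$, which drives the whole argument: since $L$ is ample, $L\cdot C$ is a positive integer, and the basic length estimate for extremal rays gives $\ell(R)\leq n+1$, whence $\tau\leq n+1$.

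The ``gaps'' asserted by the theorem follow from integrality of $\ell(R)$ together with $\ell(R)\leq n+1$. Indeed, a value $\tau\in(n,n+1)$ would require $L\cdot C=1$ and hence $\tau=\ell(R)\in\ZZ$, a contradiction; likewise $\tau\in(n-1,n)$ forces $L\cdot C\in\{1,2\}$, and the case $L\cdot C=2$ is incompatible with $\ell(R)\leq n+1$ once $n\geq 3$, while $L\cdot C\geq 3$ already gives $\tau\leq(n+1)/3<n-1$. So the possible boundary values are $n+1$, $n$, $n-1$. At $\tau=n+1$ one has $L\cdot C=1$ and $\ell(R)=n+1$; the contraction of $R$ is of fiber type with fibers of dimension $\geq n$, so $\rho_X=1$ and $-K_X=(n+1)L$, and the Kobayashi--Ochiai characterization identifies $(X,L)$ with $(\PP^n,\cO(1))$.

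For the finer statements I would run the same dichotomy one notch at a time. Write the contraction $\phi_R\colon X\to Y$, let $E\subseteq X$ be the locus of curves proportional to $C$, and let $F$ be a nontrivial fiber. The Ionescu--Wi\'sniewski inequality $\dim E+\dim F\geq n+\ell(R)-1$ controls $\tau$ through $\dim Y$: in the fiber-type case $E=X$, so $\dim Y\leq n-\ell(R)+1$, and the values $\tau=n$ and $\tau=n-1$ (with $L\cdot C=1$) force $\dim Y\leq 1$, respectively $\leq 2$, the generic fibers being projective spaces or quadrics of the forced dimension. This yields the scroll and quadric alternatives for $\tau=n$, and the del Pezzo manifold ($\rho_X=1$, $-K_X=(n-1)L$), the quadric bundle and the $\PP^{n-2}$-bundle for $\tau=n-1$. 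In the birational case the same inequality pins down the exceptional locus: the divisors contracted are disjoint copies of $\PP^{n-1}$ mapping to smooth points of $X'$ with $L$ restricting to $\cO(1)$, which is alternative (2d).

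The main obstacle is concentrated in the case $\tau=n-1$, and is twofold. First, one must upgrade the pointwise fiber descriptions to clean global conclusions --- that a fiber-type contraction all of whose fibers have the expected dimension is genuinely a quadric bundle or a projective bundle, and that the birational contraction is the smooth blow-down of disjoint linear $\PP^{n-1}$'s rather than some more degenerate collapse. Second, one must ensure equidimensionality of $\phi_R$ and rule out the remaining mixed configurations. Both rest on Wi\'sniewski's structure theorems for contractions whose fibers have dimension close to the lower bound forced by $\ell(R)$; this is precisely where the classical, $\CC^*$-free input of adjunction theory \cite{IONESCU,FUJITA-Sendai,B_S} carries the weight of the argument, the $\CC^*$ action itself entering only through the uniruledness that guarantees $\tau>0$.
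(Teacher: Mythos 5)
The paper itself contains no proof of this statement: Theorem \ref{ionescu} is quoted as a known classical result, with the proof delegated to Ionescu \cite{IONESCU} and Fujita \cite{FUJITA-Sendai}. So there is no internal argument to compare yours against; what you have written is, in substance, a reconstruction of the standard Mori-theoretic proof from those very sources: pick a $K_X$-negative extremal ray $R$ in the null face of $K_X+\tau L$, write $\tau=\ell(R)/(L\cdot C)$ for a minimal rational curve $C$ spanning $R$, bound $\ell(R)\leq n+1$, use integrality of $\ell(R)$ and $L\cdot C$ to produce the gaps at $n+1$, $n$, $n-1$, then identify the boundary cases via Kobayashi--Ochiai and the fiber-locus inequality, delegating the upgrade from fiberwise descriptions to global scroll/quadric-bundle/blow-down structures to the structure theorems of Fujita and Ionescu. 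You correctly and candidly locate the real weight of the proof in that last delegation; as a sketch the architecture is sound and matches how the theorem is actually proved in the literature.

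Two points need repair. First, the theorem is stated for an \emph{arbitrary} polarized pair, with no $\CC^*$ action; your opening move, ``since $X$ carries a $\CC^*$ action it is uniruled, so $K_X$ is not nef and $\tau>0$,'' imports a hypothesis absent from the statement. The fix is immediate --- if $K_X$ is nef then $\tau\leq 0$ and every assertion holds vacuously, while if $K_X$ is not nef the rationality and cone theorems apply with no group action --- but as written your argument covers only the $\CC^*$-equipped case that the paper happens to use it in. Second, at the boundary value $\tau=n-1$ your parenthetical ``(with $L\cdot C=1$)'' silently discards the possibility $L\cdot C=2$, $\ell(R)=2n-2$, which is compatible with $\ell(R)\leq n+1$ precisely when $n=3$: for example $(\PP^3,\cO(2))$ has $\tau=2=n-1$ with lines satisfying $-K_X\cdot C=4$, $L\cdot C=2$. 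Your strict-inequality computation excluded $L\cdot C=2$ only on the open interval $\tau\in(n-1,n)$, not at $\tau=n-1$ itself. The stray case is harmless --- there $\ell(R)=n+1$ forces, by the fiber-locus inequality, the contraction of $R$ to be the contraction to a point, whence $\rho_X=1$ and $-K_X=(n-1)L$, landing in alternative (2a), del Pezzo --- but it must be registered explicitly, since otherwise your case analysis at $\tau=n-1$ is incomplete.
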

The following observation follows easily by taking a rational curve
$C\subset X$ which spans an extremal ray contained in the extremal face contracted by the adjoint
morphism $\phi_{\tau}\colon X\to X^{\prime}$, and using that
$\tau=\frac{-K_X\cdot C}{L\cdot C}\leq \frac{n+1}{L\cdot C}$.
\begin{rem}\label{tau_integer} Let $(X,L)$ be a polarized pair with
  $n\geq 3$. Assume that $\tau>n-2$. Then $\tau\geq n-1$, and $\tau\in
  \ZZ$ except for $(X,L)=(\PP^{4}, \cO(2))$, $(X,L)=(\PP^{3},
  \cO(3))$, and $(X,L)=(\mathcal{Q}^{3}, \cO(2))$.
\end{rem}


\subsection{Varieties with a $\CC^*$ action}
Let us consider an effective (i.e.~nontrivial) action of an algebraic
torus $H=(\CC^*)^r$ on a smooth projective variety $X$, that is
$H\times X\ni (t,x)\rightarrow t\cdot x\in X$. 
Given a subtorus $H'\subseteq H$ we can consider the resulting action $H'\times X\rightarrow X$;
this operation will be called \textit{downgrading} the action of $H$ to
$H'$ (see \cite[$\S$2.2]{B_W} for further details). The action is called \textit{almost faithful}
if the resulting homomorphism $H\rightarrow Aut(X)$ has finite kernel.

Except for Section \ref{sect-contact-mnflds}, we will be primarily interested
in the case $r=1$.

We consider the fixed locus of the action $X^H$ and its decomposition
into connected components: $$X^H=\bigsqcup_{i\in I}Y_i$$ where $I$ is a
set of indices, and each component $Y_i$ is a smooth subvariety (see
e.g.~the main theorem in \cite{IVERSEN}).  By $\cY=\{Y_i: i\in I\}$ we
denote the set of the irreducible fixed point components of $X^H$.
\begin{rem}\label{irreducile_vs_connected}
  We stress that if $X$ is smooth then the connected components of $X^H$
  are smooth, hence irreducible. If $X$ is not smooth then the connected
  components of $X^H$ may not be irreducible as the following example
  shows (thanks to Joachim Jelisiejew): consider the quadric cone
  $\mathcal{Q}=\{z_1z_2+z_2z_4=0\}$ in the projective space with coordinates
  $[z_0,z_1,\dots,z_4]$ and a $\CC^*$ action with weights
  $(0,0,0,1,-1)$. Then the fixed point set consists of two isolated
  points $[0,0,0,1,0]$, $[0,0,0,0,1]$, and the reducible conic
  $\mathcal{Q}\cap\{z_3=z_4=0\}$. \end{rem}

We have the following standard observation.
\begin{lemma}\label{curve-cone-generation}
  Let $X$ be a variety with an effective $\CC^*$ action. Then the
  cone of curves $\cC(X)$ is generated by classes of closures of
  orbits and by classes of curves contained in the fixed
  locus of the action.
\end{lemma}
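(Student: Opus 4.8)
The plan is to show that any class in the cone of curves $\cC(X)$ can be written as a nonnegative combination of the two distinguished types of classes: closures of one-dimensional orbits and classes of curves lying in the fixed locus. Since $\cC(X)$ is by definition generated by classes of irreducible curves, it suffices to prove that for every irreducible curve $C\subset X$, the class $[C]$ lies in the subcone spanned by these two families. So first I would reduce to the case of a single irreducible curve $C$ and consider its orbit under the $\CC^*$ action.

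The key idea is to degenerate $C$ to a $\CC^*$-invariant cycle. Concretely, I would consider the action map $\CC^*\times C\to X$ and look at the limit of the translates $t\cdot C$ as $t\to 0$ and as $t\to\infty$. Because $X$ is projective, these limits exist, and the flat (or algebraic) limit $\lim_{t\to 0} t\cdot C$ is a one-dimensional cycle $C_0$ that is invariant under the $\CC^*$ action, with $[C_0]=[C]$ in $\NN_1(X)$, since the translates $t\cdot C$ all have the same numerical class (the action is by automorphisms, hence acts trivially on $\NN_1(X)$, being connected). Thus it is enough to analyze the numerical class of a $\CC^*$-invariant irreducible curve, and by linearity the components of the invariant limit cycle.

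Now I would classify an irreducible $\CC^*$-invariant curve $Z\subset X$. Either the torus acts trivially on $Z$, in which case $Z$ is contained in the fixed locus $X^H$ and $[Z]$ is of the second type; or the action on $Z$ is nontrivial, in which case $Z$ is the closure of a single one-dimensional orbit. In the latter case $Z$ has exactly two fixed points, its \emph{source} and \emph{sink}, the limits of a general point under $t\to 0$ and $t\to\infty$, and $[Z]$ is precisely a class of the first type, namely the closure of an orbit. Combining this with the degeneration step, every $[C]$ becomes a nonnegative integer combination of orbit-closure classes and fixed-locus curve classes, which proves the generation statement.

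The main obstacle I expect is making the degeneration argument rigorous: one must ensure that the limit cycle $\lim_{t\to 0} t\cdot C$ exists as an effective one-cycle with the \emph{same} numerical class as $C$, and that its irreducible components are genuinely of the two types described (rather than, say, multiple orbit closures plus fixed curves appearing with the correct multiplicities). The clean way to handle this is to extend the orbit map $\CC^*\to X$, $t\mapsto t\cdot c$ for a general $c\in C$, to a morphism $\PP^1\to X$ using properness, and to invoke flatness of the family $\{t\cdot C\}$ over $\CC^*$ together with properness of the Chow variety (or Hilbert scheme) to obtain a flat limit; numerical invariance of the class under the connected group $\CC^*$ is then automatic. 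The verification that each component of the limit is either fixed or an orbit closure is the place where one uses that $\CC^*$-invariant irreducible curves are exactly of these two kinds.
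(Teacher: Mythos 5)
Your proposal is correct and follows essentially the same route as the paper: both degenerate an arbitrary irreducible curve $C$ to a $\CC^*$-invariant $1$-cycle by taking the limit of the translates $t\cdot C$ as $t\to 0$, and then observe that each invariant irreducible curve is either an orbit closure or lies in the fixed locus. The only (inessential) difference is technical: the paper produces the limit cycle by extending $F\colon \CC^*\times\widehat{C}\to X$ over $0$ and resolving the resulting rational map by blowing up the surface $\CC\times\widehat{C}$, whereas you invoke properness of the Chow variety (or Hilbert scheme) to obtain the flat limit.
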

\begin{proof}
  The result follows by applying standard Mori breaking technique
  using the $\CC^*$ action, see e.g. ~\cite[p.~253]{W_NOTES} for
  details. Let us take an arbitrary irreducible curve $C\subset X$
  with normalization $f:\widehat{C}\rightarrow C\subset X$. We
  consider the morphism $F: \CC^*\times\widehat{C}\rightarrow X$
  defined by setting
  $$\CC^*\times\widehat{C}\ni(t,p)\mapsto F(t,p)=t\cdot f(p)\in X.$$
  We extend the morphism $F$ to a rational map $\CC\times
  \widehat{C}\dashrightarrow X$ which we resolve to a regular morphism
  $\widehat{F}: \widehat{S}\rightarrow X$ blowing-up the product over
  $0=\CC\setminus\CC^*$.  The image (as a 1-cycle) under $\widehat{F}$
  of the fiber of $\widehat{S}\rightarrow\CC\times
  \widehat{C}$ over 0 is the sum of
  curves which are stable under the $\CC^*$ action and it is
  numerically equivalent to $C$.
\end{proof}

For every $Y\in \cY$ the torus $H$ acts on $TX_{\mid Y}$ so that
we get the decomposition $TX_{\mid Y} =T^{+}\oplus T^{0}\oplus T^{-}$, where $T^{+}$, $T^{0}$, $T^{-}$ are respectively the
subbundles of $TX_{\mid Y}$ on which $H$ acts with positive,
zero or negative weights. Then, by local linearization, $T^0=TY$ and
$$T^{+}\oplus T^{-}=\cN_{Y/X}=\cN^+(Y)\oplus \cN^-(Y)$$
is the decomposition of the normal bundle $\cN_{Y/X}$ into the part on
which $H$ acts with positive, respectively, negative weights.

\begin{defi}\label{def-equalized}
  Setting as above. We say that the $\CC^*$ action on $X$ is {\em
    equalized} if for every component $Y\in\cY$ the torus acts
  on $\cN^+(Y)$ with all the weights equal to $+1$ and on $\cN^-(Y)$ with all the weights equal to $-1$.
\end{defi}

It is a basic fact (see \cite{SOM}) that for $x\in X$ the action $\CC^*\times
\{x\}\to X$ extends to a holomorphic map $\PP^1\times \{x\}\to X$, hence there
exist $\lim_{t\rightarrow 0}t\cdot x$, and $\lim_{t\rightarrow \infty}t\cdot x$.
Moreover, since the orbits are locally closed, and the closure of an orbit is an
invariant subset, then both the limit points of an orbit lie in $\cY$. We will
call these limits the {\em source} and the {\em sink} of the orbit of $x$,
respectively.

For every $Y\in\cY$ we can define the Bia{\l}ynicki-Birula cells in the
following way:
$$X^+(Y)=\{x\in X: \lim_{t\rightarrow 0} t\cdot x\in Y\}{\rm \ \ and \ \
}X^-(Y)=\{x\in X: \lim_{t\rightarrow \infty} t\cdot x\in Y\}.$$
The following result is due to Bia{\l}ynicki-Birula and known as BB
decomposition. We use this argument as presented in \cite{CARRELL}. See
\cite{BB} for the original exposition. A vast generalization of this result,
which is also valid for singular varieties, can be found in a recent paper
\cite{J-S} and references therein.
\begin{thm}\label{thm_ABB-decomposition}
In the situation described above the following holds:
  \begin{itemize}[leftmargin=*]
  \item $X^{\pm}_i$ are locally closed subsets and there are two decompositions
  $$X=\bigsqcup_{i\in I}X^+(Y_i)=\bigsqcup_{i\in I}X^-(Y_i)$$ which we call
  $X^+$ or $X^-$ BB decomposition, respectively.
\item For every $Y\in\cY$ there are $\CC^*$-isomorphisms $X^+(Y)\cong
  \cN^{+}(Y)$ and $X^-(Y)\cong \cN^{-}(Y)$ lifting the natural maps
  $X^\pm(Y)\rightarrow Y$. Moreover, the map $X^\pm(Y)\rightarrow Y$ is algebraic and is a
  $\CC^{\rk^\pm(Y)}$ fibration, where we set $\rk^\pm(Y):=\rank \cN^\pm(Y)$.
\item There is a decomposition in homology
  $$H_m(X,\ZZ)=\bigoplus_{i\in I}H_{m-2\rk^+(Y_i)}(Y_i,\ZZ)=
  \bigoplus_{i\in I}H_{m-2\rk^-(Y_i)}(Y_i,\ZZ).$$
\end{itemize}
\end{thm}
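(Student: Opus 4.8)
The plan is to deduce the three assertions in order, treating the set-theoretic partition (first bullet) and the homology splitting (third bullet) as formal consequences of the central geometric statement of the second bullet. First I would settle the partition. By the basic fact recalled above (after \cite{SOM}), for every $x\in X$ the orbit map extends over $\PP^1$, so both limits $\lim_{t\to 0}t\cdot x$ and $\lim_{t\to\infty}t\cdot x$ exist and lie in $\cY$; uniqueness of this extension shows that the forward limit lies in exactly one component $Y_i$. Thus $x\mapsto Y_i$ is well defined and yields the disjoint decomposition $X=\bigsqcup_{i\in I}X^+(Y_i)$, and running the argument for $t\to\infty$ produces the $X^-$ decomposition; local closedness of the $X^\pm(Y_i)$ I would then read off from the local model.

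The heart of the proof is the local structure near a fixed component $Y$. I would linearize the action on a $\CC^*$-invariant neighborhood of $Y$, identifying it equivariantly with a neighborhood of the zero section of $\cN_{Y/X}$ on which $\CC^*$ acts fiberwise linearly; in the algebraic setting this is Luna's \'etale slice theorem, and analytically it is the equivariant tubular neighborhood theorem underlying \cite{CARRELL}. By the weight decomposition recalled just before the theorem we have $\cN_{Y/X}=\cN^+(Y)\oplus\cN^-(Y)$ with strictly positive, respectively strictly negative, weights, and no zero part since $T^0=TY$. Writing a normal vector as a pair $(v^+,v^-)$, the flow scales $v^+$ by positive and $v^-$ by negative powers of $t$, so $\lim_{t\to 0}t\cdot(v^+,v^-)$ exists in $Y$ if and only if $v^-=0$. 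This identifies $X^+(Y)$ near $Y$ with $\cN^+(Y)$, shows that $X^+(Y)$ is locally closed and smooth of dimension $\dim Y+\rk^+(Y)$, and exhibits the source map $X^+(Y)\to Y$ locally as the bundle projection.

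The step I expect to be the main obstacle is the globalization: promoting these local pictures to a genuine algebraic morphism $X^+(Y)\to Y$ that is a $\CC^{\rk^+(Y)}$-fibration and, moreover, to a global $\CC^*$-isomorphism $X^+(Y)\cong\cN^+(Y)$ valid over all of $Y$ rather than near $Y$. Here I would exploit the $\CC^*$-equivariance decisively: the action contracts $X^+(Y)$ onto $Y$ as $t\to 0$, which forces the limit map to be a morphism with affine-space fibers, while the equivariant structure rigidifies the gluing of the local trivializations so that the transition functions are linear, yielding the isomorphism with $\cN^+(Y)$. This is precisely Bia{\l}ynicki-Birula's theorem, and I would carry out the argument as in \cite{CARRELL} (cf. \cite{BB}); the nontrivial input is that an attracting $\CC^*$-action forces its retraction to be an affine fibration isomorphic to the positive part of the normal bundle.

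Finally, for the homology decomposition I would build a filtration of $X$ by closed subvarieties whose successive differences are the cells. Choosing a linearization so that the weights $\mu_\cL(Y_i)$ are distinct and ordering the components so that $\mu_\cL(Y_1)<\cdots<\mu_\cL(Y_k)$, the unions $Z_p=\bigsqcup_{i\le p}X^+(Y_i)$ are closed, because the boundary $\overline{X^+(Y_i)}\setminus X^+(Y_i)$ consists of points whose forward limit lies in a component of strictly smaller weight. Each difference $Z_p\setminus Z_{p-1}=X^+(Y_p)$ is, by the previous step, a vector bundle of rank $\rk^+(Y_p)$ over $Y_p$, so the Thom isomorphism gives $H_m(Z_p,Z_{p-1};\ZZ)\cong H_{m-2\rk^+(Y_p)}(Y_p,\ZZ)$, using that $X$ and the $Y_p$ are compact so that ordinary and Borel--Moore homology agree. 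Since all cells are complex, hence of even real dimension, the long exact sequences of the pairs $(Z_p,Z_{p-1})$ split, and summing over $p$ yields $H_m(X,\ZZ)=\bigoplus_{i}H_{m-2\rk^+(Y_i)}(Y_i,\ZZ)$. The identity with $\rk^-$ follows by applying the same reasoning to the inverse action $t\mapsto t^{-1}$, which interchanges source and sink and the two BB decompositions.
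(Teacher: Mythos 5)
First, a remark on the comparison itself: the paper does not prove Theorem \ref{thm_ABB-decomposition} at all --- it is quoted as Bia{\l}ynicki-Birula's theorem, ``as presented in \cite{CARRELL}'' with the original in \cite{BB} --- so there is no in-paper argument to measure you against. Your first two steps are the standard outline of that classical proof, and you are candid that the hard geometric content (that the attracting set $X^+(Y)$ is globally, $\CC^*$-equivariantly, isomorphic to $\cN^+(Y)$, not merely an affine fibration glued from local models) is exactly the Bia{\l}ynicki-Birula theorem, which you, like the paper, outsource to \cite{BB,CARRELL}; that is acceptable, though it means your second bullet is cited rather than proved. One small imprecision: you cannot in general choose a linearization making the weights $\mu_\cL(Y_i)$ pairwise distinct (distinct components may sit at the same weight, as in the paper's own bandwidth examples), but this is harmless, since the boundary of each cell lies in cells attached to components of \emph{strictly} smaller weight, so ties in the ordering may be broken arbitrarily and your $Z_p$ are still closed.

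The genuine gap is in your last step: ``since all cells are complex, hence of even real dimension, the long exact sequences of the pairs $(Z_p,Z_{p-1})$ split'' is not a valid argument here. That parity argument works only when the fixed components are isolated points, so that each relative group $H_m(Z_p,Z_{p-1};\ZZ)$ is concentrated in even degrees. In the theorem as stated the $Y_p$ are arbitrary smooth projective varieties and may well have odd-degree homology (take $X=C\times\PP^1$ with $C$ a curve of positive genus and $\CC^*$ acting on the second factor: both fixed components are copies of $C$); then $H_m(Z_p,Z_{p-1};\ZZ)\cong H_{m-2\rk^+(Y_p)}(Y_p,\ZZ)$ occupies both parities and the connecting homomorphisms do not vanish for formal degree reasons. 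The degeneration of the spectral sequence of the BB filtration is itself a theorem and needs a further input. A standard repair: each stratum $Z_p\setminus Z_{p-1}$ is a vector bundle over a smooth projective variety, hence its Borel--Moore homology is pure, and strictness of morphisms of mixed Hodge structures forces the connecting maps in the Borel--Moore long exact sequences to vanish; alternatively one runs Bia{\l}ynicki-Birula's original argument, or uses the retractions $X^\pm(Y_p)\to Y_p$ to produce splittings of the maps $H_*(Z_{p-1})\to H_*(Z_p)$. Without some such ingredient, your filtration yields only a spectral sequence converging to $H_*(X,\ZZ)$, not the asserted direct sum decomposition.
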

The unique $Y$ such that $X^+(Y)$ is dense in $X$ is called the
\textit{source} of the action. The unique $Y$ such that $X^-(Y)$ is
dense in $X$ is called the \textit{sink}.

We have a partial order on $\cY$ in the following
way:
\begin{equation} \label{partial_order}
Y_i\prec Y_j\Leftrightarrow \exists \ x\in X:\
\lim_{t\rightarrow 0} t\cdot x\in Y_i{\rm \ \ and \ \
}\lim_{t\rightarrow \infty} t\cdot x\in Y_j
\end{equation}

\begin{defi}\label{onepointend-def}
  An effective $\CC^*$ action on a smooth variety $X$ is said to
  have \textit{one pointed end} if its source or sink is a single
  point. The action is said to have \textit{two pointed ends} if both
  the source and the sink are isolated points.
\end{defi}

We note that replacing $t$ with $t^{-1}$ we change the action to the
opposite and $X^+$ decomposition into $X^-$ decomposition. When we
refer to a one pointed end action we are assuming that the source is
given by an isolated point.

Using BB decomposition we can describe the Picard group of our
varieties in terms of the source of the action. 

\begin{prop}\label{onepointend-basic}
Let us keep the same notation of Theorem \ref{thm_ABB-decomposition}. Suppose that a $\CC^*$ action on a smooth variety $X$ has
  one pointed end with source $y_0$. Then $X$ is rational, and $\Pic{X}$ is finitely
  generated with no torsion.  Moreover, the divisors
  $D_i^+=\overline{X^+(Y_i)}$ for $Y_i$ such that $\rk^-(Y_i)=1$, are
  irreducible and their classes make the basis of $\Pic X$.
  \end{prop}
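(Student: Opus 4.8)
The plan is to read everything off the Bia{\l}ynicki-Birula decomposition of Theorem \ref{thm_ABB-decomposition}, using the dense plus-cell for rationality and the homology decomposition to determine $\Pic X$. First I record the numerology that drives the whole argument: since $TX_{\mid Y_i}=T^+\oplus T^0\oplus T^-$ with $T^0=TY_i$, we have $n=\rk^+(Y_i)+\dim Y_i+\rk^-(Y_i)$, so that the fibration $X^+(Y_i)\cong\cN^+(Y_i)\to Y_i$ has dimension $\rk^+(Y_i)+\dim Y_i=n-\rk^-(Y_i)$; thus $X^+(Y_i)$ has codimension $\rk^-(Y_i)$ in $X$, and it is irreducible, being (by Theorem \ref{thm_ABB-decomposition}) the total space of a vector bundle over the smooth irreducible variety $Y_i$. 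For rationality, note that the source being the single point $y_0$ forces the cell $X^+(y_0)\cong\cN^+(y_0)$ to be dense, hence $\rk^+(y_0)=n$ and $X^+(y_0)\cong\CC^n$; a variety with a dense open subset isomorphic to affine space is rational.

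Next I would show that $\Pic X$ is generated by the $D_i^+$ with $\rk^-(Y_i)=1$. Put $U=X^+(y_0)\cong\CC^n$, an open dense subset with closed complement $Z=X\setminus U$. Since $U$ is open and dense, $\overline{X^+(Y_i)}\subseteq Z$ for every $Y_i\neq y_0$, so $Z=\bigcup_{Y_i\neq y_0}\overline{X^+(Y_i)}$, and the codimension-one irreducible components of $Z$ are exactly the closures of maximal dimension $n-1$, namely the $D_i^+=\overline{X^+(Y_i)}$ with $\rk^-(Y_i)=1$ (irreducible by the first paragraph). As $X$ is smooth, $\Pic X$ is the group of divisor classes, and the excision sequence
\[
\bigoplus_{i:\,\rk^-(Y_i)=1}\ZZ\,[D_i^+]\longrightarrow \Pic X\longrightarrow \Pic\big(X^+(y_0)\big)\longrightarrow 0,
\]
together with $\Pic(\CC^n)=0$, shows that the finitely many classes $[D_i^+]$ generate $\Pic X$; in particular $\Pic X$ is finitely generated.

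Finally I would pin down freeness and the basis. Rationality gives the vanishing of $\HH^1(X,\cO_X)$ and $\HH^2(X,\cO_X)$, so the exponential sequence yields $\Pic X\cong H^2(X,\ZZ)\cong H_{2n-2}(X,\ZZ)$, the last isomorphism by Poincaré duality on the closed oriented $2n$-manifold $X$. I compute the right-hand side from the plus-decomposition at $m=2n-2$: the $i$-th summand $H_{2n-2-2\rk^+(Y_i)}(Y_i,\ZZ)$ lives in degree $2\dim Y_i+2\rk^-(Y_i)-2$, so it vanishes unless $\rk^-(Y_i)\le 1$; the source (the unique component with $\rk^-=0$) lands in negative degree and contributes $0$, while each $Y_i$ with $\rk^-(Y_i)=1$ contributes $H_{2\dim Y_i}(Y_i,\ZZ)\cong\ZZ$, the top homology of a connected closed oriented manifold. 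Hence $\Pic X$ is free of rank $N:=\#\{i:\rk^-(Y_i)=1\}$, and in particular torsion-free. Since by the previous paragraph $\Pic X$ is generated by exactly these $N$ classes $[D_i^+]$, the induced surjection $\ZZ^{N}\twoheadrightarrow\Pic X\cong\ZZ^{N}$ is forced to be an isomorphism, so $\{[D_i^+]:\rk^-(Y_i)=1\}$ is a $\ZZ$-basis of $\Pic X$.

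The step I expect to require the most care is the topological passage $\Pic X\cong H^2(X,\ZZ)\cong H_{2n-2}(X,\ZZ)$ together with the degree bookkeeping in the homology decomposition. One must check that the vanishing of $\HH^1(X,\cO_X)$ and $\HH^2(X,\cO_X)$ really follows from rationality, that Poincaré duality is applied with integral coefficients so that no torsion is lost, and, crucially, that the number of surviving summands ($\rk^-=1$) coincides exactly with the number of codimension-one cells produced by the excision argument. It is precisely the equality of these two counts that upgrades ``$\Pic X$ is generated by the $[D_i^+]$'' to ``the $[D_i^+]$ form a basis''; any hidden torsion in $H_{2n-2}(X,\ZZ)$, or any mismatch between the two counts, would break the argument. (Alternatively, one can avoid the cohomological identification by verifying directly that the cycle map sends each $[D_i^+]$ to the distinguished generator of the summand $H_{2\dim Y_i}(Y_i,\ZZ)$ in the Bia{\l}ynicki-Birula decomposition, and then concluding as above.)
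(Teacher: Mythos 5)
Your proof is correct, and it rests on the same Bia{\l}ynicki-Birula skeleton as the paper's, but it takes a genuinely different route through the topology and, notably, fills in a step the paper leaves implicit. The paper gets rationality from $X^+(y_0)\iso\CC^n$ exactly as you do, but then applies the decomposition of Theorem \ref{thm_ABB-decomposition} in degree $2$, obtaining $H_2(X,\ZZ)=\bigoplus_i H_{2-2\rk^-(Y_i)}(Y_i,\ZZ)\iso\ZZ^{\rho}$ with $\rho=\#\{i:\rk^-(Y_i)=1\}$, and concludes via simple connectedness of $X$ (which kills the $\Ext$ term in universal coefficients) that $\Pic X\iso H^2(X,\ZZ)\iso\Hom_{\ZZ}(H_2(X,\ZZ),\ZZ)\iso\ZZ^{\rho}$, ending with ``our claim follows''. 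You instead identify $\Pic X\iso H^2(X,\ZZ)$ through $h^{0,1}=h^{0,2}=0$ (birational invariance of these numbers plus rationality) and the exponential sequence, pass to $H_{2n-2}(X,\ZZ)$ by integral Poincar\'e duality, and compute that group from the plus-decomposition in degree $2n-2$; the degree bookkeeping via $\rk^+(Y_i)=n-\dim Y_i-\rk^-(Y_i)$ is right, each surviving summand is the top homology $\ZZ$ of the smooth connected projective $Y_i$, and duality with $\ZZ$ coefficients does preserve torsion, so freeness of rank $N=\#\{i:\rk^-(Y_i)=1\}$ is sound. The real added value of your write-up is the final claim about the $D_i^+$: the paper's proof never spells out why these classes form a basis, whereas you prove generation by the excision sequence for the divisor class group relative to the dense cell $U=X^+(y_0)\iso\CC^n$ (using $\Pic\CC^n=0$, the irreducibility of each cell closure, and the codimension formula $\operatorname{codim}X^+(Y_i)=\rk^-(Y_i)$ to identify the codimension-one components of $X\setminus U$), and you then observe that a surjection $\ZZ^N\twoheadrightarrow\ZZ^N$ between free groups of the same rank is an isomorphism. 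This matching of the excision count with the homological count is exactly the point the paper glosses over, and your version makes the proposition's last sentence a theorem rather than an assertion; the paper's route is shorter where it overlaps, but yours is the more self-contained argument.
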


 \begin{proof}


Applying Theorem \ref{thm_ABB-decomposition}, we get $H_{2}(X,\mathbb{Z})=\bigoplus_{i\in I} H_{2-2 \rk^{-}(Y_i)}(Y_i,\ZZ)$. Being $y_0$ an isolated point with $\rk^{-}{(y_0)}=0$, then the only fixed components $Y_i$ which contribute to the homology are those having $\rk^{-}(Y_i)=1$. Therefore $H_{2}(X,\mathbb{Z}) \iso \ZZ^{\rho}$ with $\rho \in \mathbb{Z}_{\geq 0}$. By Theorem \ref{thm_ABB-decomposition}, one has $X^+(y_0)\iso\CC^n$, hence $X$ is rational. In particular, being $X$ simply connected, one has $$\Pic X\iso H^2(X,\mathbb{Z})\iso \text{Hom}_{\mathbb{Z}}(H_2(X,\mathbb{Z}),\mathbb{Z})\iso \ZZ^{\rho}$$
and our claim follows.

\end{proof}
\subsection{Linearization} \label{linearization} Let $p\colon
\cL\rightarrow X$ be a line bundle over a normal projective variety
with an action of an algebraic torus $H=(\CC^*)^r$. We recall that a
linearization $\mu$ of $\cL$ is an $H$ equivariant action on $\cL$
which is linear on the fibers of $p$, that is for every $t\in H$ and
$x\in X$ the restriction $\mu\colon \cL_{x}\to \cL_{t\cdot x}$ is
linear.  In this case we say that $(\cL,\mu)$ is an $H$ linearized
line bundle on $X$. See \cite[$\S$1.3]{MFK}, \cite[$\S$2.2]{BRION} or
\cite[$\S$2]{KNOP} for details on linearizations. From now on, we
denote by $\mu_{\cL}$ or simply by $\mu$ a chosen linearization of the
line bundle $\cL$.

By \cite[Proposition 2.4]{KNOP} and the subsequent Remark in \cite{KNOP}, we know
that there exists a linearization of the action of an algebraic torus $H$ on
$\cL$. Using \cite[Lemma 3.2.4]{BRION} we deduce that given two line bundles
$\cL_1$ and $\cL_2$ with linearizations $\mu_{\cL_1}$ and $\mu_{\cL_2}$,  their
product $\cL_1\otimes\cL_2$ has a natural linearization
$\mu_{\cL_1\otimes\cL_2}=\mu_{\cL_1}+\mu_{\cL_2}$, where for $H$ linearized line
bundles we will use the additive notation. Also the dual of any $H$ linearized
line bundle on $X$ is $H$-linearized as well. Thus the isomorphism classes of
$H$-linearized line bundles form an abelian group relative to the tensor
product, which we denote by  $\Pic^{H}(X)$. We have a short exact sequence:
\begin{equation}\label{linearization-exact-sequence}
\xymatrix{0\ar[r]&\Hom(H,\CC^*)=M\ar[r]^{\ \ \ \ \gamma}& \Pic^{H}(X)\ar[r]^{\
\varphi}&\Pic(X)\ar[r]&0}
\end{equation}
where $\varphi$ forgets the linearization and $\gamma$ are linearizations of the
trivial bundle. In particular, any two linearizations of a line bundle differ by
a character. Moreover, $TX$ and $\Omega X$ have natural linearizations, hence
$K_X$ too.

\begin{rem}\label{action_on_sections} Given a line bundle $\cL\to X$ with a linearization $\mu:H\times \cL\rightarrow \cL$ we get the
  action on $\HH^0(X,\cL)$ such that
  $$H\times\HH^0(X,\cL)\ni(t,\sigma)\longrightarrow
  \left(x\mapsto (t\cdot\sigma)(x):=\mu(t,\sigma(t^{-1}\cdot
    x))\right)\in\HH^0(X,\cL)$$ for every $\sigma \in \HH^0(X,\cL)$,
  $t\in H$, and $x\in X$.  When $\cL$ is semiample we can consider the
  graded finitely generated $\CC$-algebra $\cR=\bigoplus_{m\geq
    0}\HH^0(X,m\cL)$. As we have already observed, each line bundle
  $m\cL$ has an induced linearization, then there is an induced $H$
  action on $\HH^0(X,m\cL)$, hence on $\cR$.
\end{rem}

\begin{prop}\label{descending_action}
  Let $(X,L)$ be as in Assumptions \ref{gen-assumpt}. Then the target
  of the adjunction morphism $\phi_{\tau}: X\rightarrow X'$ admits an
  action of $\CC^*$ (possibly non effective) such that $\phi_{\tau}$ is
  $\CC^*$ equivariant.
\end{prop}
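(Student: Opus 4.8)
The plan is to read off the $\CC^*$-action on $X'$ from the explicit presentation (\ref{proj}) of the target as a $\Proj$, after equipping with a linearization the line bundle whose sections build that algebra. Since $X$ is smooth, $K_X$ is a genuine line bundle, and, as recalled just after the exact sequence (\ref{linearization-exact-sequence}), it carries a natural linearization coming from those of $TX$ and $\Omega X$. Combining it, in the additive notation for $\Pic^H$, with the given linearization $\mu$ of $L$, I obtain for every integer $d$ with $d\tau\in\ZZ$ a linearization of the line bundle $\cL_d:=d(K_X+\tau L)=dK_X+(d\tau)L$. Choosing $d$ divisible enough that moreover $\cL_d=\phi_\tau^*(dL')$ with $dL'$ Cartier on $X'$, we have, as the $d$-th Veronese of (\ref{proj}), $X'=\Proj\bigl(\bigoplus_{k\geq 0}\HH^0(X,k\cL_d)\bigr)$.

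Next I would transport the linearization to the section algebra. By Remark \ref{action_on_sections}, the linearization of $\cL_d$ induces a $\CC^*$-action on each $\HH^0(X,k\cL_d)$, and these actions are compatible with the multiplication of sections; hence $\CC^*$ acts by graded-algebra automorphisms on the finitely generated graded algebra $\cR:=\bigoplus_{k\geq 0}\HH^0(X,k\cL_d)$. Equivalently, this action refines the grading of $\cR$ to a $\ZZ\oplus\ZZ$-grading whose first factor is the original ($\Proj$) grading and whose second factor records the $\CC^*$-weights of the sections.

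Then I would take $\Proj$. A $\CC^*$-action on $\cR$ preserving the $\Proj$-grading induces, by functoriality of $\Proj$, an action on $X'=\Proj\cR$: concretely the second grading turns $\Spec\cR$ into a $\CC^*\times\CC^*$-variety, and $\Proj\cR$ inherits the residual $\CC^*$-action after passing to the quotient by the grading torus. Equivariance of $\phi_\tau$ is then automatic: since $K_X+\tau L$ is semiample, $\phi_\tau$ is exactly the morphism to $\Proj\cR$ determined by $\cR$, and the action on sections set up in Remark \ref{action_on_sections} is the natural one dual to the action on $X$, so the induced morphism to $\Proj\cR$, namely $\phi_\tau$, intertwines the two actions.

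I expect the only genuine subtlety to be this last descent step, i.e. checking that graded automorphisms really descend to $\Proj$ and that the resulting action is compatible with $\phi_\tau$ rather than merely abstractly present. The parenthetical ``possibly non effective'' in the statement is precisely the reservation that $\CC^*$ may act on each $\HH^0(X,k\cL_d)$ through a single character, in which case the induced action on $X'$ collapses to the identity. I would record the well-definedness by observing that the ambiguity in the linearization, measured by $\gamma$ in (\ref{linearization-exact-sequence}), amounts to twisting by a character; this shifts the weights in degree $k$ by a common amount proportional to $k$, which cancels in the ratios defining the morphism to $\Proj\cR$, and hence leaves the action on $X'$ unchanged. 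Thus the $\CC^*$-action on $X'$ is canonical and $\phi_\tau$ is equivariant.
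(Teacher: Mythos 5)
Your proposal is correct and follows essentially the same route as the paper: linearize $K_X+\tau L$ via the natural linearization of $K_X$ together with $\mu$, induce the $\CC^*$ action on the section spaces by Remark \ref{action_on_sections}, and descend to $X'$ through the presentation (\ref{proj}). The only cosmetic difference is that the paper realizes the descent by an equivariant embedding of $X'$ into $\PP(\HH^0(X,m(K_X+\tau L)))$ for $m$ large enough that $m(K_X+\tau L)$ pulls back a very ample divisor, whereas you descend directly through graded-algebra automorphisms of $\Proj\cR$; your closing observation on the character ambiguity of the linearization is a harmless addition the paper leaves implicit.
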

\begin{proof} Taking the natural linearization for $K_{X}$ it follows
  that $K_{X}+\tau L$ admits a linearization, and by Remark
  \ref{action_on_sections} we deduce that the torus acts on the
  variety given by (\ref{proj}). In fact, taking
  sufficiently large multiple $m$ of the divisor $K_{X}+\tau L$, we may
  assume that $m(K_{X}+\tau L)$ is the pullback of a very ample
  divisor on $X'$, and the action on $X'$ is induced by equivariant
  embedding into $\PP(\HH^0(X,m(K_{X}+\tau L))$.
\end{proof}

The following construction was used in \cite[$\S$2.1]{B_W}.  Let $X$
be a normal projective variety with an action of an algebraic torus
$H$ of rank $r$ whose set of fixed point component is $\cY$. Let us
consider a linearization $\mu_{\cL}$ of a line bundle $p\colon \cL\to
X$, and $Y\in \cY$. Given $y\in Y$ we associate $\mu_{\cL}(y)\in M=
\Hom_{alg}(H,\CC^*)\cong \ZZ^r$ which is the weight of the action of
$H$ on $p^{-1}(y)$. If $y_{1}$, $y_{2}$ belong to the same connected
component $Y$, then $\mu_{\cL}(y_{1})=\mu_{\cL}(y_{2})$, and we will
denote this weight by $\mu_{\cL}(Y)$. In this way we get a homomorphism
of abelian groups $\Pic^{H}(X)\rightarrow {M}^\cY$, with ${M}^\cY$ denoting
the additive group of functions $\cY\rightarrow M$, which to
linearized line bundle $(\cL,\mu_\cL)$ associates the function
$$\cY\ni Y\mapsto \mu_\cL(Y)\in M$$

\begin{defi}\label{mu-map}
  The above constructed function, which by abuse we continue to denote
  by $\mu_\cL$ (or simply by $\mu$), will be called \textit{fixed
    point weight map} $$\mu_\cL:\cY\ra M = \Hom_{alg}(H,\CC^*)\iso\ZZ^r$$
\end{defi}


\begin{rem}\label{rem-changing-torus}
  Suppose that an algebraic torus $H$ acts on $X$ and it contains an
  algebraic subtorus $\iota: H'\rightarrow H$. Then the action of $H$
  induces via $\iota$ the action of $H'$. Given any line bundle $\cL$
  over $X$ with $H$ linearization $\mu_\cL$, we have a unique induced
  linearization $\mu_\cL'$ of the action of $H'$. Moreover, we have
  the inclusion of the fixed point locus $X^{H}\subset X^{H'}$, and hence
  the map of the fixed point components $\iota_\bullet: \cY\rightarrow
  \cY'$. Then for the associated fixed point weight maps one has
  $$\mu_\cL'\circ\iota_\bullet=\iota^*\circ\mu_\cL$$
  where $\iota^*:M\rightarrow M'$ is the homomorphism of lattices of
  characters of the respective tori.
\end{rem}

In the case of a $\CC^*$ action on $X$, we distinguish the sink $Y_\infty$ of the action and say that the
linearization is \textit{normalized} if $\mu_\cL(Y_{\infty})=0$. That is,
a normalized line bundle $(\cL,\mu_\cL)$ is in the kernel of the homomorphism
$$\Pic^{\CC^*}(X)\ni (\cL,\mu_\cL)\mapsto\mu_\cL(Y_\infty)\in\ZZ.$$ In other words, the
choice of a normalized linearization splits the exact sequence
(\ref{linearization-exact-sequence}).

Using the map $\mu_\cL$ for a $\CC^*$ action, we introduce the following new definition.

\begin{defi}\label{bandwidth}
  Let $X$ be a normal projective variety admitting a $\CC^*$ action. Suppose that $\cL$ is a nef line bundle over $X$ with the
  fixed point weight map $\mu_\cL\colon \cY\ra \ZZ$. We denote by $\mu_{\min}$ and $\mu_{\max}$ the minimal and
  maximal value of $\mu$. The \textit{bandwidth} $|\mu|$ of the triple
  $(X,\cL,\CC^*)$ is $|\mu|:=\mu_{\max}-\mu_{\min}$. For short, we also say that $X$ and $\cL$ have bandwidth $|\mu|$.
\end{defi}

\section{Adjunction, Mori theory for varieties with a $\CC^*$ action}%
\label{section-adjunction}
In this section we describe the main ideas regarding adjunction theory for
varieties with a $\CC^*$ action.
\subsection{AM vs FM}\label{subsect-AMvsFM}
We begin with an easy example which we discuss in detail. Then we will prove AM vs FM equality in Lemma \ref{AMvsFM}. We refer to \cite[$\S$2.3]{OSCRW} for some
consequences of this equality, and for its generalization to vector bundles.
\begin{ex}\label{example-P1}
  Let us consider the standard action of $\CC^*$ on $\PP^1$ which in
  homogeneous coordinates $[x_0,x_1]$ is defined as follows
  $$H\times\PP^1\ni (t,[x_0,x_1])\longrightarrow t\cdot [x_0,x_1]=[tx_0,x_1]\in\PP^1$$

  The two fixed points are $y_0=[0,1]$ and $y_{\infty}=[1,0]$ with
  local coordinates $\frac{x_0}{x_1}$ and $\frac{x_1}{x_0}$ on which
  $\CC^*$ acts with weights $+1$ and $-1$, respectively.  If we write
  $t=\frac{x_0}{x_1}$ and $t^{-1}=\frac{x_1}{x_0}$, then the action
  extends the action of $\CC^*$ on itself. Moreover, if $y\in
  \PP^1\setminus\{y_0,y_{\infty}\}$ then $\lim_{t\rightarrow 0} t\cdot
  y = y_0$ and $\lim_{t\rightarrow \infty} t\cdot y =
  y_{\infty}$. Thus $y_0$ is the {\em source} and $y_{\infty}$ is the
  {\em sink} of the action.

  We recall that the universal bundle $\cL=\cO(-1)$ is embedded into
  the trivial bundle $V\times\PP^1$, where $V$ is the vector space
  with coordinates $(x_0,x_1)$ and $V\setminus\{0\}\rightarrow \PP^1$
  is the projection. The vector space has the obvious $\CC^*$ action
  $t\cdot(x_0,x_1)=(tx_0,x_1)$, and the composition
  $$\cL\hookrightarrow V\times\PP^1\longrightarrow \PP^1$$
  is $\CC^*$ equivariant. The fiber of $\cL\rightarrow\PP^1$ over
  $y_{\infty}=[1,0]$ is a line with coordinate $x_0$, and over
  $y_{0}=[0,1]$ is a line with coordinate $x_1$. This yields a
  linearization $\mu_\cL$ of $\cL$ such that $\mu_\cL(y_{\infty})=1$
  and $\mu_\cL(y_0)=0$. Then, if we replace $\cL=\cO(-1)$ with
  $\cL^{\vee}=\cO(1)$ we get $\mu_{\cL^{\vee}}(y_{\infty})=-1$ and
  $\mu_{\cL^{\vee}}(y_0)=0$.
\end{ex}

\begin{lemma}\label{AMvsFM}
Let $\CC^*\times\PP^1\rightarrow\PP^1$ be an effective
action with fixed points $y_0$ and $y_\infty$, which are
respectively the source and the sink of the action. Consider a line bundle $\cL$
over $\PP^1$ with linearization $\mu_\cL$. Then
$$\mu_\cL(y_0)-\mu_\cL(y_\infty)=\delta(y_0)\cdot \deg\cL$$ where $\delta(y_0)$
denotes the weight of the $\CC^*$ action on the tangent space
$T_{y_0}\PP^1$.
\end{lemma}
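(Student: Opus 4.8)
The plan is to exploit the fact that both sides of the asserted equality are additive in the linearized line bundle, and thereby reduce the statement to a check on generators of $\Pic^{\CC^*}(\PP^1)$. Concretely, I set
$$F(\cL,\mu_\cL):=\bigl(\mu_\cL(y_0)-\mu_\cL(y_\infty)\bigr)-\delta(y_0)\cdot\deg\cL.$$
Since the fixed-point weight map is additive, $\mu_{\cL_1\otimes\cL_2}=\mu_{\cL_1}+\mu_{\cL_2}$ at each of $y_0,y_\infty$, since $\deg$ is additive, and since $\delta(y_0)$ is a fixed integer depending only on the action and not on $\cL$, the assignment $F\colon\Pic^{\CC^*}(\PP^1)\to\ZZ$ is a homomorphism of abelian groups. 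Hence it suffices to prove $F\equiv 0$ on a generating set.

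By the exact sequence (\ref{linearization-exact-sequence}) the group $\Pic^{\CC^*}(\PP^1)$ is generated by the image $\gamma(M)$ of the characters of $\CC^*$ together with any single lift of a generator of $\Pic(\PP^1)\iso\ZZ$. On a linearization $\gamma(k)$ of the trivial bundle the weight equals $k$ at both fixed points and the degree is $0$, so $F(\gamma(k))=k-k-\delta(y_0)\cdot 0=0$. It then remains to evaluate $F$ on one lift of a generator, for which I take the tautological bundle $\cO(-1)$ with the linearization inherited from the linear $\CC^*$ action on $\CC^2$, exactly as in Example \ref{example-P1}. Diagonalizing the action, I may write it as $t\cdot[x_0,x_1]=[t^a x_0,t^b x_1]$ with $a\neq b$; then the tangent weight at $y_0=[0,1]$ is $\delta(y_0)=a-b$. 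The fibre of $\cO(-1)$ over $y_0$ is spanned by $(0,1)$, on which $\CC^*$ acts with weight $b$, and the fibre over $y_\infty=[1,0]$ is spanned by $(1,0)$, of weight $a$; thus $\mu_{\cO(-1)}(y_0)-\mu_{\cO(-1)}(y_\infty)=b-a=-(a-b)=\delta(y_0)\cdot\deg\cO(-1)$, i.e. $F(\cO(-1))=0$. Therefore $F$ vanishes on generators and hence identically, which is the claim.

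The explicit computation is routine linear algebra on the two fibres; the genuine (if modest) difficulty I anticipate lies in the bookkeeping. First, one must correctly match source with sink and track signs: $y_0$ is the source precisely when $\delta(y_0)=a-b>0$, and on $\PP^1$ one always has $\delta(y_\infty)=-\delta(y_0)$, so the effectivity hypothesis $a\neq b$ is exactly what guarantees two distinct fixed points and a well-defined nonzero $\delta(y_0)$. Second, one must be sure that (\ref{linearization-exact-sequence}) really supplies the claimed generators; this is where the additivity of $F$ is doing the work, since it allows a single explicit lift (here $\cO(-1)$) to settle the whole ample direction. As an alternative I could avoid generators altogether by a local argument: choose a rational section $s$ of $\cL$ that is a $\CC^*$-eigenvector of weight $w$; its divisor is $\CC^*$-invariant, hence supported on $\{y_0,y_\infty\}$, so $\deg\cL=\mathrm{ord}_{y_0}(s)+\mathrm{ord}_{y_\infty}(s)$, and comparing $s$ with an equivariant local frame at each fixed point yields $\mu_\cL(y_0)=w+\delta(y_0)\,\mathrm{ord}_{y_0}(s)$ and $\mu_\cL(y_\infty)=w+\delta(y_\infty)\,\mathrm{ord}_{y_\infty}(s)$, whose difference is the identity. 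I would nevertheless present the additivity argument as the main proof, since it is shortest and dovetails directly with Example \ref{example-P1}, keeping the local computation in reserve.
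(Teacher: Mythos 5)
Your proof is correct and takes essentially the same route as the paper: the paper also reduces, via additivity of linearizations under tensor products and duals (Subsection \ref{linearization}), to the tautological computation of Example \ref{example-P1}, with non-standard actions handled by observing that a multiple of the standard action multiplies both $\delta$ and $\mu$. Your homomorphism $F$ on $\Pic^{\CC^*}(\PP^1)$ together with the exact sequence (\ref{linearization-exact-sequence}) is simply a more explicit packaging of that reduction, and your direct computation with weights $(a,b)$ (including the twists $\gamma(k)$, which the paper leaves implicit) correctly replaces the paper's rescaling step.
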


\begin{proof}
  If $\cL:= \cO(-1)$ and the action is standard, then the statement
  follows by Example \ref{example-P1}. As observed in Subsection
  \ref{linearization}, a linearization of a line bundle implies a
  linearization of its multiples and of its dual. Similarly, a
  multiple of the standard action multiplies the weights, both
  $\delta$ and $\mu$.  Hence the claim follows.
\end{proof}

Now let us apply the above observation to any manifold $X$ with a $\CC^*$
action.  Given a nontrivial orbit $\CC^*\cdot
x\hookrightarrow X$ and its closure $C\subset X$ we can take either a
normalization $f: \PP^1\rightarrow C\subset X$ or a
\textit{parametrization} $f_{\CC^*}:\PP^1\rightarrow C\subset X$. The latter
is defined by the formula $f_{\CC^*}(t)=t\cdot x$ for $t\in
\CC^*$, so that the action of $\CC^*$ on
$\PP^1$ is standard.

The morphism $f_{\CC^*}$ factors through the normalization $f$:
$$\xymatrix{\PP^1\ar[dr]^{f_{\CC^*}}\ar[d]_{\pi_\delta}&&\\ \PP^1\ar[r]^{f\ \ \
}&C\subset X}$$ That is $f_{\CC^*}=\pi_\delta\circ f$ where $\pi_\delta$ is
$\CC^*$ equivariant cover $\PP^1\rightarrow\PP^1$ of degree $\delta$ associated to
the weight of the $\CC^*$ action on the tangent space $T_{y_0}\PP^1$.
Equivalently, $\delta$ is the order of stabilizer of $x$ in $\CC^*$ acting on $X$.

Finally, if the action of $\CC^*$ on $X$ is equalized then, by the local
description of the action around the fixed point components (see
Theorem \ref{thm_ABB-decomposition}), we conclude that $C$ is smooth
and $f=f_{\CC^*}$.

Having the above in mind we obtain the following:

\begin{cor}\label{AMvsFMonX}
  Let $X$ be a smooth variety with an effective $\CC^{*}$ action, and let $f\colon
  \PP^1\to X$ be a non-constant $\CC^*$ equivariant map. Let $y_\infty$
  and $y_0$ be respectively the sink and source of the action on $\PP^1$.  Take $\cL$ a line bundle on $X$ with linearization $\mu_{\cL}$. Then the following hold:
\begin{itemize}[leftmargin=*]
\item[(a)] $\deg{f^{*} \cL}$ has the same sign (or it is zero) as the
  difference $$\mu_{\cL}(f(y_0))-\mu_{\cL}(f(y_{\infty})).$$
\item[(b)] If $\cL$ is nef and the action of $H$ is faithful, then the bandwidth
of the triple $(X,\cL,\CC^*)$ is equal to the degree of $\cL$ on the closure of a
general orbit of $\CC^*$.
\item[(c)] If the action of $\CC^*$ is equalized and $f$ is the
  normalization of the closure of a nontrivial orbit $C\subset X$,
  then $\deg{f^{*} \cL}=\mu_{\cL}(f(y_0))-\mu_{\cL}(f(y_{\infty}))$.
\end{itemize}
\end{cor}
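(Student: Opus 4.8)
The plan is to deduce all three statements from Lemma \ref{AMvsFM} by pulling back along the equivariant map $f\colon\PP^1\to X$. First I would set up the pullback: since $f$ is $\CC^*$ equivariant and $\cL$ carries a linearization $\mu_\cL$, the pullback $f^*\cL$ inherits a linearization $\mu_{f^*\cL}$, and by the compatibility of the fixed-point weight map with equivariant pullback (as in Remark \ref{rem-changing-torus}, applied to the map of fixed loci $\{y_0,y_\infty\}\to\{f(y_0),f(y_\infty)\}\subset X^{\CC^*}$) we have $\mu_{f^*\cL}(y_0)=\mu_\cL(f(y_0))$ and $\mu_{f^*\cL}(y_\infty)=\mu_\cL(f(y_\infty))$. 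Applying Lemma \ref{AMvsFM} to $f^*\cL$ on $\PP^1$ then yields
$$\mu_\cL(f(y_0))-\mu_\cL(f(y_\infty))=\delta(y_0)\cdot\deg f^*\cL,$$
where $\delta(y_0)>0$ is the weight of the $\CC^*$ action on $T_{y_0}\PP^1$, which is positive because $y_0$ is the source. This single identity is the engine behind everything.

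For part (a), I would simply read off the sign: since $\delta(y_0)>0$, the integer $\deg f^*\cL$ has the same sign as (or vanishes together with) the weight difference $\mu_\cL(f(y_0))-\mu_\cL(f(y_\infty))$. For part (c), the hypothesis that the action is equalized guarantees, by the discussion immediately preceding the corollary, that the normalization $f$ of $\overline{C}$ coincides with the parametrization $f_{\CC^*}$, i.e.\ $\delta(y_0)=1$; substituting $\delta(y_0)=1$ into the displayed identity gives $\deg f^*\cL=\mu_\cL(f(y_0))-\mu_\cL(f(y_\infty))$ directly.

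Part (b) requires slightly more. Here I would first invoke part (b)'s hypothesis that the $\CC^*$ action is faithful, so that for a \emph{general} orbit the stabilizer is trivial, hence $\delta=1$ and the normalization of the closure of a general orbit is again its parametrization. Then the identity reads $\deg f^*\cL=\mu_\cL(f(y_0))-\mu_\cL(f(y_\infty))$ for this general orbit $C$. The remaining point is to identify the right-hand side with the bandwidth $|\mu|=\mu_{\max}-\mu_{\min}$: a general orbit flows from the source $Y_0$ of the whole action to the sink $Y_\infty$, and using Lemma \ref{curve-cone-generation} together with the BB decomposition (Theorem \ref{thm_ABB-decomposition}), the source attains the extreme weight $\mu_{\max}$ and the sink attains $\mu_{\min}$ (after possibly exchanging the roles via $t\mapsto t^{-1}$).

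The main obstacle I expect is exactly this last identification in (b): one must argue that the general orbit connects the \emph{globally} extremal weight components rather than merely two arbitrary fixed components, so that the orbit's weight difference equals $\mu_{\max}-\mu_{\min}$ and not a smaller gap. The key input is that the general orbit is dense in $X^+(Y_0)$ for the source $Y_0$ and in $X^-(Y_\infty)$ for the sink $Y_\infty$, and that on the nef bundle $\cL$ the maximal and minimal weights are attained precisely at the source and sink; nefness of $\cL$ is what forbids an interior fixed component from carrying an even more extreme weight, via the sign statement in part (a) applied to orbit closures joining fixed components.
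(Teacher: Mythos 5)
Your proposal is correct and follows essentially the same route as the paper, whose justification of this corollary is exactly the discussion preceding it: Lemma \ref{AMvsFM} applied via the equivariant map $f$, with the factorization $f_{\CC^*}=\pi_\delta\circ f$ and positivity of $\delta(y_0)$ giving (a), the fact that an equalized action forces $f=f_{\CC^*}$ (so $\delta=1$) giving (c), and triviality of the generic stabilizer under a faithful action giving $\delta=1$ on a general orbit for (b). Your added care in (b) --- using nefness and the sign statement (a) along orbit closures to show that $\mu_{\max}$ and $\mu_{\min}$ are attained at the source and the sink, which the general orbit joins by density of $X^+(Y_0)$ and $X^-(Y_\infty)$ --- correctly fills in a step the paper leaves implicit, though the appeal to Lemma \ref{curve-cone-generation} is unnecessary there: the BB decomposition (Theorem \ref{thm_ABB-decomposition}) already provides, for every non-source component, an incoming orbit, and iterating these yields the required chains.
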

\begin{ex} \label{ex_no_equalized}
In what follows, we discuss an easy example of a non-equalized  action which explains the assumptions in the
  preceding corollary. Let us consider an action of $\CC^*$ on $\PP^2$ with weights $(0,1,2)$,
that is
$$\CC^*\times\PP^2\ni(t,[z_0,z_1,z_2])\rightarrow [z_0,tz_1,t^2z_2]\in\PP^2$$
with three fixed points $y_0=[1,0,0],\ y_1=[0,1,0],\ y_2=[0,0,1]$, where $y_0$ is the source of this action, and $y_2$ is the sink. If
$L=\cO(1)$ then $\mu_L(y_i)=-i$ for $i=0,1,2$, so that $(\PP^2,L,\CC^*)$ has bandwidth two. Lines $z_0=0$ through $y_1$ and $y_2$, and $z_2=0$ through $y_0$ and $y_1$ are closures of orbits with the standard action of
$\CC^*$. Take the line $z_1=0$ through $y_0$ and $y_2$. By Lemma \ref{AMvsFM}, we deduce that this line is the closure of the
orbit with the action of $\CC^*$ of weight 2 (and thus the isotropy of rank
two), so that Corollary \ref{AMvsFMonX} (c) fails.  A general orbit is a conic $z_0z_2=a\cdot z_1^2$ with $a\ne 0$.
\end{ex}

\subsection{Graph of the action, cone theorem}
Let us start this subsection with a simple version of the localization
theorem, see e.g.~\cite{EDIDIN,QUILLEN} for more details.
We are interested in the description of $\Pic X$ in terms of normalized
linearizations.

\begin{prop}\label{Pic_vs_linearizations}
  Let $X$ be a projective manifold with an action of the torus
  $\CC^*$. Take the decomposition of the fixed locus into
  irreducible components $X^{\CC^*}=\bigsqcup_{i\in I}Y_i$ with $Y_\infty$
  denoting the sink component. Suppose that any effective curve on $X$
  is numerically equivalent to a sum of closures of orbits of
  $\CC^*$. Consider a function $\Upsilon: \Pic X
  \rightarrow\bigoplus_{Y_i\ne Y_\infty}\ZZ\cdot Y_i$ such
  that $$\Upsilon(\cL)=\sum_{Y_i\ne Y_\infty}\mu^\infty_\cL(Y_i)\cdot
  Y_i$$ where $\mu^\infty_\cL$ is the normalized linearization of
  $\cL$, i.e. $\mu^\infty_\cL(Y_\infty)=0$. Then $\Upsilon$ is a
  homomorphism of groups with the kernel equal to numerically trivial
  line bundles.
\end{prop}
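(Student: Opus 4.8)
My plan is to prove the two assertions separately: that $\Upsilon$ is a homomorphism of abelian groups, and that its kernel is precisely the subgroup of numerically trivial line bundles.

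\emph{Homomorphism.} This is the formal part, and I would deduce it from the uniqueness and additivity of normalized linearizations. By the exact sequence \eqref{linearization-exact-sequence} any two linearizations of a fixed $\cL$ differ by a character of $\CC^*$, i.e.\ by an integer; imposing the value $0$ at the sink $Y_\infty$ singles out a unique one, so $\mu^\infty_\cL$ is canonically attached to $\cL$ and $\Upsilon$ is well defined. Given $\cL_1,\cL_2$, the sum $\mu^\infty_{\cL_1}+\mu^\infty_{\cL_2}$ is a linearization of $\cL_1\otimes\cL_2$ (additive notation on $\Pic^{\CC^*}(X)$) whose value at $Y_\infty$ is $0$; hence it is the normalized linearization of the product. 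Evaluating at each $Y_i$ gives $\mu^\infty_{\cL_1\otimes\cL_2}(Y_i)=\mu^\infty_{\cL_1}(Y_i)+\mu^\infty_{\cL_2}(Y_i)$, so $\Upsilon(\cL_1\otimes\cL_2)=\Upsilon(\cL_1)+\Upsilon(\cL_2)$.

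\emph{The bridge to intersection numbers.} Note first that $\Upsilon(\cL)=0$ is equivalent to $\mu^\infty_\cL\equiv 0$ on all of $\cY$, since the normalization already forces $\mu^\infty_\cL(Y_\infty)=0$. To relate the weights to intersection numbers I would apply Lemma \ref{AMvsFM} to $\cL$ pulled back along a parametrization of the closure $C$ of a nontrivial orbit: denoting by $Y_s,Y_t\in\cY$ the components containing the source and the sink of $C$, this yields $\mu^\infty_\cL(Y_s)-\mu^\infty_\cL(Y_t)=\delta\,(\cL\cdot C)$ for a positive integer $\delta$. In particular $\cL\cdot C=0$ if and only if $\mu^\infty_\cL(Y_s)=\mu^\infty_\cL(Y_t)$. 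From this the inclusion $\ker\Upsilon\subseteq\{\text{numerically trivial}\}$ is immediate: if $\mu^\infty_\cL\equiv 0$ then $\cL\cdot C=0$ for every orbit closure $C$, and by the standing hypothesis every effective curve is numerically a sum of orbit closures, so $\cL$ is numerically trivial. This is exactly where the assumption on the cone of curves enters.

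\emph{The reverse inclusion and the main obstacle.} Conversely, if $\cL$ is numerically trivial then $\cL\cdot C=0$ for every curve, so by the displayed equality $\mu^\infty_\cL$ takes equal values at the source and sink components of every orbit closure; that is, $\mu^\infty_\cL$ is constant along each edge of the graph of the action. Upgrading \textit{constant along edges} to \textit{identically zero} requires the graph to be connected, and I expect this to be the main point. I would establish connectivity directly from Theorem \ref{thm_ABB-decomposition}. Fix an ample $L$ with a linearization; by the bridge equality the source and sink components of any nontrivial orbit have strictly different values of $\mu_L$, hence are distinct. Next, since $\rk^+(Y)+\rk^-(Y)+\dim Y=n$, the source is the unique component with $\rk^-(Y)=0$: indeed $\rk^-(Y_i)=0$ forces $\dim X^+(Y_i)=n$, i.e.\ $X^+(Y_i)$ dense, which characterizes the source. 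Thus any $Y_i$ other than the source has $\rk^-(Y_i)\geq 1$, so $X^-(Y_i)\supsetneq Y_i$ and there is a nontrivial orbit with sink in $Y_i$ and source in some component $Y_k$ with $\mu_L(Y_k)>\mu_L(Y_i)$. Iterating produces a strictly $\mu_L$-increasing chain of edges which, the value set of $\mu_L$ being finite, must terminate at the unique component admitting no such predecessor, namely the source. Hence every component is joined to the source by a path of orbit closures and the graph is connected. Since $\mu^\infty_\cL$ is constant along edges of this connected graph and vanishes at $Y_\infty$, it vanishes everywhere, giving $\Upsilon(\cL)=0$. Combining the two inclusions yields $\ker\Upsilon=\{\text{numerically trivial line bundles}\}$.
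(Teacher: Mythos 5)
Your proof is correct, and on two of its three components it coincides with the paper's argument: the paper likewise derives the homomorphism property from the splitting of the sequence (\ref{linearization-exact-sequence}) by the normalized linearization, writing $\Upsilon$ as the composition $\Pic X\to\Pic^{\CC^*}X\to\ZZ^{\cY}\to\bigoplus_{Y_i\ne Y_\infty}\ZZ\cdot Y_i$, and then combines Lemma \ref{AMvsFM} with the hypothesis on the cone of curves to handle the kernel. The genuine difference is that the paper's proof stops after the inclusion you call the immediate one: it says only that if the normalized weight map vanishes then $\cL$ has degree zero on every orbit closure, ``hence the claim,'' i.e.\ it makes explicit that $\ker\Upsilon$ consists of numerically trivial bundles but never addresses the converse, that a numerically trivial $\cL$ has identically vanishing normalized weights. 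You correctly isolate this converse as the nontrivial point — constancy of $\mu^\infty_\cL$ along edges only yields vanishing once the graph of fixed components and orbits is known to be connected — and your connectivity argument is sound: applying Lemma \ref{AMvsFM} to an ample polarization makes $\mu_L$ strictly decrease along every directed edge; $\rk^-(Y)=0$ forces $X^+(Y)\cong\cN^+(Y)$ to have dimension $n$, hence to be open and dense, so the source is the unique component without a predecessor (two disjoint dense open cells cannot coexist in the irreducible $X$); and your strictly $\mu_L$-increasing chains, the fixed components being finite in number, must terminate at the source. In short, your write-up is a strictly more complete proof of the stated equality of kernels: it supplies, via Theorem \ref{thm_ABB-decomposition}, precisely the connectivity fact that the paper's one-inclusion proof tacitly relies on.
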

\begin{proof} In the discussion following Remark \ref{rem-changing-torus},  we
  noted that normalized linearization splits the sequence
  (\ref{linearization-exact-sequence}). Therefore, $\Upsilon$ is the composition
  $$\Pic X\rightarrow \Pic^{\CC^*}X\rightarrow
  \ZZ^\cY\rightarrow\bigoplus_{Y_i\ne Y_\infty}\ZZ\cdot Y_i$$ where
  the arrow in the middle is the fixed point weight map, and the right
  arrow is the projection. Thus $\Upsilon$ is a homomorphism of
  groups. By Lemma \ref{AMvsFM}, if $\mu^0_{\cL}$ is zero then the
  degree of $\cL$ on the closure of every orbit is zero, hence the
  claim. \end{proof}

\begin{defi}\label{graph}
  Let $X$ be a smooth projective variety with an effective $\CC^*$ action. We define a \textit{directed graph}
  $\cG=\cG(X,\CC^*):=(\cY,\cE)$ with the set of vertices being the set of
  the fixed point components $\cY=\{Y_i\}$ of the $\CC^*$ action, and
  the set of directed edges $\cE$ defined as follows:
  $\epsilon(Y_{i_1},Y_{i_2})=\overrightarrow{Y_{i_1}Y_{i_2}}\in\cE$ is
  a directed edge joining components $Y_{i_1}, Y_{i_2}\in\cY$ if and
  only if there exists a nontrivial orbit $\CC^*\cdot x$ such
  that $\lim_{t\rightarrow 0} t\cdot x\in Y_{i_1}$ and
  $\lim_{t\rightarrow \infty} t\cdot x\in Y_{i_2}$. Note that this
  edge is directed from $Y_{i_1}$ to $Y_{i_2}$ and by
  (\ref{partial_order}) we have $Y_{i_1}\prec Y_{i_2}$. In this case,
  we say that the fixed point components $Y_{i_1}$ and $Y_{i_2}$ are
  joined by an orbit of the $\CC^*$ action; $Y_{i_1}$ precedes
  $Y_{i_2}$, and $Y_{i_2}$ succeeds $Y_{i_1}$ in the graph
  $\cG$. \end{defi}

\begin{ex} This is an extension of Example \ref{example-P1}. Let us
  consider an action of $\CC^*$ on a vector space $W$ of
  dimension $d=d_1+\cdots+d_s$, with $d_j>0$, given by weights
  $a_1>\cdots >a_s$, and eigenspaces of dimensions $d_1,\dots,
  d_s$. Namely, if $t\in \CC^*$ then in some coordinates on $W$ we
  have $$\begin{array}{l} t\cdot
    (z_1,\cdots, z_{d_1},z_{d_1+1},\cdots, z_{d_1+d_2},\cdots)=\\
    \phantom{t}(t^{a_1}z_1,\cdots, t^{a_1}z_{d_1},t^{a_2}z_{d_1+1},
    \cdots, t^{a_2}z_{d_1+d_2},\cdots)
\end{array}$$

 The $\CC^*$ action descends to $\PP^{d-1}$, the quotient of $W$ via homotheties.
 The fixed locus of this action has $s$ components $Y_1\iso\PP^{d_1-1},\dots$,
 $Y_s\iso\PP^{d_s-1}$ associated to eigenspaces of weights $a_1,\dots, a_s$
 respectively.  The action of $\CC^*$ on the fiber of
 $W\setminus\{0\}\rightarrow\PP^{d-1}$ over $Y_i$ is of weight $a_i$.  Thus, the
 induced linearization $\mu_L$ of the ample line bundle $L=\cO(1)$ maps $Y_i$ to
 $-a_i$. The graph $\cG$ is a complete graph with vertices in
 $\cY=\{Y_1,\dots,Y_s\}$ directed, so that we have
 \begin{equation}\label{order-in-projective-space}
 \epsilon(Y_{i_1},Y_{i_2})=\overrightarrow{Y_{i_1}Y_{i_2}}\in\cE \ \Leftrightarrow \
 a_{i_1}<a_{i_2}\ \Leftrightarrow \ \mu(Y_{i_1})>\mu(Y_{i_2}).
\end{equation}
 \end{ex}

We note that any polarized pair $(X,L)$ with a $\CC^*$ action
can be embedded equivariantly into some projective space $\PP^{N}$, so
that $mL$ is the restriction of $\cO(1)$, for some $m\gg
0$. Accordingly, the graph $\cG$ of fixed points and orbits for $X$ is
mapped to the graph of $\PP^N$.  Thus, in particular, the graph $\cG$
has no directed cycles nor loops.

An edge $\epsilon(Y_{i_1},Y_{i_2}) = \overrightarrow{Y_{i_1}Y_{i_2}}\in\cE$ is
called \textit{minimal} if there is no sequence of length $>1$ of directed edges
joining $Y_{i_1}$ to $Y_{i_2}$. The set of minimal edges for the graph
$\cG=(\cY,\cE)$ is denoted by $\cE^0$.

In the situation of Proposition \ref{Pic_vs_linearizations}, we consider a
vector space $\RR^{|\cY|-1}=\bigoplus_{i\ne\infty}\RR\cdot Y_i$ with
the dual basis of functionals $Y_i^*$. We define functionals
$\widehat{\epsilon}(Y_i,Y_\infty)=Y_i^*$ and
$\widehat{\epsilon}(Y_{i_1},Y_{i_2})= Y^*_{i_1}-Y^*_{i_2}$ for
$i_2\ne\infty$. For a functional $\widehat{\epsilon}$, we denote by
$\widehat{\epsilon}_{\geq 0}$ the halfspace on which the
functional is non-negative.

The following is an effective version of the nef cone for varieties with a $\CC^*$ action.

\begin{thm}\label{nef_cone_description} In the situation of Proposition
  \ref{Pic_vs_linearizations}, we assume that the cone of 1-cycles
  $\cC(X)$ is generated by classes of closures of orbits of the $\CC^*$
  action. Let us consider the map $\Upsilon_\RR:
  \NN^1(X)\rightarrow \bigoplus_{Y_i\ne Y_\infty} \RR \cdot Y_i$ which
  comes from the morphism defined in Proposition \ref{Pic_vs_linearizations}. Then
$$\Upsilon_\RR(\overline{\cA}(X))= \Upsilon_\RR(\NN^1(X)) \cap
\left(\bigcap_{\epsilon(Y_{i_1},Y_{i_2})\in\cE^0}
\widehat{\epsilon}(Y_{i_1},Y_{i_2})_{\geq 0}\right).$$ \end{thm}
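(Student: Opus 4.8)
The plan is to translate the statement, via the AM$\leftrightarrow$FM dictionary of Subsection \ref{subsect-AMvsFM}, into the elementary fact that a divisor is nef exactly when it is non-negative on every orbit closure, and then to reduce the defining inequalities from all edges of $\cG$ to the minimal ones by a purely combinatorial telescoping argument on the (finite, acyclic) graph.

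First I would record the key linear identity. Fix an edge $e=\epsilon(Y_{i_1},Y_{i_2})\in\cE$ and let $C_e$ be the closure of a corresponding orbit, with normalization $f_e\colon\PP^1\to C_e$. Applying Lemma \ref{AMvsFM} to $f_e^*\cL$ on $\PP^1$ and unwinding the definition of $\widehat{\epsilon}$ (using the convention $Y_\infty^*=0$, which matches the normalization $\mu^\infty_\cL(Y_\infty)=0$, and noting that $i_1\neq\infty$ since the sink has no outgoing edge) one obtains
$$\widehat{\epsilon}(Y_{i_1},Y_{i_2})\bigl(\Upsilon_\RR(\cL)\bigr)=\mu^\infty_\cL(Y_{i_1})-\mu^\infty_\cL(Y_{i_2})=\delta_e\cdot(\cL\cdot C_e),$$
where $\delta_e>0$ is the weight of the $\CC^*$ action on $T_{y_0}\PP^1$. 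Both sides are $\RR$-linear in the divisor class, so the identity persists for every $D\in\NN^1(X)$; in particular $\widehat{\epsilon}(e)(\Upsilon_\RR(D))\geq 0$ if and only if $D\cdot C_e\geq 0$. Since the left-hand side depends only on the endpoints $Y_{i_1},Y_{i_2}$ and $\delta_e>0$, this sign is the same for \emph{every} orbit closure realizing $e$. I would also observe that $\Upsilon_\RR$ is well defined and injective on $\NN^1(X)$, its kernel being the numerically trivial classes by Proposition \ref{Pic_vs_linearizations}.

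Next I would prove the intermediate equality with the full edge set $\cE$ in place of $\cE^0$. A class $D$ is nef if and only if $D\cdot C\geq 0$ for every effective curve $C$ (Kleiman); by the running hypothesis that $\cC(X)$ is generated by classes of orbit closures, this is equivalent to $D\cdot C_e\geq 0$ for all $e\in\cE$, hence by the identity above to $\widehat{\epsilon}(e)(\Upsilon_\RR(D))\geq 0$ for all $e\in\cE$. Since $\overline{\cA}(X)$ is precisely the nef cone, this gives $\Upsilon_\RR(\overline{\cA}(X))=\Upsilon_\RR(\NN^1(X))\cap\bigcap_{e\in\cE}\widehat{\epsilon}(e)_{\geq 0}$.

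Finally I would collapse $\cE$ to $\cE^0$, which I expect to be the crux of the argument. The telescoping identity $\sum_{l=0}^{k-1}\widehat{\epsilon}(Y_{j_l},Y_{j_{l+1}})=Y^*_{j_0}-Y^*_{j_k}=\widehat{\epsilon}(Y_{j_0},Y_{j_k})$ holds along any directed path $Y_{j_0},\dots,Y_{j_k}$ in $\cG$ (again using $Y_\infty^*=0$). Since $\cG$ is finite and has no directed cycles, the endpoints of any edge $(Y_a,Y_b)$ are joined by a directed path of maximal length, and maximality forces each of its edges to be minimal: a non-minimal edge could be replaced by a path of length $>1$, producing a longer path and contradicting maximality. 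Hence every $\widehat{\epsilon}(e)$ is a sum of functionals $\widehat{\epsilon}(e')$ with $e'\in\cE^0$, so $\bigcap_{e\in\cE^0}\widehat{\epsilon}(e)_{\geq 0}\subseteq\bigcap_{e\in\cE}\widehat{\epsilon}(e)_{\geq 0}$; the reverse inclusion is trivial since $\cE^0\subseteq\cE$. Substituting into the previous display yields the asserted description. The main obstacle is precisely this reduction to minimal edges; the remaining steps are a careful but routine application of Lemma \ref{AMvsFM} and Corollary \ref{AMvsFMonX}.
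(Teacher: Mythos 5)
Your proposal is correct and follows essentially the same route as the paper's proof: Corollary \ref{AMvsFMonX} translates nefness of a class into the inequalities $\widehat{\epsilon}(Y_{i_1},Y_{i_2})(\Upsilon_\RR(D))\geq 0$ over all edges of $\cG$, and one then reduces to the minimal edges $\cE^0$. The only difference is that the paper merely asserts this last reduction, while you supply the justification (telescoping $\widehat{\epsilon}$ along a maximal-length directed path, which is forced to consist of minimal edges since $\cG$ is finite and acyclic) --- a detail worth making explicit, but not a different argument.
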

\begin{proof}
  In view of Corollary \ref{AMvsFMonX}, since $\cC(X)$ is generated by
  classes of closures of orbits of $\CC^*$, we need to prove that a line bundle
  $\cL\in\Pic X$ is nef if and only if the fixed point weight map
  $\mu_\cL^\infty:\cY\rightarrow\ZZ$ is non-increasing on the vertices
  of the directed graph $\cG$.  That is, the partial linear order
  given by the function $\mu^\infty_\cL$ is opposite to the order $\prec$
  coming from the directed graph $\cG$. Given
  $\epsilon(Y_{i_1},Y_{i_2})\in\cE$, then $\mu_\cL^\infty(Y_{i_1})\geq
  \mu_\cL^\infty(Y_{i_2})$ if and only if
  $\widehat{\epsilon}(Y_{i_1},Y_{i_2})(\mu_\cL^\infty)\geq 0$. It
  is enough to check this inequality for the minimal edges
  $\epsilon(Y_{i_1},Y_{i_2})\in\cE^0$ to conclude the proof.
\end{proof}

\subsection{When orbits generate the cone of 1-cycles}
In Proposition \ref{Pic_vs_linearizations} and Theorem
\ref{nef_cone_description} we assume that the classes of closures of
orbits generate $\NN_1(X)$ and $\cC(X)$, respectively. On the other
hand, from Lemma \ref{curve-cone-generation} we know that $\cC(X)$ is
generated by the classes of closures of orbits and classes of curves
contained in the fixed locus of the action. Hence the assumptions of Proposition
\ref{Pic_vs_linearizations} and Theorem \ref{nef_cone_description} are
satisfied when the fixed locus consists of a finite number of points.  In this
subsection we extend this observation for a broader class of varieties
which turns out to be very useful in our applications.

\begin{lemma}\label{negligible-fix-pt-cmpt}
  Assume that $\CC^*$ acts effectively on a projective manifold $X$. Suppose that
  $Y$ is a connected component of the fixed locus of the action. Then
  the following conditions are equivalent:
\begin{enumerate}[leftmargin=*]
\item The component $Y$ is succeeded in the directed graph $\cG$ by one
  component consisting of a single point $y$.
\item The closure of the Bia{\l}ynicki-Birula cell $X^+(Y)$ adds a single point:
  $$\overline{X^+(Y)}\setminus X^+(Y)=\{y\}.$$
\item The positive weight subbundle $T^+$ of $TX_{|Y}$ is an ample
  line bundle and there exists an $H$ equivariant morphism:
  $$\PP_Y(T^+\!\oplus\cO)\longrightarrow X$$
  where the action of $H$ on the $\PP^1$-bundle has two fixed point
  components associated to two sections, $Y^0$ and $Y^\infty$. The
  section $Y^0$ has normal bundle $T^+$ and it is mapped
  isomorphically to $Y\subset X$; the section $Y^\infty$ has normal bundle
  $(T^+)^{\vee}$ and it is mapped to a point $y\in X$.
\end{enumerate}
\end{lemma}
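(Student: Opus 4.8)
The plan is to prove the equivalence by establishing a cycle of implications, or more naturally by showing $(3)\Rightarrow(2)\Rightarrow(1)\Rightarrow(3)$, leaning heavily on the Bia\l ynicki-Birula decomposition from Theorem \ref{thm_ABB-decomposition}. The central object is the positive BB cell $X^+(Y)$, which by that theorem is $\CC^*$-isomorphic to the total space of the normal bundle $\cN^+(Y)=T^+$ over $Y$. First I would unwind what each condition says geometrically. Condition (2) is a statement about how many fixed points appear when one compactifies the affine-bundle cell $X^+(Y)$; condition (1) rephrases this in the combinatorial language of the graph $\cG$, since a component $y$ succeeding $Y$ corresponds exactly to an orbit flowing out of $X^+(Y)$ and limiting into $y$; and condition (3) gives the concrete projective-bundle model $\PP_Y(T^+\oplus\cO)$ whose two canonical sections are the source and sink of the fiberwise $\CC^*$ action.

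\begin{proof}[Proof sketch]
The key geometric input is the isomorphism $X^+(Y)\cong\cN^+(Y)=T^+$ from Theorem \ref{thm_ABB-decomposition}, under which the $\CC^*$ action on $X^+(Y)$ corresponds to the fiberwise scaling action on the vector bundle $T^+$ over the fixed locus $Y$. I would first argue $(1)\Leftrightarrow(2)$: the points of $\overline{X^+(Y)}\setminus X^+(Y)$ are precisely the sinks of orbits whose source lies in $Y$, and each such sink lies in a fixed component strictly succeeding $Y$ in $\cG$; so the boundary reduces to a single point $y$ exactly when $Y$ is succeeded by one component, that component being $\{y\}$. To make this precise I would use that the limits $\lim_{t\to\infty}t\cdot x$ for $x\in X^+(Y)$ sweep out the boundary, together with the fact that $\cG$ has no directed cycles.

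The substantive implication is $(2)\Rightarrow(3)$. Assuming the boundary is the single point $y$, I want to identify $\overline{X^+(Y)}$ with a projectivized bundle. The idea is that $\overline{X^+(Y)}$, being the closure of a single affine-bundle cell whose complement is one point, should be the fiberwise compactification of the vector bundle $T^+$ obtained by adding a point at infinity to each fiber—but for this to yield a \emph{smooth} $\PP^1$-bundle $\PP_Y(T^+\oplus\cO)$ with all fibers collapsing to the \emph{same} point $y$, the bundle $T^+$ must be an ample line bundle. Here is where the rank and ampleness come in: if $\rk^+(Y)=\rank T^+$ exceeded $1$, adding a single point at infinity to each fiber $\CC^{\rk^+}$ would produce a cone-like singularity rather than a smooth fiber, contradicting that $X$ is smooth and $y$ is a genuine fixed point with its own (linearizable) neighborhood; examining the $\CC^*$ action on $T_yX$ via local linearization forces $T^+$ to have rank one. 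Ampleness of the line bundle $T^+$ then follows because the orbit closures emanating from $Y$ and meeting $y$ are the fibers of $\PP_Y(T^+\oplus\cO)\to Y$, and positivity of their intersection with the relative $\cO(1)$—read off from the AM$\leftrightarrow$FM equality of Lemma \ref{AMvsFM} applied to these orbits—translates into positivity of $T^+$. I would then construct the equivariant morphism $\PP_Y(T^+\oplus\cO)\to X$ as the natural map extending the open immersion $X^+(Y)\hookrightarrow X$, sending the zero section $Y^0$ (normal bundle $T^+$) isomorphically onto $Y$ and the infinity section $Y^\infty$ (normal bundle $(T^+)^\vee$) to $y$.

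Finally $(3)\Rightarrow(1)$ is a direct read-off: given the model $\PP_Y(T^+\oplus\cO)$ with its two sections mapping to $Y$ and to the single point $y$ respectively, every orbit issuing from $Y$ is the image of a fiber of the $\PP^1$-bundle and limits to $y$, so $y$ is the unique component succeeding $Y$ and it is a point. The main obstacle I anticipate is the rank-one and ampleness argument inside $(2)\Rightarrow(3)$: one must rule out higher-rank $T^+$ purely from the smoothness of $X$ and the single-point boundary condition, and then upgrade "line bundle with one point at infinity in each fiber" to "\emph{ample} line bundle" using the geometry of the orbit closures. I expect the cleanest route is to study the local analytic picture near $y$, using that the $\CC^*$ action near the isolated-type contraction point $y$ is linearizable, so that $\overline{X^+(Y)}$ near $y$ looks like the total space of an ample cone; matching this with the affine-bundle structure of $X^+(Y)$ near $Y$ pins down both the rank and the positivity.
\end{proof}
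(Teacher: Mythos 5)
Your proposal has two genuine gaps, both in places you yourself flag as delicate, and in both cases the mechanism you propose would not work. First, your argument for $(1)\Rightarrow(2)$ rests on the claim that $\overline{X^+(Y)}\setminus X^+(Y)$ consists precisely of the sinks $\lim_{t\to\infty}t\cdot x$ of orbits with source in $Y$. That characterization of the boundary is false in general: for $\PP^2$ with weights $(0,1,2)$ (Example \ref{ex_no_equalized}) and $Y$ the source $y_0$, the boundary of $X^+(y_0)$ is the entire line $\{z_0=0\}$, which contains a whole one-dimensional orbit and not just the two sinks $y_1,y_2$. Under hypothesis $(1)$ the boundary does reduce to $\{y\}$, but that is essentially the content of the implication $(1)\Rightarrow(2)$ itself; the paper never proves it directly, obtaining it instead as a consequence of $(1)\Rightarrow(3)$. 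Since your substantive step is $(2)\Rightarrow(3)$, i.e.\ starts from the a priori stronger hypothesis, the broken link $(1)\Rightarrow(2)$ leaves condition $(1)$ unconnected to the other two.

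Second, inside your $(2)\Rightarrow(3)$ the rank-one argument is wrong in principle: you argue that $\rk^+(Y)\geq 2$ would force a cone singularity "contradicting that $X$ is smooth," but $\overline{X^+(Y)}$ is merely a subvariety of $X$ and can perfectly well be singular at $y$ — indeed the paper's proof notes that $\cS(Y,T^+)\to\overline{X^+(Y)}\hookrightarrow X$ is only the \emph{normalization}, so the closure is in general a singular (even non-normal) cone with vertex $y$, and there is no local obstruction at $y$ to exploit. The actual mechanism is global and projective: choose a $\CC^*$-invariant very ample divisor $D$ with $y\notin D$; the closure of every orbit from $Y$ meets $D$ only at its source, so $D\cap X^+(Y)=Y$ set-theoretically, which forces $Y$ to have codimension one in $X^+(Y)\cong T^+$ (hence $\rk^+(Y)=1$) and, writing $D|_{X^+(Y)}=mY$, gives $D|_Y=m\,c_1(T^+)$, whence ampleness of $T^+$. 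Your proposed substitute for ampleness — positivity of $\deg(L)$ on the orbit closures via Lemma \ref{AMvsFM} — only controls the degree of $L$ along the \emph{fibers} of the would-be $\PP^1$-bundle and says nothing about positivity of $T^+$ along $Y$. Finally, the extension of $X^+(Y)\hookrightarrow X$ (which, incidentally, is only a locally closed immersion, not open) to a morphism from $\PP_Y(T^+\!\oplus\cO)$ is asserted as "natural" but is a real step: the paper obtains it by extending invariant divisors in $|mY|$ on $T^+$ to divisors of the form $aY^0+bY^\infty+\pi^*(M)$ on the $\PP^1$-bundle and producing the morphism through the projective cone $\cS(Y,T^+)$.
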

\begin{proof}
  The implication $(2)\Rightarrow (1)$ is clear, because $X^+(Y)$
  contains all orbits whose source is in $Y$. Also the implication
  $(3)\Rightarrow (2)$ is obvious. Thus let us focus on the
  implication $(1)\Rightarrow (3)$.

  Take $L$ a very ample line bundle on $X$ and consider a $\CC^*$
  invariant divisor $D$ in $|L|$ which does not contain $y$. Every
  orbit $t\cdot x$ of $\CC^*$ such that $\lim_{t\to 0}t\cdot x\in Y$ has
  $\lim_{t\to\infty}t\cdot x=y$. Since the closure of every such orbit
  has intersection with $D$, it follows that $D\cap X^+(Y)=Y$ and, as a
  divisor on $X^+(Y)\iso T^+$ (c.f.~ Theorem
  \ref{thm_ABB-decomposition}), the restriction of $D$ is a multiple of
  the zero section in the bundle $T^+$, that is $D\cdot X^+(Y)=mY$ for
  some $m>0$. Thus $T^+$ is an ample line bundle over $Y$ on which $H$
  acts with a weight $\delta>0$. Moreover, since the argument does not
  depend on the choice of a very ample $L$, the restriction $\Pic
  X\rightarrow\Pic Y$ is contained in $\ZZ\cdot T^+$. On the other
  hand, because of Lemma \ref{AMvsFM}, the degree of any line bundle
  $L$ on the closure of every orbit joining $Y$ with $y$ is equal to
  $(\mu_L(Y)-\mu_L(y))/\delta$.

  The projective $\PP^1$-bundle $\pi:\PP_Y(T^+\!\oplus\cO)\ra Y$ has two sections
  associated to projections to two factors of the decomposable bundle. We denote
  the one with normal $T^+$ by $Y^0$ and the other one, whose normal is dual to
  $T^+$, by $Y^\infty$. Since $T^+$ is ample, we have a contraction morphism
  $$\PP_Y(T^+\!\oplus\cO)\longrightarrow\Proj(Sym(T^+\!\oplus\cO)):=\cS(Y,T^+)$$
  which contracts the section $Y^\infty$ to a point $y^\infty$ which is the
  vertex of the projective cone $\cS(Y,T^+)$. We define the action of $\CC^*$
  on $\PP_Y(T^+\!\oplus\cO)$ so that $Y^0$ and $Y^\infty$ is the source and
  sink, respectively, and along the fibers of the $\PP^1$-bundle the action has
  weight $\delta$. Therefore, we have a $\CC^*$ equivariant embedding
  $T^+\hookrightarrow \PP_Y(T^+\!\oplus\cO)$ with image equal to
  $\PP_Y(T^+\!\oplus\cO)\setminus Y^\infty = \cS(Y,T^+)\setminus\{y^\infty\}$.

  We claim that the $\CC^*$ equivariant isomorphism $T^+\iso X^+(Y)$ (see Theorem
  \ref{thm_ABB-decomposition}), extends to a regular $\CC^*$ equivariant morphism
  $$\PP_Y(T^+\!\oplus\cO)\longrightarrow\cS(Y,T^+)\longrightarrow X$$
  which has the properties as in $(3)$. Indeed, any $\CC^*$ invariant
  divisor in $|mY|$ on $T^+\iso X^+(Y)$ extends to
  $\PP_Y(T^+\!\oplus\cO)$ as the sum $aY^0+bY^\infty+\pi^*(M)$, where
  $a+b=m$ and $M\in |bT^+|$. Thus the desired extension exists and
  maps $Y^\infty$ to $y$.  We note that $\cS(Y,T^+)\rightarrow
  \overline{X^+(Y)}\hookrightarrow X$ is the normalization.
\end{proof}

We note that changing the direction of the action of $\CC^*$, and
therefore the direction of the graph $\cG$, we get a similar statement
as in the lemma above, with $0$ swapped with $\infty$, source with
the sink, and $T^+$ with $T^-$.
\begin{cor}\label{cor-negligible-fix-pt-cmpt2}
  Assume that $\CC^*$ acts effectively on a projective manifold $X$, and let $Y$ be a fixed point component which satisfies one of the equivalent conditions of Lemma \ref{negligible-fix-pt-cmpt}. Then the curves
  contained in $Y$ are numerically proportional to
  classes of closures of orbits joining $Y$ with $y$.
\end{cor}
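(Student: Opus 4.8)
The plan is to read off everything from the explicit model furnished by condition (3) of Lemma \ref{negligible-fix-pt-cmpt}. Write $P=\PP_Y(T^+\!\oplus\cO)$ with its projection $\pi\colon P\to Y$ and the two distinguished sections $Y^0,Y^\infty$, and let $g\colon P\to X$ be the $H$-equivariant morphism of that lemma. By construction $g$ restricts to an isomorphism $Y^0\iso Y\subset X$ and contracts $Y^\infty$ to the single point $y$. The fibers $F$ of $\pi$ are the $\CC^*$-invariant $\PP^1$'s of the bundle, and (again by the lemma) their images $g(F)$ are precisely the closures of the orbits joining $Y$ with $y$. Since all fibers $F$ are numerically equivalent in $P$ and $g_*$ preserves numerical equivalence, these orbit closures all carry a single class $[\ell]:=g_*[F]$ in $\NN_1(X)$; this is what makes ``the classes of closures of orbits joining $Y$ with $y$'' a single ray, and the assertion is that every curve in $Y$ is proportional to it.

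First I would take an arbitrary irreducible curve $C\subset Y$ (the general $1$-cycle case then follows by linearity) and lift it to both sections via the inverses of the isomorphisms $\pi|_{Y^0}$ and $\pi|_{Y^\infty}$, obtaining $C^0\subset Y^0$ and $C^\infty\subset Y^\infty$, each projecting isomorphically onto $C$. Then $\pi_*\bigl([C^0]-[C^\infty]\bigr)=[C]-[C]=0$ in $\NN_1(Y)$. Because $\pi$ is a projective line bundle we have $\rho_P=\rho_Y+1$ and $\pi_*\colon\NN_1(P)\to\NN_1(Y)$ is surjective, so its kernel is one-dimensional; as $[F]$ lies in this kernel and is nonzero, we get $\ker\pi_*=\RR\cdot[F]$. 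Hence
$$[C^0]-[C^\infty]=d\,[F]\qquad\text{for some }d\in\RR.$$

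The key step is to push this relation forward by $g_*$. From $g|_{Y^0}\iso Y$ we obtain $g_*[C^0]=[C]$; from the fact that $g$ collapses $Y^\infty$, and hence $C^\infty$, to the point $y$ we obtain $g_*[C^\infty]=0$; and by definition $g_*[F]=[\ell]$. Therefore $[C]=g_*[C^0]=g_*[C^\infty]+d\,g_*[F]=d\,[\ell]$, which is exactly the claimed proportionality. I expect no genuine obstacle in this argument, as the content is cycle-theoretic bookkeeping inside the model of Lemma \ref{negligible-fix-pt-cmpt}: the two points meriting a word of care are the identification $\ker\pi_*=\RR\cdot[F]$ (which rests only on $\rho_P=\rho_Y+1$ together with surjectivity of $\pi_*$) and the fact that $g_*$ descends to numerical classes (valid for any proper morphism). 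Notably, neither equalization nor the precise fiber weight $\delta$ enters the computation.
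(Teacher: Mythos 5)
Your proof is correct and is essentially the paper's own argument: both work inside the model $\PP_Y(T^+\!\oplus\cO)$ of Lemma \ref{negligible-fix-pt-cmpt}\,(3), express the difference of the two section lifts of a curve in $Y$ as a multiple of the fiber class (the paper phrases this as $C\equiv\widetilde{C}+\alpha F$), and push forward to $X$, where the contracted section kills one term. If anything, your write-up is more careful than the paper's two-line proof: you justify $\ker\pi_*=\RR\cdot[F]$ explicitly via $\rho_P=\rho_Y+1$ and surjectivity of $\pi_*$, and you keep the roles of $Y^0$ (mapped isomorphically onto $Y$) and $Y^\infty$ (contracted to $y$) consistent with the lemma, whereas the paper's proof inadvertently swaps those two labels.
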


\begin{proof} The corollary is a version of a known observation that curves in the base of a cone are numerically proportional to lines in the ruling of the cone.
We use Lemma \ref{negligible-fix-pt-cmpt} (3), and keep the same notation there introduced. For an irreducible curve $C$ contained in $\PP_Y(T^+)=Y^{\infty}$, one has $C\equiv \widetilde{C}+\alpha F$, where $\widetilde{C}$ is an irreducible curve contained in $\PP_Y(\cO)=Y^0$, $F$ is a fiber of the $\PP^1$-bundle $\PP_Y(T^+\!\oplus\cO)\to Y$, and $\alpha\in \QQ$. Mapping $\PP_Y(T^+\!\oplus\cO)$ to $X$, since $ \widetilde{C}$ is contracted to a point, we get the claim. 
\end{proof}

\subsection{Technical lemmata}
This last part of the present section contains technical lemmata which
will be used later in our applications.
\begin{lemma}\label{images_extremal_points} Let $\phi\colon X\to Z$ be a
surjective $\CC^*$ equivariant morphism of two normal projective varieties with an action of $\CC^*$. Suppose that $X$ is smooth and $Y_0$ (resp. $Y_\infty$) is the
source (resp. the sink) of $X$. Then for a general $z\in Z$ we have
$$\lim_{t\ra 0}t\cdot z\in\phi(Y_0)\ \ {\rm  and}\ \ \lim_{t\ra \infty}t\cdot
z\in\phi(Y_\infty).$$ \end{lemma}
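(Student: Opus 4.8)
The plan is to exploit that the Bia{\l}ynicki-Birula cells of the source and the sink are dense in $X$, and to pull a general point of $Z$ back to a point of $X$ lying in both. First I would set $W:=X^+(Y_0)\cap X^-(Y_\infty)$. By Theorem \ref{thm_ABB-decomposition} and the definitions of source and sink, both $X^+(Y_0)$ and $X^-(Y_\infty)$ are dense open subsets of the smooth variety $X$, so $W$ is again dense and open, and every $x\in W$ satisfies simultaneously $\lim_{t\ra 0}t\cdot x\in Y_0$ and $\lim_{t\ra\infty}t\cdot x\in Y_\infty$, directly from the definition of the cells.

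Next I would check that $\phi(W)$ contains a dense open subset $U\subseteq Z$, which is precisely the locus of the \emph{general} $z$ in the statement. Since $\phi$ is surjective and continuous and $\overline{W}=X$, one has $Z=\phi(X)=\phi(\overline{W})\subseteq \overline{\phi(W)}$, so $\phi(W)$ is dense in $Z$; being constructible by Chevalley's theorem, it therefore contains a dense open set $U$. For $z\in U$ I may then choose $x\in W$ with $\phi(x)=z$.

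The core step is to commute the limits with $\phi$. As $X$ is complete, the orbit map $\CC^*\ni t\mapsto t\cdot x$ extends to a morphism $\overline{f}\colon \PP^1\ra X$ with $\overline{f}(0)=\lim_{t\ra 0}t\cdot x\in Y_0$ and $\overline{f}(\infty)=\lim_{t\ra\infty}t\cdot x\in Y_\infty$. By $\CC^*$-equivariance of $\phi$, the composition $\phi\circ\overline{f}\colon \PP^1\ra Z$ restricts on $\CC^*$ to $t\mapsto t\cdot z$; since $Z$ is complete, the extension of the orbit map of $z$ to $\PP^1$ is unique, so $\phi\circ\overline{f}$ is that extension. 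Evaluating at the two fixed points of $\PP^1$ then gives $\lim_{t\ra 0}t\cdot z=\phi(\overline{f}(0))\in\phi(Y_0)$ and $\lim_{t\ra\infty}t\cdot z=\phi(\overline{f}(\infty))\in\phi(Y_\infty)$, as required.

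I expect the only genuine subtlety to be this interchange of limit and $\phi$: a priori $\lim_{t\ra 0}t\cdot z$ is computed intrinsically on $Z$, and it must be matched with the image under $\phi$ of the limit computed on $X$. This matching is exactly where completeness of both $X$ and $Z$, together with equivariance and the uniqueness of the extension across the punctures of $\PP^1$, are used. The density of $\phi(W)$ and the fact that a dense constructible set contains a dense open set are routine and I would not dwell on them.
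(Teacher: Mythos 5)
Your proof is correct and takes essentially the same route as the paper's one-line argument, which simply notes that the restriction of $\phi$ to $X^+(Y_0)$ (respectively $X^-(Y_\infty)$) dominates $Z$ and that equivariance transports the limits; your writeup just makes explicit the routine details (openness of the dense cells, Chevalley, and the interchange of limit with $\phi$ via the unique extension of the orbit map to $\PP^1$). Intersecting the two cells so that one preimage $x$ serves both limits is a harmless variant of the paper's treatment of each end separately.
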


\begin{proof} Since $\phi$ is equivariant, its restriction to $X^+(Y_0)$ or,
respectively, to $X^-(Y_\infty)$ dominates $Z$, and this implies the
claim. \end{proof}

\begin{cor} \label{cor_extremal_points} Let $\phi\colon X\to Z$ be a surjective
$\CC^*$ equivariant morphism of two normal projective varieties with an action of $\CC^*$. If $X$ is smooth and the action of $\CC^*$ on $X$ has one pointed end or
two pointed ends, then the action on $Z$ has at least one pointed end or two
pointed ends, respectively. \end{cor}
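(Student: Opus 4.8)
The plan is to read off the source and the sink of $Z$ from those of $X$ by means of Lemma \ref{images_extremal_points}, and then to use the Bia{\l}ynicki-Birula retraction on $Z$ to upgrade the statement ``the general orbit limits to a single point'' into the stronger statement ``the source (respectively the sink) \emph{component} of $Z$ is a single point''. Throughout, write $Y_0$ and $Y_\infty$ for the source and the sink of the action on $X$, and let $W_0$ denote the source of the action on $Z$, i.e.\ the unique fixed point component with $Z^+(W_0)$ dense in $Z$; for normal $Z$ such a decomposition is available by \cite{J-S}.

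First I would record the basic input. By Lemma \ref{images_extremal_points}, for a general $z\in Z$ one has $\lim_{t\to 0}t\cdot z\in\phi(Y_0)$ and $\lim_{t\to\infty}t\cdot z\in\phi(Y_\infty)$, and since $\phi$ is a morphism of projective varieties both $\phi(Y_0)$ and $\phi(Y_\infty)$ are closed. As the general point of $Z$ lies in the dense cell $Z^+(W_0)$ and has its $t\to 0$ limit in $W_0$, that limit in fact lies in the intersection $W_0\cap\phi(Y_0)$.

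Next, assume the action on $X$ has one pointed end, say $Y_0=\{y_0\}$. Then $\phi(Y_0)=\{\phi(y_0)\}$ is a single point, so the observation above says that the general orbit of $Z$ has $t\to 0$ limit equal to the fixed point $\phi(y_0)$. I would then consider the retraction $r\colon Z^+(W_0)\to W_0$, $z\mapsto\lim_{t\to 0}t\cdot z$, which restricts to the identity on $W_0$, hence is a continuous surjection. Since $r$ equals $\phi(y_0)$ on the dense set of general points, the closed set $r^{-1}(\phi(y_0))$ is dense and therefore all of $Z^+(W_0)$; combined with surjectivity this forces $W_0=\{\phi(y_0)\}$. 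Thus the source of $Z$ is a single point and $Z$ has one pointed end. If instead the action on $X$ has two pointed ends, then $Y_0=\{y_0\}$ and $Y_\infty=\{y_\infty\}$ are both points; applying the previous argument to the source, and the symmetric argument to the sink $W_\infty$ of $Z$ (replacing $t$ by $t^{-1}$ and using $\lim_{t\to\infty}$ together with $\phi(Y_\infty)$), shows that both the source and the sink of $Z$ are single points, so $Z$ has two pointed ends.

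The main obstacle I anticipate is foundational rather than geometric: the Bia{\l}ynicki-Birula cell $Z^+(W_0)$, its density, and the continuity and surjectivity of the retraction $r$ were established in Theorem \ref{thm_ABB-decomposition} only for \emph{smooth} $X$, whereas here the target $Z$ is merely normal. I would resolve this by invoking the singular Bia{\l}ynicki-Birula decomposition of \cite{J-S}; alternatively, one can argue directly that the constructible ``$t\to 0$ limit'' map $Z\dashrightarrow Z^{\CC^*}$ is generically equal to the constant $\phi(y_0)$, so that the irreducible component of its closed image carrying the general orbits, namely the source, reduces to the single point $\{\phi(y_0)\}$.
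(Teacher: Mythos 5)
Your proof is correct and follows the same route the paper intends: the paper states Corollary \ref{cor_extremal_points} without proof, as an immediate consequence of Lemma \ref{images_extremal_points}, and your argument simply makes explicit the one implicit step, namely passing from ``all general orbits of $Z$ limit to the single fixed point $\phi(y_0)$'' to ``the source component of $Z$ is that point'' via constancy of the Bia{\l}ynicki-Birula retraction on a dense subset together with its surjectivity. Your care about the foundational point that $Z$ is only normal, resolved by invoking the singular BB decomposition of \cite{J-S} (which the paper itself cites for exactly this purpose), is a legitimate filling-in of a detail the paper leaves tacit rather than a deviation from its approach.
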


\begin{lemma}\label{intersection-dimension}
  Suppose that $\CC^{*}$ acts effectively on a projective manifold
  $X$. Let us consider two different components $Y_{1}$ and $Y_{2}\in
  \cY$. Assume that both $Y_1$ and $Y_2$ are succeded in $\cG$ by a
  single point component $\{y\}\in\cY$.  Then we have $$\dim
  Y_{1} + \dim Y_{2}\leq n-2$$
\end{lemma}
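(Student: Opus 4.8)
The plan is to exhibit the two closures $\overline{X^+(Y_1)}$ and $\overline{X^+(Y_2)}$ as subvarieties of $X$ meeting only at the point $y$, and then to invoke the dimension theorem for intersections on a smooth variety. First, since both $Y_1$ and $Y_2$ are succeeded in $\cG$ by the single point $\{y\}$, each satisfies the equivalent conditions of Lemma \ref{negligible-fix-pt-cmpt}. By condition $(3)$ the positive weight subbundle $\cN^+(Y_i)$ is a line bundle, so by Theorem \ref{thm_ABB-decomposition} the cell $X^+(Y_i)\cong\cN^+(Y_i)$ has dimension $\dim Y_i+1$; by condition $(2)$ we have $\overline{X^+(Y_i)}=X^+(Y_i)\cup\{y\}$, whence $\dim\overline{X^+(Y_i)}=\dim Y_i+1$.

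The central step is to show that $\overline{X^+(Y_1)}\cap\overline{X^+(Y_2)}=\{y\}$. Since $Y_1\prec\{y\}$ and $Y_2\prec\{y\}$, the component $\{y\}$ is distinct from both $Y_1$ and $Y_2$; in particular $y$ lies in its own cell $X^+(\{y\})$ and hence not in $X^+(Y_1)$ or $X^+(Y_2)$. The cells of the decomposition $X=\bigsqcup_{i\in I}X^+(Y_i)$ are pairwise disjoint, so $X^+(Y_1)\cap X^+(Y_2)=\emptyset$. Combining this with the equality $\overline{X^+(Y_i)}=X^+(Y_i)\cup\{y\}$ coming from condition $(2)$, the only common point of the two closures is $y$ itself.

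Finally, I would apply the dimension theorem for intersections of closed subvarieties in the smooth projective variety $X$: every irreducible component of $\overline{X^+(Y_1)}\cap\overline{X^+(Y_2)}$ has dimension at least $\dim\overline{X^+(Y_1)}+\dim\overline{X^+(Y_2)}-n$. As the intersection is the single point $y$, its unique component has dimension $0$, so
$$0\geq(\dim Y_1+1)+(\dim Y_2+1)-n,$$
which rearranges to $\dim Y_1+\dim Y_2\leq n-2$, as claimed.

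The delicate point is the middle step. It is precisely the hypothesis that both $Y_1$ and $Y_2$ are succeeded by the \emph{same} single point $y$, together with condition $(2)$ of Lemma \ref{negligible-fix-pt-cmpt} guaranteeing that the closures add nothing beyond that point, which forces the intersection to be zero-dimensional; were the two points distinct, the intersection could be empty and no bound would follow. Once this isolated-intersection property is secured, the smoothness of $X$ and the standard bound on the dimension of an intersection finish the argument immediately.
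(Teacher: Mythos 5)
Your proof is correct and follows essentially the same route as the paper: both arguments bound $\dim\overline{X^+(Y_1)}+\dim\overline{X^+(Y_2)}\leq n$ by noting that the two closures meet only at $y$ (the BB cells are disjoint and, by Lemma \ref{negligible-fix-pt-cmpt}, each closure adds only the point $y$), and then gain $+1$ on each side since $\overline{X^+(Y_i)}$ properly contains $Y_i$. The only cosmetic difference is that you apply the intersection-dimension theorem directly to the zero-dimensional intersection $\{y\}$, whereas the paper phrases the same step contrapositively, saying that a larger dimension sum would force an invariant curve, hence an orbit, inside $\overline{X^+(Y_1)}\cap\overline{X^+(Y_2)}$, contradicting the BB decomposition.
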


\begin{proof} First we observe that
  $\dim{\overline{X^+(Y_{1})}}+\dim{\overline{X^+(Y_{2})}}\leq n$,
  otherwise there would be an orbit passing through $y$ and belonging
  to $\overline{X^+(Y_{1})}\cap \overline{X^+(Y_{2})}$, against BB
  decomposition. On the other hand, because $Y_{i} \subsetneq
  \overline{X^+(Y_{i})}$ for $i=1,2$, we obtain that
  $$\dim{Y_{1}}+\dim{Y_{2}}+2\leq
  \dim{\overline{X^+(Y_{1})}}+\dim{\overline{X^+(Y_{2})}}\leq n$$ hence the
  claim.
\end{proof}


\section{Varieties with small bandwidth}\label{section-bandwidth}
In the present section we classify polarized varieties $(X,L)$ with an
effective $\CC^*$ action such that the bandwidth, or the degree of
the closure of a general orbit, is $\leq 3$.
\subsection{Bandwidth $\leq 2$}
The following has been proved in \cite[Proposition 3.12]{B_W}, we
reprove it using the notion of adjunction.

\begin{thm}\label{bwleq2classification}
  Let $(X,L)$ be a polarized pair satisfying Assumptions
  \ref{gen-assumpt}, with $\dim{X}=n$. Then
  \begin{itemize}
  \item if the sink of the action is an isolated point, and $|\mu|=1$
    then $(X,L)=(\PP^n,\cO(1))$;
  \item if $n\geq 2$ and the $\CC^*$ action has two pointed ends with $|\mu|=2$ then
    either $(X,L)=(\PP^n,\cO(1))$, or $(X,L)=(\mathcal{Q}^n,\cO(1))$. Moreover,
    the $\CC^*$ action is equalized only in the latter case.
\end{itemize}
\end{thm}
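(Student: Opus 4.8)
The plan is to use the AM$\leftrightarrow$FM machinery together with Ionescu--Fujita's adjunction theory (Theorem \ref{ionescu}) to pin down the nef value $\tau$ and thereby the structure of $(X,L)$. The guiding principle is Corollary \ref{AMvsFMonX}(b): since the sink is an isolated point, the fixed locus meeting a general orbit at one end is a point, and the bandwidth $|\mu|$ equals the degree of $L$ on the closure of a general orbit. I would first treat the $|\mu|=1$ case. Here the degree of $L$ on a general orbit-closure is $1$, so a general orbit closure is a line $C\cong\PP^1$ through the source and sink. Via the AM$\leftrightarrow$FM identity \eqref{eqAMvsFM}, the weight difference $\mu_L(y_0)-\mu_L(y_\infty)$ forces tight control on $-K_X\cdot C$, and I would argue that $\tau$ must be as large as possible, namely $\tau=n+1$; by Theorem \ref{ionescu} this forces $(X,L)=(\PP^n,\cO(1))$. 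Concretely, one should show that the lines through the source sweep out $X$ and that $\deg f^*(-K_X)=n+1$ on such a line, which is exactly the numerical signature of projective space.

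\medskip

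For the $|\mu|=2$ case with two pointed ends, the general orbit closure is a (possibly non-reduced in the non-equalized case) curve of $L$-degree $2$. The key step is to estimate the nef value from below. Taking the normalization $f\colon\PP^1\to C$ of a general orbit closure, by Corollary \ref{AMvsFMonX} the relevant weights and degrees satisfy $\deg f^*L=2$ (or is controlled by the stabilizer degree $\delta$ in the non-equalized case, via the factorization $f_{\CC^*}=\pi_\delta\circ f$ described after Lemma \ref{AMvsFM}). Using $\tau=\frac{-K_X\cdot C}{L\cdot C}$ for a curve $C$ spanning the extremal ray contracted by $\phi_\tau$, and the fact that an orbit closure through both isolated endpoints has $-K_X$-degree at least $n+1$ when the action is suitably positive, I would deduce $\tau\geq n$. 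Then Theorem \ref{ionescu} leaves exactly the cases $\tau=n+1$, giving $(\PP^n,\cO(1))$, and $\tau=n$, where the only option compatible with two isolated pointed ends is the quadric: a $\PP^{n-1}$-bundle over a curve has its source and sink being sections over the curve, hence positive-dimensional, contradicting two pointed ends. This isolates $(X,L)=(\mathcal{Q}^n,\cO(1))$.

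\medskip

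To finish, I would verify the equalization claim. On the quadric $\mathcal{Q}^n$ with the standard $\CC^*$ action having isolated source and sink, a general orbit closure is a conic which, because it is smooth, satisfies $f=f_{\CC^*}$; by the remark following Lemma \ref{AMvsFM} this is precisely the equalized situation (stabilizer of order $\delta=1$). Conversely, for $\PP^n$ with $|\mu|=2$ and two pointed ends, a general orbit closure must be a conic of $L$-degree $2$, and the analysis of Example \ref{ex_no_equalized} shows that such an action on $\PP^n$ cannot be equalized: an equalized action would force $\deg f^*L=\mu_L(y_0)-\mu_L(y_\infty)=2$ on a line, i.e.\ $L$-degree $2$ lines, which is impossible with $L=\cO(1)$. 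Hence equalization holds in the quadric case and fails for projective space.

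\medskip

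I expect the main obstacle to be the lower bound $\tau\geq n$ in the bandwidth-two case, and more precisely ruling out the bundle-type degenerations in Theorem \ref{ionescu}(1)(b) and the birational and bundle cases hidden in part (2). The clean tool here is Corollary \ref{cor_extremal_points}: since $\phi_\tau$ is $\CC^*$-equivariant (Proposition \ref{descending_action}), the two pointed ends property descends to the target $X'$, which severely constrains which contractions can occur. In particular, any contraction whose general fiber carries a positive-dimensional source or sink is incompatible with two isolated pointed ends on $X$, and this is what eliminates the scroll and quadric-bundle alternatives, leaving only $\PP^n$ and $\mathcal{Q}^n$.
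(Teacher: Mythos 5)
Your overall strategy---use the AM$\leftrightarrow$FM identity to bound the nef value $\tau$ and then invoke Theorem \ref{ionescu}---is the same as the paper's, but two of your steps fail as stated. First, your elimination of the $\PP^{n-1}$-bundle case in Theorem \ref{ionescu}(1)(b) is wrong: it is simply false that a $\PP^{n-1}$-bundle over a curve must have positive-dimensional source and sink. The paper's own scrolls $\PP(\cO(1)^{n-1}\oplus\cO(3))\to\PP^1$ (Example \ref{ex-scroll}, case (1) of Theorem \ref{bw3classification}) are $\PP^{n-1}$-bundles over $\PP^1$ with two isolated pointed ends, so ``source and sink are sections'' cannot be the reason; and Corollary \ref{cor_extremal_points} only says that the images of the ends are ends of the target, which excludes nothing here. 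The paper's proof avoids this case by a different mechanism: in the boundary case it shows $\mu_{K_X+nL}(y_\infty)=n=\mu_{K_X+nL}(y_0)$, so $K_X+nL$ has degree zero on the dense family of orbits through both ends, hence the adjunction morphism $\phi_n$ contracts $X$ to a point---which a bundle projection onto a curve cannot do---and Kobayashi--Ochiai then yields $\PP^n$ or $\mathcal{Q}^n$. Relatedly, your lower bound $\tau\geq n$ rests on the assertion that an orbit through both isolated ends has $-K_X$-degree at least $n+1$ ``when the action is suitably positive''; this is the estimate doing all the work and you never justify it. The needed input is the FM-side bound $\mu_{K_X}(y_\infty)\geq n$ and $\mu_{K_X}(y_0)\leq -n$ at isolated ends (all tangent weights at the sink are $\leq -1$ and at the source $\geq +1$; this is \cite[Lemma 3.11]{B_W}, which is exactly what the paper uses), combined with Corollary \ref{AMvsFMonX}(a) applied to $K_X+nL$. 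The same lemma is what makes your bandwidth-one argument work, where moreover the source need not be a point, so the orbit is not a line ``through the source and sink'' but from a possibly positive-dimensional $Y_0$ to $y_\infty$.

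Second, your proof that the quadric action is equalized is a non sequitur, and the paper's own Example \ref{ex_no_equalized} refutes the inference: there the general orbit closure is a smooth conic with trivial stabilizer, so $f=f_{\CC^*}$ on the general orbit, yet the action is \emph{not} equalized. Equalization is a condition on \emph{all} weights of $\cN^{\pm}(Y)$ at \emph{every} fixed component---here including the inner component $\mathcal{Q}^{n-2}$---and smoothness of general orbit closures tells you nothing about those weights. The paper establishes equalization by a genuine computation: the $\CC^*$ action on $\mathcal{Q}^n$ is a downgrading of the maximal torus $\widehat{H}\subset SO_{n+2}$, and the known $\widehat{H}$-weights on the normal bundles from \cite[Example 2.20]{B_W} all project to $\pm 1$ under the lattice map $\widehat{M}\to\ZZ$. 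Your argument that the $\PP^n$ action is not equalized is essentially correct (the invariant line joining the two extremal fixed points is an orbit closure with $\mu_L$-difference $2$ but $L$-degree $1$, forcing isotropy of order $2$), and matches the paper; but the quadric direction needs the weight computation, not an appeal to smooth conics.
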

\begin{proof}
  Assume that $|\mu|=1$. Let $Y_{\infty}=\{y_\infty\}$ and $Y_0$ be
  respectively the sink and the source of the action. We can take
  $\mu(Y_{\infty})=0$, so that $\mu(Y_0)=1$. Applying \cite[Lemma
  3.11]{B_W} we deduce that $\mu_{K_X}(Y_{\infty})\geq n$ and
  $\mu_{K_X}(Y_0) < 0$. Therefore $$\mu_{K_X+nL}(Y_{\infty})\geq n >
  \mu_{K_X+nL}(Y_0)$$ and denoting by $C$ the closure of an orbit
  joining the source and the sink, by Corollary \ref{AMvsFMonX}
  $(a)$ we get the inequality $(K_X+nL) \cdot C<0$, so that $K_X+nL$ is not
  nef. Therefore, using Remark \ref{tau_integer} one has $\tau=n+1$,
  and by Theorem \ref{ionescu} we obtain that $(X,L)=(\PP^n,\cO(1))$.

  Similarly, in case $|\mu|=2$, $Y_\infty=\{y_\infty\}$,
  $Y_0=\{y_0\}$, we get $$\mu_{K_X+nL}(Y_{\infty})\geq n \geq
  \mu_{K_X+nL}(Y_0).$$ If $\mu_{K_X+nL}(Y_{\infty})>\mu_{K_X+nL}(Y_0)$,
  then as above we deduce that $K_X+nL$ is not nef, $\tau=n+1$ and
  applying Theorem \ref{ionescu} we get $(X,L)=(\PP^n,\cO(1))$. Assume
  that $\mu_{K_X+nL}(Y_{\infty})=n =\mu_{K_X+nL}(Y_0)$, and thus $K_X+nL$ is nef. Then the
  divisor $K_X+nL$ has intersection zero with a general orbit joining
  the source and the sink, so that the adjoint morphism
  $\phi_{n}$ contracts $X$ to a point.  Applying again Theorem \ref{ionescu}, which in this case coincides
  with a classical result by Kobayashi and Ochiai (see
  \cite{KOB}), we then deduce that either $(X,L)=(\PP^n,\cO(1))$, or $(X,L)=(\mathcal{Q}^n,\cO(1))$. In the first case, the $\CC^*$ action in coordinates will be $(t,[z_0,\dots,z_n])\rightarrow [z_0,tz_1,\dots ,tz_{n-1},t^2z_n]$. This action is not equalized because there exists an invariant line joining the sink and the source, and in view of Lemma \ref{AMvsFM} one has that the weight of the tangent bundle of this line at the sink is $2$ (the case $n=2$ has been discussed in Example \ref{ex_no_equalized}). Consider $(X,L)=(\mathcal{Q}^n,\cO(1))$. The action of a maximal torus $\widehat{H}$ on $(X,L)$ is described in \cite[Example 2.20]{B_W}. Now, the $\CC^*$ action is obtained from a downgrading of the action of $\widehat{H}$, that gives a projection of the corresponding lattice of characters $\pi\colon \widehat{M}\to \mathbb{Z}$. We observe that taking a component $Y\subset X^{\CC^*}$ corresponding to a weight $i\in \mathbb{Z}$, the weights of the $\CC^*$ action on $\cN_{Y/X}$ are obtained as projection of the weights of the $\widehat{H}$ action on the normal bundles at all the fixed components of $X^{\widehat{H}}$ which correspond to the weights that are sent to $i$ through $\pi$. Using the computations done in \cite[Example 2.20]{B_W} regarding the weights of the $\widehat{H}$ action on the cotangent bundles at the fixed components, we obtain the weights on the normal bundles at the same components, and using the previous observation we finally conclude that the $\CC^*$ action is equalized.
\end{proof}
The following two results concern the action of a torus on a quadric,
case of small bandwidth. Before this discussion, we recall by \cite[$\S$2.1]{B_W} that for any
polarized pair $(X,L)$ with an action of an algebraic torus $H$, the
\textit{polytope of fixed points} $\Delta(X,L,H,\mu_L)$ is the convex
hull of the image of the weight point map $\mu_L$ (see Definition \ref{mu-map}).
\begin{lemma}\label{quadric-bw1}
  Let $X$ be a smooth quadric of dimension $n\geq 3$.  Suppose that $\CC^*$ acts effectively on $X$ with fixed locus
  consisting of two components. Then both fixed point components are
  isomorphic to $\PP^m$, with $m=\lfloor\frac{n}{2}\rfloor$.
\end{lemma}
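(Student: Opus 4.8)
The plan is to reduce to linear algebra on the ambient vector space by linearizing the action. Since $\dim X = n \geq 3$, the Lefschetz hyperplane theorem gives $\Pic X = \ZZ\cdot\cO(1)$, where $\cO(1)$ is the restriction of the hyperplane bundle under the embedding $X=\mathcal{Q}^n \hookrightarrow \PP^{n+1}=\PP(V)$ cut out by a nondegenerate quadratic form $q$ on $V=\CC^{n+2}$. Every automorphism of $X$ preserves the ample generator $\cO(1)$, hence is induced by a linear automorphism of $\PP(V)$ preserving $X$, so $\Aut X = PGO(V,q)$. First I would lift the one-parameter subgroup $\CC^*\to PGO(V,q)\subset PGL(V)$ to a homomorphism $\CC^*\to GO(V,q)\subset GL(V)$: pulling back the central extension $1\to\CC^*\to GL(V)\to PGL(V)\to 1$ along $\CC^*$ yields an extension of $\CC^*$ by $\CC^*$ in the category of groups of multiplicative type, which splits at the level of character lattices. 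Thus the action comes from a diagonalizable representation $\CC^*\to GL(V)$ under which $q$ is a similitude, i.e. $q(t\cdot v)=t^{c}q(v)$ for a fixed integer $c$.

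Next I would analyze the fixed locus via the weight decomposition $V=\bigoplus_a V_a$. Writing $b$ for the symmetric bilinear form attached to $q$, the similitude property gives $b(t\cdot u,t\cdot v)=t^{c}b(u,v)$, so $b(V_a,V_{a'})=0$ unless $a+a'=c$. Nondegeneracy of $b$ then forces the pairing $V_a\times V_{c-a}\to\CC$ to be perfect for every $a$ (in particular $V_a\neq 0\iff V_{c-a}\neq 0$ and $\dim V_a=\dim V_{c-a}$), and forces $b|_{V_{c/2}}$ to be nondegenerate whenever the central weight $c/2$ occurs. Since $q(v)=\tfrac12 b(v,v)$, this shows $q$ vanishes identically on $V_a$ for $a\neq c/2$, while $q|_{V_{c/2}}$ is nondegenerate. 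Consequently the fixed components of $X$ contained in $\PP(V_a)$ are: the whole $\PP(V_a)\cong\PP^{\dim V_a-1}$ when $a\neq c/2$; a smooth quadric $X\cap\PP(V_{c/2})$ when $a=c/2$ and $\dim V_{c/2}\geq 2$; and no fixed point at all when $a=c/2$ and $\dim V_{c/2}=1$ (the unique point of $\PP(V_{c/2})$ does not lie on $X$).

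I would then count. The involution $a\mapsto c-a$ permutes the occurring weights, and the number of fixed components of $X$ equals twice the number of its non-fixed orbits, plus one if a central quadric occurs. Demanding exactly two components therefore forces exactly one non-fixed orbit $\{a,c-a\}$, giving the two components $\PP(V_a)$ and $\PP(V_{c-a})$, both isomorphic to $\PP^{d-1}$ with $d:=\dim V_a=\dim V_{c-a}$, together with $\dim V_{c/2}\leq 1$; effectiveness of the action excludes the degenerate case of a single occurring weight. Finally, from $n+2=\dim V=2d+\dim V_{c/2}$ with $\dim V_{c/2}\in\{0,1\}$ one computes $d-1=\lfloor n/2\rfloor$ in both cases, so both fixed components are $\PP^{\lfloor n/2\rfloor}$, as claimed.

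The main obstacle I expect is the linearization step, namely making precise that the $\CC^*$ action genuinely descends from a similitude representation on $V$ (the identification $\Aut\mathcal{Q}^n=PGO(V,q)$ for $n\geq 3$ together with the vanishing of the relevant extension of tori); once that is in hand, everything reduces to bookkeeping with the eigenspace decomposition of the invariant quadratic form.
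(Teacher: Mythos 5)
Your argument is correct, but it takes a genuinely different route from the paper's proof. The paper does not linearize the action by hand: it observes that $\CC^*$ is contained in a maximal torus $\widehat{H}$ of $SO_{n+2}$, so the given action is a downgrading of the $\widehat{H}$ action analyzed in \cite[Example 2.20]{B_W}, whose polytope of fixed points is the cross-polytope $\conv(\pm e_i,\ i=0,\dots,m)$ with its $2(m+1)$ vertices corresponding to the isolated $\widehat{H}$-fixed points; the hypothesis of two components forces the projection $\widehat{M}\to\ZZ$ to send all vertices to just two values, hence to contract two opposite facets, which are simplices corresponding to two maximal linear subspaces $\PP^m\subset \mathcal{Q}^n$, and the identification of the fixed components is then quoted from \cite[Lemma 2.10]{B_W}. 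You instead lift $\CC^*\to PGL(V)$ to a similitude representation on $V=\CC^{n+2}$ and read everything off the weight decomposition together with the pairing $V_a\times V_{c-a}\to\CC$; your lifting step is fine (the extension of $\CC^*$ by $\CC^*$ splits, and $q\circ t^{-1}$ must be proportional to $q$ since $q$ generates the ideal of $X$, with the proportionality factor an algebraic character $t^c$). What each approach buys: yours is self-contained, avoids the polytope formalism and the external combinatorial lemmas of \cite{B_W}, and actually yields the full structure of the fixed locus of an arbitrary $\CC^*$ action on the quadric (isotropic linear pieces $\PP(V_a)$ paired by $b$, plus a possible central quadric) --- more than the lemma asks, and close in spirit to what the paper's Proposition \ref{quadric-bw_leq2} extracts separately; the paper's version is shorter and stays inside the downgrading/polytope machinery on which the rest of the paper runs. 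One small imprecision in your count: when $\dim V_{c/2}=2$ the central piece $X\cap\PP(V_{c/2})$ is $\mathcal{Q}^0$, i.e.\ two points, so it contributes two components rather than one. This does not affect your conclusion: realizing two components that way would force $V=V_{c/2}$, a single occurring weight, which you already exclude by effectiveness, while any configuration with a non-central pair and $\dim V_{c/2}=2$ has at least four components.
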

\begin{proof}
  Being $X$ a smooth quadric, $\CC^*$ is contained in some
  maximal torus $\widehat{H}$ of $SO_{n+2}$ with the lattice of
  characters $\widehat{M}=\bigoplus_{i=0}^m\ZZ e_i$. Thus, we are in
  situation described in \cite[Example 2.20]{B_W} and the action of
  $\CC^*$ is obtained from some downgrading $\CC^*\rightarrow\widehat{H}$ which
  comes with the homomorphism of lattices of characters
  $\widehat{M}\rightarrow \ZZ$.  We know that the polytope
  $\Delta=\Delta(\mathcal{Q}^n,\cO(1),\widehat{H})=\conv(\pm e_i, i=0,\dots, m)$
  has $2(m+1)$ vertices associated to fixed points of the action of
  $\widehat{H}$. Moreover, the polytope $\Delta$ is central symmetric
  and therefore its projection has the same property.  In view of Remark
  \ref{rem-changing-torus} all vertices of $\Delta$ are mapped via
  $\widehat{M}\rightarrow \ZZ$ to the set consisting of two points.
  Therefore the projection contracts two opposite facets of
  $\Delta(\mathcal{Q}^n,\cO(1),\widehat{H})$, each containing at least $m+1$
  vertices, thus both symplices associated to $\PP^{m}$. The last
  statement follows by \cite[Lemma 2.10]{B_W}.
\end{proof}

\begin{prop}\label{quadric-bw_leq2}
  Let $X$ be a smooth quadric of dimension $n\geq 3$ or a quadric
  cone, that is a cone over the smooth quadric of dimension $n-1$. By
  $L$ we denote the line bundle $\cO(1)$. Suppose that the torus
  $\CC^*$ acts effectively on $X$ with one pointed end, and the
  bandwidth of the action is $|\mu|\leq 2$. Then one of the following
  holds:
  \begin{enumerate}
  \item $X$ is a smooth quadric, $|\mu|=2$, the $\CC^*$  action has
    two pointed ends $y_0$ and $y_2$, and $X^{\CC^*}=\{y_0,y_2\}\sqcup
    \mathcal{Q}^{n-2}$.
  \item $X$ is a quadric cone and $X^{\CC^*}$ has two components: the vertex
    and a divisor $\iso\mathcal{Q}^{n-1}$.
  \item $X$ is a quadric cone and $X^{\CC^*}$ has three components: the
    vertex and two components $\iso\PP^m$, with
    $m=\lfloor\frac{n-1}{2}\rfloor$.
  \end{enumerate}
\end{prop}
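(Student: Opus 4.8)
The plan is to split the argument according to whether $X$ is smooth or a genuine cone, exploiting in both cases the fact that a quadric carries the action of a maximal torus $\widehat H$ of the ambient special orthogonal group, so that our $\CC^*$ is obtained by downgrading along a homomorphism of character lattices $\pi\colon\widehat M\to\ZZ$, exactly as in the proof of Lemma \ref{quadric-bw1}. The values of the fixed point weight map $\mu_L$ are then read off as the $\pi$-images of the vertices of the centrally symmetric polytope $\Delta(X,\cO(1),\widehat H)$, and this central symmetry, combined with the tight bound $|\mu|\leq 2$, will be the main quantitative lever throughout.

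First I would dispose of the smooth quadric $X=\mathcal{Q}^n$. Here $\Delta=\conv(\pm e_i\colon i=0,\dots,m)$ is symmetric about the origin, hence so is the multiset of values of $\mu_L$. This already excludes $|\mu|=1$, since a symmetric integral interval of length $1$ would have non-integral endpoints $\pm\tfrac12$; thus $|\mu|=2$ and, after normalizing, the values lie in $\{-1,0,1\}$ with $\pm1$ attained. The one pointed end hypothesis forces the extremal value to be realized by a single vertex of $\Delta$ (otherwise the source would be positive-dimensional), and by symmetry the opposite extreme is realized by a single vertex as well; these are the isolated fixed points $y_0$ and $y_2$, so the action in fact has two pointed ends. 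The remaining vertices $\pm e_i$, $i\geq1$, span the weight-zero eigenspace and cut out the middle component, which is $\mathcal{Q}^{n-2}$ by the computation of \cite[Example 2.20]{B_W}. This is case (1).

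Next I would treat the quadric cone $X$ over $\mathcal{Q}^{n-1}$ with vertex $v$. Being the unique singular point, $v$ is $\CC^*$-fixed, and projection from $v$ induces a $\CC^*$-action on the base $Q=\mathcal{Q}^{n-1}$; diagonalizing the action on the ambient $\PP^{n+1}$ with weights $(a_0,\dots,a_n)$ on $Q$ and $b$ on the cone direction, the fixed component of $X$ of weight $\alpha\neq b$ coincides with the corresponding fixed component of $Q$, while $v$ lies in the weight-$b$ locus. If the induced action on $Q$ is trivial, all $a_i$ agree and $X^{\CC^*}=\{v\}\sqcup Q$ with $Q\iso\mathcal{Q}^{n-1}$ a divisor, giving case (2); here $v$ is the single-point end succeeding the divisor, consistent with Lemma \ref{negligible-fix-pt-cmpt} applied to $Y=Q$ with ample $T^+=\cO_Q(1)$. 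If the action on $Q$ is nontrivial with two fixed components, Lemma \ref{quadric-bw1} identifies them as two copies of $\PP^m$, $m=\lfloor\frac{n-1}{2}\rfloor$, sitting at two weights which, by central symmetry of the base polytope, differ by at least $1$. The one pointed end hypothesis then forces the unique isolated component to be $v$, placed strictly above (or below) both $\PP^m$'s in the $\mu_L$-order; combining this with the base spread yields $|\mu|\geq2$, hence $|\mu|=2$, and after normalization $\mu_L$ takes the value $0$ on $v$ and $-1,-2$ on the two $\PP^m$'s. This is case (3).

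The hard part will be the cone case, and specifically the bookkeeping of the vertex against the base action. Two delicate points arise. First, one must rule out that $v$ is absorbed into a larger linear fixed component $\PP^{m+1}=\PP(V\oplus\langle w\rangle)$ occurring when $b$ equals an extremal base weight: this would leave both source and sink positive-dimensional, contradicting the one pointed end hypothesis, so $v$ must remain an isolated component placed at an extreme of $\mu_L$. Second, and more seriously, one must exclude base actions with three or more distinct weights, where the base quadric can degenerate into isolated points and reducible loci (possible since the cone is singular, cf.\ Remark \ref{irreducile_vs_connected}) and the vertex need no longer be the pointed end. I expect the clean trichotomy to rest entirely on exploiting the central symmetry of $\Delta(Q,\cO(1),\widehat H)$ together with the fact that any extremal vertex beyond the two $\PP^m$'s would already force $|\mu|\geq3$; making this exclusion airtight, rather than the enumeration itself, is where the real work lies.
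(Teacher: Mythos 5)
Your smooth-quadric case is essentially the paper's own argument -- downgrading from the maximal torus $\widehat{H}$ of $SO_{n+2}$ and exploiting the central symmetry of $\Delta(\mathcal{Q}^n,\cO(1),\widehat{H})$ to force two pointed ends, $|\mu|=2$, and a middle component $\mathcal{Q}^{n-2}$ -- and it is correct. The cone case, however, is where your proposal stops being a proof: you explicitly defer the exclusion of base actions with three or more distinct weights (``where the real work lies''), and the one deduction you do make there is wrong. The hypothesis of one pointed end says only that the source \emph{or} the sink is a single point; it does not say that this point is the vertex $v$, so your claim that ``$v$ must remain an isolated component placed at an extreme of $\mu_L$'' does not follow. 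In fact the configuration you are hoping to exclude genuinely occurs: the paper's own Remark \ref{irreducile_vs_connected} exhibits the quadric cone $\{z_1z_2+z_3z_4=0\}\subset\PP^4$ (so $n=3$) with weights $(0,0,0,1,-1)$, whose fixed locus consists of two isolated points (which are the source and the sink, so the action even has two pointed ends) together with a connected \emph{reducible} conic through the vertex, all within bandwidth $2$. Here $v$ sits at the central weight, the extra extremal vertices of the base polytope are absorbed inside the existing weight range, and no ``$|\mu|\geq 3$'' contradiction can arise; central symmetry of the base polytope gives you no lever. So the step you flag as delicate is not merely unfinished -- as formulated it would fail, and any complete treatment must either assume (as the intended reading and the application in Lemma \ref{quadric-bdl-str} do) that the vertex is the pointed end, i.e.\ that $\mu_L(v)$ is an extreme of $\mu_L$, or else confront this vertex-in-the-middle configuration head on.

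For comparison, the paper disposes of the cone case in three lines and bypasses all of your eigenvalue bookkeeping: pick a $\CC^*$-equivariant section of $\cO(1)$ not vanishing at $v$; its zero locus $X'$ is an invariant \emph{smooth} quadric of dimension $n-1$, and every fixed component whose $\mu_L$-value differs from $\mu_L(v)$ lies in $X'$ (an equivariant section of the wrong weight vanishes identically on a fixed component). With $v$ at the extreme of $\mu_L$, the restricted action on $X'$ is then either trivial, giving case (2), or of bandwidth $1$, and Lemma \ref{quadric-bw1} applied to $X'$ produces the two copies of $\PP^m$, $m=\lfloor\frac{n-1}{2}\rfloor$, of case (3). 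Note that this dichotomy ``trivial or bandwidth $1$'' also uses that $\mu_L(v)$ is extremal -- in the Remark \ref{irreducile_vs_connected} example the restricted action on $X'$ has bandwidth $2$ -- so the paper's proof shares the tacit assumption under which your plan could be repaired; but a blind proof of the statement as written cannot simply posit it, and your proposal leaves exactly that burden open.
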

\begin{proof}
  First, suppose that $X$ is the smooth quadric $\mathcal{Q}^n$, so that we are
  in situation described in \cite[Example 2.20]{B_W}. The torus $\CC^*$ is
  contained in some maximal torus $\widehat{H}$ of $SO_{n+2}$ with the
  lattice of characters $\widehat{M}$. Denote by $r$ the rank of $\widehat{H}$, and take $e_1,\dots, e_r$ a basis of $\widehat{M}$. By the downgrading, we see
  that the linearization of the action is associated to a projection
  $\pi\colon \widehat{M}\rightarrow \ZZ$. From \cite[Example 2.20]{B_W} we know
  that $\Delta(\mathcal{Q}^n,\cO(1),\widehat{H})=\conv(\pm e_i, i=1,\dots, r)$, and all the fixed points correspond to vertices of this polytope.  Therefore $\pi(\Delta(\mathcal{Q}^n,\cO(1),\widehat{H}))$ is a central
  symmetric polytope, and the action of $\CC^*$ has two pointed ends corresponding to $\pi(e_i)$, $\pi(-e_i)$ for some $i=1,\dots,r$. Moreover, the elements $\pm e_j$ with $j\neq i$ will be
  projected to the same point in $\ZZ$, so that $|\mu|=2$. We then obtain that the fixed point component associated to such a point is isomorphic to $\mathcal{Q}^{n-2}$, and this settles the smooth case.

  Now suppose that $X$ is a quadric cone. Let us choose a section of
  $L=\cO(1)$ which is $\CC^*$ equivariant and does not vanish at the
  vertex of the cone. Thus, the zero set $X'\subset X$ is a smooth
  quadric invariant with respect to the action of $\CC^*$. Then either
  $X'\in X^{\CC^*}$ and we get $(2)$, or the restriction of the action of
  $\CC^*$ to $X'$ has bandwidth $1$. In this latter case, applying Lemma
  \ref{quadric-bw1} to $X'$ we obtain $(3)$.
\end{proof}

\subsection{Bandwidth 3}
In this subsection we will study polarized pairs $(X,L)$ under the
following assumptions.

\begin{assumpt}\label{ass-bw3}
  Let $(X,L)$ be a polarized pair, where $X$ is a manifold of
  dimension $n\geq 2$ with a linearized $\CC^*$ action, such that
  it has two pointed ends and the bandwidth of $(X,L,\CC^*)$ is
  three. In addition, assume that the action is equalized.
\end{assumpt}

We start by discussing the easier case of surfaces, that was first studied in \cite[Example 3.16]{B_W}.

\begin{lemma} \label{BW3_surfaces} Assume that $(X,L)$ satisfies Assumptions
\ref{ass-bw3}, with $n=2$. Then either $(X,L)=(\PP^1\times \PP^1, \cO(1,2))$, or $(X,L)=(\PP_{\PP^1}(\cO(1)\oplus \cO(3)), \cO(1))$.
\end{lemma}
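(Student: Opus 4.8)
The plan is to reduce the problem to the Ionescu--Fujita classification (Theorem \ref{ionescu}) by computing the nef value $\tau(X,L)$. First I would normalize the linearization so that $\mu_L(y_\infty)=0$ for the sink $y_\infty$; since the action has two pointed ends and is equalized, the tangent space at the source $y_0$ carries weights $(+1,+1)$ and that at $y_\infty$ carries $(-1,-1)$. Hence $\mu_{K_X}(y_0)=-2$ and $\mu_{K_X}(y_\infty)=2$, while $\mu_L(y_0)=|\mu|=3$ (the maximum of $\mu_L$ is attained at the source for ample $L$, by Corollary \ref{AMvsFMonX}). The same corollary gives that the closure of a general orbit $C$ satisfies $L\cdot C=3$ and $-K_X\cdot C=4$.

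Next I would show that the fixed locus is finite. If there were a fixed curve $Y$, then, being a divisor, it would satisfy $\rk^+(Y)+\rk^-(Y)=1$; but $\rk^+(Y)=1$ makes $X^+(Y)$ two-dimensional and dense, forcing $Y$ to be the source, while $\rk^-(Y)=1$ symmetrically forces $Y$ to be the sink (Theorem \ref{thm_ABB-decomposition}). Both are impossible because the source and the sink are isolated points. Thus $X^{\CC^*}$ is finite, every intermediate fixed point has tangent weights $(+1,-1)$, so on it $\mu_{K_X}=0$ and $\mu_L\in\{1,2\}$, and by Lemma \ref{curve-cone-generation} the cone $\cC(X)$ is generated by orbit closures, so that Proposition \ref{Pic_vs_linearizations} and Theorem \ref{nef_cone_description} are available. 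Since $\rho_X\geq 1$, at least one intermediate point exists.

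Then I would compute $\tau$ from the weights $\mu_{K_X+tL}$, which take the value $3t-2$ at the source, $2$ at the sink, and $t\cdot\mu_L\in\{t,2t\}$ at an intermediate point. For $t<2$ an orbit of $(K_X+tL)$-degree $t-2$ appears: if an intermediate point sits at level $2$ it receives an orbit from the source (the only higher fixed point), and if it sits at level $1$ it emits an orbit to the sink; by Theorem \ref{nef_cone_description} that orbit has $(K_X+tL)$-degree $(3t-2)-2t=t-2<0$, respectively $t-2<0$. A direct inspection of the four weight levels shows $K_X+2L$ is nef, so $\tau=2=n$. Ionescu--Fujita then leaves only $(\mathcal{Q}^2,\cO(1))$ or a $\PP^1$-bundle over a smooth curve with $L$ relative $\cO(1)$; the quadric is excluded because an equalized $\CC^*$-action on $(\PP^1\times\PP^1,\cO(1,1))$ has general orbit the diagonal, of degree $2$, i.e.\ bandwidth $2\neq 3$ (compare Example \ref{ex_no_equalized}).

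Finally, in the bundle case $\phi_2\colon X\to B$, the action descends to $B$ (Proposition \ref{descending_action}, Corollary \ref{cor_extremal_points}); the descended action must be nontrivial, since a trivial one would produce fixed sections, contradicting the finiteness of $X^{\CC^*}$. Hence $B\cong\PP^1$ and $X=\PP(\cV)$ with $\cV=\cO(a)\oplus\cO(b)$ and $L=\cO_{\PP(\cV)}(1)$ up to a twist by a pullback from the base. Writing out the weights of $L$ and of $TX$ at the four fixed points and imposing that all fiber tangent weights equal $\pm1$ forces $|b-a|\in\{0,2\}$, so $X\cong\PP^1\times\PP^1$ or $X\cong\FF_2$; requiring bandwidth $3$ together with ampleness of $L$ then fixes the polarization, giving exactly $(\PP^1\times\PP^1,\cO(1,2))$ and $(\PP(\cO(1)\oplus\cO(3)),\cO(1))$. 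The main obstacle is the finiteness-of-fixed-locus step, and then keeping the signs consistent in this concluding weight computation; the rest is a direct application of the adjunction-theoretic tools already established.
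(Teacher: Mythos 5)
Your proposal is correct, but it takes a genuinely different route from the paper's. The paper handles $n=2$ without any adjunction theory: it first invokes BB decomposition/localization (the $n=2$ instance of the computation in Remark \ref{isolated_points}, following \cite{B_W}) to conclude that $X^{\CC^*}$ consists of four isolated points, exactly one for each weight $0,\dots,3$; then Proposition \ref{onepointend-basic} gives rationality and $\Pic X\iso\ZZ^2$, hence $X\iso\PP_{\PP^1}(\cO\oplus\cO(e))$ directly; finally a toric computation, projecting the moment polytope of $L=aC_0+bF$ under the downgrading $\ZZ^2\to\ZZ$, pins down $e\in\{0,2\}$ and the polarization. You instead push the paper's own $n\geq 3$ strategy (compare Subsection \ref{tau>=n}) down to surfaces: after proving finiteness of $X^{\CC^*}$ by the dimension count on BB cells (a fact the paper extracts from localization instead — your cell argument is a valid substitute), you compute $\tau(X,L)=2=n$ by AM$\leftrightarrow$FM bookkeeping, apply Theorem \ref{ionescu}(1), exclude the quadric, and finish with a weight computation on $\PP(\cO(a)\oplus\cO(b))$ that is in substance the paper's toric step. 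Two points deserve emphasis: first, Remark \ref{tau_integer} is unavailable at $n=2$, and you correctly route around it by verifying nefness of $K_X+2L$ directly on orbit closures, which makes the estimate $\tau=2$ self-contained; second, you never need the sharper localization statement ``one fixed point per weight'' upfront — it falls out of your concluding computation. The two sketched spots are sound: the quadric exclusion is cleanest via your own degree bookkeeping ($-K_X\cdot C=2n=4$ on a general orbit closure, while $-K_{\mathcal{Q}^2}=2L$ would force $L\cdot C=2\neq 3$, so appealing to the shape of the general orbit is not even needed), and the asserted final computation does check out: equalized weights at the four fixed points force $|b-a|\in\{0,2\}$, and bandwidth $3$ plus ampleness then yields exactly $(\PP^1\times\PP^1,\cO(1,2))$ and $(\PP(\cO(1)\oplus\cO(3)),\cO(1))$, matching the paper's polytope projection with vertex images $\{0,1,2,3\}$. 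What your approach buys is uniformity with the $n\geq 3$ argument of Section \ref{Class_bandwidth3}; what the paper's buys is brevity and the explicit fixed-point count before any classification input.
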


\begin{proof}
First, using either BB decomposition or the localization theorem (see proofs \cite[pag. 29-30]{B_W}) we get the description of the fixed point components, which are four isolated point, one for every weight $0,\dots,3$.  
Applying Proposition \ref{onepointend-basic} and its proof, we conclude that $X$ is rational, and $\Pic X\iso  \mathbb{Z}^2$. Therefore, $X\iso \PP_{\PP^1}(\cO\oplus \cO(e))$ for some $e\in \ZZ_{\geq 0}$. We will observe that either $e=0$ or $e=2$. 

Being $L$ ample on a Hirzebruch surface, we may write $L=aC_0+bF$, where $C_0$ is the minimal section, $F$ the fiber of the natural projection, $a>0$, and $b>ae$. The $\CC^*$ action on $X$ is obtained by a downgrading of a $(\CC^*)^{2}$ action on $X$.  The weights of this action on $L$ at the fixed points are the following: $(0,0)$; $(0,a)$; $(b-ae,a)$; $(b,0)$ (cf. \cite[Ex. 6.1.18]{Cox}).  Then, the weights of the $\CC^*$ action are obtained by a projection $\pi\colon \ZZ^{2}\to \ZZ$, such that $\pi(\bigtriangleup(X,L,(\CC^*)^{2}))=\bigtriangleup(X,L,\CC^*)$. A straightforward computation shows that the only cases in which we get a bandwidth three $\CC^*$ action having one fixed point corresponding to each lattice point, are those listed in the statement. 
\end{proof}

We now extend the study of bandwidth three varieties to high dimension. The following result will be the crucial point to prove results in
Subsection \ref{$SL_3$ action on contact manifolds}, and will be shown
in Section \ref{Class_bandwidth3}, by using adjunction theory when $n\geq 3$. Here, by inner fixed points
components we mean the components which are neither the sink nor the
source.

\begin{thm}\label{bw3classification}
  Suppose that $(X,L)$ satisfies Assumptions \ref{ass-bw3}.
  Then one of the following holds:
  \begin{enumerate}[leftmargin=*]
  \item $(X,L)=(\PP(\cV),\cO(1))$ is a scroll over $\PP^1$,
    with $L$ being relative $\cO(1)$ on the projectivisation of the vector
    bundle $\cV$ which is either $\cO(1)^{n-1}\oplus\cO(3)$ or
    $\cO(1)^{n-2}\oplus\cO(2)^2$. The inner fixed points components
    are two copies of $\PP^{n-2}$.
  \item $(X,L)=(\PP^1\times\mathcal{Q}^{n-1}, \cO(1,1))$ is a product quadric
    bundle over $\PP^1$. The inner fixed points components are two
    isolated points and two copies of $\mathcal{Q}^{n-3}$.
  \item $n\geq 6$ is divisible by 3 and $X$ is Fano, $\rho_X=1$,
    $-K_X=\frac{2}{3}nL$. The inner fixed points components are two
    smooth subvarieties of dimension $\frac{2}{3}n-2$.
  \end{enumerate}
\end{thm}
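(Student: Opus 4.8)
The plan is to read off the nef value $\tau=\tau(X,L)$ directly from the $\CC^*$-weights and then feed it into the Ionescu--Fujita classification (Theorem \ref{ionescu}), using equivariance to select, among the abstract possibilities, those compatible with an equalized bandwidth-three action. First I normalize the linearization so that $\mu_L(y_\infty)=0$; since the action is equalized with two pointed ends and $|\mu|=3$, the fixed-point weight map takes values in $\{0,1,2,3\}$, the source $y_0$ being the single point of weight $3$ and the sink $y_\infty$ the single point of weight $0$. Because $\cN^+(y_0)=T_{y_0}X$ carries only the weight $+1$ and $\cN^-(y_\infty)=T_{y_\infty}X$ only the weight $-1$, the natural linearization of $K_X$ satisfies $\mu_{K_X}(y_0)=-n$ and $\mu_{K_X}(y_\infty)=n$. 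Applying Corollary \ref{AMvsFMonX}(c) to the normalization $C\cong\PP^1$ of a general orbit (which joins $y_0$ to $y_\infty$ and has $L\cdot C=3$ by part (b)) I obtain $K_X\cdot C=-2n$, hence $(K_X+tL)\cdot C=3t-2n$ and $\tau\ge\tfrac{2}{3}n$, where $\tau$ is in fact the maximum over all orbit classes of the ratios $\tfrac{-K_X\cdot C}{L\cdot C}$, each computed by the same AM--FM bookkeeping in terms of the ranks $\rk^\pm$ of the endpoints. By Proposition \ref{onepointend-basic} the variety $X$ is rational with $\Pic X$ free, a fact I use to identify the base of any fibration with $\PP^1$. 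The case $n=2$ being settled in Lemma \ref{BW3_surfaces}, I assume $n\ge 3$.

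Next I split according to $\tau$. The key rigidity is that, by AM--FM, the difference $\mu_{K_X}(y_0)-\mu_{K_X}(y_\infty)=-2n$ equals $K_X\cdot C$ independently of the chosen linearization; thus \emph{whenever} $-K_X=cL$ (so $K_X\cdot C=-3c$) one is forced to have $c=\tfrac{2}{3}n$. This simultaneously excludes $(\PP^n,\cO(1))$ ($c=n+1$), the smooth quadric $(\mathcal{Q}^n,\cO(1))$ ($c=n$), and the del Pezzo case of Theorem \ref{ionescu}(2)(a) ($c=n-1$, possible only at $n=3$); and it shows that any point contraction $\phi_\tau$ yields $-K_X=\tfrac{2}{3}nL$ with $3\mid n$, which is the skeleton of alternative (3). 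If instead the general orbit is \emph{not} contracted, then $\dim X'\ge 1$ and, by Corollary \ref{cor_extremal_points}, $X'$ again has two pointed ends; matching $\tau$ against Theorem \ref{ionescu} leaves only the scroll case $\tau=n$ (part (1)(b)) and the quadric-bundle case $\tau=n-1$ (part (2)(b)), both over $X'=\PP^1$. The $\PP^{n-2}$-bundle over a surface (part (2)(c)) and the divisorial contraction (part (2)(d)) are discarded by comparing the $L$-degree $3$ of a general orbit with the relative degree of $L$ on fibres, together with the two-pointed-ends constraint on the base from Corollary \ref{cor_extremal_points}.

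It remains to pin down the geometry in each surviving fibration case. In the scroll case $X=\PP(\cV)\to\PP^1$ with $L$ relative $\cO(1)$, the fibrewise $\CC^*$-action splits $\cV$ into eigen-subbundles over $\PP^1$; imposing that the action be equalized and that the global bandwidth be $3$ determines, up to twist, $\cV\in\{\cO(1)^{n-1}\oplus\cO(3),\ \cO(1)^{n-2}\oplus\cO(2)^2\}$, the inner fixed components being the two copies of $\PP^{n-2}$ carried by the weight-$1$ and weight-$2$ eigenspaces; this is alternative (1). In the quadric-bundle case I restrict the action to a fibre $\mathcal{Q}^{n-1}$ and invoke Lemma \ref{quadric-bw1} and Proposition \ref{quadric-bw_leq2}: the fibrewise weight polytope is central symmetric, and compatibility with the base $\PP^1$ of bandwidth one forces the bundle to be the product $\PP^1\times\mathcal{Q}^{n-1}$ with $L=\cO(1,1)$, whose inner fixed components are two points and two copies of $\mathcal{Q}^{n-3}$; this is alternative (2).

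The hard part will be alternative (3). Once the general orbit is contracted ($3\tau-2n=0$), I must show that $\phi_\tau$ contracts \emph{all} of $X$ to a point and that $\rho_X=1$, neither of which is provided by Theorem \ref{ionescu} in the range $\tau<n-1$. For the first, contraction of $C$ forces $\phi_\tau(y_0)=\phi_\tau(y_\infty)$, so a general point of $X'$ has coinciding source and sink; by the argument of Corollary \ref{cor_extremal_points} (through Lemma \ref{images_extremal_points}) the induced action on $X'$ is then trivial, and since it must still have two pointed ends, $X'$ is a single point. Hence $-K_X=\tfrac{2}{3}nL$ and $X$ is Fano. The equality $\rho_X=1$ is the genuine obstacle: I expect to deduce it from Proposition \ref{onepointend-basic} by proving that exactly one fixed component has $\rk^-=1$ --- the weight-$2$ component $Y$, for which $\mu_{K_X}(Y)=\rk^-(Y)-\rk^+(Y)=n-\tfrac{2}{3}n\,\mu_L(Y)$ combined with $\rk^+(Y)+\rk^-(Y)+\dim Y=n$ gives $\rk^-(Y)=1$ and $\dim Y=\tfrac{2}{3}n-2$ (and dually for the weight-$1$ component). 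Establishing that no further codimension-one Bia\l ynicki--Birula cells occur will require controlling the whole directed graph $\cG$ of the action via Lemma \ref{negligible-fix-pt-cmpt}, Lemma \ref{intersection-dimension}, and the nef-cone description of Theorem \ref{nef_cone_description}; the constraint $n\ge 6$ then follows from positivity of $\dim Y$.
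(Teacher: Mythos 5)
Your overall strategy (use AM--FM to read off intersection numbers of $K_X$ and $L$ with orbit closures, estimate the nef value $\tau$, then feed into Theorem \ref{ionescu}) is indeed the paper's strategy, and several of your steps are sound: the computation $K_X\cdot E=-2n$ on a general orbit, hence $\tau\geq\tfrac23 n$; the rigidity $-K_X=cL\Rightarrow c=\tfrac23 n$; and the observation that contraction of the general orbit forces $\phi_\tau$ to be constant via Lemma \ref{images_extremal_points}. But there is a genuine gap at the center of your case division. You claim that when the general orbit is not contracted, ``matching $\tau$ against Theorem \ref{ionescu} leaves only $\tau=n$ and $\tau=n-1$.'' Theorem \ref{ionescu} classifies nothing below $\tau=n-1$, so this step is vacuous on the whole range $\tfrac23 n<\tau<n-1$: a priori $\tau$ is only the maximum of the orbit ratios, and the type-$C$ ratio $2n-4-(d_1+d_2)$ involves the unknown dimensions of the inner components, so $\tau$ could sit anywhere in $[\tfrac23 n,\,n]$. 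Closing this hole is precisely the hard content of the paper's Lemmas \ref{tau>n-3} and \ref{tau>n-2}: first a bipartite-matching argument with the dimension bound of Lemma \ref{intersection-dimension} gives $\tau\geq n-2$ when $\rho_X>1$, and then the case $\tau=n-2$ with nonconstant $\phi_\tau$ is excluded by a full page of argument (fiber-locus inequality of \cite[Theorem 1.1]{WIS}, Corollary \ref{cor-negligible-fix-pt-cmpt2} forcing $\overline{X^+(Y_1^1)}$ to be contracted, an equidimensional scroll over a smooth threefold via \cite[Lemma 2.12]{FUJITA-Sendai}, and an equivariance contradiction with two pointed ends on the target). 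Nothing in your proposal substitutes for this.

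The second gap is the one you partly acknowledge: in alternative (3) the statements $\rho_X=1$ and $n\geq 6$ are not established. By Proposition \ref{onepointend-basic} and Lemma \ref{bw3-components} one gets $\rho_X=|\cY_2|$, and Lemma \ref{intersection-dimension} with $\dim Y=\tfrac23 n-2$ kills $|\cY_2|\geq 2$ only for $n>6$; the boundary case $n=6$, $\rho_X\geq 2$ survives your graph-theoretic plan, and the paper must invoke \cite[Theorem B]{WIS2} (yielding $(\PP^3\times\PP^3,\cO(1,1))$) and then derive a contradiction by downgrading to a bandwidth-one action on a factor. Likewise ``$n\geq 6$ follows from positivity of $\dim Y$'' is unjustified: $\dim Y=0$ at $n=3$ is not absurd on its face, and the paper excludes it by the equivariant Riemann--Roch computation of Remark \ref{isolated_points}, which forces $a=\rk\Pic X=3$ and lands that case in $(\PP^1\times\PP^1\times\PP^1,\cO(1,1,1))$, i.e., in alternative (2). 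Finally, two of your one-line dismissals conceal real arguments: Ionescu's cases (2)(c) and (2)(d) are not eliminated by comparing $L$-degrees (the paper uses a non-end fixed point in the base surface together with Corollary \ref{cor-negligible-fix-pt-cmpt2}, and for the birational case the equalized bandwidth-$\leq 2$ classification of Theorem \ref{bwleq2classification}); and productness of the quadric bundle is proved, not ``forced by compatibility'' --- the paper shows $X$ is Fano, that the second extremal contraction is a $\PP^1$-bundle by \cite[Corollary 1.3]{WIS}, and that it has two disjoint sections. Also note that in the scroll case the action is not fibrewise: the induced action on the base $\PP^1$ is nontrivial, so the splitting of $\cV$ must be handled equivariantly over a moving base (the paper instead reads off $a_1=1$ and $\sum a_i=n+2$ from the second extremal ray).
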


In the scroll case, we have the standard action of a rank $n$ algebraic torus on $X$; in the quadric bundle case one has the standard action of $\CC^*\times H_r$, with $\CC^*$ acting on $\PP^1$, and $H_r$ a maximal rank torus acting on $\mathcal{Q}^{n-1}$. In Examples \ref{ex-scroll}, \ref{ex-quad_bdl} we will see how the $\CC^*$ action are obtained from a downgrading of the standard action of the respective torus of bigger rank. In Example \ref{ex-sp6}, we present a variety satisfying Theorem \ref{bw3classification} (3). Notice that the classification of varieties satisfying part $(3)$ of the above theorem has been recently reached in \cite{OSCRW2}, using tools from birational and projective geometry. In total, there are four of these varieties, all of them are rational homogeneous; we refer to \cite[Theorem 6.8]{OSCRW2} for their complete list.

\subsection{Examples}\label{sect-bw3examples}
\begin{ex}\label{ex-scroll}
  Let us consider the standard action of $\CC^*$ on $\PP^1$ with
  source at $y_0$ and sink at $y_\infty$. For any line bundle $\cL$
  over $\PP^1$, we can choose its linearization so that
  $\mu_\cL(y_0)=a$ and $\mu_\cL(y_\infty)=a-\deg\cL$, where $a\in\ZZ$
  can be chosen arbitrarily, c.f. (\ref{linearization-exact-sequence})
  and Lemma \ref{AMvsFM}. Given a decomposable bundle $\cV$ over $\PP^1$, we
  can define its linearization by linearizing its components. If
  $\cV=\cO(1)^{n-1}\oplus\cO(3)$ then we linearize $\cO(1)$'s with
  $\mu(y_\infty)=1$ and $\mu(y_0)=2$, while the component $\cO(3)$
  is linearized so that $\mu(y_\infty)=0$ and $\mu(y_0)=3$. This
  determines the action of $\CC^*$ on $X=\PP(\cV)$ with the linearization of the relative $L=\cO(1)$.

  Alternatively, the pair $(X,L)$ can be described as a toric variety
  associated to a polytope $\Delta(L)$ in a lattice $M$ with
  generators $e_i$, $i=1,\dots, n$. We take the vertices of $\Delta$ as
  follows: $0, 3e_1$ and $e_1+e_i, 2e_1+e_i$ for $i>1$. The action of
  $\CC^*$ is defined by a downgrading $M\rightarrow \ZZ$ by the
  projection to the first coordinate.

  A similar construction works for $\cV=\cO(1)^{n-2}\oplus\cO(2)^2$.
  We linearize $\cO(1)$'s as before with $\mu(y_\infty)=1$ and
  $\mu(y_0)=2$, one copy of $\cO(2)$ with $\mu(y_\infty)=0$ and
  $\mu(y_0)=2$, and the other with $\mu(y_\infty)=1$ and $\mu(y_0)=3$.
  Or, alternatively we take $\Delta(L)$ in $M=\bigoplus_{i=1}^n\ZZ e_i$ with
  vertices follows: $0, 2e_1$ and $e_1+e_2, 3e_1+e_2$, and
  $e_1+e_i,2e_1+e_i$ for $i>2$. The action of $\CC^*$ is defined by
  downgrading $M\rightarrow \ZZ$ by the projection to the first
  coordinate.

  The fixed locus has four components: two extremal fixed points
  and two components isomorphic to $\PP^{n-2}$. The chosen
  linearization of the bundle $L$ associates to them the values $0,1,2,3$.

  We note that $K_X+nL=\pi^*\cO(n)$, with $\pi\colon \PP(\cV)\to
  \PP^1$ the natural projection, hence the nef value of the polarized
  variety $(X,L)$ is $n$ and $\pi$ is the adjunction map for $(X,L)$.

  In Figure \ref{fig-scroll} we present schematically the scroll
  situation: the thick black points and line segments are fixed point
  components, the thin line segments are orbits, and the shaded regions
  are fibers of the adjoint morphism over 0 and $\infty$.
\end{ex}
\begin{figure}[htbp]
\begin{tikzpicture}[scale=1.5]
\coordinate (0) at (0,0);
\coordinate (a1) at (0.95,0.9);
\coordinate (a2) at (1.95,0.9);
\coordinate (b1) at (1.05,-0.7);
\coordinate (b2) at (2.05,-0.7);
\coordinate (3) at (3,0);
\path [fill=lightgray] (0)--(a1)--(b1)--(0);
\path [fill=lightgray] (3)--(a2)--(b2)--(3);
\draw [fill] (0) circle [radius=0.04];
\draw [fill] (3) circle [radius=0.04];
\draw [ultra thick] (a1)-- (b1);
\draw [ultra thick] (a2)-- (b2);
\draw [brown] (0)--(3);
\draw [green] (0)--(a1);
\draw [green] (0)--(b1);
\draw [green] (3)--(a2);
\draw [green] (3)--(b2);
\draw [red] (a1)--(a2);
\draw [red] (b1)--(b2);
\draw [->] (1.5,-0.9)--(1.5,-1.4);
\node at (1.7,-1.2) {$\mu$};
\draw [fill] (0,-1.5) circle [radius=0.04];
\draw [fill] (1,-1.5) circle [radius=0.04];
\draw [fill] (2,-1.5) circle [radius=0.04];
\draw [fill] (3,-1.5) circle [radius=0.04];
\draw (0,-1.5)--(3,-1.5);
\end{tikzpicture}
\caption{Scroll case: fixed points, orbits, linearization}\label{fig-scroll}
\end{figure}

\begin{ex}\label{ex-quad_bdl}
  For $r\geq 2$, let $H_r$ be a torus $(\CC^*)^r$ with lattice of
  characters $M=\bigoplus_{i=1}^r\ZZ e_i$. The standard action of
  $H_r\subset SO_{2r}, SO_{2r+1}$ on the quadrics $\mathcal{Q}$ denoting
  $\mathcal{Q}^{2r-2}$ or $\mathcal{Q}^{2r-1}$ has a natural linearization on $\cO(1)$,
  so that $\Delta(\mathcal{Q},\cO(1),H_r)$ has vertices $\pm e_i$. Take $\PP^1$ with
  the standard action of $\CC^*$, and the linearization of $\cO(1)$
  with weights $(1,2)$. Consider $X=\PP^1\times\mathcal{Q}$ with the induced
  action of the product $\CC^*\times H_r$, and the lattice of
  characters $\ZZ e_0 \oplus M$. If $L=\cO(1,1)$ then the induced
  linearization yields $\Delta(\mathcal{Q}, L, \CC^*\times H_r)$ with vertices $e_0\pm e_i, 2e_0\pm
  e_i$ for $i>0$.

  Now, we take the action of $\CC^*$ on $X=\PP^1\times\mathcal{Q}$ which is
  obtained by the downgrading associated to the projection
  $\bigoplus_{i=0}^r \ZZ e_i\rightarrow\ZZ$ such that $e_0, e_1
  \mapsto 1$ and $e_i\mapsto 0$ for $i>1$.

  If $\dim X=3$, then $X=\PP^1\times\PP^1\times\PP^1$, $L=\cO(1,1,1)$,
  and the downgrading can be described in a symmetric way as a projection
  of a cube onto one of its diagonals. The action has 8 fixed
  points. We note that in this case $-K_X=2L$ and the associated
  adjoint morphism contracts $X$ to a point. In Figure
  \ref{fig-P1xP1xP1} we present schematically the fixed point set
  together with the orbits of the action, and the associated value of
  the linearization $\mu$ on the fixed point components.
\begin{figure}[htbp]
\begin{tikzpicture}[scale=1.8]
\coordinate (0) at (0,0);
\coordinate (1) at (0.95,0.6);
\coordinate (2) at (0.95,-0.6);
\coordinate (3) at (1.05,0.1);
\coordinate (4) at (1.95,-0.1);
\coordinate (5) at (2.05,0.7);
\coordinate (6) at (2.05,-0.5);
\coordinate (7) at (3,0);
\draw [fill] (0) circle [radius=0.04];
\draw [fill] (1) circle [radius=0.04];
\draw [fill] (2) circle [radius=0.04];
\draw [fill] (3) circle [radius=0.04];
\draw [fill] (4) circle [radius=0.04];
\draw [fill] (5) circle [radius=0.04];
\draw [fill] (6) circle [radius=0.04];
\draw [fill] (7) circle [radius=0.04];
\draw [brown, semithick] (0)--(7);
\draw [green] (0)--(1);
\draw [green] (0)--(2);
\draw [green] (0)--(3);
\draw [green] (7)--(4);
\draw [green] (7)--(5);
\draw [green] (7)--(6);
\draw [red] (1)--(5);
\draw [red] (1)--(4);
\draw [red] (2)--(6);
\draw [red] (2)--(4);
\draw [red] (3)--(5);
\draw [red] (3)--(6);
\draw [blue, ultra thin] (0)--(4);
\draw [blue, ultra thin] (0)--(5);
\draw [blue, ultra thin] (0)--(6);
\draw [blue, ultra thin] (7)--(1);
\draw [blue, ultra thin] (7)--(2);
\draw [blue, ultra thin] (7)--(3);
\draw [->] (1.5,-0.9)--(1.5,-1.4);
\node at (1.7,-1.2) {$\mu$};
\draw [fill] (0,-1.5) circle [radius=0.04];
\draw [fill] (1,-1.5) circle [radius=0.04];
\draw [fill] (2,-1.5) circle [radius=0.04];
\draw [fill] (3,-1.5) circle [radius=0.04];
\draw (0,-1.5)--(3,-1.5);
\end{tikzpicture}
\caption{$\PP^1\times\PP^1\times\PP^1$ with diagonal $\CC^*$ action}\label{fig-P1xP1xP1}
\end{figure}
\par\medskip

If $\dim X =n > 4$, then the induced action of $\CC^*$ has two pointed
ends and there are two fixed components associated to each weight 1
and 2: one is given by an isolated point, the other one is
isomorphic to $\mathcal{Q}^{n-3}$. In particular, for $n=4$ the two fixed
components associated to the weight 1 are an isolated point, and
another one isomorphic to $\PP^1$ with the restriction of $L$ being
$\cO(2)$, and the same holds for the fixed components associated to the
weight 2.

The nef value of the pair $(X,L)$ is $n-1$ with the adjoint map being
the projection $X\rightarrow \PP^1$.

In Figure \ref{fig-P1xQn} we present schematically the fixed point
locus together with the orbits of the action, and the associated value
of the linearization $\mu$ on the fixed point components. The two
shaded regions present the fibers of the adjoint morphism over the
fixed points of the $\CC^*$ action on $\PP^1$, that is $0$ and
$\infty$.
\begin{figure}[htbp]
\begin{tikzpicture}[scale=1.8]
\coordinate (0) at (0,0);
\coordinate (1) at (0.9,0.6);
\coordinate (2) at (0.95,-0.6);
\coordinate (3) at (1.1,0.1);
\coordinate (4) at (1.9,-0.1);
\coordinate (5) at (2.1,0.7);
\coordinate (6) at (2.05,-0.5);
\coordinate (7) at (3,0);
\path [fill=lightgray] (0)--(1)--(4)--(2)--(0);
\path [fill=lightgray] (7)--(6)--(3)--(5)--(7);
\draw [fill] (0) circle [radius=0.04];
\draw [fill] (3) circle [radius=0.04];
\draw [fill] (4) circle [radius=0.04];
\draw [fill] (7) circle [radius=0.04];
\draw [ultra thick] (1)--(2);
\draw [ultra thick] (5)-- (6);
\draw [brown, semithick] (0)--(7);
\draw [green] (0)--(1);
\draw [green] (0)--(2);
\draw [green] (0)--(3);
\draw [green] (7)--(4);
\draw [green] (7)--(5);
\draw [green] (7)--(6);
\draw [red] (1)--(5);
\draw [red] (1)--(4);
\draw [red] (2)--(6);
\draw [red] (2)--(4);
\draw [red] (3)--(5);
\draw [red] (3)--(6);
\draw [blue] (0)--(4);
\draw [blue, ultra thin] (0)--(5);
\draw [blue, ultra thin] (0)--(6);
\draw [blue, ultra thin] (7)--(1);
\draw [blue, ultra thin] (7)--(2);
\draw [blue] (7)--(3);
\draw [->] (1.5,-0.9)--(1.5,-1.4);
\node at (1.7,-1.2) {$\mu$};
\draw [fill] (0,-1.5) circle [radius=0.04];
\draw [fill] (1,-1.5) circle [radius=0.04];
\draw [fill] (2,-1.5) circle [radius=0.04];
\draw [fill] (3,-1.5) circle [radius=0.04];
\draw (0,-1.5)--(3,-1.5);
\end{tikzpicture}
\caption{$\PP^1\times\mathcal{Q}$ with $\CC^*$ action}\label{fig-P1xQn}
\end{figure}
\end{ex}
\par\medskip
\begin{ex}\label{ex-sp6} 
  Consider the $Sp_6$ homogeneous variety, namely the Lagrangian Grassmannian of isotropic planes in $\PP^5$. We verify that such a variety satisfies Theorem \ref{bw3classification} (3). It has dimension 6 and
  lives in $\PP^{13}$ (see \cite[$\S$17.1]{FULTON_HARRIS} for details about this variety). The
  representation of dimension 14 is associated to the highest weight
  $(1,1,1)$. The action of the big torus in $Sp_6$, which is of rank 3, 
  has 8 fixed points associated to the Weyl group orbit of the
  dominant weight. The weights associated to fixed points yield a cube
  in the weight space. We take the downgrading associated to the projection
  of the cube onto a long diagonal. The resulting $\CC^*$ action has
  fixed point locus which consists of two isolated points (the source
  and the sink) and two copies of $\PP^2$. A schematic picture is
  presented in Figure \ref{fig-Sp6}, with shaded triangles denoting
  the surface components of the fixed point set. The adjoint morphism contracts the variety to a point. We refer the interest reader to \cite[$\S$6]{OSCRW2} for the complete treatment and classification of all the varieties satisfying Theorem \ref{bw3classification} (3). 
\begin{figure}[htbp]
\begin{tikzpicture}[scale=1.8]
\coordinate (0) at (0,0);
\coordinate (1) at (0.8,0.6);
\coordinate (2) at (0.9,-0.6);
\coordinate (3) at (1.2,0.1);
\coordinate (4) at (1.8,-0.1);
\coordinate (5) at (2.2,0.7);
\coordinate (6) at (2.1,-0.5);
\coordinate (7) at (3,0);
\path [fill=darkgray] (1)--(3)--(2)--(1);
\path [fill=darkgray] (6)--(4)--(5)--(6);
\draw [fill] (0) circle [radius=0.04];
\draw [fill] (7) circle [radius=0.04];
\draw [brown, semithick] (0)--(7);
\draw [green] (0)--(1);
\draw [green] (0)--(2);
\draw [green] (0)--(3);
\draw [green] (7)--(4);
\draw [green] (7)--(5);
\draw [green] (7)--(6);
\draw [red] (1)--(5);
\draw [red] (1)--(4);
\draw [red] (2)--(6);
\draw [red] (2)--(4);
\draw [red] (3)--(5);
\draw [red] (3)--(6);
\draw [blue] (0)--(4);
\draw [blue, ultra thin] (0)--(5);
\draw [blue, ultra thin] (0)--(6);
\draw [blue, ultra thin] (7)--(1);
\draw [blue, ultra thin] (7)--(2);
\draw [blue] (7)--(3);
\draw [->] (1.5,-0.9)--(1.5,-1.4);
\node at (1.7,-1.2) {$\mu$};
\draw [fill] (0,-1.5) circle [radius=0.04];
\draw [fill] (1,-1.5) circle [radius=0.04];
\draw [fill] (2,-1.5) circle [radius=0.04];
\draw [fill] (3,-1.5) circle [radius=0.04];
\draw (0,-1.5)--(3,-1.5);
\end{tikzpicture}
\caption{The $Sp_6$ homogeneous variety}\label{fig-Sp6}
\end{figure}

\end{ex}

\begin{rem} \label{isolated_points} In the case in which all fixed points
  are isolated points, we can apply BB decomposition and equivariant
  cohomology. Assume that $n\geq 3$. Under the Assumptions \ref{ass-bw3}, the
  equivariant Riemann-Roch gives the formula for
  $\chi_m(t)=\chi(X,L^m)$ (see \cite[Corollary A.3]{B_W}):
  $$\chi_m(t)=\frac{1}{(1-t)^n}+a\frac{t^m}{(1-t^{-1})(1-t)^{n-1}}
  +a\frac{t^{2m}}{(1-t)(1-t^{-1})^{n-1}}+\frac{t^{3m}}{(1-t^{-1})^{n}}$$
  where $a=\rank \Pic X$ is the number of fixed points associated to
  the weights 1 and 2.  In fact, because of the BB decomposition, $X$ has
  pure cohomology and $\chi(\cO_X)=1$; therefore
  $$\frac{1}{(1-t)^n}+a\frac{1}{(1-t^{-1})(1-t)^{n-1}}
  +a\frac{1}{(1-t)(1-t^{-1})^{n-1}}+\frac{1}{(1-t^{-1})^{n}}=1.$$ From
  this, multiplying by $(1-t)^n(1-t^{-1})^n$ we get the equality
  $$(1-t)^n+(1-t^{-1})^n+a(2-t-t^{-1})\cdot\left((1-t^{-1})^{n-2}+(1-t)^{n-2})\right)=
  (2-t-t^{-1})^n.$$ For $n=2$ we get $a=1$, while for $n=3$ we get
  $a=3$.  Let us assume $n\geq 4$ and write the highest terms of the
  left-hand side
  $$(-t)^n+(n+a)\cdot(-t)^{n-1}+\left(\binom{n}{2}+na\right)\cdot(-t)^{n-2}+\cdots$$
  while the highest terms of the right-hand side are
  $$(-t)^n+2n\cdot(-t)^{n-1}+\left(4\binom{n}{2}+n\right)\cdot(-t)^{n-2}+\cdots$$
  Comparing the second and the third term we see that for $n\geq 4$
  there is no solution.  For $n=a=3$ we have the case of
  $\PP^1\times\PP^1\times\PP^1$.
 \end{rem}


 \section{Classification of bandwidth 3
   varieties}\label{Class_bandwidth3}
 This section is devoted to prove Theorem
 \ref{bw3classification}. Let us keep Assumptions \ref{ass-bw3}, where
 we consider a normalized linearization $\mu$ so that we define
 $\cY_i:=\{Y\in \cY: \mu(Y)=i\}$ for $i=0,1,2,3$. All the fixed
 components in $\cY_1$ and $\cY_2$ are called \textit{inner
   components}. Using this notation, we will denote by $Y_0=\{y_0\}$
 and $Y_3=\{y_3\}$ respectively the sink and the source of the
 $\CC^{*}$ action. In view of Lemma \ref{BW3_surfaces}, from now on we suppose that $n\geq 3$.
 \begin{lemma}\label{bw3-components}
   In the situation of Assumptions \ref{ass-bw3} we have
   $\cY_1\ne\emptyset\ne\cY_2$. Moreover, in notation of Theorem
   \ref{thm_ABB-decomposition}, for every $Y_1\in\cY_1$, $Y_2\in\cY_2$
   we have $\rk^+(Y_1)=1=\rk^-(Y_2)$.
\end{lemma}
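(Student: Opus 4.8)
The plan is to extract everything from the AM$\leftrightarrow$FM equality (Corollary \ref{AMvsFMonX}) together with the Bia{\l}ynicki--Birula structure, and to reduce the two rank claims to a single one by the symmetry $t\mapsto t^{-1}$, which exchanges $\cY_1\leftrightarrow\cY_2$, source $\leftrightarrow$ sink and $\rk^+\leftrightarrow\rk^-$.

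First I would record two preliminary facts. Since the action is equalized, Corollary \ref{AMvsFMonX}(c) gives that $\mu_L$ strictly decreases along every orbit closure from its source to its sink; hence the component of maximal (resp.\ minimal) weight has $\rk^-=0$ (resp.\ $\rk^+=0$), so its $X^+$-cell (resp.\ $X^-$-cell) has dimension $n$ and is therefore dense. As the source and the sink are the unique components with dense $X^+$-, resp.\ $X^-$-cell, this forces $\cY_3=\{y_3\}$ and $\cY_0=\{y_0\}$, i.e.\ the only weight-$3$ component is the source and the only weight-$0$ component is the sink. The second fact is that, by equalization, the natural linearization of $K_X$ satisfies $\mu_{K_X}(Y)=\rk^-(Y)-\rk^+(Y)$ (the fibre weight of $K_X$ is minus the sum of the tangent weights), so in particular $\mu_{K_X}(y_3)=-n$ and $\mu_{K_X}(y_0)=n$. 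By the displayed symmetry it then suffices to prove $\cY_1\neq\emptyset$ and $\rk^+(Y_1)=1$ for every $Y_1\in\cY_1$.

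For the rank statement, take $Y_1\in\cY_1$. As $Y_1$ is not the sink we have $\rk^+(Y_1)\geq 1$, so orbits leave $Y_1$; each of them ends in a component of weight $<1$, i.e.\ of weight $0$, which is the single point $y_0$. Thus $Y_1$ is succeeded in $\cG$ by the one-point component $y_0$, so condition (1) of Lemma \ref{negligible-fix-pt-cmpt} holds, and condition (3) yields that $\cN^+(Y_1)=T^+$ is an ample line bundle, that is $\rk^+(Y_1)=1$; by symmetry $\rk^-(Y_2)=1$ for all $Y_2\in\cY_2$. It remains to show nonemptiness. By Proposition \ref{onepointend-basic}, $\Pic X\cong\ZZ^{\rho}$ with $\rho=\#\{Y\in\cY:\rk^-(Y)=1\}$, and ampleness of $L$ forces $\rho\geq 1$, so there is at least one inner component; this already excludes weights $\{0,3\}$. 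By the symmetry it is enough to rule out $\cY_2=\emptyset$, i.e.\ weights contained in $\{0,1,3\}$. In that case every $Y_1\in\cY_1$ is preceded only by the weight-$3$ point $y_3$, so the opposite version of Lemma \ref{negligible-fix-pt-cmpt} gives $\rk^-(Y_1)=1$; together with $\rk^+(Y_1)=1$ this forces $\dim Y_1=n-2$ and $\mu_{K_X}(Y_1)=0$. If there were two such components, Lemma \ref{intersection-dimension} (both are succeeded by $y_0$) would give $2(n-2)\leq n-2$, impossible for $n\geq 3$; hence $\cY_1=\{Y_1\}$ and $\rho=1$, so $\NN_1(X)=\RR$. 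I would then compare a general orbit $C$ joining $y_3$ to $y_0$, for which $L\cdot C=3$ and $-K_X\cdot C=2n$ (AM$\leftrightarrow$FM applied to $L$ and to $K_X$), with an orbit $C'$ joining $Y_1$ to $y_0$, for which $L\cdot C'=1$ and, since $\mu_{K_X}(Y_1)=0$, $-K_X\cdot C'=n$. Proportionality in $\NN_1(X)$ forces $[C]=3[C']$, whence $2n=3n$, a contradiction. Thus $\{0,1,3\}$ cannot occur and both $\cY_1,\cY_2$ are nonempty.

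I expect the nonemptiness step to be the only real obstacle. The rank computation and the exclusion of $\{0,3\}$ are immediate once the extreme-weight components are identified, but eliminating $\{0,1,3\}$ genuinely requires two ingredients working in tandem: pinning down $\rho=1$ through Lemma \ref{intersection-dimension}, and then converting the identity $\mu_{K_X}=\rk^--\rk^+$ into a numerical contradiction by testing the AM$\leftrightarrow$FM equality against two different orbit classes. (Note that in the genuine case $\rho=1$, $-K_X=\tfrac{2}{3}nL$ of Theorem \ref{bw3classification}(3) the same comparison is consistent, which is why the argument must use the specific value $\mu_{K_X}(Y_1)=0$ coming from $\dim Y_1=n-2$.)
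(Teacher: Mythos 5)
Your proof is correct, and its architecture matches the paper's almost step for step: nonemptiness of $\cY_1\cup\cY_2$ via the Picard-rank count of Proposition \ref{onepointend-basic}; under the contrary assumption $\cY_2=\emptyset$, the conclusion $\rk^+(Y)=\rk^-(Y)=1$ and $\dim Y=n-2$ from Lemma \ref{negligible-fix-pt-cmpt} (and its reversed version); uniqueness of the inner component from Lemma \ref{intersection-dimension}; and the final rank statement again from Lemma \ref{negligible-fix-pt-cmpt}. The one genuine divergence is the closing contradiction. The paper never touches $K_X$: it uses that $\Pic X=\ZZ\cdot D$ with $D\equiv\overline{X^+(Y)}\equiv\overline{X^-(Y)}$, computes $D\cdot C_1=D\cdot C_2=1$ for the orbit closures $C_1$ (from $Y$ to $y_0$) and $C_2$ (from $y_3$ to $Y$), hence $[C_1]=[C_2]$, contradicting $L\cdot C_1=1\neq 2=L\cdot C_2$ from Corollary \ref{AMvsFMonX}. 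You instead exploit $\rho_X=1$ and test both $L$ and $-K_X$ on a general orbit $E$ and a type-$A$ orbit $C'$, using the equalized weight formula $\mu_{-K_X}(Y)=\rk^+(Y)-\rk^-(Y)$ of \cite[Lemma 3.11]{B_W}; this is exactly the computation the paper postpones to the table of Lemma \ref{bandwidth3-intersection} (columns $A$ with $d_*=n-2$, and $E$), so your ending anticipates that machinery one lemma early, while the paper's ending avoids the anticanonical weights but needs the intersection numbers of orbits with the BB divisors. Your route buys independence from the divisors $\overline{X^\pm(Y)}$ (no claim $D\cdot C_i=1$ is needed), and your sanity check that the same comparison becomes consistent precisely when $-K_X=\frac{2}{3}nL$, as in Theorem \ref{bw3classification}(3), correctly identifies why the value $\mu_{K_X}(Y_1)=0$ is the load-bearing input. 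Two minor observations: your explicit verification that $\cY_0=\{y_0\}$ and $\cY_3=\{y_3\}$ is left implicit in the paper but is indeed required for condition (1) of Lemma \ref{negligible-fix-pt-cmpt}; and your exclusion of a second inner component uses $n\geq 3$, which is the standing hypothesis of Section \ref{Class_bandwidth3} (the case $n=2$ being handled separately by Lemma \ref{BW3_surfaces}) — the paper's own proof depends on this in exactly the same way.
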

\begin{proof}
Firstly, arguing as in the proof of Proposition \ref{onepointend-basic}, we note that for $n>1$ we have
  $\cY_1\cup\cY_2\ne\emptyset$. So, contrary what lemma says, let us
  assume $\cY_2=\emptyset$ and take $Y\in\cY_1$. Then, because of
  Lemma \ref{negligible-fix-pt-cmpt}, one has $\rk^+(Y)=\rk^-(Y)=1$, hence
  $\dim Y=n-2$, and both $\overline{X^+(Y)}$ and $\overline{X^-(Y)}$
  are divisors. Thus, using Lemma \ref{intersection-dimension} we deduce that
  there are no other fixed point components in $\cY_1$, that is
  $\cY_1=\{Y\}$. Again, by Proposition \ref{onepointend-basic},
  divisors $\overline{X^+(Y)}$ and $\overline{X^-(Y)}$ are linearly
  equivalent and $\Pic X=\ZZ \cdot D$, where $D$ is their equivalence
  class. Moreover, if $C_1$ is the closure of an orbit with source at
  $Y$ and sink at $y_0$ and $C_2$ the closure of an orbit with source at
  $y_3$ and sink at $Y$, then $D\cdot C_1=D\cdot C_2=1$. However,
  because of Corollary \ref{AMvsFMonX}, $L\cdot C_1=1$ while $L\cdot
  C_2=2$, a contradiction. The last statement follows again by Lemma
  \ref{negligible-fix-pt-cmpt}.
\end{proof}


\subsection{Orbits}
The following is the graph of closures of possible orbits joining
fixed points components with $Y_1^i\in \cY_1$ and $Y_2^j\in \cY_2$. By
abuse, the orbits and their closures will be called by the same
name. We use the notation $A_*$ to denote that there could be
different orbits of type $A$ joining the component $Y_0$ (or $Y_3$)
with one of the components $Y_1^i\in \cY_1$ (respectively, with one of
the components $Y_2^j\in \cY_2$). In the same way, $B_*$ and $C_*$
denote the possible different orbits of type $B$ and $C$.

$$
\xygraph{
!{<0cm,0cm>;<3cm,0cm>:<0cm,1cm>::}
!{(0,0) }*+{\bullet_{Y_0}}="0"
!{(1,0) }*+{\bullet_{Y_1^i}}="1"
!{(2,0) }*+{\bullet_{Y_2^j}}="2"
!{(3,0)}*+{\bullet_{Y_3}}="3"
"0"-@/^/@[green]"1" _{A_*}
"2"-@/^/@[green]"3" _{A_*}
"0"-@/_1cm/@[blue]"2" _{B_*}
"1"-@/_1cm/@[blue]"3" _{B_*}
"1"-@/^/@[red]"2" _{C_*}
"0"-@/^1cm/@[brown]"3" ^{E}
} $$

\begin{rem}\label{orbits_A_E}
  For $n\geq 2$ orbits of type $A$ and $E$ always exist.  However, not
  all of the above types of orbits always exist:
\begin{enumerate}[leftmargin=*]
\item if $X=\PP^1\times\PP^1$, $L=\cO(1,2)$ then there are no orbits
  of type $C$,
\item if $X=\PP(\cO(1)^{n-1}\oplus\cO(3))$ then there are no orbits of
  type $B$.
\end{enumerate}
\end{rem}

In what follows, if not needed, we will not distinguish curves of
different types $A$ or $B$ and, if no confusion is probable, we
will write $d_*$ for the respective dimension of a component in
$\cY_*$.

\begin{lemma}\label{bandwidth3-intersection}
  Let $(X,L)$ be as in Assumptions \ref{ass-bw3} and let us keep the
  above notation for the possible orbits. The first two rows in the
  following table present the intersection of the closure of the orbit
  of the respective type (the column) with the divisor $L$ and $-K_X$.
  The third row presents the resulting estimate on $\tau$.
$$\begin{array}{c|c|c|c|c}
&\ A_*\ &\ B_*\ &\ C_*\ &\ \ E\ \ \\
\hline
L&1&2&1&3\\
\hline
-K_X&d_*+2&2n-d_*-2&2n-4-(d_1+d_2)&2n\\
\hline
\tau\geq &d_*+2&n-1-\frac{d_*}{2}&2n-4-(d_1+d_2)&\frac{2}{3}n\\
\hline
\end{array}$$
\par\noindent
For curves of type $A$ and $B$, by $d_*$ we denote the dimension of the
corresponding fixed source or sink component in $\cY_1$ or $\cY_2$. For curves of type $C$, by $d_2$ and $d_1$ we denote the
dimension of the sink/source component in $\cY_2$, $\cY_1$,
respectively.
\end{lemma}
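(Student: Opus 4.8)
The plan is to extract both numerical rows from the AM$\leftrightarrow$FM equality and then divide. Since the action is equalized, the closure $C$ of every orbit is a smooth rational curve whose normalization $f\colon\PP^1\to C$ is $\CC^*$-equivariant with ramification weight $\delta=1$ (as recorded just before Corollary \ref{AMvsFMonX}); thus for any linearized line bundle $\cL$ Corollary \ref{AMvsFMonX}(c) applies and gives
$$\deg f^*\cL=\mu_\cL(Y_s)-\mu_\cL(Y_t),$$
where $Y_s$ and $Y_t$ are the source and the sink of the orbit, that is the endpoints carrying the larger and the smaller value of the linearization. Since only differences of weights occur, the character ambiguity in the choice of linearization (see (\ref{linearization-exact-sequence})) plays no role, and I would use this formula for $\cL=L$ and $\cL=K_X$ to fill the first two rows.

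For the $L$-row I would take the normalized linearization $\mu$, whose values on $\cY_0,\cY_1,\cY_2,\cY_3$ are $0,1,2,3$. Reading the endpoints of each orbit off the graph, the source-minus-sink difference of $\mu$ is $1$ for types $A_*$ and $C_*$ (adjacent weights), $2$ for $B_*$ and $3$ for $E$, giving the first row.

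For the $-K_X$-row the one nontrivial ingredient is the weight $\mu_{K_X}(Y)$ at each fixed component; being $K_X$ the determinant of the cotangent bundle, this weight equals the negative of the sum of the $\CC^*$-weights on the fibre of $TX_{|Y}$. Because the action is equalized, these weights are $0$ along $TY$ (multiplicity $\dim Y$), $+1$ along $\cN^+(Y)$ (multiplicity $\rk^+(Y)$) and $-1$ along $\cN^-(Y)$ (multiplicity $\rk^-(Y)$). At the source $y_3$ all $n$ weights equal $+1$, at the sink $y_0$ all equal $-1$; for an inner component $Y_1\in\cY_1$ of dimension $d_1$ Lemma \ref{bw3-components} gives $\rk^+(Y_1)=1$, whence $\rk^-(Y_1)=n-1-d_1$, and symmetrically $\rk^-(Y_2)=1$, $\rk^+(Y_2)=n-1-d_2$ for $Y_2\in\cY_2$. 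Substituting these multiplicities and taking the source-minus-sink difference of $\mu_{-K_X}$ along each of the six edge types produces the stated intersection numbers; for example a curve of type $C$ runs from $Y_2$ (source) down to $Y_1$ (sink) and yields $(n-d_2-2)-(d_1+2-n)=2n-4-(d_1+d_2)$, while a curve of type $E$ yields $n-(-n)=2n$.

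The third row is then immediate: since $L$ is ample, $\tau=\sup_{C}\frac{-K_X\cdot C}{L\cdot C}$, so each individual orbit of a given type gives the lower bound obtained by dividing its $-K_X$-value by its $L$-value. I do not expect a real obstacle here; the whole statement is bookkeeping resting on two facts already in hand, namely that equalization forces smooth $\PP^1$ orbit closures with $\delta=1$, and the equalities $\rk^+(Y_1)=\rk^-(Y_2)=1$ from Lemma \ref{bw3-components}. The only place demanding care will be the consistent orientation of each orbit, i.e. correctly identifying which endpoint is the source (larger $\mu$) and which is the sink, so that the signs in the weight differences come out right.
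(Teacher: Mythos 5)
Your proposal is correct and follows essentially the same route as the paper: both rows come from Corollary \ref{AMvsFMonX}(c) (valid with $\delta=1$ since the action is equalized), the key weight formula $\mu_{-K_X}(Y)=\rk^+(Y)-\rk^-(Y)$ — which the paper cites from \cite[Lemma 3.11]{B_W} and you re-derive directly from the tangent weights — combines with $\rk^+(Y_1)=\rk^-(Y_2)=1$ from Lemma \ref{bw3-components} to give the $-K_X$ row, and the $\tau$ row follows by nefness of $K_X+\tau L$ on each orbit closure. All your endpoint orientations and arithmetic (e.g.\ $(n-d_2-2)-(d_1+2-n)=2n-4-(d_1+d_2)$ for type $C$) match the paper's table exactly.
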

\begin{proof}
  In view of Corollary \ref{AMvsFMonX}, the values of the first two rows
  are obtained by calculating the difference in $\mu_L$ and
  $\mu_{-K_X}$ at the source and the sink of each of the one
  dimensional orbits of the respective type. The values for $\mu_L$
  are known. Here, by $\mu_{-K_X}$ we denote the natural linearization
  of $-K_X$. Since the action is equalized, by \cite[Lemma 3.11]{B_W}
  we have $$\mu_{-K_X}(Y) = \rk^{+}(Y)-\rk^-(Y)$$ where we use the
  notation $\rk^\pm(Y)=\mathcal{N}^\pm(Y)$ as in Theorem
  \ref{thm_ABB-decomposition}. On the other hand, by Lemma
  \ref{negligible-fix-pt-cmpt} one has $\rk^+(Y_1)=\rk^-(Y_2)=1$ for
  $Y_1\in\cY_1, Y_2\in\cY_2$. This allows to compute the values of
  $\mu_{-K_X}$, and thus of the second row in the table. The third row
  is obtained by calculating the value of $K_X+\tau L$ from the two
  previous rows.
\end{proof}
\begin{prop}\label{tau-first_estimate}
  In the situation of Assumptions \ref{ass-bw3} we have the
  following:
  \begin{enumerate}[leftmargin=*]
  \item The space of 1-cycles $\NN_1(X)$ and the cone of curves $\cC(X)$
    are generated by classes of closures of orbits of the $\CC^*$ action.
  \item If $-K_X=\tau L$ then $\tau=\frac{2}{3}n$ and for any
    $Y\in\cY_1\cup\cY_2$ we have $\dim Y=\frac{2}{3}n-2$.
  \item $X$ is a Fano manifold unless there are components
    $Y_1\in\cY_1$, $Y_2\in\cY_2$, both of codimension 2, connected by
    an orbit of type $C$.
  \item If there exists a component of $X^H$ of codimension $2$, then
    $\tau \geq n$.
  \item If a component in $\cY_1\cup\cY_2$ does not meet a curve of
    type $C$, then it is of codimension 2.
  \end{enumerate}
\end{prop}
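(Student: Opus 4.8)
The plan is to prove the five items in turn, keeping the normalization $\mu(Y_0)=0$, $\mu(Y_3)=3$, and first recording a structural fact I will use throughout. In degree $0$ the homology decomposition of Theorem~\ref{thm_ABB-decomposition} reads $H_0(X,\ZZ)=\bigoplus_i H_{-2\rk^+(Y_i)}(Y_i,\ZZ)$, so the only nonzero summands come from components with $\rk^+(Y_i)=0$; since $H_0(X,\ZZ)=\ZZ$ there is exactly one such component, namely the sink. Hence $\cY_0=\{y_0\}$, and dually $\cY_3=\{y_3\}$. Consequently every $Y_1\in\cY_1$ is succeeded only by the single point $y_0$, and every $Y_2\in\cY_2$ is preceded only by the single point $y_3$.

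For (1) I would invoke Lemma~\ref{curve-cone-generation}: the cone $\cC(X)$ is spanned by orbit closures together with curves inside $X^{\CC^*}$, and only the inner components can carry curves. For $Y_1\in\cY_1$ we have $\rk^+(Y_1)=1$ by Lemma~\ref{bw3-components}, and $Y_1$ is succeeded only by $y_0$, so condition (1) of Lemma~\ref{negligible-fix-pt-cmpt} holds; Corollary~\ref{cor-negligible-fix-pt-cmpt2} then shows curves in $Y_1$ are numerically proportional to the type-$A$ orbit closures joining $Y_1$ to $y_0$. The dual argument, using $\rk^-(Y_2)=1$ and that $Y_2$ is preceded only by $y_3$, treats $\cY_2$. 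Thus every generator of $\cC(X)$ is an orbit class, and the same holds for the $\RR$-span $\NN_1(X)$.

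The numerical engine for (2)--(5) is the bound $\dim Y\le n-2$ for every inner $Y$, combined with the table of Lemma~\ref{bandwidth3-intersection}. To get the bound for $Y_1\in\cY_1$: if $\rk^-(Y_1)=0$ then $\dim\overline{X^+(Y_1)}=\dim Y_1+\rk^+(Y_1)=n$, so $X^+(Y_1)$ is dense and $Y_1$ would be the source, contradicting $\mu(Y_1)=1$; hence $\rk^-(Y_1)\ge 1$ and $\dim Y_1=n-1-\rk^-(Y_1)\le n-2$, and dually for $\cY_2$. For (2), intersecting $-K_X=\tau L$ with a type-$E$ orbit gives $\tau=(-K_X\cdot E)/(L\cdot E)=\frac{2}{3}n$, while intersecting with a type-$A$ orbit attached to any inner $Y$ (one exists since $\rk^{\pm}=1$) gives $\tau=\dim Y+2$, whence $\dim Y=\frac{2}{3}n-2$. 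For (3), by (1) the cone of curves is generated by orbit classes, so $X$ is Fano exactly when $-K_X$ has positive degree on every orbit closure; reading Lemma~\ref{bandwidth3-intersection} and using $\dim\le n-2$, types $A,B,E$ always give strictly positive values, whereas a type-$C$ orbit gives $-K_X\cdot C=2n-4-(d_1+d_2)\ge 0$ with equality precisely when $d_1=d_2=n-2$, which is therefore the only obstruction. For (4), a codimension-$2$ inner component meets a type-$A$ orbit and the third row of the table yields $\tau\ge\dim Y+2=n$. For (5), if $Y_1\in\cY_1$ meets no type-$C$ curve then every in-orbit comes from $y_3$, so $Y_1$ is preceded only by the single point $y_3$; the dual of Lemma~\ref{negligible-fix-pt-cmpt} forces $\rk^-(Y_1)=1$, hence $\dim Y_1=n-2$, and dually for $\cY_2$ using $y_0$.

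The main obstacle is not any single computation but the bookkeeping that makes the reductions legitimate: verifying that each inner component is succeeded (resp. preceded) by a single \emph{isolated} point, so that Lemma~\ref{negligible-fix-pt-cmpt} and Corollary~\ref{cor-negligible-fix-pt-cmpt2} genuinely apply, and pinning down the estimate $\rk^{\mp}(Y)\ge 1$ that yields $\dim Y\le n-2$. Once these are secured, items (2)--(5) follow mechanically from the intersection table, and the Fano criterion in (3) rests entirely on part (1).
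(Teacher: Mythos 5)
Your proof is correct and follows essentially the same route as the paper's: item (1) via Lemma~\ref{curve-cone-generation} together with Corollary~\ref{cor-negligible-fix-pt-cmpt2}, and items (2)--(5) by reading off the intersection table of Lemma~\ref{bandwidth3-intersection} combined with the bound $d_*\leq n-2$ and Lemma~\ref{negligible-fix-pt-cmpt} (respectively its reversed form) exactly as in the paper. The only deviations are harmless: you justify explicitly, via the $H_0$ term of the BB homology decomposition, that $\cY_0=\{y_0\}$ and $\cY_3=\{y_3\}$ and that $\rk^{\mp}\geq 1$ on inner components --- facts the paper leaves implicit --- and in (2) you compute with the type-$A$ column where the paper uses type-$B$, which is if anything slightly safer since by Remark~\ref{orbits_A_E} type-$A$ orbits always exist while type-$B$ orbits need not.
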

\begin{proof} Claim $(1)$ follows by Corollary
  \ref{cor-negligible-fix-pt-cmpt2}. To prove $(2)$ we note that
  the orbits of type $E$ are general, thus they always exist so, by Lemma
  \ref{bandwidth3-intersection}, using the column associated to $E$, we
  have $$-K_X\cdot E=2n=\tau L\cdot E=3 \tau$$ while using other
  entries in the table we get $2n-d_*-2=\frac{4}{3}n$. In order to
  show $(3)$ we use Lemma \ref{bandwidth3-intersection} again, and look at
  the intersection of orbits with $-K_X$, where we recall that $d_*\leq
  n-2$. Part $(4)$ follows by the existence of orbits of type $A$, and
  by the respective entries at the last row of the above table.
  Finally, assume that $Y\in\cY_1\cup\cY_2$ does not meet a curve of
  type $C$. Then, by Lemma \ref{negligible-fix-pt-cmpt} we get
  $\rank\cN^\pm(Y)=1$, hence $\dim{Y}=n-2$ and we obtain $(5)$.
\end{proof}

\subsection{$\tau\geq n$, scroll over a curve} \label{tau>=n} If
$\tau\geq n$, then because of Proposition \ref{tau-first_estimate}
$(2)$, we know that $(X,L)$ is neither $(\PP^n,\cO(1))$ nor
$(\mathcal{Q}^n,\cO(1))$.  Thus by Remark \ref{tau_integer} and Theorem
\ref{ionescu}, it follows that $(X,L)$ is a scroll over $\PP^1$. This is the first
claim in the following.

\begin{lemma}\label{scroll_onP1}
  If the nef value of the pair $(X,L)$ is $\geq n$ then this pair is a scroll
  over $\PP^1$ as described in case (1) of Theorem
  \ref{bw3classification}.
\end{lemma}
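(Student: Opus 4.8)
The plan is to continue from the preceding paragraph, which already isolates the geometric type of $(X,L)$. Since $\tau\geq n$, the non‑integral cases of Remark \ref{tau_integer} are excluded (they all have $\tau<n$), so $\tau\in\{n,n+1\}$; the value $n+1$ would give $(\PP^n,\cO(1))$, excluded by Proposition \ref{tau-first_estimate}(2) because $-K_X=(n+1)L$ would force $\tau=\tfrac23 n$, and likewise $(\mathcal{Q}^n,\cO(1))$ is excluded. Hence $\tau=n$, and Theorem \ref{ionescu}(1) realizes the adjunction morphism $\phi_\tau=\pi$ as a $\PP^{n-1}$-bundle $\pi\colon X=\PP(\cV)\to B$ over a smooth curve $B$ with $L$ the relative $\cO(1)$. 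First I would pin down $B\cong\PP^1$: by Proposition \ref{descending_action} the $\CC^*$ action descends to $B$ with $\pi$ equivariant, and by Corollary \ref{cor_extremal_points} this action has two pointed ends. It is nontrivial, since the source cell $X^+(y_3)\cong\CC^n$ is dense (Theorem \ref{thm_ABB-decomposition}) and so has dense image in $B$, whereas a trivial action on $B$ would force $\pi(X^+(y_3))=\{\pi(y_3)\}$. A smooth projective curve carrying a nontrivial $\CC^*$ action is $\PP^1$, so $B\cong\PP^1$; write $0_B,\infty_B$ for its source and sink.

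Next I would determine the fixed locus through the two invariant fibres $F_0=\pi^{-1}(0_B)$ and $F_\infty=\pi^{-1}(\infty_B)$, which contain all of $X^{\CC^*}$ as the fixed points lie over $\{0_B,\infty_B\}$. The global source $y_3$ lies in $F_0$, and since the big cell meets $F_0$ in a dense open subset, $y_3$ is the source of the induced action on $F_0\cong\PP^{n-1}$. Equalization (Definition \ref{def-equalized}) forces every weight on $T_{y_3}X$, hence on $T_{y_3}F_0$, to equal $+1$; a $\CC^*$ action on $\PP^{n-1}$ whose isolated source has all tangent weights $+1$ has only two fibre weights, differing by one, so $F_0^{\CC^*}=\{y_3\}\sqcup\PP^{n-2}$. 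Applying Corollary \ref{AMvsFMonX}(c) to a line in $F_0$ from $y_3$ to this $\PP^{n-2}$ gives it $\mu_L=2$, so it is the unique component of $\cY_2$. Running the argument for the opposite action yields $F_\infty^{\CC^*}=\{y_0\}\sqcup\PP^{n-2}$ with the hyperplane in $\cY_1$ ($\mu_L=1$). Since $X^{\CC^*}\subset F_0\cup F_\infty$, the inner components are exactly two copies of $\PP^{n-2}$, and $\cY_1,\cY_2$ are singletons (consistent with Lemma \ref{bw3-components}).

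It remains to identify $\cV$. I would first check that $\pi$ is equalized over the base, i.e. the tangent weight of $B$ at $0_B$ is $1$: the equivariant surjection $d\pi\colon T_{y_3}X\to T_{0_B}B$ has pure weight $+1$ on the source, so the quotient $T_{0_B}B$ also has weight $+1$. Splitting $\cV=\bigoplus_i\cO(a_i)$ $\CC^*$-equivariantly into equivariant line bundles (available for a torus-equivariant bundle on $\PP^1$), each summand gives a smooth invariant section $\sigma_i=\PP(\cL_i)$ joining a fixed point of $F_0$ to one of $F_\infty$, hence a smooth orbit closure. By Corollary \ref{AMvsFMonX}(c), $a_i=L\cdot\sigma_i=\mu_L(\text{the }F_0\text{-end})-\mu_L(\text{the }F_\infty\text{-end})$, with $F_0$-values drawn from the multiset $\{3,2^{(n-1)}\}$ and $F_\infty$-values from $\{1^{(n-1)},0\}$. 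Enumerating the possible pairings: the unique weight-$0$ end either matches the weight-$3$ end, giving degrees $\{3,1^{(n-1)}\}$, i.e.\ $\cV=\cO(1)^{n-1}\oplus\cO(3)$; or it matches a weight-$2$ end, which forces the weight-$3$ end to pair with a weight-$1$ end, giving $\{2,2,1^{(n-2)}\}$, i.e.\ $\cV=\cO(1)^{n-2}\oplus\cO(2)^2$. These are exactly the two bundles in case (1).

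The main obstacle I anticipate is the bookkeeping in this last step: correctly relating the linearization $\mu_L$ on $X=\PP(\cV)$ to the $\CC^*$-weights on the fibres of $\cV$ (a convention-sensitive sign, which I sidestep by reading $a_i=L\cdot\sigma_i$ off the invariant sections via AM--FM), and justifying the $\CC^*$-equivariant splitting of $\cV$ over $\PP^1$. The remaining steps---excluding $\PP^n$ and the quadric, forcing $B\cong\PP^1$, and reading the local model at the source and sink---are dictated by equalization and the AM--FM equality and should be routine.
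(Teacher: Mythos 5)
Your proof is correct, but it reaches the conclusion by a genuinely different route than the paper once the scroll structure $\phi_n\colon X=\PP(\cV)\to\PP^1$ is established. The paper stays Mori-theoretic: it reads off from the intersection table of Lemma \ref{bandwidth3-intersection} that $\phi_n$ contracts exactly the type-$A$ orbits, deduces $d_*=n-2$ and hence (via a homology count as in Proposition \ref{onepointend-basic} and $\rho_X=2$) that $X^{\CC^*}=\{y_0\}\sqcup Y_1\sqcup Y_2\sqcup\{y_3\}$ with $Y_i\cong\PP^{n-2}$ sitting as hyperplanes in the $\PP^{n-1}$-fibers (using the cone description of Lemma \ref{negligible-fix-pt-cmpt}); it then produces type-$C$ orbits generating the second extremal ray with $(-K_X)\cdot C=0$, and pins down $\cV$ from the two numbers $a_1=L\cdot C=1$ and $\sum a_i=n+2$. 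You instead argue fiberwise: you restrict the action to the two invariant fibers $F_0,F_\infty$, use equalization to force the tangent representation at the isolated source/sink to be pure of weight $\pm1$ (so the linearized action on $\PP^{n-1}$ has exactly two eigenvalues differing by one), read off the whole fixed locus, and then determine $\cV$ by splitting it $\CC^*$-equivariantly and computing the degrees $a_i$ of the invariant sections via AM--FM, enumerating the two possible pairings of endpoint weights $\{3,2^{(n-1)}\}$ with $\{1^{(n-1)},0\}$. Your route is more elementary in that it bypasses the second extremal contraction, Theorem \ref{nef_cone_description} and Lemma \ref{intersection-dimension} entirely, and it makes the fixed-locus geometry transparent; the price is two imported facts that you correctly flag: the equivariant Grothendieck splitting of a linearized bundle on $\PP^1$ (standard, e.g.\ via Klyachko-type filtrations, but not proved in the paper), and the linearizability of the fiber action, which you can get cheaply from the restriction of $\mu_L$ since $L_{|F_0}\cong\cO(1)$. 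Two small points worth making explicit in a write-up: the openness of the source cell $X^+(y_3)$ (it is locally closed of dimension $n$) is what makes your density argument for ``$y_3$ is the source of $F_0$'' work, and the identification of which end of each section $\sigma_i$ is its source uses Lemma \ref{images_extremal_points} so that the AM--FM difference is taken with the correct sign; with those spelled out your argument is complete and yields exactly the same two splitting types as the paper's computation $a_1=1$, $\sum a_i=n+2$.
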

\begin{proof}
  By the above discussion, we deduce that $(X,L)$ is a scroll, so we
  know that there exist curves contracted by the adjoint morphism, passing through the end points. By looking at the table of Lemma
  \ref{bandwidth3-intersection}, we see that the intersection of
  $K_X+nL$ with curves of type $B_*$ and $E$ is positive, hence curves
  of type $A_*$ are contracted. Using the same table to compute the intersection number with such curves, we get $$(K_X+nL)\cdot A_*=-d_*-2+n L\cdot A_*=0.$$ Since by Corollary \ref{AMvsFMonX} (c) we know that $L\cdot A_*=1$, we deduce that $d_*=n-2$, namely there exist $Y_1\in\cY_1$ and
  $Y_2\in\cY_2$ which have dimension $n-2$. Hence $\rank
  \cN^{\pm}(Y_i)=1$, and being $\rho_X=2$, arguing as in the proof of Proposition \ref{onepointend-basic} we conclude that $X^{\CC^*}=\{y_0\}\sqcup
  Y_1\sqcup Y_2\sqcup \{y_3\}$.  Moreover, $\overline{X^+(Y_1)}$ and
  $\overline{X^-(Y_2)}$ are divisors and fibers of the adjoint
  morphism $\phi:=\phi_{n}\colon X\to \PP^1$, then they are isomorphic
  to $\PP^{n-1}$. Since by Lemma \ref{negligible-fix-pt-cmpt} these
  divisors are respectively cones over $Y_{1}$ and $Y_2$, and $X$ is a
  scroll, it follows that $Y_1\cong Y_2 \cong \PP^{n-2}$.
  
 We observe that there exist orbits of type $C$ joining $Y_1$ and
  $Y_2$, otherwise we reach a contradiction using Lemma
  \ref{intersection-dimension}. By Proposition \ref{onepointend-basic}, we deduce that $X$ is a rational 
 scroll, hence it has another contraction. Therefore, by Theorem
  \ref{nef_cone_description}, the curves of type $C$ generate the
  other ray of the cone $\cC(X)$ whose intersection with $-K_X$ is
  zero, as we see from the table in Lemma \ref{bandwidth3-intersection}.  Let
  $\cV=\phi_*L$, thus $X=\PP(\cV)$ and if we write
  $\cV=\cO(a_1)\oplus\cdots\oplus\cO(a_n)$ with $0< a_1\leq\cdots\leq
  a_n$, then the other contraction of $X$ contracts sections of
  $\phi$ associated to the smallest summand in this
  decomposition. Hence, $1=L\cdot C=a_1$ and because
  $K_X+nL=\phi^*\cO(\deg\cV-2)$ one has $0=K_X\cdot C=\sum a_i -n-2$
  from which we get both possible splitting types of $\cV$ as in
  Theorem \ref{bw3classification} (1).
\end{proof}

\begin{lemma}\label{components}
  Suppose that $|\cY_i|=1$ for either $i=1$ or $i=2$. Then either
  $\rho_X=1$ and $X$ is Fano of index $\frac{2}{3}n$ , or $(X,L)$ is a
  scroll over $\PP^1$ as in Lemma \ref{scroll_onP1}.
\end{lemma}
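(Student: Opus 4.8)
The plan is to reduce the statement to two results already at our disposal: the nef value bound for codimension-two fixed components (Proposition \ref{tau-first_estimate}) and the Bia\l{}ynicki-Birula basis of $\Pic X$ (Proposition \ref{onepointend-basic}). First I would reduce to the case $|\cY_1|=1$. Replacing $t$ by $t^{-1}$ reverses the action: it interchanges source with sink, the decompositions $X^+$ with $X^-$, and (after renormalizing $\mu$) the sets $\cY_1$ and $\cY_2$, while it preserves Assumptions \ref{ass-bw3} (two pointed ends, equalization, and bandwidth $3$ are all invariant) and both possible conclusions, since being a scroll over $\PP^1$ as in Lemma \ref{scroll_onP1} and being Fano with $\rho_X=1$ and $-K_X=\frac{2}{3}nL$ are properties of the pair $(X,L)$ alone. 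Hence I may assume $\cY_1=\{Y_1\}$.

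Next I would split according to whether a fixed point component of codimension $2$ exists. If some component of $X^{\CC^*}$ has codimension $2$, then Proposition \ref{tau-first_estimate}(4) gives $\tau\geq n$, and Lemma \ref{scroll_onP1} immediately yields that $(X,L)$ is a scroll over $\PP^1$ of the desired type. This is the easy branch.

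The substance of the argument is the complementary case, in which no fixed component has codimension $2$; here the goal is to prove $\rho_X=1$. The crucial step is a count of the Bia\l{}ynicki-Birula basis. Since the sink $y_0$ is an isolated point, Proposition \ref{onepointend-basic} applied to the opposite action shows that the divisors $\overline{X^-(Y_i)}$ with $\rk^+(Y_i)=1$ form a basis of $\Pic X$. By Lemma \ref{bw3-components} every $Y_1\in\cY_1$ has $\rk^+(Y_1)=1$, and every $Y_2\in\cY_2$ has $\rk^-(Y_2)=1$, so $\rk^+(Y_2)=n-1-\dim Y_2$ equals $1$ precisely when $Y_2$ has codimension $2$; the source and the sink contribute $\rk^+=n$ and $\rk^+=0$, neither equal to $1$ for $n\geq 2$. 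Under the no-codimension-two hypothesis the only contributing components are therefore those of $\cY_1$, whence $\rho_X=|\cY_1|=1$. I expect this rank count to be the main obstacle, since it is exactly where the hypothesis $|\cY_1|=1$ enters, and where one must keep careful track of which components carry a one-dimensional positive-weight normal bundle.

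Finally, with $\rho_X=1$ in hand, Proposition \ref{tau-first_estimate}(3) applies, because its exceptional configuration requires two codimension-two components joined by an orbit of type $C$, which are now excluded; hence $X$ is Fano. Writing $\Pic X=\ZZ\cdot A$ for an ample generator, both $-K_X$ and $L$ are positive multiples of $A$, so the nef value is attained when $K_X+\tau L$ becomes numerically trivial, giving $-K_X=\tau L$. Then Proposition \ref{tau-first_estimate}(2) forces $\tau=\frac{2}{3}n$, which places $(X,L)$ in the Fano case of the statement and completes the dichotomy.
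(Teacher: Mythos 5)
Your proof is correct and takes essentially the same route as the paper's: both arguments rest on the $\Pic X$ basis count of Proposition \ref{onepointend-basic} combined with Lemma \ref{bw3-components} to show that $\rho_X>|\cY_1|$ forces a codimension-two component in $\cY_2$, and then invoke Proposition \ref{tau-first_estimate}(2)--(4) and Lemma \ref{scroll_onP1} exactly as you do. The only differences are organizational: you split on the existence of a codimension-two fixed component and make the $t\mapsto t^{-1}$ symmetry reduction explicit, whereas the paper splits on $\rho_X>1$ versus the non-scroll case and leaves the symmetry between $\cY_1$ and $\cY_2$ implicit.
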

\begin{proof}
  Suppose that $|\cY_1|=1$. If $\rho_X> 1$, then by Proposition
  \ref{onepointend-basic} there exists a component $Y_2^j\in\cY_2$
  with $\rank\cN^+(Y_2^j)=1$, and since by Assumptions \ref{ass-bw3}
  one has $\rank\cN^{-}(Y_2^j)=1$, then this component is of
  codimension 2. Hence, by Proposition \ref{tau-first_estimate} $(4)$
  one has $\tau\geq n$, and by Lemma \ref{scroll_onP1} we obtain that
  $(X,L)$ is a scroll over $\PP^1$ as in Lemma \ref{scroll_onP1}. On
  the other hand, if $(X,L)$ is not such a scroll, then by what we
  have already proved it follows that $\tau<n$, and there is no
  component of $X^{\CC^*}$ of codimension $2$. Then $X$ is Fano because of
  Proposition \ref{tau-first_estimate} $(3)$, and by the claim $(2)$
  of the same Proposition its index is $\frac{2}{3}n$.
\end{proof}

\subsection{$\tau\leq n-1$, quadric bundle over a curve}
In this subsection we keep Assumptions
  \ref{ass-bw3} with $\tau\leq n-1$.
\begin{lemma}\label{b2_vs_components}
  If $\tau\leq n-1$ then $|\cY_1|=|\cY_2|=\rho_X$, every inner
  component is connected to another inner component by a curve of
  type $C$, the manifold $X$ is Fano, and the cone $\cC(X)$ is
  generated by classes of curves of type $A$ and $C$.
\end{lemma}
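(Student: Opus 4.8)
The plan is to read off all four assertions from the structural results of the previous subsection, chiefly Proposition \ref{tau-first_estimate}, together with the duality between the nef cone and the cone of curves furnished by Theorem \ref{nef_cone_description}. The running hypothesis $\tau\leq n-1$ enters through the contrapositive of Proposition \ref{tau-first_estimate}(4): since $\tau<n$, the manifold $X$ carries \emph{no} fixed component of codimension $2$. This single observation drives the first three claims.

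First I would settle the two qualitative statements. As there is no codimension-$2$ component, Proposition \ref{tau-first_estimate}(5) forces every inner component to meet a curve of type $C$; since a type-$C$ orbit joins a component of $\cY_1$ to a component of $\cY_2$, this is exactly the assertion that every inner component is joined to another inner component by a type-$C$ curve. For Fanoness, Proposition \ref{tau-first_estimate}(3) presents $X$ as Fano unless there are codimension-$2$ components $Y_1\in\cY_1$, $Y_2\in\cY_2$ linked by a type-$C$ orbit; the absence of codimension-$2$ components excludes this case, so $X$ is Fano.

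Next I would count the inner components via Proposition \ref{onepointend-basic}. Applied to the pointed end with source $y_3$, it produces a basis of $\Pic X$ indexed by those $Y$ with $\rk^-(Y)=1$. The ends $y_0,y_3$ have $\rk^-=n,0$; a component $Y_1\in\cY_1$ has $\rk^+(Y_1)=1$ by Lemma \ref{bw3-components}, whence $\rk^-(Y_1)=n-1-\dim Y_1\geq 2$, because $Y_1$ is not the source and there are no codimension-$2$ components, so $\dim Y_1\leq n-3$; while every $Y_2\in\cY_2$ has $\rk^-(Y_2)=1$, again by Lemma \ref{bw3-components}. Thus the basis is indexed precisely by $\cY_2$, giving $\rho_X=|\cY_2|$, and repeating the argument for the opposite action (which swaps $\cY_1\leftrightarrow\cY_2$) yields $\rho_X=|\cY_1|$.

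The remaining, and most delicate, point is the description of $\cC(X)$. By Proposition \ref{tau-first_estimate}(1) this cone is generated by classes of orbit closures, and by Theorem \ref{nef_cone_description} the nef cone $\overline{\cA}(X)$ is cut out by the half-spaces $\widehat\epsilon(Y_{i_1},Y_{i_2})_{\geq 0}$ attached to the minimal edges $\epsilon\in\cE^0$; via the AM vs FM equality (Corollary \ref{AMvsFMonX}) each such functional is the intersection pairing with the class of the corresponding orbit, so dualizing identifies the generators of $\cC(X)$ with the orbit classes of minimal edges. It therefore suffices to prove that $\cE^0$ is exactly the set of type-$A$ and type-$C$ edges. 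Edges of type $A$ and $C$ join $\mu$-adjacent components and so cannot be refined, hence are minimal; the work is to show that every $B$- and $E$-edge admits a directed path of length $\geq 2$. Here I would produce the intermediate orbits from the rank data: for each $Y_2\in\cY_2$ the equality $\rk^-(Y_2)=1$ yields an orbit with sink $Y_2$, whose source must lie in the locus of weight $3$, i.e.\ a type-$A$ edge $y_3\to Y_2$; symmetrically $\rk^+(Y_1)=1$ gives a type-$A$ edge $Y_1\to y_0$ for each $Y_1\in\cY_1$. Combining these with the type-$C$ link established above, each $B$-edge factors as $y_3\to Y_2\to Y_1$ or $Y_2\to Y_1\to y_0$, and the $E$-edge factors as $y_3\to Y_2\to Y_1\to y_0$, so all of them are non-minimal. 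Hence $\cE^0$ consists exactly of the $A$- and $C$-edges, and $\cC(X)$ is generated by the classes of type-$A$ and type-$C$ curves. I expect this identification of $\cE^0$, and in particular the construction of the connecting type-$A$ orbits, to be the crux of the proof.
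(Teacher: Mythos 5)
Your proof is correct and follows essentially the same route as the paper's: the contrapositive of Proposition \ref{tau-first_estimate}(4) to exclude codimension-$2$ components, parts (5) and (3) for the type-$C$ connectivity and Fanoness, Proposition \ref{onepointend-basic} with the rank bookkeeping $\rk^-(Y_1)\geq 2$, $\rk^-(Y_2)=1$ for the count $\rho_X=|\cY_1|=|\cY_2|$, and duality via Theorem \ref{nef_cone_description} for the cone. The only difference is one of exposition: where the paper compresses the last claim into a citation of Proposition \ref{tau-first_estimate} and Theorem \ref{nef_cone_description}, you correctly supply the implicit minimal-edge analysis, factoring each $B$- and $E$-edge through the type-$A$ edges guaranteed by $\rk^-(Y_2)=\rk^+(Y_1)=1$ and the type-$C$ links.
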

\begin{proof} Firstly, we note that if an inner component $Y$ is not
  connected to some other inner component by a curve of type $C$, then we are in the situation of Lemma \ref{negligible-fix-pt-cmpt} for both
  $y_3$ preceding $Y$ and $y_0$ succeeding $Y$; therefore $Y$ is of
  codimension 2 and Proposition \ref{tau-first_estimate}
  (4) gives a contradiction.  Now we prove that $\rho_X=|\cY_1|$; the equality
  $\rho_X=|\cY_2|$ follows by the same arguments. By Proposition
  \ref{onepointend-basic}, we know that $\rho_X\geq |\cY_1|$; if the
  inequality is strict then we argue as in the proof of Lemma
  \ref{components} to get a component $Y_2^j\in\cY_2$ with
  $\rank\cN^+(Y_2^j)=1$, which again leads to $\dim Y_2^j=n-2$, and by
  Proposition \ref{tau-first_estimate} (4) we reach a
  contradiction. The rest of the lemma follows by Proposition
  \ref{tau-first_estimate} (3) and Theorem \ref{nef_cone_description}.
\end{proof}

\begin{lemma}\label{tau>n-3}
Suppose $\rho_X>1$. Then $\tau\geq n-2$.
\end{lemma}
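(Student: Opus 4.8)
The plan is to exhibit a single orbit of type $C$ joining components $Y_1\in\cY_1$ and $Y_2\in\cY_2$ whose dimensions satisfy $\dim Y_1+\dim Y_2\le n-2$. Indeed, the last row of the table in Lemma \ref{bandwidth3-intersection} records $\tau\ge 2n-4-(\dim Y_1+\dim Y_2)$ for such an orbit, so this bound would immediately give $\tau\ge 2n-4-(n-2)=n-2$, which is the claim. I would run the argument by contradiction, assuming $\tau<n-2$; the type-$C$ row then forces $\dim Y_1+\dim Y_2\ge n-1$ for \emph{every} type-$C$ orbit (the dimensions being integers), and the task becomes to contradict this.

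First I would set up the combinatorics. Since we work under the standing hypothesis $\tau\le n-1$ of this subsection, Lemma \ref{b2_vs_components} applies and yields $|\cY_1|=|\cY_2|=\rho_X$, together with the fact that every inner component is joined to another inner component by a type-$C$ orbit. Because an orbit joins components of different weight only, the type-$C$ orbits are precisely the edges of a bipartite graph on $\cY_1\sqcup\cY_2$, and Lemma \ref{b2_vs_components} says that this graph has no isolated vertex. As $\rho_X>1$, both sides carry at least two vertices, so a short graph-theoretic observation produces a matching of size two: if the maximal matching had size one, every edge would pass through a single fixed vertex, leaving another vertex on its side isolated, a contradiction. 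This gives distinct $Y_1^1,Y_1^2\in\cY_1$ and distinct $Y_2^1,Y_2^2\in\cY_2$ together with type-$C$ orbits joining $Y_1^1$ to $Y_2^1$ and $Y_1^2$ to $Y_2^2$.

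To finish, I would combine two dimension estimates. From the contradiction hypothesis and the type-$C$ row, the two matched orbits give $\dim Y_1^1+\dim Y_2^1\ge n-1$ and $\dim Y_1^2+\dim Y_2^2\ge n-1$, hence a total of at least $2n-2$. On the other hand, $Y_1^1$ and $Y_1^2$ are both succeeded in $\cG$ by the sink $y_0$, so Lemma \ref{intersection-dimension} bounds $\dim Y_1^1+\dim Y_1^2\le n-2$; applying the same lemma to the reversed $\CC^*$ action, for which $Y_2^1$ and $Y_2^2$ are both succeeded by the source $y_3$ (cf.\ the remark following Lemma \ref{negligible-fix-pt-cmpt}), gives $\dim Y_2^1+\dim Y_2^2\le n-2$. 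Adding these produces a total of at most $2n-4$, contradicting the bound $2n-2$ above. Hence $\tau\ge n-2$.

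The step I expect to be delicate is the second one: one must ensure that the two type-$C$ orbits can be chosen to touch four pairwise distinct components rather than clustering around a single component (a ``star''), since only in that configuration can Lemma \ref{intersection-dimension} be invoked on $\cY_1$ and on $\cY_2$ simultaneously. This is exactly where the hypothesis $\rho_X>1$ enters, through the size-two matching; for $\rho_X=1$ the scheme collapses, in agreement with the fact that the Picard rank one case is Fano of index $\tfrac{2}{3}n$ and is handled separately.
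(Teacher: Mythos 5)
Your proof is correct and follows essentially the same route as the paper's: Lemma \ref{b2_vs_components} together with a size-two matching in the bipartite graph on $\cY_1\sqcup\cY_2$, then Lemma \ref{intersection-dimension} applied to both pairs combined with the type-$C$ row of Lemma \ref{bandwidth3-intersection}. The only cosmetic difference is that you argue by contradiction where the paper adds the two type-$C$ estimates directly to get $2\tau\geq 4n-8-(d_1^1+d_1^2+d_2^1+d_2^2)\geq 2n-4$; your explicit matching argument and your use of the reversed action for the $\cY_2$ pair merely spell out steps the paper leaves implicit.
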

\begin{proof}
  By Lemma \ref{b2_vs_components}, we may assume that there are at
  least two components $Y_i^1\ne Y_i^2 \in \cY_i$ for each $i=1,2$
  and, moreover, we can choose these components so that $Y_2^i$ is
  connected to $Y_1^i$ via a curve of type $C$, for $i=1,2$. The
  latter follows by a standard argument on finding partial matching in
  a bipartite graph with vertices $\cY_1\sqcup\cY_2$. Using Lemma
  \ref{intersection-dimension} we get $d_i^1+d_i^2\leq n-2$, where
  $d_i^j=\dim Y_i^j$ for $i,j=1,2$. We confront this inequality with
  the estimate on $\tau$ for curves of type $C$ from Lemma
  \ref{bandwidth3-intersection} to get
  $$2\tau\geq 4n-8-(d_1^1+d_1^2+d_2^1+d_2^2)\geq 2n-4$$ hence the claim.
\end{proof}
\begin{rem}\label{tau=n-2=>equalities}
  From the proof of Lemma \ref{tau>n-3} we conclude that in case
  $\rho_X>1$ and $\tau=n-2$ all inequalities in the proof become
  equalities. That is, using the above notation
  $$d_1^1+d_1^2=d_2^1+d_2^2=n-2=d_1^1+d_2^1=d_1^2+d_2^2.$$
  In view of Lemma \ref{bandwidth3-intersection} the two
  right-hand side equalities imply that in this case the curves of
  type $C$ joining $Y_1^i$ with $Y_2^i$, for $i=1,2$, are contracted
  by $\phi_\tau$.
\end{rem}

\begin{rem} \label{ass_on_dimensions} Suppose that the adjoint morphism
  $\phi_{\tau}\colon X\to X^{\prime}$ is not the contraction to a
  point. Using Lemma \ref{bandwidth3-intersection} we get
  $\tau>\frac{2}{3}n$. Assuming $\tau\leq n-1$ we obtain $n\geq 4$, and
  if $\tau=n-2$ then $n\geq 7$.
\end{rem}

\begin{lemma} \label{tau>n-2}
  Assume that the adjoint morphism $\phi_{\tau}\colon X\to X^{\prime}$
  is not the contraction to a point. Then $\tau\geq n-1$.
\end{lemma}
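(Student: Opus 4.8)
The plan is to assume $\tau<n-1$ and derive a contradiction, so that together with the subsection hypothesis $\tau\le n-1$ the assertion $\tau=n-1$ follows. First I would reduce to $\rho_X\ge 2$: if $\rho_X=1$ then the nef divisor $K_X+\tau L$ is not ample (else $\tau$ would not be minimal), hence numerically trivial, so $\phi_\tau$ contracts $X$ to a point, which is excluded by hypothesis. Thus $\rho_X\ge 2$, and Lemma \ref{tau>n-3} gives $\tau\ge n-2$; by Remark \ref{tau_integer} the non-integral values in $[n-2,n-1)$ only occur for the exceptional pairs with $\rho=1$, so the only case left to exclude is $\tau=n-2$. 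Under this assumption $n\ge 7$ by Remark \ref{ass_on_dimensions}, the dimension equalities of Remark \ref{tau=n-2=>equalities} hold and the matched type-$C$ curves are contracted by $\phi_\tau$. Evaluating the nef divisor $K_X+(n-2)L$ on the type-$A$ curves through Lemma \ref{bandwidth3-intersection} yields $n-4-d_*\ge 0$, so every inner component has dimension $\le n-4$, hence, being paired, dimension between $2$ and $n-4$.

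Next I would use that $X$ is Fano (Lemma \ref{b2_vs_components}) and that $\phi_\tau$ is a Fano--Mori contraction which is not to a point. On a general fiber $F$ the divisor $K_X+(n-2)L=\phi_\tau^*L'$ restricts trivially, so $-K_F=(n-2)\,L|_F$ and $F$ is Fano of index $\ge n-2$; Kobayashi--Ochiai then forces $n-2\le\dim F+1=n-\dim X'+1$, i.e. $\dim X'\le 3$, with $F$ a projective space, a quadric, or a del Pezzo manifold according as $\dim X'=3,2,1$. By Proposition \ref{descending_action} and Corollary \ref{cor_extremal_points} the action descends to $X'$ with two pointed ends, whose source and sink are $\phi_\tau(y_3)$ and $\phi_\tau(y_0)$.

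The contradiction should then come from confronting this fibration with the torus structure. Each matched pair collapses to a single fixed point $Z_i=\phi_\tau(Y_1^i)=\phi_\tau(Y_2^i)$ of $X'$, and computing the induced linearization of $K_X+(n-2)L$ at the fixed components gives $\mu_{L'}(Z_i)=2n-4-\dim Y_1^i$, which is an interior value of the weight interval $[n,2n-6]$ unless $\dim Y_1^i\in\{2,n-4\}$. Feeding this back through Lemma \ref{intersection-dimension} (two components of $\cY_1$, or of $\cY_2$, have dimension-sum $\le n-2$) pins the configuration down severely, forcing for instance, when $X'$ is a curve, $\rho_X=2$ and inner dimensions $\{2,n-4\}$; one then restricts to the invariant fiber over $\phi_\tau(y_0)$, which carries a $\CC^*$-action of bandwidth $\le 2$ with isolated sink and satisfies $-K=(n-2)L$, and confronts this with the bandwidth $\le 2$ classification of Theorem \ref{bwleq2classification} and Proposition \ref{quadric-bw_leq2}, the point being that a del Pezzo $(n-1)$-fold of index $n-2$ cannot carry such a low-bandwidth action. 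The main obstacle is exactly this last step: the invariant fibers over the source and sink of $X'$ may be singular and need not have two pointed ends, so excluding the del Pezzo fibers and treating $\dim X'=2,3$ on the same footing requires the careful fiber-by-fiber analysis that is the technical heart of the argument.
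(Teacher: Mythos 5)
Your reduction to excluding $\tau=n-2$ (via Lemma \ref{tau>n-3}, Remark \ref{tau_integer}, and the easy $\rho_X=1$ observation) and the bound $d_*\le n-4$ coming from type-$A$ curves agree with the paper's setup. But from that point on there is a genuine gap, which your last sentence in fact concedes: the contradiction is never actually derived. Two concrete problems. First, the Kobayashi--Ochiai step presumes that $\phi_{n-2}$ is of fiber type; if it were birational the general fiber would be a point and you would get no bound on $\dim X'$, and nothing in your setup excludes this a priori. Second, the claim that each matched pair collapses, i.e.\ that $Z_i=\phi_\tau(Y_1^i)=\phi_\tau(Y_2^i)$ is a single fixed point, does not follow from Remark \ref{tau=n-2=>equalities}: only the type-$C$ curves joining the pair are known to be contracted, while the components themselves are contracted only if their type-$A$ curves are, which by Lemma \ref{bandwidth3-intersection} happens exactly when $d_*=n-4$. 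In the configuration the paper eventually derives, precisely one side of each pair collapses: $\overline{X^+(Y_1^1)}$ is contracted to a point while $Y_2^1$ (of dimension $2$) maps finitely onto a $2$-dimensional image. So the $Z_i$-bookkeeping and the weight-interval argument you build on it are unsound, and the case analysis for $\dim X'=1,2,3$ is left open.

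The missing idea is the fiber-locus inequality \cite[Theorem 1.1]{WIS}: since the contracted type-$C$ curves satisfy $-K_X\cdot C=n-2$, every positive-dimensional fiber of $\phi_{n-2}$ has dimension $\geq n-3$. This lower bound, complementary to your Kobayashi--Ochiai upper bound, is what drives the paper's proof. Since $d_1^1\geq 3$ for $n\geq 7$, a fiber meets $\overline{X^+(Y_1^1)}$ (of dimension $d_1^1+1\geq 4$) in positive dimension; Corollary \ref{cor-negligible-fix-pt-cmpt2} then forces all of $\overline{X^+(Y_1^1)}$ to be contracted to a point, whence $d_1^1=n-4$ and, by Remark \ref{tau=n-2=>equalities}, $d_1^2=d_2^1=2$ and $d_2^2=n-4$. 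Because the type-$A$ curves at the $2$-dimensional components are not contracted, no fiber of dimension $\geq n-2$ meets $\overline{X^+(Y_1^2)}$ or $\overline{X^-(Y_2^1)}$, so $\phi_{n-2}$ is an equidimensional scroll over a smooth threefold; finally the finite $\CC^*$-equivariant map $\overline{X^+(Y_1^2)}\to X'$ shows that $X'$ has exactly two fixed components --- a point and a surface --- contradicting Corollary \ref{cor_extremal_points}, which forces two pointed ends on $X'$. None of your proposed routes (interior weights, restricting to possibly singular fibers over the ends, excluding del Pezzo fibers) supplies this mechanism, so the proposal is incomplete at precisely its decisive step.
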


\begin{proof} By Lemma \ref{tau>n-3} and Remark \ref{tau_integer}, we
  need to exclude the case $\tau=n-2$. We argue by contradiction and
  assume that $\phi_{n-2}\colon X\rightarrow X'$ is the adjoint
  morphism. We use Remarks \ref{tau=n-2=>equalities} and
  \ref{ass_on_dimensions}. By the former, we know that curves of type
  $C$ which join $Y^1_1\in \cY_1$ and $Y_2^1\in \cY_2$ are contracted
  by $\phi_{n-2}$. We may assume $d_1^1\geq d_1^2$, hence $d_1^1\geq
  3$ and $\dim \overline{X^+(Y^1_1)}\geq 4$. By fiber-locus inequality
  \cite[Theorem 1.1]{WIS} we deduce that fibers of $\phi_{n-2}$ have
  dimension $\geq n-3$, hence a fiber of $\phi_{n-2}$ has positive
  dimensional intersection with $\overline{X^+(Y_1^1)}$. Then
  $\overline{X^+(Y^1_1)}$ is contracted to a point by $\phi_{n-2}$,
  because of Corollary \ref{cor-negligible-fix-pt-cmpt2}. Thus
  $\phi_{n-2}$ contracts curves of type $A$ joining $y_0$ and $Y_1^1$,
  hence $d_1^1=n-4$ by Lemma \ref{bandwidth3-intersection}. Applying
  Remark \ref{tau=n-2=>equalities} we get $d_1^2=d_2^1=2$ and
  $d_1^1=d_2^2=n-4$. Using Lemma \ref{bandwidth3-intersection}, the
  curves in $\overline{X^+(Y^2_1)}$ and $\overline{X^-(Y_2^1)}$ are
  not contracted; therefore no fiber of $\phi_{n-2}$ of dimension
  $\geq n-2$ meets these subvarieties. Again, by \cite[Theorem
  1.1]{WIS} we conclude that $\phi_{n-2}:X\rightarrow X'$ is
  an equidimensional scroll over a smooth threefold; the smoothness
  follows from \cite[Lemma 2.12]{FUJITA-Sendai}.  The morphism
  $\overline{X^+(Y_1^2)}\rightarrow X'$ (and, in fact,
  $\overline{X^-(Y_2^1)}\rightarrow X'$) is finite and $\CC^*$ equivariant, 
  from which we infer that the $\CC^*$ action on $X'$ has two fixed
  point components, the image of $y_0$ and of $Y_1^2$, which is of
  dimension $2$. However, also $X\rightarrow X'$ is $\CC^*$ equivariant; thus Corollary \ref{cor_extremal_points} implies that the action
  of $\CC^*$ on $X'$ has two pointed ends, a contradiction.
\end{proof}

\begin{lemma} \label{dimX'=1}
  Suppose that $\tau=n-1$, and the adjoint morphism $\phi_{n-1}\colon
  X\to X^{\prime}$ is not the contraction to a point. Then $\dim X'=1$,
  and $\phi_{n-1}$ is a quadric bundle.
\end{lemma}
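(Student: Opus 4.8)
The plan is to combine the Ionescu--Fujita classification with the combinatorics of the $\CC^*$ action, eliminating every case except the quadric bundle. Since $n\ge 4$ (Remark \ref{ass_on_dimensions}) and $\tau=n-1<n$, Theorem \ref{ionescu}(2) forces $(X,L)$ to be one of: (a) a del Pezzo manifold with $-K_X=(n-1)L$; (b) a quadric bundle over a smooth curve; (c) a $\PP^{n-2}$-bundle over a smooth surface with $L$ relative $\cO(1)$; or (d) the birational contraction of disjoint divisors $E_i\cong\PP^{n-1}$, with $L_{|E_i}\cong\cO(1)$, to smooth points of $X'$. In case (a) the adjoint morphism contracts $X$ to a point, which is excluded by hypothesis. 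So it suffices to rule out (c) and (d); case (b) then gives $\dim X'=1$ and the desired quadric bundle. Throughout I use that, since the action is equalized and $y_0,y_3$ are isolated, $T_{y_3}X$ (resp.\ $T_{y_0}X$) carries weight $+1$ (resp.\ $-1$) in every direction, so the extreme values $\mu_L=3$ and $\mu_L=0$ are attained only at $y_3$ and $y_0$; moreover the induced action on $X'$ again has two pointed ends by Corollary \ref{cor_extremal_points}.

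To exclude (d): each $E_i$ is $\CC^*$-invariant and maps to a fixed point $p_i\in X'$, and trivialising $(K_X+(n-1)L)_{|E_i}$ yields $\cN_{E_i/X}\cong\cO(-1)$. If $y_3\in E_i$, then $y_3$ is the isolated source of the action on $E_i\cong\PP^{n-1}$, so the weight $+1$ condition forces $E_i=\PP(V)$ with $V$ supported in two consecutive weights; the component of larger $\mu_L$ is then a hyperplane $\cong\PP^{n-2}$ of weight $2$, a codimension-$2$ fixed component, contradicting Proposition \ref{tau-first_estimate}(4). The same holds for $y_0$, so $y_0,y_3\notin\bigcup_iE_i$ and every $p_i$ is an inner fixed point; hence all fixed components inside $E_i$ have weight in $\{1,2\}$. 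A codimension-$1$ fixed component would have to be the source or the sink (one of $\overline{X^\pm(E_i)}$ would be all of $X$), which is impossible, so $E_i$ has exactly two fixed components: a source $S_i\in\cY_2$ and a sink $T_i\in\cY_1$. Since $\rk^-(S_i)=1$ (Lemma \ref{bw3-components}) while the normal of $S_i$ inside $E_i$ is positive, the conormal $\cN_{E_i/X}=\cO(-1)$ must have weight $-1$ at $S_i$; symmetrically it has weight $+1$ at $T_i$. Applying Lemma \ref{AMvsFM} to $\cO(-1)$ along a line of $E_i$ gives $\mathrm{weight}(S_i)-\mathrm{weight}(T_i)=-1$, whereas the two computed values give $-2$, a contradiction.

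To exclude (c): here $X=\PP(\cE)$ with $\cE=(\phi_{n-1})_*L$ ample of rank $n-1$ and $L=\cO_{\PP(\cE)}(1)$, over a smooth surface $X'$. The source $y_3$ lies in the fibre $F=\phi_{n-1}^{-1}(s_3)\cong\PP^{n-2}$ over the source $s_3$ of $X'$. As above, $y_3$ being the isolated source of $F$ forces $\cE_{s_3}$ to split into exactly two weight spaces differing by one, the one-dimensional lower-weight line giving $y_3$; since $y_3$ is the unique point with maximal $\mu_L=3$, that line realises the strictly minimal $\cE$-weight over the entire fixed locus. Now choose any orbit closure $\Gamma\cong\PP^1$ issuing from $s_3$ in $X'$ and split the equivariant bundle $\cE_{|\Gamma}=\bigoplus_j\cO(a_j)$. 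Comparing weights at the two ends of $\Gamma$ by Lemma \ref{AMvsFM}, each $a_j$ is the $\cE$-weight at $s_3$ minus the $\cE$-weight at the other endpoint; because the weights at $s_3$ are minimal and one of them is strictly smaller than every weight over the other endpoint, we get $a_j\le 0$ for all $j$ with at least one $a_j<0$. Hence $\cE_{|\Gamma}$ is not ample, so $\cE$ is not ample, contradicting the ampleness of $L$.

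Having excluded (a), (c) and (d), only case (b) survives, giving $\dim X'=1$ and $\phi_{n-1}$ a quadric bundle. The hard part is the weight bookkeeping at the source and sink fibres: verifying that $\mu_L=3$ is attained only at $y_3$ (via the uniqueness of the source of the induced action on $X'$, Corollary \ref{cor_extremal_points}), and splitting $\cN_{E_i/X}$ and $\cE_{|\Gamma}$ into weight pieces precisely enough that the AM$\leftrightarrow$FM equality produces the numerical contradictions.
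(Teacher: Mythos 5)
Your skeleton (Ionescu--Fujita plus elimination of cases (c) and (d)) is the paper's, and your elimination of the birational case (d) is correct but genuinely different from the paper's: the paper shows that an end point \emph{must} lie in a contracted divisor $F\iso\PP^{n-1}$ (otherwise the inner fixed components of $F$ trigger Corollary \ref{cor-negligible-fix-pt-cmpt2}) and then rules this out by the bandwidth-$\leq 2$ classification of Theorem \ref{bwleq2classification} together with equalization; you instead show the end points are \emph{not} in any $E_i$ and obtain a clean numerical contradiction from the weights $-1$ and $+1$ of $\cN_{E_i/X}\iso\cO(-1)$ at the source and sink of $E_i$ versus the AM--FM value $-1$. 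That argument checks out (the weight of the one-dimensional quotient $T_yX/T_yE_i$ is indeed forced to be $-1$ at $S_i$ and $+1$ at $T_i$ by Lemma \ref{bw3-components} and equalization), and it is a nice self-contained alternative.

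Your elimination of case (c), however, has a genuine gap: a sign-convention mix that manufactures the contradiction. With $\cE=(\phi_{n-1})_*L$ and $X=\PP(\cE)$, $L=\cO(1)$ (the Grothendieck convention, which is what makes ``$\cE$ ample'' equivalent to $L$ ample, and is the convention of Example \ref{ex-scroll}), the weights of $\CC^*$ on $\cE_{z'}$ are exactly the $\mu_L$-values of the fixed components in the fiber over $z'$. So $y_3$ corresponds to the strictly \emph{maximal} weight $3$, the weights of $\cE_{s_3}$ are $\{3,2^{n-2}\}$, and over any other fixed point they lie in $\{0,1,2\}$; AM--FM applied to an equivariant summand $\cO(a_j)$ of $\cE_{|\Gamma}$ gives $\delta a_j=(\text{weight at }s_3)-(\text{weight at the other end})\geq 0$, with $a_j\geq 1$ for the distinguished summand --- i.e.\ precisely the direction \emph{compatible} with ampleness, and no contradiction. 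If instead you insist on the subspace convention in which $y_3$ is the minimal weight (as your phrase ``lower-weight line'' indicates), then $\phi_*L$ is the \emph{dual} of your bundle and ampleness of $L$ translates into all $a_j<0$, so your conclusion ``$a_j\leq 0$ with at least one $a_j<0$'' again contradicts nothing; e.g.\ weights $\{3,2^{n-2}\}$ at $s_3$ pairing with $\{1^{n-2},0\}$ at the sink give all degrees positive, consistent with an ample $\cE$. In other words, weight bookkeeping over the two end fibers and a single orbit in the base cannot kill the $\PP^{n-2}$-bundle case; some global input is needed. The paper's proof supplies it by looking at the fiber $F\iso\PP^{n-2}$ over an \emph{inner} fixed point $y'$ of the surface $X'$ (such $y'$ exists since $X'$ has $\rho_{X'}\geq 1$ and two pointed ends by Corollary \ref{cor_extremal_points}): all fixed values on $F$ lie in $\{1,2\}$, so since $\dim\cE_{y'}\geq 3$ some fixed component of $F$ is positive-dimensional and sits inside a component $Y_1\in\cY_1$; then Corollary \ref{cor-negligible-fix-pt-cmpt2} makes the curves of $Y_1$ proportional to orbits reaching the sink, so $\phi_{n-1}$ contracts $\overline{X^+(Y_1)}\ni y_0$, contradicting $y_0\notin F$. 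You need this (or an equivalent global argument) to close case (c).
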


\begin{proof}
  Applying Theorem \ref{ionescu} (2), since by our assumption $X$ is not as described in point $(a)$, we are left to eliminate also cases $(c)$ and $(d)$ of that theorem. In the
  former case, because of Corollary \ref{cor_extremal_points}, there exists a
  fixed point $y'$ of the $\CC^*$ action on $X'$ which is not an end
  point. Hence, if $F\subset X$ is the fiber of $\phi_{\tau}$ over $y'$, 
  then $F$ is $\CC^*$ invariant with $\mu_{|\cY\cap F}$ assuming values
  only 1 or 2. In fact $F\iso\PP^{n-2}$, hence one of its fixed
  point components is of positive dimension and contained in, say, a
  component $Y_1\in\cY_1$.  We note however that by Corollary
  \ref{cor-negligible-fix-pt-cmpt2} the curves in $Y_1$ are
  numerically proportional to orbits joining the sink of the $\CC^*$
  action with $Y_1$. Hence the morphism $\phi_{n-1}$ contracts 
  $Y_1$ and also $\overline{X^+(Y_1)}$, which contains the sink. This
  contradicts the fact that $F$ does not contain any of the end points
  of the $\CC^*$ action on $X$.

  Suppose that $\phi_{n-1}:X\ra X'$ is birational; thus, by
  Theorem \ref{ionescu} it contracts at least one $F\iso\PP^{n-1}$ to a
  point. By the same arguments as above, we may assume that $F$
  contains one of the end points of the $\CC^*$ action on $X$, say
  $y_0\in F$. The action of $\CC^*$ on $F$ is of bandwidth $\leq 2$ and it
  is equalized. If the action of $\CC^*$ on $F$ is of bandwidth 1, then it
  has a fixed point component of dimension $n-2$, which by Proposition
  \ref{tau-first_estimate} implies $\tau\geq n$, not possible. If the
  $\CC^*$ action on $F$ is of bandwidth 2 then, by the same argument
  which was used in the first part of the proof, the other fixed end
  point component is an isolated point. Now, by Theorem
  \ref{bwleq2classification}, this is not possible if the action is
  equalized.
\end{proof}

\begin{lemma}\label{quadric-bdl-str}
  Suppose that $\tau=n-1$ and the adjoint morphism $\phi_{n-1}\colon
  X\ra \PP^1$ is a quadric bundle. Then $X=\PP^1\times\mathcal{Q}^{n-1}$,
  $L=\cO(1,1)$, and $\phi_{n-1}$ is the projection. Moreover, the sets
  of inner fixed points components $\cY_1$ and $\cY_2$ consist of an
  isolated point and a copy of $\mathcal{Q}^{n-3}$.
\end{lemma}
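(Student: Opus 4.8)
The plan is to localise the whole problem on the two $\CC^{*}$-invariant fibres of $\phi_{n-1}$ lying over the fixed points of the induced action on $\PP^{1}$, and then to reconstruct $X$ as a product via the second extremal contraction. First I would pin down the action on the base. Since $(X,L)$ has two pointed ends, Corollary \ref{cor_extremal_points} (applied to the $\CC^{*}$-equivariant morphism $\phi_{n-1}$, cf. Proposition \ref{descending_action}) together with Lemma \ref{images_extremal_points} forces the action on $X'=\PP^{1}$ to be nontrivial with exactly two fixed points $p_{0}=\phi_{n-1}(y_{3})$ (source) and $p_{\infty}=\phi_{n-1}(y_{0})$ (sink). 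Writing $F_{0}=\phi_{n-1}^{-1}(p_{0})$ and $F_{\infty}=\phi_{n-1}^{-1}(p_{\infty})$, we have $y_{3}\in F_{0}$ and $y_{0}\in F_{\infty}$; and since $\phi_{n-1}$ is equivariant, for every fixed component $Y$ the image $\phi_{n-1}(Y)$ is a single fixed point of $\PP^{1}$, so $\cY=(\cY\cap F_{0})\sqcup(\cY\cap F_{\infty})$. Recalling that $n\ge 4$ here (Remark \ref{ass_on_dimensions}) and using $(K_{X}+(n-1)L)\cdot E=n-3$ (Lemma \ref{bandwidth3-intersection}) with $K_{X}+(n-1)L=\phi_{n-1}^{*}\cO(n-3)$, the orbit $E$ joining $y_{3}$ to $y_{0}$ satisfies $\phi_{n-1}^{*}\cO(1)\cdot E=1$; hence $\phi_{n-1}|_{E}$ is an isomorphism, $\phi_{n-1}$ is a submersion at $y_{3}$, and $F_{0}$ is smooth at $y_{3}$ (symmetrically $F_{\infty}$ at $y_{0}$).

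Next I would analyse the two invariant fibres. Each of $F_{0},F_{\infty}$ is a quadric of dimension $n-1\ge 3$ polarised by $\cO(1)=L|_{F_{\bullet}}$, carrying the restricted equalized action, which has one pointed end ($y_{3}$, resp. $y_{0}$) and bandwidth $\le 2$: on $F_{0}$ the values of $\mu$ lie in $\{1,2,3\}$ because the weight $0$ is attained only at the sink $y_{0}\in F_{\infty}$. Thus Proposition \ref{quadric-bw_leq2} applies. The cone alternative \ref{quadric-bw_leq2}(2) would produce a fixed divisor $\cong\mathcal{Q}^{n-2}$ in $F_{0}$, i.e. a codimension-two component of $X^{\CC^{*}}$, which by Proposition \ref{tau-first_estimate}(4) forces $\tau\ge n$, contradicting $\tau=n-1$; the cone alternative \ref{quadric-bw_leq2}(3) would make the unique isolated component $y_{3}$ equal to the cone vertex (as $m=\lfloor(n-2)/2\rfloor\ge 1$), contradicting the smoothness of $F_{0}$ at $y_{3}$ established above. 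Hence $F_{0}\cong F_{\infty}\cong\mathcal{Q}^{n-1}$ and $F_{0}^{\CC^{*}}=\{y_{3},P\}\sqcup Q$, $F_{\infty}^{\CC^{*}}=\{y_{0},P'\}\sqcup Q'$ with $P,P'$ isolated points and $Q,Q'\cong\mathcal{Q}^{n-3}$. Reading off the weights ($\mu(P)=1$, $\mu(Q)=2$, $\mu(P')=2$, $\mu(Q')=1$) gives $\cY_{1}=\{P,Q'\}$ and $\cY_{2}=\{Q,P'\}$, so each inner level is an isolated point together with a copy of $\mathcal{Q}^{n-3}$, which is the last assertion of the lemma.

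Finally I would assemble the product. As the only possibly singular fibres $F_{0},F_{\infty}$ are smooth, $\phi_{n-1}$ is a smooth $\mathcal{Q}^{n-1}$-bundle and $\rho_{X}=2$. The ray contracted by $\phi_{n-1}$ is spanned by the vertical fibre lines; by Lemma \ref{b2_vs_components} the cone $\cC(X)$ is generated by classes of type $A$ and $C$, and the complementary ray $R_{2}$ is generated by a horizontal orbit of type $A$ joining an end point to an inner isolated point, for which Lemma \ref{bandwidth3-intersection} gives $-K_{X}\cdot R_{2}=2$, $L\cdot R_{2}=1$ and $\phi_{n-1}^{*}\cO(1)\cdot R_{2}=1$. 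Contracting $R_{2}$ gives a $K_{X}$-negative elementary contraction $\psi\colon X\to Z$ whose fibres have dimension $\ge 1$ by the fibre-locus inequality; since $-K_{X}\cdot R_{2}=2$ it is a $\PP^{1}$-bundle over a smooth $Z$ of dimension $n-1$, and each $\psi$-fibre maps isomorphically to $\PP^{1}$ under $\phi_{n-1}$, hence meets $F_{0}$ in one point. Therefore $\psi|_{F_{0}}\colon\mathcal{Q}^{n-1}\to Z$ is finite and birational, hence an isomorphism, so $Z\cong\mathcal{Q}^{n-1}$; then $(\phi_{n-1},\psi)\colon X\to\PP^{1}\times\mathcal{Q}^{n-1}$ is finite and birational between smooth varieties, hence an isomorphism under which $\phi_{n-1}$ is the projection, and evaluating $L$ on the two rulings yields $L=\cO(1,1)$.

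I expect the main obstacle to be the analysis of the special fibres, namely proving that they are honest smooth quadrics rather than degenerate (possibly high-corank or reducible) ones. Smoothness at the single point $y_{3}$ is cheap via the type-$E$ orbit, but upgrading it — excluding the quadric-cone cases of Proposition \ref{quadric-bw_leq2} and any further degeneration of the fibre — is where the equalized hypothesis and the strict bound $\tau=n-1<n$ (through Proposition \ref{tau-first_estimate}(4)) must do the real work. Once the fibres are smooth quadrics, the final passage to $\PP^{1}\times\mathcal{Q}^{n-1}$ through the second contraction is routine Mori theory.
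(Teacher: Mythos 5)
Your overall architecture coincides with the paper's proof --- localise at the two invariant fibres $F_0,F_\infty$ over the fixed points of the induced action on $\PP^1$, apply Proposition \ref{quadric-bw_leq2}, rule out the cone alternatives, then reconstruct $X$ as a product via the second extremal contraction --- and your exclusion of the cone alternative \ref{quadric-bw_leq2}(2) (a fixed divisor $\cong\mathcal{Q}^{n-2}$ is a codimension-two component of $X^{\CC^*}$, so Proposition \ref{tau-first_estimate}(4) gives $\tau\geq n$) is correct. But your exclusion of alternative \ref{quadric-bw_leq2}(3) rests on the unproved identity $K_X+(n-1)L=\phi_{n-1}^*\cO(n-3)$. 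Base point freeness only gives $K_X+(n-1)L=\phi_{n-1}^*\cO(d)$ for some $d\geq 1$, and intersecting with $E$ yields $d\cdot\deg(\phi_{n-1}|_E)=n-3$; what you actually need is $\deg(\phi_{n-1}|_E)=1$, and that is not free. Indeed, let $w$ be the weight of the induced $\CC^*$ action on $T_{p_0}\PP^1$; since the action is equalized, Lemma \ref{AMvsFM} applied to $\phi_{n-1}^*\cO(1)$ on $E$ gives $\deg(\phi_{n-1}|_E)=w$. Linearizing the action at $y_3$ (all tangent weights $+1$) and using equivariance of $\phi_{n-1}$, the local equation of $F_0$ at $y_3$ is homogeneous of degree $w$; hence $w\geq 2$ holds exactly when $F_0$ is singular at $y_3$, i.e.\ exactly in the cone-with-vertex-$y_3$ case you are trying to exclude (a quadric cone has multiplicity $2$ at its vertex, so $w=2$ and $d=(n-3)/2$, which is numerically consistent whenever $n$ is odd --- for instance $n=5$, one of the two cases needed for the contact application). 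So your smoothness claim for $F_0$ at $y_3$ presupposes what it is meant to prove, and the argument is circular precisely in the dangerous case.

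The paper closes this case by a different mechanism: if a fibre were a quadric cone, the general orbits inside it would be curves of type $B$, running from the vertex $y_3$ (with $\mu=3$) to the $\mu=1$ component of the fibre, hence contracted by $\phi_{n-1}$; but Lemma \ref{bandwidth3-intersection} gives $(K_X+(n-1)L)\cdot B_*=d_*>0$, a contradiction. Alternatively, you can patch your argument while staying close to your viewpoint: in case \ref{quadric-bw_leq2}(3) the invariant ruling line joining the vertex $y_3$ to a fixed point of the $\mu=1$ component has $L$-degree $1$, so Lemma \ref{AMvsFM} forces the tangent weight at $y_3$ along this line to be $\mu(y_3)-\mu(\cdot)=2$, contradicting equalizedness (all weights at $y_3$ are $+1$). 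Once the cone cases are removed, $F_0$ is smooth, $w=1$, and your weight bookkeeping and final assembly are sound and parallel to the paper's, with one small slip: for \cite[Corollary 1.3]{WIS} you need the fibres of $\psi$ to have dimension $\leq 1$ (you wrote $\geq 1$); this follows, as in the paper, because $\psi$ is finite on every fibre of $\phi_{n-1}$ (curves in $\mathcal{Q}^{n-1}$ are numerically proportional and span the other ray), so each $\psi$-fibre maps finitely to $\PP^1$.
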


\begin{proof} By Remark \ref{ass_on_dimensions} we know that $n\geq
  4$. We describe $X^{\CC^*}$, by proving that for each $i=1,2$ the set of
  the fixed point components $\cY_i$ contains an isolated point and
  $\mathcal{Q}^{n-3}$. Let us take the quadrics corresponding to the fibers of
  $\phi_{n-1}$ over the end points of the induced action of $\CC^*$ on
  $\PP^1$; these fibers are either smooth quadrics or quadric
  cones. In view of Corollary \ref{b2_vs_components} and Proposition
  \ref{quadric-bw_leq2}, we deduce that both $\cY_1$ and $\cY_2$ have
  two components, say $\cY_i=\{Y_i^{1},Y_i^{2}\}$. Using Lemma
  \ref{intersection-dimension} we get $\dim{Y_i^1}+\dim{Y_i^2}\leq
  n-2$, and we may assume that $\dim{Y_i^1} \leq \dim{Y_i^2}$.  Notice
  that the fibers cannot be quadric cones. Suppose by contradiction
  that there is a fiber which is a quadric cone, then curves of type
  $B$ must be contracted by $\phi_{n-1}$, but by Lemma
  \ref{bandwidth3-intersection} we know that $K_X+(n-1)L$ has
  intersection positive with these curves. Thus, by Proposition
  \ref{quadric-bw_leq2} we deduce that $Y_i^1$ is an isolated point
  and $Y_i^2\cong \mathcal{Q}^{n-3}$ for each $i=1,2$, as claimed.

  Now we will prove that $X$ is a product. Using Proposition
  \ref{tau-first_estimate} we know that $X$ is Fano, then we can
  consider the other extremal contraction $\Psi\colon X\to Z$.
  By Theorem \ref{nef_cone_description}, we recall that the cone
  $\cC(X)$ is generated by classes of curves of type $A$ and $C$. More
  specifically, curves of type $A$ may join an end point, say
  $y_0$, with a component $Y_1^1$ which is a point, or with
  $Y_1^2\iso\mathcal{Q}^{n-3}$. In the former case, by Lemma
  \ref{bandwidth3-intersection} the intersection with $-K_X$ is $2$,
  in the latter $n-1$. Similarly, using Lemma \ref{bandwidth3-intersection}
  we verify that curves of type $C$ may have intersection with $-K_X$
  equal to $2, n-1$ and $2n-4$; the latter is not possible as $\tau=n-1$
  and $n\geq 4$.  Fibers of $\Psi$ are of dimension $\leq 1$ and have
  intersection $\geq 2$ with $-K_X$, hence applying \cite[Corollary
  1.3]{WIS} it follows that $\Psi$ is a $\PP^1$-bundle. Moreover
  $\Psi$ has two disjoint sections which correspond to the smooth
  quadrics that are fibers of $\phi_{n-1}$ over the two end points of
  the action on $\PP^1$. Therefore $\Psi$ is a trivial bundle over
  $Z\cong \mathcal{Q}^{n-1}$, and $X\cong \PP^1\times \mathcal{Q}^{n-1}$.
\end{proof}

\subsection{Conclusion of the proof of Theorem \ref{bw3classification}}%
\label{conclusion_bw3_classification}
Now fitting together the results of the above subsections, we are able
to prove the classification theorem for bandwidth 3 varieties.

\begin{proof} [Proof of Theorem \ref{bw3classification}] If $n=2$ we reach the claim by Lemma \ref{BW3_surfaces}. Hence, from now on we consider the case $n\geq 3$. We first
  assume that $\rho_{X}\geq 2$. By Lemma \ref{tau>n-3}, we know that
  $\tau\geq n-2$. Moreover, Remark \ref{tau_integer} and Theorem
  \ref{ionescu} imply that $\tau\in \{n-2,n-1,n\}$. If $\tau=n$ then
  by the discussion at the beginning of Subsection \ref{tau>=n} and
  Lemma \ref{scroll_onP1} we get $(1)$.

  Assume that $\tau<n$. We first show that if $\rho_X\geq 2$ and the
  adjunction morphism $\phi_{\tau}$ is the contraction to a point,
  then $(X,L)=(\PP^1\times \PP^1\times\PP^1,\cO(1,1,1))$.  Indeed, if
  $\phi_{\tau}$ is the contraction to a point, applying Lemma
  \ref{bandwidth3-intersection} we deduce that $\tau=\frac{2}{3}n$. We
  analyze what happens for $\tau=\frac{2}{3}n=n-1$, and
  $\tau=\frac{2}{3}n=n-2$. In the first case, applying calculations
  from Remark \ref{isolated_points}, we see that $\rho_X=3$. Moreover,
  by Proposition \ref{tau-first_estimate} (3), (4) $X$ is a Fano
  3-fold and from Theorem \ref{ionescu} (2) it has index 2, then we
  get $(X,L)=(\PP^1\times \PP^1\times\PP^1,\cO(1,1,1))$. The fixed
  point locus of the $\CC^{*}$ action is given by $8$ isolated points
  as described in Example \ref{ex-quad_bdl}.

  Now, we prove that the case $\rho_X\geq 2$ and
  $\tau=\frac{2}{3}n=n-2$ is not possible. If this happens,
  \cite[Theorem B]{WIS2} implies that $(X,L)=(\PP^3\times \PP^3,
  \cO(1,1))$. Then $L=L_1\otimes L_2$ where $L_i$ are the pullback of
  $\cO(1)$ via projections on each of the factors $p_i\colon
  \PP^3\times \PP^3\to \PP^3$. Each $L_i$ is nef and nontrivial,
  therefore, since by our assumption the bandwidth of $L$ is $3$, then
  one of them, say $L_1$ has bandwidth $1$. The contraction $p_1$ is
  equivariant and thus the resulting action of $\CC^*$ on $(\PP^3,
  \cO(1))$ has bandwidth $1$, and by Corollary
  \ref{cor_extremal_points} it has two pointed ends, a
  contradiction.

  Hence, for $n\geq 4$ we may assume that either $\rho_X=1$ or
  $\phi_{\tau}$ is not the contraction to a point. In the latter case, 
  applying Lemma \ref{tau>n-2}, Lemma \ref{dimX'=1}, and Lemma
  \ref{quadric-bdl-str} we obtain $(2)$.  If $\rho_X=1$,
  using Proposition \ref{tau-first_estimate} $(2)$, $(3)$ and Remark
  \ref{isolated_points} we obtain part $(3)$ of the statement, hence
  the claim.
\end{proof}

\begin{rem}\label{rem-normal_of_fixed_pt} Let us focus on Theorem \ref{bw3classification} (1), (2), and keep the notation used in the proof of that theorem. Take the corresponding adjunction morphism $\phi_{\tau}\colon X\to \PP^1$. Since $\phi_{\tau}$ is $\CC^{*}$ equivariant (see Proposition \ref{descending_action}), the fixed locus $X^{\CC^{*}}$ will be contained in the inverse image of the fixed locus of the $\CC^{*}$ action on $\PP^1$. In the scroll case, in the proof of Lemma \ref{scroll_onP1}, we have shown that the fixed point components $Y_1\iso Y_2\iso \PP^{n-2}\subset F\iso \PP^{n-2}$, with $F$ being the fiber of $\phi_{\tau}$. Therefore, we get $\cN^+(Y_1)=\cO(1)$,
  $\cN^-(Y_1)=\cO$, $\cN^+(Y_2)=\cO$, and 
  $\cN^-(Y_2)=\cO(1)$. In the quadric bundle case, in Lemma \ref{quadric-bdl-str}, we proved that $\cY_i=\{pt\}\sqcup Y_i^2$ for $i=1,2$, where $Y_1^2\iso Y^2_2\iso\mathcal{Q}^{n-3}\subset \widetilde{F}\iso \mathcal{Q}^{n-2}$, with $\widetilde{F}$ being the fiber of $\phi_{\tau}$. Hence, one has
  $\cN^+(Y_1^2)=\cO(1)$, $\cN^-(Y_1^2)=\cO(1)\oplus\cO$, $\cN^+(Y^2_2)=\cO(1)\oplus\cO$, and $\cN^-(Y^2_2)=\cO(1)$.
\end{rem}

\section{Contact manifolds}\label{sect-contact-mnflds}

\subsection{Contact manifolds of dimension 11 and 13}\label{contact11-13}

In this section, $X_\sigma$ is a contact variety of dimension $2n+1$ with
$L_\sigma$ an ample line bundle on it, and $\Pic X_\sigma\iso \ZZ
L_\sigma$. By definition, $L_\sigma$ is the cokernel of the contact distribution
$F_\sigma\rightarrow TX_\sigma$ with a rank $2n$ vector subbundle
$F_\sigma\subset TX_\sigma$, and $\sigma\in\HH^0(X_\sigma,\Omega X_\sigma\otimes
L_\sigma)$ such that $d\sigma$ defines a nowhere degenerate pairing
$F_\sigma\times F_\sigma\rightarrow L_\sigma$. In particular,
$-K_{X_\sigma}=(n+1)L_\sigma$.

Contact manifolds appear in the context of quaternion-K\"ahler
manifolds and LeBrun--Salamon conjecture in differential geometry,
which asserts that every positive quaternion-K\"ahler manifold is a
Wolf space. The algebro-geometric version of LeBrun-Salamon conjecture
predicts that every Fano contact manifold is rational homogeneous and,
in fact, isomorphic to the adjoint variety of a simple group, that is
the closed orbit in the projectivisation of the adjoint representation
of this simple group. The contact manifold coming from a
quaternion-K\"ahler manifold admits K\"ahler-Einstein metric, so that in the
differential-geometric context is not restrictive to assume that the group of its
contact automorphisms is reductive.

Let us recall that the case when $\Pic X_{\sigma}\ne \ZZ L_\sigma$ is
known; in such a case $(X_\sigma,L_\sigma)= (\mathbb{P}(TY), \cO_{\mathbb{P}(TY)}(1))$ with $Y$ a projective manifold of dimension $n+1$; see \cite[Corollary 4.2]{L-S}, \cite[Theorem 1.1]{KPSW}, \cite[Corollary 4]{Demailly}. Also the case in which $L_\sigma$ have
sufficiently many sections is known, see \cite[Theorem 0.1]{Beauville}. For $\dim
X_\sigma\leq 9$ we have the following theorem, we refer to \cite[Theorem 1]{DRUEL}, \cite[Theorem 1.2]{B_W}.
\begin{thm}\label{contact_dimleq9}
  Let $(X_\sigma,L_\sigma)$ be a contact Fano manifold of dimension $\leq 9$
  whose group of contact automorphisms $G$ is reductive.  Then $G$ is simple and
  $X_\sigma$ is the closed orbit in the projectivisation of the adjoint
  representation of $G$.
\end{thm}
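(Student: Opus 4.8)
The plan is to reduce the statement, in each of the finitely many admissible dimensions $2n+1\in\{3,5,7,9\}$, to the combinatorics of a $\CC^*$-action together with the small-bandwidth classifications of Sections \ref{section-bandwidth}--\ref{Class_bandwidth3}, following the strategy of \cite{DRUEL,B_W}. Since $L_\sigma$ is ample it is effective, and by the identification $\g=\HH^0(X_\sigma,L_\sigma)$ of the Lie algebra of the automorphism group with the space of sections (see \cite[Lemma 4.5]{B_W}) we get $\dim G>0$; as $G$ is reductive it contains a one-parameter subgroup, so $(X_\sigma,L_\sigma)$ carries a nontrivial contact $\CC^*$-action with linearization $\mu=\mu_{L_\sigma}$. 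The first step is to record the local structure forced by the contact form. For a fixed component $Y$, decomposing $TX_\sigma|_Y=T^+\oplus T^0\oplus T^-$ as in Theorem \ref{thm_ABB-decomposition}, the nondegenerate pairing $d\sigma\colon F_\sigma\times F_\sigma\to L_\sigma$ pairs the weight-$a$ eigenspace of $F_\sigma|_Y$ nondegenerately with the weight-$(\mu(Y)-a)$ eigenspace; this makes the normal weights symmetric about $\mu(Y)/2$ and, choosing $\CC^*$ inside a root subgroup $SL_2^\alpha\subset G$, forces the action to be equalized with normal weights $\pm 1$ and with isolated source and sink (the two Legendrian extremes). Thus $(X_\sigma,L_\sigma,\CC^*)$ fits Assumptions \ref{gen-assumpt}, and the adjunction-theoretic bridge of the previous sections applies.

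Next I would extract numerical constraints. Using $-K_{X_\sigma}=(n+1)L_\sigma$ together with the AM$\leftrightarrow$FM equality of Corollary \ref{AMvsFMonX} on the closure of a general orbit, the bandwidth of $L_\sigma$ and the weights $\mu_{-K_{X_\sigma}}(Y)=\rk^+(Y)-\rk^-(Y)$ at the fixed components are pinned down in terms of $n$ and the dimensions of the $Y\in\cY$. Feeding this into the Bia{\l}ynicki--Birula decomposition (Theorem \ref{thm_ABB-decomposition}) and, when all fixed points are isolated, into the equivariant Riemann--Roch / pure-cohomology identity of Remark \ref{isolated_points}, yields strong restrictions on the poset of fixed components and on their dimensions. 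In the contact setting the weight-pairing above makes this poset symmetric under a Legendrian duality $Y\mapsto Y^\vee$, which cuts down the admissible configurations further.

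The third step is the reduction to the small-bandwidth lists. The closures of orbits of root subgroups $SL_2^\alpha$, and more generally the invariant subvarieties swept out by the BB cells through the inner components, are polarized pairs of bandwidth at most $3$; to these I would apply Theorem \ref{bwleq2classification}, Proposition \ref{quadric-bw_leq2} and Theorem \ref{bw3classification}, which identify them as projective spaces, quadrics, scrolls or quadric bundles. Because $\dim X_\sigma\leq 9$, only finitely many combinatorial types are compatible with the symmetric poset and the numerical constraints of the previous step, and each surviving type reproduces exactly the weight polytope of the adjoint representation of one of the simple groups whose adjoint variety has dimension $\leq 9$ (the types $A_\ell,B_\ell,C_\ell,D_\ell,G_2$ occurring in this range). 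Matching the polytope recovers the root datum, hence a simple group $G'$; the closed $G'$-orbit in $\PP(\g')$ is a contact Fano manifold with the same combinatorial data, and a rigidity argument identifying the two $\CC^*$-varieties cell by cell gives $X_\sigma\cong G'/P$. Finally, since $\g=\HH^0(X_\sigma,L_\sigma)$ is then the simple Lie algebra $\g'$, the reductive group $G$ must coincide with $G'$, so it is simple and $X_\sigma$ is its adjoint variety.

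The main obstacle is the case analysis in the third step: ruling out the \emph{a priori} possible non-homogeneous fixed-point configurations that satisfy all the numerical and poset constraints, and then reconstructing the simple group from the combinatorial data rather than merely from its dimension. This is precisely where the bound $\dim X_\sigma\leq 9$ is used decisively, and where \cite{DRUEL} (via Mori theory) and \cite{B_W} (via torus combinatorics) supply the finite, explicit enumeration; extending the argument to higher dimension without a rank assumption is exactly what fails, which is what motivates the hypotheses of the main theorem.
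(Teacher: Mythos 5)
First, a framing point: the paper does not actually prove Theorem \ref{contact_dimleq9} — it quotes it from \cite[Theorem 1]{DRUEL} and \cite[Theorem 1.2]{B_W}, and then summarizes the strategy of \cite{B_W} in steps (0)--(5). Your outline reconstructs that strategy, but it contains a genuine gap at exactly the point the paper flags as ``essential to launch the whole argument'', namely step (0). You write ``Since $L_\sigma$ is ample it is effective'': this is false in general — an ample line bundle need not have sections — and $h^0(X_\sigma,L_\sigma)>0$ is precisely the nontrivial starting input, obtained in \cite{SALAMON} and \cite[Theorem 6.1]{B_W} (and, for the present paper's extension to dimensions 11 and 13, via Lemma \ref{anticanonical_dim}). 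Note also that reductivity of $G$ alone does not give $\dim G>0$ (finite groups are reductive), so without this nonvanishing you cannot even produce a nontrivial $\CC^*$-action, and your whole reduction never starts.

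Even granting a one-parameter subgroup, two further steps would fail as written. Steps (1)--(4) of the \cite{B_W} argument work with the maximal torus $\widehat{H}$, which must have rank $\geq 2$ (Salamon's bound, reproved in \cite[Theorem 6.1]{B_W}): the identification of the fixed-point polytope with the section polytope and with the root polytope of a simple group (cf.\ Corollary \ref{cor_Delta=Gamma}, \cite[Lemmas 4.5--4.7, Proposition 4.8]{B_W}) is a statement about the full torus action together with $\HH^0(X_\sigma,L_\sigma)\cong\g$; a single $\CC^*$ does not determine a root datum, so your ``matching the polytope recovers the root datum'' step has no content at rank one. Relatedly, your appeal to ``a root subgroup $SL_2^\alpha\subset G$'' to force equalized weights $\pm1$ and isolated source/sink presupposes that $G$ is semisimple, which in \cite{B_W} is a conclusion of steps (2)--(3), not a hypothesis; there, isolatedness of the extremal components is derived from $\Delta=\Gamma$ and the vertex analysis, not from the contact pairing alone, and the final identification of $X_\sigma$ with the adjoint variety is the specific grid-data rigidity \cite[Proposition 2.23]{B_W} (matching compasses, normal-bundle splittings and equivariant Euler characteristics), not a generic cell-by-cell argument. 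A minor further remark: the bandwidth-3 classification (Theorem \ref{bw3classification}) is this paper's new tool for dimensions 11 and 13; the $\dim X_\sigma\leq 9$ cases in \cite{DRUEL,B_W} are handled with bandwidth $\leq 2$ results and case-by-case elimination of root polytopes, so invoking it here is not wrong but is not where the actual difficulty of the $\leq 9$ theorem lies.
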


In \cite{B_W} the proof of the above theorem for $\dim X_\sigma=7, 9$ is based
on the analysis of the action of the maximal torus $\widehat{H}$ in the group
$G$ of contact automorphisms of $X_\sigma$.  The torus $\widehat{H}$ is of rank
$\geq 2$ by a result of Salamon (see \cite[Theorem 7.5]{SALAMON}) reproved in
\cite[Theorem 6.1]{B_W} in the contact case.

In Theorem \ref{contact_dimleq13} we will follow the strategy adopted in \cite{B_W} to extend  the above result. To this end, before recalling the main idea of \cite{B_W}, we briefly remind some preliminaries.

For any manifold $X$ with an ample line bundle $L$ and an almost
faithful action of a torus $H$, one analyses data in the
lattice $M$ of characters of $H$. We recall, see
\cite[$\S$2.1]{B_W}, that a linearization $\mu$ of $L$ defines the
polytope of fixed point $\Delta(L):=\Delta(X,L,H,\mu)$ that
is the convex hull in $M_{\RR}$ of the weights $\mu(Y_i)\in M$ with
which $H$ acts on the fiber of $L$ over each point in a
fixed component $Y_i\subset X^{H}$. Moreover, such a
linearization gives also the polytope of sections
$\Gamma(L):=\Gamma(X,L,H,\mu)$ which is the convex hull in
$M_{\RR}$ of the characters (eigenvalues) of the action of
$H$ on $H^0(X,L)$.

Fixed point components in $\cY$ are represented by points in $M$
together with vectors representing the weights of the action of
$H$ on their conormal bundle; for each $Y\in\cY$ the set of
these (possibly multiple) weights is called the \textit{compass} and denoted by
$\cC(Y, X, H)$ or simply by $\cC(Y)$. We refer to \cite[$\S$2.3]{B_W} for details about the
compass. In the contact case, because of the pairing coming from the
contact form, the vectors in the compass satisfy associated symmetry
(see \cite[Lemma 4.1]{B_W}).

\begin{defi}\label{def-grid-data}
  Given a polarized pair $(X,L)$ with an action of an algebraic torus
  $H$ and linearization $\mu$, we define the {\em grid data}
  of the quadruple $(X,L,H,\mu)$ as follows:
  \begin{enumerate}[leftmargin=*]
  \item the isomorphism classes of connected fixed point components
    $Y_i$, for $X^{H}=\bigsqcup_{i\in I}Y_i$ together with the
    fixed point weight map $$\mu: \cY=\{Y_i: i\in I\}\ra
    M=\Hom(H,\CC^*);$$
    \item the compasses $\cC(Y_i)$ for every $Y_i\subset X^{H}$; and the isomorphism classes of the splitting of the normal bundle
    $$\cN_{Y_i/X}=\bigoplus \cN^{-\nu(Y_i)}(Y_i)$$ where
    $\nu(Y_i)\in \cC(Y_i)$ and
    $\cN^{-\nu(Y_i)}(Y_i)$ are the eigen-subbundles of the
    respective weights.
  \end{enumerate}
\end{defi}

The localized version of Riemann-Roch theorem asserts that the Euler characteristic of $L$,
$\chi^{H}(X,L)$ as a function graded in $M$
depends only on the grid data under certain assumptions, see \cite[Theorem A.1]{B_W}.

The proof of Theorem \ref{contact_dimleq9} for $\dim X_\sigma=7,
9$ goes along the following steps:

\begin{enumerate}[leftmargin=*]\setcounter{enumi}{-1}
\item Prove that there exists a nontrivial action of a (reductive)
  group $G$ with a maximal torus $\widehat{H}$ of rank $r$ on
  $X_\sigma$; it is enough to show that $h^0(X_\sigma,L_\sigma)>0$, see
  \cite{SALAMON} and \cite[Theorem 6.1]{B_W}.
\item Prove that
  $\Delta(X_\sigma,L_\sigma,\widehat{H},\mu)=\Gamma(X_\sigma,L_\sigma,\widehat{H},\mu)$,
  and the vertices of this polytope are associated to isolated fixed
  point components \cite[Lemma 4.7]{B_W}.
\item Prove that $\Gamma(X_\sigma,L_\sigma,\widehat{H},\mu)$ is
  associated to the adjoint representation of the group $G$
  \cite[Lemma 4.5]{B_W} and therefore $G$ is semisimple \cite[Lemma
  4.6]{B_W}.
\item Prove that $G$ is simple and therefore
  $\Delta(L_\sigma)=\Gamma(L_\sigma)$ is the root polytope of $G$ in
  the lattice of weights of $G$ (see \cite[Proposition 4.8]{B_W}).
\item Examine, case by case, root polytopes of simple groups and
  eliminate the ones which are not associated to the action on the
  adjoint contact variety (see \cite[$\S$5]{B_W}).
\item Once it is shown that the grid data of the quadruple $(X_\sigma,L_\sigma,
  \widehat{H},\mu)$ are the same as in the adjoint contact variety
  case, one can conclude that $X_\sigma$ is actually the
  adjoint variety by \cite[Proposition 2.23]{B_W}.
\end{enumerate}

We note that the starting point, that is step (0), is essential to
launch the whole argument. On the other hand, steps (2), (3) and (5) in
this line of argument are fairly general.  Step (1) depends on a general
lemma about existence of sections of an ample line bundle $L_Y$ on a
arbitrary Fano manifold $Y$ such that $\Pic Y=\ZZ L_Y$, and $\dim Y =
n-r+1$. In \cite{B_W}, a well known fact for Fano 3-folds is used. In
what follows, we present a generalization of this result for Fano
4-folds and 5-folds, that is Lemma \ref{anticanonical_dim} (see also \cite[Corollary 1.3]{Smiech}).

The results of step (4) are summarized in \cite[Theorem
5.3]{B_W}. If $\dim X_\sigma\leq 13$ and $r\geq 2$ then
that theorem can be improved by analysing the case of the action of a
simple group of type $A_2$ or $G_2$ on $X_\sigma$. This is done in
Subsection \ref{$SL_3$ action on contact manifolds}. The classification
of bandwidth 3 manifolds given by Theorem \ref{bw3classification} is
the key ingredient in this argument.

As result we obtain the following:
\begin{thm}\label{contact_dimleq13}
  Let $(X_\sigma,L_\sigma)$ be a polarized pair, with $X_\sigma$
  contact Fano manifold of dimension $\leq 13$, and $\Pic X_\sigma=\ZZ
  L_\sigma$. Assume that the group of contact automorphisms $G$ is
  reductive of rank $\geq 2$ (the latter is true if
  e.g.~$h^0(X_\sigma,L_\sigma)> 3$).  Then $X_\sigma$ is a rational
  homogeneous variety, and in particular:
  \begin{enumerate}[leftmargin=*]
  \item if $\dim X_\sigma=11$ then $X_\sigma$ is the closed orbit in
    the projectivisation of the adjoint representation of $SO_9$;
  \item if $\dim X_\sigma=13$ then $X_\sigma$ is the closed orbit
    in the projectivisation of the adjoint representation of
    $SO_{10}$.
  \end{enumerate}
\end{thm}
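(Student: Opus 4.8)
For $\dim X_\sigma\leq 9$ the conclusion is Theorem \ref{contact_dimleq9}, so the task is the two new dimensions $2n+1\in\{11,13\}$, that is $n\in\{5,6\}$. The plan is to run the six-step scheme (0)--(5) of \cite{B_W} recalled above, with the bandwidth three classification supplying step (4). Step (0) is exactly the hypothesis: $G$ is reductive of rank $\geq 2$ (equivalently $h^0(X_\sigma,L_\sigma)>3$, as $\dim\g=h^0(X_\sigma,L_\sigma)$), so a maximal torus $\widehat H\subset G$ of rank $r\geq 2$ acts effectively on $(X_\sigma,L_\sigma)$. Step (1) is run as in \cite{B_W}, but now the Fano fixed-point bases $Y$ (with $\Pic Y=\ZZ L_Y$ and $\dim Y=n-r+1$) can have dimension up to $4$ when $n=5$ and up to $5$ when $n=6$, because $r\geq 2$; the required nonvanishing of sections of the ample generator on such $Y$ is exactly Lemma \ref{anticanonical_dim}. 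Steps (2) and (3), which are general, then yield $\Delta(X_\sigma,L_\sigma,\widehat H,\mu)=\Gamma(X_\sigma,L_\sigma,\widehat H,\mu)$ with vertices at isolated fixed points, that $\Gamma$ is the polytope of an adjoint representation, and finally that $G$ is simple with $\Delta=\Gamma$ equal to its root polytope in the weight lattice.

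It remains to identify $G$, which is step (4). From the formula $\dim(\text{adjoint variety})=2h^\vee-3$ for the dual Coxeter number, the simple groups of rank $\geq 2$ whose adjoint variety has dimension $11$ are exactly $A_6=SL_7$, $B_4=SO_9$ and $C_6=Sp_{12}$, and those of dimension $13$ are $A_7=SL_8$, $C_7=Sp_{14}$ and $D_5=SO_{10}$. The target answers are $B_4$ and $D_5$, so I must exclude the linear and symplectic candidates. The case list of \cite[Theorem 5.3]{B_W} settles the configurations already occurring in dimension $\leq 9$; what is genuinely new, and specific to the dimensions $11$ and $13$, is the control of root subsystems of type $A_2$ and $G_2$.

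For this I would use that each candidate contains a simple subgroup of type $A_2$ (and, where present, one of type $G_2$); restricting the action to such a subgroup and downgrading to a generic one-parameter subgroup of its maximal torus cuts out $\CC^*$-invariant subvarieties $(X,L)\subset(X_\sigma,L_\sigma)$ satisfying Assumptions \ref{ass-bw3} --- the two pointed ends, equalization and normal weights $\pm1$ being imposed by the contact pairing \cite[Lemma 4.1]{B_W}. Theorem \ref{bw3classification} then lists three possibilities for such an $(X,L)$: a scroll over $\PP^1$, a product quadric bundle $\PP^1\times\mathcal{Q}^m$, or a Fano variety of type (3) (realized for instance by the $Sp_6$-variety of Example \ref{ex-sp6}). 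Reading off the inner fixed-point components and their compasses and comparing with the root polytope of each candidate, the data are consistent with $X_\sigma$ being an adjoint contact variety only when the relevant subvarieties are product quadric bundles (type (2)), matching the $SO$-homogeneous pattern; this singles out $B_4=SO_9$ in dimension $11$ and $D_5=SO_{10}$ in dimension $13$ and excludes the linear candidates $A_6,A_7$ and the symplectic candidates $C_6,C_7$.

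Once $G$ is pinned to $SO_9$ or $SO_{10}$, the whole grid data of $(X_\sigma,L_\sigma,\widehat H,\mu)$ agrees componentwise with that of the corresponding adjoint variety, so step (5) together with \cite[Proposition 2.23]{B_W} identifies $X_\sigma$ with that adjoint variety, which is $G/P$ and hence rational homogeneous. I expect the main obstacle to be precisely this elimination in step (4): producing honest bandwidth three subvarieties from the subgroup actions and extracting, from the three-case list of Theorem \ref{bw3classification}, enough compass information to exclude the symplectic and linear groups. The $G_2$-subsystem is the most delicate point, since its long roots yield a compass that must be matched against the type-(2) and type-(3) patterns with extra care.
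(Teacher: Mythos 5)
You have inverted the logic of step (4), and this is a genuine gap. Nothing in steps (0)--(3) tells you that $\dim X_\sigma$ equals the dimension of the adjoint variety of $G$: the group $G$ is only known to be simple of rank $\geq 2$ with $\Delta(X_\sigma,L_\sigma,\widehat H,\mu)$ equal to its root polytope, and it could perfectly well be a small rank-two group acting on an $11$- or $13$-dimensional contact manifold. So your enumeration via $2h^\vee-3$ of the "candidates" $A_6,B_4,C_6$ (resp.\ $A_7,C_7,D_5$) and the plan to exclude the linear and symplectic ones attacks the wrong set of cases: those groups are already disposed of by \cite[Theorem 5.3]{B_W}, which settles every simple type \emph{except} $A_2$ and $G_2$ (and in any event the adjoint varieties of $SL_m$ and $Sp_{2m}$ fail the hypothesis $\Pic X_\sigma=\ZZ L_\sigma$, having Picard number $2$, resp.\ contact line bundle $\cO(2)$ on $\PP^{2m-1}$). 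The only genuinely new cases -- and the entire content of the paper's proof -- are $G$ itself of type $A_2$ or $G_2$; your proposal never engages with this possibility, and restricting a larger candidate group to an $A_2$-subgroup is not a substitute, since in that scenario there is no larger group to restrict.

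Moreover, your proposed endgame ("the data are consistent with the $SO$-homogeneous pattern only in the quadric-bundle case, which singles out $B_4$ and $D_5$") is not a contradiction mechanism and would not close even the correct case. The paper argues by contradiction: assuming $G$ of type $A_2$ or $G_2$, it downgrades the rank-two torus $\widehat H\subset G$ to obtain bandwidth-$3$ subvarieties $X_i$ of dimension $n-1\in\{4,5\}$ (Lemma \ref{A2=>bandwidth3}); the scroll case of Theorem \ref{bw3classification} is excluded by the contact pairing (Lemma \ref{normal_Y_beta}, Corollary \ref{contact-no_scroll}) and case (3) by divisibility, forcing $X_i\iso\PP^1\times\mathcal{Q}^{n-2}$; then Lemma \ref{central_components} and Proposition \ref{fixedpts_n=5-6} show that the \emph{full} grid data of $(X_\sigma,L_\sigma,\widehat H,\mu)$ coincides with that of $(G(1,\mathcal{Q}^{n+2}),L,H_2,\mu)$ computed in the Appendix. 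Thus the data of the putative $A_2/G_2$-action is forced to look exactly like the $SO_{n+4}$ pattern -- pattern matching alone excludes nothing. The decisive step is quantitative: equality of grid data gives, via localized Riemann--Roch, $\chi^{\widehat H}(X_\sigma,L_\sigma)=\chi^{H_2}(G(1,\mathcal{Q}^{n+2}),L)$, hence (both varieties being Fano) $h^0(X_\sigma,L_\sigma)=\dim SO_9=36$ for $n=5$, resp.\ $\dim SO_{10}=45$ for $n=6$, which exceeds $\dim\g=8$ or $14$ and contradicts the identification $H^0(X_\sigma,L_\sigma)\iso\g$ from \cite[Assumptions 5.2]{B_W}. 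Only after this exclusion does \cite[Theorem 5.3]{B_W} yield $G=B_4$, resp.\ $D_5$, and $X_\sigma\iso G(1,\mathcal{Q}^{n+2})$. Your draft assembles most of the right ingredients (Lemma \ref{anticanonical_dim} for step (1), the bandwidth-$3$ classification, compasses) but is missing both the correct target of the case analysis and the section-count contradiction that actually finishes it.
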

\begin{proof}
  As noted above, the proof goes along the lines established in
  \cite{B_W}. Namely, using Corollary \ref{cor_Delta=Gamma}, and applying \cite[Proposition
  4.8]{B_W} and \cite[Lemma 4.5]{B_W} we are in situation of
  \cite[Assumptions 5.2]{B_W}. In particular, by that assumptions we recall that the group $G$ is simple. By contradiction, assume that $G$ is of type $A_2$ or $G_2$. In such a case, consider the action of a rank two torus $\widehat{H}\subset G$ on $(X_\sigma, L_\sigma)$. Due to Proposition \ref{fixedpts_n=5-6}, we find out that the grid data of $(X_\sigma, L_{\sigma}, \widehat{H}, \mu)$ coincide with the grid data of $(G(1,\mathcal{Q}^{n+2}),L,H_2,\mu)$ with $H_2$ a rank two torus contained in the maximal torus acting on $G(1,\mathcal{Q}^{n+2})$. From the proof of Propositions \ref{fixedpts_n=5-6}, and \ref{SL3-on-adjoint} we observe that ${L_\sigma}_{\mid Y}\iso L_{\mid Y}$ for every fixed point component $Y$. The equality of the grid data, together with the isomorphism ${L_\sigma}_{\mid Y}\iso L_{\mid Y}$ are equivalent to require that ${L_{\sigma}}_{\mid Y}$ is $\widehat{H}$-equivariantly isomorphic to $L_{\mid Y}$ and that $\cN_{Y/X_{\sigma}}$ is $\widehat{H}$-equivariantly isomorphic to $\cN_{Y/G(1,\mathcal{Q}^{n+2})}$, respectively. This gives an equality of $\widehat{H}$-equivariant Euler characteristics (see \cite[Theorem A.1]{B_W}):
$$\chi^{\widehat{H}}(X_{\sigma}, L_{\sigma}) = \chi^{H_2}(G(1,\mathcal{Q}^{n+2}),L).$$ Then, being $X_{\sigma}$ and $G(1,\mathcal{Q}^{n+2})$ Fano, one has that $H^0(X_{\sigma}, L_{\sigma})$, $H^0(G(1,\mathcal{Q}^{n+2}),L)$ are equal as elements of the representation ring of $\widehat{H}$. 
  Therefore, using again
  Proposition \ref{fixedpts_n=5-6}, if $n=5$ then
  $h^0(X_\sigma,L_\sigma)=\dim{SO_9}=36$, and if $n=6$ then
  $h^0(X_\sigma,L_\sigma)=\dim{SO_{10}}=45$. In both cases, these
  dimensions are bigger than the dimensions of $G_2$ and $A_2$, against \cite[Assumptions 5.2]{B_W} for which $H^{0}(X_\sigma, L_{\sigma})$ can be identified with the Lie algebra of $G$. Thus
  $G$ is neither $G_2$ nor $A_2$. Now, applying \cite[Theorem 5.3]{B_W} we conclude that when $n=5$ one has $(X_\sigma, L_\sigma)\iso (G(1,\mathcal{Q}^{7}),\cO(1))$ with $G=B_4$; while for $n=6$ we get $(X_\sigma, L_\sigma)\iso (G(1,\mathcal{Q}^{8}),\cO(1))$ with $G=D_5$; hence the claim.
\end{proof}
\begin{rem} Notice that the theorem above improves \cite[Theorem 5.3]{B_W}, since when $n=5,6$ the group of the contact automorphisms $G$ cannot be of type neither $G_2$ nor $A_2$. We refer to the recent preprint \cite[Theorem 6.1]{OSCRW2} where, under certain assumptions on the rank of the maximal torus, LeBrun-Salamon conjecture has been proved in arbitrary dimension, dealing also with the cases in which $G$ is of exceptional type. We note that with our approach of downgrading torus action to $\CC^*$ action of bandwidth 3, the assumption that the rank of the group $G$ is at least two is inevitable. The case of rank one group requires understanding bandwidth 4 action of $\CC^*$ on contact manifolds. Finally, as noted above, the fact that $h^0(X_\sigma,L_\sigma)>0$ implies the action of a reductive group of positive rank. On the other hand,  $h^0(X_\sigma,L_\sigma)>3$ implies the action of a reductive group of rank $\geq 2$, because the only rank 1 groups are $\CC^*$ and $PSL(2)$. Although, at present we can not verify either of these inequalities, they seem to be almost equally hard to check, hence the assumption on rank of $G$ being $\geq 2$ is rather harmless.
\end{rem}

\subsection{Dimension of anticanonical systems}
For  the following result see also the recent paper \cite{Smiech} and references therein.
\begin{lemma}\label{anticanonical_dim}
  Let $X$ be a Fano manifold of positive dimension $\leq 5$ with $\Pic
  X = \ZZ L$. Then $h^0(X,L)> 1$.
\end{lemma}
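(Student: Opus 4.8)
The plan is to reduce the statement to a lower bound on an Euler characteristic, and then extract that bound from the Hilbert polynomial together with a little positivity. Write $-K_X = rL$ with $r\in\ZZ_{>0}$ the Fano index (legitimate since $-K_X$ is ample and $\Pic X=\ZZ L$). First I would record two vanishing inputs. Since $L-K_X=(r+1)L$ is ample, Kodaira vanishing gives $\HH^i(X,L)=0$ for $i>0$, so $h^0(X,L)=\chi(X,L)$; and since $\cO_X=K_X+(-K_X)$ with $-K_X$ ample, the same vanishing yields $\chi(\cO_X)=1$. It therefore suffices to prove $\chi(X,L)\ge 2$.

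Next I would analyse the Hilbert polynomial $P(t):=\chi(X,tL)$, of degree $n=\dim X$ with leading coefficient $L^n/n!>0$. For $1\le j\le r-1$ the bundle $-jL=K_X+(r-j)L$ has $(r-j)L$ ample, so Kodaira vanishing kills its higher cohomology while anti-ampleness kills $\HH^0$; hence $P(-j)=0$ for $1\le j\le r-1$. Serre duality gives the functional equation $P(t)=(-1)^n P(-r-t)$, so the roots of $P$ are symmetric about $t=-r/2$, and $P(0)=1$. Writing $P=\tfrac{L^n}{n!}\,\big(\prod_{j=1}^{r-1}(t+j)\big)\,M(t)$ with a ``mystery factor'' $M$ of degree equal to the coindex $n+1-r$, a direct computation gives $P(1)=r\,M(1)/M(0)$, and the constraint $P(0)=1>0$ forces $M(0)>0$. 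When the coindex is at most $3$ (that is, $r\ge n-2$), the symmetry pins $M$ down up to a single sign-definite parameter, and one checks formally that $P(1)>n-1$ for $n\ge 3$ (and $P(1)=n+1$ for $n\le 2$); being an integer, $P(1)\ge 2$. Since $n\le 5$, this already settles every case except index-one fourfolds and index $\le 2$ fivefolds.

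For these few remaining low-index cases I would expand $P(t)$ about its symmetry centre using the explicit Riemann--Roch formula in dimensions $4$ and $5$. Using $\chi(\cO_X)=1$ and the forced vanishings to eliminate the Chern numbers that are not intersection numbers of $L$, the inequality $P(1)\ge 2$ reduces to the positivity of a single Chern-class combination; for instance in the fourfold index-one case one finds $P(1)=1+\tfrac{1}{12}\big(2L^4+c_2(X)\cdot L^2\big)$, so it suffices that $c_2(X)\cdot L^2\ge 0$, after which integrality promotes $P(1)>1$ to $P(1)\ge 2$. The main obstacle is precisely this last point: the Hilbert-polynomial constraints alone permit pathological root configurations with $P(1)\in\{0,1\}$, and excluding them requires genuine input from Fano geometry, namely the positivity $c_2(X)\cdot L^{n-2}\ge 0$ (and the analogous higher combinations in dimension $5$) for Fano manifolds. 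This is where I would invoke the Chern-class positivity results available for Fano manifolds (cf.\ \cite{Smiech}), after which $h^0(X,L)=P(1)\ge 2$ follows in all dimensions $\le 5$.
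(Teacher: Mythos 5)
Your proposal is correct in outline and, on the decisive cases, coincides with the paper's own proof: both reduce $h^0(X,L)>1$ to $\chi(X,L)>1$ via Kodaira vanishing, and both handle index-one fourfolds and index $\leq 2$ fivefolds by combining Riemann--Roch with the Serre symmetry of the Hilbert polynomial, so that the relevant value of $\chi$ is expressed through $c_1^n$ and $c_1^{n-2}c_2$ alone. Indeed your fourfold formula $P(1)=1+\tfrac{1}{12}\left(2L^4+c_2\cdot L^2\right)$ is literally the paper's $\chi(1)=\chi(0)+\tfrac{1}{6}c_1^4+\tfrac{1}{12}c_1^2c_2$, and your dimension-five plan reproduces the paper's identities $\chi(1)=3\chi(0)+\tfrac{1}{24}(c_1^3c_2+c_1^5)$ and $\chi(1/2)=2\chi(0)+\tfrac{1}{96}c_1^3c_2+\tfrac{1}{384}c_1^5$, obtained there from the two symmetric factorizations of $\chi(t)$. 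Where you genuinely diverge is in the coindex $\leq 3$ cases: the paper disposes of them by citing known results (dimension $\leq 3$, and Mukai manifolds of index $n-2$; del Pezzo manifolds and higher index are covered by the classifications in \cite{FUJITA, ISK}), whereas you give a uniform self-contained argument from the antisymmetry of the mystery factor $M$ about $t=-r/2$. That argument does check out: normalizing $M(t)=(t+r/2)\left((t+r/2)^2+b\right)$ in the coindex-three case, $P(0)>0$ forces $(r/2)^2+b>0$, hence $(1+r/2)^2+b>(r/2)^2+b>0$ and $P(1)/P(0)>r+2$; the coindex-two case gives the weakest bound $P(1)>r=n-1$, which with integrality still yields $P(1)\geq 2$. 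This is arguably cleaner than the paper's appeal to classification, at the price of writing out the small computation hidden behind your ``one checks formally.''

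The one step you must repair is the sourcing of the positivity input, which you yourself identify as the crux. As stated, ``positivity $c_2(X)\cdot L^{n-2}\geq 0$ for Fano manifolds'' is not an available general fact, and \cite{Smiech} is a problematic reference here, since its Corollary 1.3 is (a version of) the very lemma being proved, so invoking it is close to circular. The correct mechanism, and the one the paper uses, is that $\Pic X=\ZZ L$ places you in Picard number one, where stability of $TX$ is a theorem in dimensions $\leq 5$ (\cite{P-W, HWANG}); combined with Bogomolov's inequality \cite{LANGER} this gives the strict bound $c_1^{n-2}c_2\geq\frac{\rk TX-1}{2\rk TX}\,c_1^n>0$, which even makes your integrality step superfluous. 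With that substitution your argument is complete and, in its treatment of the high-index cases, somewhat more elementary than the paper's.
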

\begin{proof}
  The claim is known if $\dim X\leq 3$, or if $X$ is a Mukai variety of
  index $\dim X -2$.  Hence, it is enough to prove the claim for $\dim
  X = 4$ and $L=-K_X$, and for $\dim X =5$ and $L=-K_X$, or $L=-K_X/2$.
  Moreover, because of Kodaira vanishing, we are left to prove that
  $\chi(X,L)=\sum_i (-1)^ih^i(X,L)> 1$.

  First, assume that $\dim X=4$. We define
  $\chi(t)=\chi(X,t(-K_X))$, and using the Riemann-Roch for 4-folds we
  get
  $$
  \chi(t)= \frac{1}{24}c_1^4\cdot t^4 +\frac{1}{12}c_1^4\cdot t^3 +
  \frac{1}{24}(c_1^2c_2 + c_1^4)\cdot t^2 + \frac{1}{24}c_1^2c_2\cdot
  t \ignore{\left(\frac{7}{360}c_1^2c_2 - \frac{1}{720}c_1^4 +
      \frac{1}{240}c_2^2 + \frac{1}{720}c_1c_3 -
      \frac{1}{720}c_4\right)} +1
  $$
  where $c_i=c_i(TX)$ are the Chern classes, and their intersection is
  evaluated at the fundamental class of $X$. The last coefficient is 1
  because $\chi(0)=1$. Thus we get
  $$\chi(1)=\chi(0) + \frac{1}{6}c_1^4+\frac{1}{12}c_1^2c_2>1.$$
  The inequality follows because $TX$ is stable (see \cite{P-W, HWANG})
  and we can use Bogomolov inequality \cite[Theorem 0.1]{LANGER} to get $$c_1^2c_2\geq\frac{\rk
    TX-1}{2\rk TX}\cdot c_1^4>0.$$

  Now, we consider the case $\dim X=5$.  We use the notation
  of the previous argument; for simplicity we set $d=c_1^5$.  The
  Hilbert polynomial $\chi(t)$ is invariant with respect to the
  Serre's involution $t\mapsto -t-1$.  Using this involution we get
  two possible presentations of its decomposition in $\RR[t]$:
  $$\begin{array}{ll}
    \chi(t)=\frac{d}{120}(t+\frac{1}{2})(t^2+t+a_1)(t^2+t+a_2)&{\rm\ or\ }\\ \\
    \chi(t)=\frac{d}{120}(t+\frac{1}{2})(t^2+b_1t+b_2)((t+1)^2-b_1(t+1)+b_2)
  \end{array}$$
  where $da_1a_2=240$ and $db_2(b_2-b_1+1)=240$, respectively, because $\chi(0)=1$.
  We can compare it with the Riemann-Roch formula:
  $$\begin{array}{ccl}
    \chi(t)&=&\frac{1}{120}c_1^5\cdot t^5 +\frac{1}{48}c_1^5\cdot t^4 +
    \frac{1}{72}(c_1^3c_2 + c_1^5)\cdot t^3 + \frac{1}{48}c_1^2c_2\cdot t^2
    \\[2pt] &&+ \frac{1}{720}\left(-c_1^5+4c_1^2c_2+3c_1c_2^2+c_1^2c_3-c_1c_4 \right)
    \cdot t +1
  \end{array}.$$
So we get respective identities $$c_1^3c_2=\frac{d}{5}\cdot(3a_1+3a_2+1){\rm \ and \ }
  c_1^3c_2=\frac{d}{5}\cdot(6b_2-3b_1^2+3b_1+1).$$
  Using these identities, we verify that
  $$\chi(1)=3\chi(0)+\frac{1}{24}\cdot(c_1^3c_2+c_1^5){\rm \ and\ }
  \chi \left(\frac{1}{2}\right)=2\chi(0)+\frac{1}{96}c_1^3c_2+\frac{1}{384}c_1^5$$
  which, again, by the stability of $TX$ and
  Bogomolov inequality, yields the lemma.
\end{proof}

We now obtain the following result which
  improves \cite[Lemma 4.7]{B_W}.
\begin{cor}\label{cor_Delta=Gamma}
  Let $X_\sigma$ be a contact Fano manifold of dimension $2n+1$ with
  $\Pic X_\sigma=\ZZ L_\sigma$. Suppose that $X_\sigma$ admits an
  almost faithful action of a torus $\widehat{H}$ of rank $r$.  If
  $r\geq n-4$ then
  $\Gamma(X_\sigma,L_\sigma,\widehat{H})=\Delta(X_\sigma,L_\sigma,\widehat{H})$, and every extremal component of $X_{\sigma}^{\widehat{H}}$ is a
  point.
\end{cor}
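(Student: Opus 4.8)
The plan is to follow the scheme of the proof of \cite[Lemma 4.7]{B_W}, the only new input being Lemma \ref{anticanonical_dim}: it allows Fano fixed components of dimension up to $5$, and this is exactly what relaxes the rank hypothesis from $r\geq n-2$ to $r\geq n-4$. First I would reduce both assertions to a single statement about extremal components, i.e.\ components $Y$ for which $\mu_{L_\sigma}(Y)$ is a vertex of $\Delta$. The inclusion $\Delta\subseteq\Gamma$ always holds: each vertex $v$ of $\Delta$ equals $\mu_{L_\sigma}(Y)$ for some fixed component $Y$, and the extremal section which is non-vanishing on $Y$ realizes $v$ as a weight of $\HH^0(X_\sigma,L_\sigma)$, whence $\Delta=\conv(\text{vertices})\subseteq\Gamma$. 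For the reverse inclusion, fix a weight $\chi$ of $\HH^0(X_\sigma,L_\sigma)$ and a one-parameter subgroup $\xi$; downgrading $\widehat H$ to $\CC^*_\xi$ and using Lemma \ref{AMvsFM} along orbits issuing from the $\xi$-source $Y$, any section whose weight is strictly larger than $\mu_{L_\sigma}(Y)$ in the $\xi$-direction must vanish on the dense cell $X^+(Y)$, hence vanish identically. Thus $\chi\in\Delta$ and $\Gamma=\Delta$, and it remains to show that every extremal $Y$ is a point.

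So let $Y$ be extremal and choose a generic $\xi$ maximised at the (nonzero) vertex $v:=\mu_{L_\sigma}(Y)$, so that after downgrading $Y$ is a source of the $\CC^*_\xi$-action and $X^+(Y)$ is dense. As in \cite[Lemma 4.7]{B_W} one first checks that $Y$ is a Fano manifold with $\Pic Y=\ZZ\cdot{L_\sigma}_{\mid Y}$. The key computation is the bound $\dim Y\leq n+1-r$. To obtain it, decompose $TX_{\sigma\mid Y}$ into $\widehat H$-weight spaces at $y\in Y$: the weight-$0$ part is $TY$, of dimension $d:=\dim Y$, and the contact direction $T_yX_\sigma/F_{\sigma,y}\iso (L_\sigma)_y$ carries weight $v\neq 0$, so $TY\subset F_{\sigma,y}$. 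The contact pairing on $F_{\sigma,y}$ has weight $v$ and matches the weight-$\chi$ eigenspace with the weight-$(v-\chi)$ one (see \cite[Lemma 4.1]{B_W}); hence $TY$ is paired with the weight-$v$ eigenspace and the remaining weights of $F_{\sigma,y}$ split into $n-d$ pairs $\{\chi_i,\,v-\chi_i\}$. Consequently the compass of $Y$ spans the same subspace as $\{v,\chi_1,\dots,\chi_{n-d}\}$, of dimension at most $n-d+1$. Since the action is almost faithful the compass must span $M_\RR\iso\RR^r$ (otherwise the orthogonal subtorus acts trivially on $\cN_{Y/X_\sigma}$ and on $Y$, hence on the dense cell $X^+(Y)$), so $r\leq n-d+1$, that is $\dim Y\leq n+1-r\leq 5$.

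Finally I would invoke Lemma \ref{anticanonical_dim}: if $\dim Y\geq 1$ then $h^0(Y,{L_\sigma}_{\mid Y})>1$. Since $\widehat H$ fixes $Y$ pointwise and acts on ${L_\sigma}_{\mid Y}$ through the single character $v$, restriction identifies $\HH^0(X_\sigma,L_\sigma)_v$ with $\HH^0(Y,{L_\sigma}_{\mid Y})$, so the extremal weight $v$ would occur with multiplicity $>1$. As in \cite[Lemma 4.7]{B_W}, this is incompatible with the grid data at a vertex of $\Delta=\Gamma$, where the non-degeneracy of the contact pairing forces the extremal weight space to be one-dimensional; this contradiction yields $\dim Y=0$. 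Hence every extremal component is a point and, combined with the first paragraph, $\Gamma=\Delta$.

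I expect the genuinely new and delicate point to be the dimension estimate $\dim Y\leq n+1-r$ via the compass symmetry of \cite[Lemma 4.1]{B_W}, together with verifying that the source component $Y$ is Fano with Picard group $\ZZ\cdot{L_\sigma}_{\mid Y}$; once $\dim Y\leq 5$ is secured, Lemma \ref{anticanonical_dim} drives the final contradiction exactly as in \cite{B_W}, the earlier Fano $3$-fold input being the only thing replaced.
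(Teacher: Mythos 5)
Your proposal is correct and takes essentially the same route as the paper, whose entire proof is the remark that the argument of \cite[Lemma 4.7]{B_W} goes through verbatim once its Fano $3$-fold input (\cite[Corollary 3.8]{B_W}) is replaced by Lemma \ref{anticanonical_dim}; your unpacking — the bound $\dim Y\leq n+1-r\leq 5$ via the compass symmetry of the contact pairing and almost-faithfulness, $Y$ Fano with $\Pic Y=\ZZ\,{L_\sigma}_{\mid Y}$, and the multiplicity contradiction from $h^0(Y,{L_\sigma}_{\mid Y})>1$ — matches the intended argument (the paper itself records the dimension $n-r+1$ of the extremal components). The one point to phrase more carefully is the identification $\HH^0(X_\sigma,L_\sigma)_v\iso \HH^0(Y,{L_\sigma}_{\mid Y})$: naive restriction only gives injectivity, which is the wrong direction for your contradiction, and the equality is really obtained from the equivariant localization theorem \cite[Theorem A.1]{B_W} together with Kodaira vanishing on $X_\sigma$ and on $Y$ — exactly the tool available in the scheme you are following.
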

\begin{proof}
  The proof is the same as the one of \cite[Lemma 4.7]{B_W} but in place
  of \cite[Corollary 3.8]{B_W} we use the respective version
  following from Lemma \ref{anticanonical_dim} of the present paper.
\end{proof}

\subsection{$SL_3$ action on contact manifolds} \label{$SL_3$ action on contact manifolds}

In this subsection we consider the following situation; compare with
\cite[Assumptions 5.1]{B_W}.
\begin{assumpt}\label{A2_on_contact}
  Let $G$ be a simple group of type $A_2$ or $G_2$ with a maximal two
  dimensional torus $\widehat{H}<G$.  Assume that $G$ acts almost faithfully
  via contactomorphisms on a contact manifold $(X_\sigma,L_\sigma)$, with
  $\dim X_\sigma=2n+1$ and $\Pic X_\sigma=\ZZ L_\sigma$.  That is, the morphism $G
  \rightarrow \Aut{(X_\sigma)}$ has finite kernel.  The linearization
  $\mu$ comes from the action of $G$ on $TX_\sigma$.  Assume that all
  extremal fixed points of the action of $\widehat{H}$ on $X_\sigma$ are
  isolated, and the polytope $\Delta(X_\sigma,L_\sigma,\widehat{H},\mu)$ is the root
  polytope $\Delta(G)$ in the lattice $\widehat{M}$ of characters of the torus
  $\widehat{H}$.
\end{assumpt}

The following diagram is copied from \cite[$\S$5.5]{B_W}. We use
the notation coming from that paper.

\begin{figure}[h!!]
$$ 
\begin{xy}<40pt,0pt>:
(0,0)*={\circ}="1", (0.2,0)*={0},
(0,1)*={\circ}="b0", (0.2,1)*={\beta_0},
(0.866,0.5)*={\circ}="y", (1.1,0.5)*={\beta_1}, 
(0.866,-0.5)*={\circ}="x", (1.1,-0.5)*={\beta_2},
(0,-1)*={\circ}="b3", (0.2,-1)*={\beta_3},
(-0.866,-0.5)*={\circ}="y^(-1)", (-1.1,-0.5)*={\beta_4},
(-0.866,0.5)*={\circ}="x^(-1)", (-1.1,0.5)*={\beta_5},
(0.866,1.5)*={\bullet}="a0", (1.1,1.5)*={\alpha_0},
(1.732,0)*={\bullet}="a1", (2,0)*={\alpha_1},  
(0.866,-1.5)*={\bullet}="a2", (1.1,-1.5)*={\alpha_2},
(-0.866,-1.5)*={\bullet}="a3", (-1.1,-1.5)*={\alpha_3},
(-1.732,0)*={\bullet}="a4", (-2,0)*={\alpha_4},
(-0.866,1.5)*={\bullet}="a5", (-1.1,1.5)*={\alpha_5},
"a5";"a1" **@{-},"b0";"b3" **@{.},
\end{xy}
$$
\caption{Lattice points corresponding to the action of $\widehat{H}$.\label{exagon_lattice}}
\end{figure}

By $y_{\alpha_i}$ we denote the extremal fixed points of the action of
$\widehat{H}$; by $Y^j_{\beta_i}$ we denote the inner fixed point
components associated to the weight $\beta_i$, while by $Y_0$ we
denote central components associated to the weight 0.  The indices of
$\alpha$'s and $\beta$'s are between 0 and 5; by convention they
are taken modulo 6.

Fix $i\in \{0,\dots,5\}$ and let $H^{\prime}$ be the subtorus corresponding to the projection $\pi_{i}\colon \widehat{M}\to \mathbb{Z}$ which maps $\alpha_{i-1}, \beta_{i},\beta_{i+1}, \alpha_{i+1}$ to $1\in \mathbb{Z}$.  Using that $\rho_{X_\sigma}=1$, and arguing as in the proof of Proposition \ref{onepointend-basic}, we deduce that there exists a unique connected component $X_{i}\subset X_\sigma^{H^{\prime}}$ which contains the extremal fixed points $y_{\alpha_{i}+1}$, $y_{\alpha_{i}-1}$, and all inner components of $X_\sigma^{\widehat{H}}$ associated to $\beta_{i}$ and $\beta_{i+1}$. On
the above diagram, we indicate the (solid) line segment associated to $X_i$. We will use the convention that $y_{\alpha_{i-1}}$ is the sink and
$y_{\alpha_{i+1}}$ is the source of the action of $\CC^*$ on $X_i$. 

\begin{lemma}\label{A2=>bandwidth3}
  Let us keep the above notation, and suppose that Assumptions
  \ref{A2_on_contact} are satisfied. Then the following hold:
\begin{enumerate}[leftmargin=*]
\item $X_i$ is a smooth connected variety of dimension $n-1$ with an
  ample line bundle $L_i:={L_\sigma}_{\mid X_i}$,
\item $X_i$ admits a natural $\CC^*$ action and a
  natural linearization $\mu_i$ of $L_i$,
\item the fixed point components of $X_i^{\CC^{*}}$ are
  $\{y_{\alpha_{i-1}}\}, Y^j_{\beta_i}, Y^j_{\beta_{i+1}},
  \{y_{\alpha_{i+1}}\}$,
\item $(X_i,L_i)$ is a bandwidth 3 variety with two end points, and
  the action of $\CC^*$ is equalized.
\end{enumerate}
\end{lemma}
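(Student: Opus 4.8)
The plan is to realize $X_i$ as a connected component of the fixed locus $X_\sigma^{H'}$ of the rank-one subtorus $H'\cong\CC^*$ corresponding to $\pi_i$, and to transport all the required data from the ambient $\widehat{H}$-action, the genuinely hard point being the exact dimension. The formal parts come first: since $H'\subset\widehat{H}$ acts on the smooth $X_\sigma$, every connected component of $X_\sigma^{H'}$ is smooth (the fixed locus of a torus action on a smooth variety, see \cite{IVERSEN}); taking $X_i$ to be the component singled out in the paragraph preceding the lemma gives a smooth connected variety, and $L_i:={L_\sigma}_{\mid X_i}$ is ample as the restriction of an ample bundle. This gives the bundle and smoothness assertions of (1), and also (2): the quotient torus $\widehat{H}/H'\cong\CC^*$ acts on $X_i$ because $H'$ acts trivially there, and the chosen linearization $\mu$ restricts to a linearization $\mu_i$ of $L_i$ whose fixed-point weight map is $\mu_{L_\sigma}$ followed by the complementary projection $\bar\pi_i\colon\widehat{M}\to\ZZ$ to $\pi_i$.

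For (3) I would note that a point of $X_i$ is fixed by $\widehat{H}/H'$ if and only if it is fixed by all of $\widehat{H}$, so $X_i^{\CC^*}=X_i\cap X_\sigma^{\widehat{H}}$; any $\widehat{H}$-fixed component meeting $X_i$ has its weight on the line $\{\pi_i=1\}$, hence lies among $\{y_{\alpha_{i-1}}\}$, $Y^j_{\beta_i}$, $Y^j_{\beta_{i+1}}$, $\{y_{\alpha_{i+1}}\}$, these being the only weights of Figure \ref{exagon_lattice} on that line, and conversely all of them lie in $X_i$ by construction. I would then read off the $\CC^*$-data in (4): the four components are the equally spaced collinear lattice points of the solid segment, so $\bar\pi_i$ takes on them consecutive values, which after normalization are $0,1,2,3$. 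By Lemma \ref{AMvsFM} and Corollary \ref{AMvsFMonX} this says exactly that $(X_i,L_i,\CC^*)$ has bandwidth three, with sink $y_{\alpha_{i-1}}$ and source $y_{\alpha_{i+1}}$, both isolated by Assumptions \ref{A2_on_contact}, hence two pointed ends. Equalization then follows because the compass directions of $\widehat{H}$ tangent to $X_i$ lie in the one-dimensional weight direction of the segment, on which every primitive vector has $\bar\pi_i=\pm1$; so the induced action on each $\cN^\pm$ of a fixed component of $X_i$ has all weights $\pm1$ in the sense of Definition \ref{def-equalized} (primitivity of these compass vectors being the contact-geometric input of \cite{B_W}).

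The main obstacle is the equality $\dim X_i=n-1$ in (1). I would work at the isolated end point $y:=y_{\alpha_{i+1}}$ and analyze the compass, i.e.\ the $\widehat{H}$-weights of $T_yX_\sigma$. Restricting the contact sequence $0\to F_\sigma\to TX_\sigma\to L_\sigma\to 0$ to $y$, the $2n$ weights of $F_{\sigma,y}$ pair as $a\leftrightarrow\alpha_{i+1}-a$ by the contact symmetry of the compass (from the pairing $F_\sigma\times F_\sigma\to L_\sigma$, cf.\ \cite[Lemma 4.1]{B_W}), while the remaining weight is $\mu_{L_\sigma}(y)=\alpha_{i+1}$, on which $\pi_i$ equals $1$. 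Now $T_yX_i$ is the subspace on which $H'$ acts trivially, i.e.\ the span of the weights with $\pi_i=0$; since $\pi_i(a)+\pi_i(\alpha_{i+1}-a)=1$, no pair can contribute two tangent directions, which already yields the isotropy bound $\dim X_i\le n$.

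The content is to show the bound drops by exactly one: among the $n$ contact weight-pairs at $y$ precisely one consists of two directions with $\pi_i\notin\{0,1\}$ (the directions transverse to the segment, joining $y_{\alpha_{i+1}}$ to the neighbouring vertices of $\Delta(G)$ off the line), while each of the remaining $n-1$ pairs splits as $\{0,1\}$ and so contributes a single tangent direction. The plan here is to pin down the multiplicities of the compass weights from the hypotheses of Assumptions \ref{A2_on_contact}: the weight values must be lattice vectors from $\alpha_{i+1}$ to lattice points of the root polytope $\Delta(G)$, their total multiplicity is $2n+1$, and one combines this with the contact pairing about $\alpha_{i+1}$ and with the equality $\Gamma(X_\sigma,L_\sigma,\widehat{H})=\Delta(X_\sigma,L_\sigma,\widehat{H})$ of Corollary \ref{cor_Delta=Gamma} to force a unique transverse pair of multiplicity one. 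Carrying out this bookkeeping for both the $A_2$ and the $G_2$ polytopes, and checking consistency at the other endpoint $y_{\alpha_{i-1}}$ and at the inner components $Y_{\beta_i},Y_{\beta_{i+1}}$, is the delicate step; once it is done, $\dim X_i=n-1$, and together with the previous paragraph this completes all four assertions and makes Theorem \ref{bw3classification} applicable to $(X_i,L_i,\CC^*)$.
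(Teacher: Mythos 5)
Your formal reductions---smoothness of $X_i$ as a component of $X_\sigma^{H'}$, the action of $\widehat{H}/H'\cong\CC^*$ with the restricted linearization, the identification $X_i^{\CC^*}=X_i\cap X_\sigma^{\widehat{H}}$, and the bandwidth-$3$ count from the four collinear weights projecting to $0,1,2,3$---are correct and coincide with the paper's treatment, which cites \cite[Lemma 2.10]{B_W} for exactly these points. The genuine gap is that the two assertions carrying the actual mathematical content of the lemma, namely $\dim X_i=n-1$ and equalization, are only planned, not proved. You correctly reduce the dimension to counting the $\widehat{H}$-weights of $T_yX_\sigma$ lying in $\ker\pi_i$, and correctly observe via the contact pairing that each of the $n$ weight pairs contributes at most one tangent direction; but the decisive step---that exactly one pair is fully transverse, i.e.\ the multiplicity pattern of the compass at the extremal points---is explicitly deferred (``carrying out this bookkeeping\dots is the delicate step''). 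This is not routine bookkeeping: the constraints you list (weights point to lattice points of $\Delta(G)$, total multiplicity $2n+1$, contact symmetry, $\Gamma=\Delta$) do not by themselves pin down the multiplicities. The paper resolves precisely this point by importing \cite[Lemma 5.15]{B_W}, which gives $\cC(y_{\alpha_{i-1}},X_\sigma,\widehat{H})=(\alpha_i-\alpha_{i-1},\,\alpha_{i-2}-\alpha_{i-1},\,-\alpha_{i-1},\,(\beta_i-\alpha_{i-1})^{n-1},\,(\beta_{i-1}-\alpha_{i-1})^{n-1})$ together with the analogous formula at inner components, and then intersecting with $\ker\pi_i$; in \cite{B_W} those multiplicities are extracted through further downgradings to bandwidth $\leq 2$ subvarieties and their classification, not from the polytope data alone.

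The same gap undermines your equalization argument. That the compass vectors tangent to $X_i$ lie in the rank-one lattice $\ker\pi_i$ is clear, but equalization requires each such vector to be \emph{primitive}, and nothing in the segment geometry rules out a vector equal to twice or three times the primitive generator (an invariant curve inside $X_i$ running from $y_{\alpha_{i-1}}$ directly to a component over $\beta_{i+1}$, or to $y_{\alpha_{i+1}}$, would produce exactly that); your parenthetical ``primitivity of these compass vectors being the contact-geometric input of \cite{B_W}'' appeals to precisely the statement that has to be established. Moreover, equalization must also be verified at the inner components $Y^j_{\beta_i}$, $Y^j_{\beta_{i+1}}$, where the paper again reads it off from the explicit compass $\cC(Y_{\beta_i},X_i,\CC^*)=(1^{n-d-2},-1)$ obtained from \cite[Lemma 5.15 (2)]{B_W}; your endpoint analysis, even if completed, does not reach these components. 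In short, your outline identifies the right mechanism and the true answer, but the compass-multiplicity computation that the paper imports from \cite[Lemma 5.15]{B_W} is exactly the missing middle of your proof.
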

\begin{proof} By construction, $X_i$ is smooth and connected. Moreover, $X_i$ admits the restricted action of the $1$-dimensional torus $\CC^{*}=\widehat{H}/H^{\prime}$ as required in $(2)$ (see \cite[Lemma 2.10 (2)]{B_W}), and by \cite[Lemma 2.10 (3)]{B_W} one has $X_i^{\CC^{*}}=X_{\sigma}^{\widehat{H}}\cap X_i$; therefore the extremal fixed points of the $\CC^{*}$ action
have weights $\alpha_{i-1}$ and $\alpha_{i+1}$, thus the bandwidth of $(X_i,L_i,\CC^{*})$ is three. We are left to check that the $\CC^{*}$ action is equalized, and that $\dim{X}=n-1$. To this end, let us describe the compasses at the fixed point components. Taking a fixed component $Y\subset X_i^{\CC^{*}}$, applying the definition of the compass, and using the projection $\pi_{i}\colon \widehat{M}\to\ZZ$, one has
\begin{equation}  \label{compass}
\cC(Y,X_i,\CC^{*})=\cC(Y,X_{\sigma}, \widehat{H}) \cap \text{ker}(\pi_i).
\end{equation}
  The description of the compasses for the rank two torus $\widehat{H}$ are obtained following the same proof of
  \cite[Lemma 5.15]{B_W}. In what follows, we use an exponent to denote the occurrence of the corresponding element in the compass. By \cite[Lemma 5.15 (1)]{B_W}, we obtain that:
 \begin{equation} \label{compass_alpha}
 \begin{array}{rl}
 \cC(y_{\alpha_{i-1}}, X_{\sigma}, \widehat{H})=&(\alpha_{i}-\alpha_{i-1}, \alpha_{i-2}-\alpha_{i-1}, -\alpha_{i-1}, {(\beta_{i}-\alpha_{i-1})}^{n-1},\\[2pt] & {(\beta_{i-1}-\alpha_{i-1})}^{n-1}).
 \end{array}
 \end{equation}

Using (\ref{compass}) we deduce that $\cC(y_{\alpha-1},X_i,\CC^{*})=(1^{n-1})$. In a similar way, we obtain that $\cC(y_{\alpha+1},X_i,\CC^{*})=(-1^{n-1})$. This also implies that $\dim X=n-1$, because by definition of the compass at a fixed component $Y$, the elements contained in it must be in number equal to $\dim X-\dim Y$; and if we consider $Y=y_{\alpha-1}$, being the sink a point by assumption, we obtain claim $(1)$.

For an irreducible inner fixed point component $Y_{\beta_i}$ of dimension $d$, the proof of  \cite[Lemma 5.15 (2)]{B_W} allows to compute
\begin{equation} \label{compass_inner}
\begin{array}{rl}
\cC(Y_{\beta_i}, X_{\sigma}, \widehat{H})=&(\alpha_{i}-\beta_{i}, \alpha_{i-1}-\beta_{i},\beta_{i+2}-\beta_{i}, \beta_{i-2}-\beta_{i},\\[2pt] & -\beta_{i}^{d+1}, (\beta_{i+1}-\beta_{i})^{n-d-2}, (\beta_{i-1}-\beta_{i})^{n-d-2});
\end{array}
\end{equation}
 and by (\ref{compass}) we obtain that $\cC(Y_{\beta_i},X_i,\CC^{*})=(1^{n-d-2},-1)$. Repeating the same procedure for the other inner fixed point components, we may conclude that the $\CC^{*}$ action on $X_i$ is equalized, and the statement follows. 
\end{proof}

\begin{lemma}\label{normal_Y_beta} Let us keep the above notation, and
  suppose that Assumptions \ref{A2_on_contact} hold. Then
$$\cN_{Y^j_{\beta_i}/X_i}^-\iso \left( \cN_{Y^j_{\beta_i}/X_{i-1}}^+
\right)^*\otimes L_{|Y^j_{\beta_i}}$$
\end{lemma}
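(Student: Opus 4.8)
The plan is to recognize both sides as distinct $\widehat{H}$-eigen-subbundles of the normal bundle $\cN_{Y/X_\sigma}$ of $Y:=Y^j_{\beta_i}$, and then to produce the isomorphism from the contact pairing. First I would read off, from the compass computation (\ref{compass}) together with (\ref{compass_inner}) in the proof of Lemma \ref{A2=>bandwidth3}, which eigenbundles give $\cN^-_{Y/X_i}$ and $\cN^+_{Y/X_{i-1}}$. Writing $d=\dim Y$, the conormal directions of $Y$ surviving in $X_i$ (those lying in $\ker\pi_i$) are spanned by the compass vector $\beta_{i+1}-\beta_i$ with multiplicity $n-d-2$ and by $\alpha_{i-1}-\beta_i$; the negative part $\cN^-_{Y/X_i}$ is the piece on which $\CC^*$ acts with conormal weight $+1$, so it is the $\widehat{H}$-eigenbundle $\cN^{\beta_i-\beta_{i+1}}_{Y/X_\sigma}$. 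Symmetrically, in $X_{i-1}$ the surviving conormal directions are $\beta_{i-1}-\beta_i$ and $\alpha_i-\beta_i$, and $\cN^+_{Y/X_{i-1}}$ is the eigenbundle $\cN^{\beta_i-\beta_{i-1}}_{Y/X_\sigma}$.

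Next I would invoke the hexagonal relation $\beta_{i-1}+\beta_{i+1}=\beta_i$ read off from Figure \ref{exagon_lattice}, which yields $\beta_i-\beta_{i+1}=\beta_{i-1}$ and $\beta_i-\beta_{i-1}=\beta_{i+1}$. Thus the two eigenbundles carry $\widehat{H}$-weights $\beta_{i-1}$ and $\beta_{i+1}$, both nonzero and distinct from $\beta_i$, and their sum is exactly $\beta_i=\mu_{L_\sigma}(Y)$, the weight of $L_\sigma$ over $Y$. Since $TY$ carries weight $0$ and the quotient map $TX_\sigma|_Y\to L_\sigma|_Y$ is concentrated in weight $\beta_i$, the weight-$\beta_{i\pm1}$ parts of $TX_\sigma|_Y$ are entirely normal and land in $F_\sigma|_Y$; hence $\cN^{\beta_{i-1}}_{Y/X_\sigma}$ and $\cN^{\beta_{i+1}}_{Y/X_\sigma}$ both sit inside the contact distribution.

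The core step is then to let the contact form do the work. The pairing $d\sigma\colon F_\sigma\times F_\sigma\to L_\sigma$ is $\widehat{H}$-equivariant and nowhere degenerate, so under the weight decomposition of $F_\sigma|_Y$ the weight-$w$ block is $d\sigma$-orthogonal to every block except the weight-$(\beta_i-w)$ block, and nondegeneracy of $d\sigma$ forces each such complementary block pairing to be perfect. Applying this with $w=\beta_{i-1}$, and using that (from the explicit compass) $\cN^{\beta_{i-1}}_{Y/X_\sigma}$ and $\cN^{\beta_{i+1}}_{Y/X_\sigma}$ are the full weight-$\beta_{i-1}$ and weight-$\beta_{i+1}$ pieces, both of rank $n-d-2$, the restriction $\cN^{\beta_{i-1}}_{Y/X_\sigma}\otimes\cN^{\beta_{i+1}}_{Y/X_\sigma}\to L_\sigma|_Y$ is a perfect pairing of equal-rank bundles into a line bundle. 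This gives $\cN^{\beta_{i-1}}_{Y/X_\sigma}\iso(\cN^{\beta_{i+1}}_{Y/X_\sigma})^*\otimes L_\sigma|_Y$, which is precisely $\cN^-_{Y/X_i}\iso(\cN^+_{Y/X_{i-1}})^*\otimes L_\sigma|_{Y}$.

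The main obstacle I anticipate is the bookkeeping that pins these eigenbundles down unambiguously: I must verify that the weight-$\beta_{i\pm1}$ parts of $\cN_{Y/X_\sigma}$ are exactly the bundles $\cN^\mp$ coming from $X_i$ and $X_{i-1}$ (with matching rank $n-d-2$ and no spurious directions of the same weight), that they lie in $F_\sigma$, and that no third eigenbundle carries a complementary weight, so that nondegeneracy cannot leak the pairing elsewhere. This is exactly the content encoded by the compass symmetry of \cite[Lemma 4.1]{B_W}, which I would use to organize the weight computation and confirm that the contact form restricts to a single full-rank block between the two bundles in question.
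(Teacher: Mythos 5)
Your proposal is correct and is essentially the paper's own argument: the proof of Lemma \ref{normal_Y_beta} consists precisely of the observation that the $\widehat{H}$-invariant nondegenerate pairing $d\sigma\colon F_\sigma\times F_\sigma\to L_\sigma$ induces a perfect, $L_\sigma$-valued pairing between the eigencomponents of $\cN_{Y/X_\sigma}$ of complementary weights. Your additional bookkeeping — identifying $\cN^-_{Y/X_i}$ and $\cN^+_{Y/X_{i-1}}$ via the compasses (\ref{compass_inner}) as the full weight-$\beta_{i-1}$ and weight-$\beta_{i+1}$ pieces of rank $n-d-2$, with $\beta_{i-1}+\beta_{i+1}=\beta_i=\mu_{L_\sigma}(Y)$, and checking they lie in $F_\sigma$ — is all accurate and simply makes explicit what the paper's two-sentence proof leaves implicit.
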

\begin{proof}
  The pairing $d\sigma: F_\sigma\times F_\sigma\rightarrow L_\sigma$
  is invariant with respect to the action of $\widehat{H}$.  Hence it
  determines the pairing on the normal of the eigencomponents of the
  normal to any fixed point component.
\end{proof}
\begin{cor}\label{contact-no_scroll}
  The variety $(X_i,L_i)$ described in Lemma \ref{A2=>bandwidth3} is
  not a scroll over $\PP^1$ described in case (1) of
  Theorem \ref{bw3classification}.
\end{cor}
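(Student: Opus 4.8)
The plan is to argue by contradiction, using the contact symmetry recorded in Lemma \ref{normal_Y_beta} to force $L$ to be trivial on a positive-dimensional fixed component. So suppose that $(X_i,L_i)$ is a scroll as in Theorem \ref{bw3classification}(1). The first step is bookkeeping the source/sink conventions fixed before Lemma \ref{A2=>bandwidth3}: along $X_i$ one reads off, from the sink $y_{\alpha_{i-1}}$ to the source $y_{\alpha_{i+1}}$, the weights $0,\beta_i,\beta_{i+1},3$, so $Y_{\beta_i}$ is the weight-$1$ inner component of $X_i$; along $X_{i-1}$ the same $Y_{\beta_i}$ is instead the weight-$2$ inner component. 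Granting that $X_i$ is a scroll, its weight-$1$ inner component is a copy of $\PP^{n-3}$ and, by Remark \ref{rem-normal_of_fixed_pt}, has trivial negative normal bundle $\cN^-_{Y_{\beta_i}/X_i}\cong\cO$.

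Next I would feed this into Lemma \ref{normal_Y_beta}, which reads $\cN^-_{Y_{\beta_i}/X_i}\cong(\cN^+_{Y_{\beta_i}/X_{i-1}})^*\otimes L|_{Y_{\beta_i}}$. Substituting $\cN^-_{Y_{\beta_i}/X_i}=\cO$ gives $\cN^+_{Y_{\beta_i}/X_{i-1}}\cong L|_{Y_{\beta_i}}$; in particular the left-hand side is a line bundle, so $\rk^+_{X_{i-1}}(Y_{\beta_i})=1$. Since $Y_{\beta_i}$ is a weight-$2$ component of $X_{i-1}$, Lemma \ref{bw3-components} forces $\rk^-_{X_{i-1}}(Y_{\beta_i})=1$ as well, whence $Y_{\beta_i}$ has codimension $2$ in $X_{i-1}$. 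Note that $X_{i-1}$ again satisfies Assumptions \ref{ass-bw3} by Lemma \ref{A2=>bandwidth3}, so the nef-value machinery of Section \ref{Class_bandwidth3} applies to it verbatim.

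To finish, I would promote this codimension-$2$ component to a full structural statement about $X_{i-1}$. The component $Y_{\beta_i}$ is joined to the source of $X_{i-1}$ by a type-$A$ orbit (these always exist, cf.\ Remark \ref{orbits_A_E}), so Proposition \ref{tau-first_estimate}(4) yields $\tau(X_{i-1})\geq\dim X_{i-1}$, and then Lemma \ref{scroll_onP1} shows that $X_{i-1}$ is itself a scroll. Applying Remark \ref{rem-normal_of_fixed_pt} to the weight-$2$ inner component of this scroll gives $\cN^+_{Y_{\beta_i}/X_{i-1}}\cong\cO$; combined with $\cN^+_{Y_{\beta_i}/X_{i-1}}\cong L|_{Y_{\beta_i}}$ this forces $L|_{Y_{\beta_i}}\cong\cO$, contradicting the ampleness of $L$ since $\dim Y_{\beta_i}=n-3>0$ in the relevant range. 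The main obstacle I anticipate is not any single estimate but the orientation bookkeeping: one must be sure that $\beta_i$ carries weight $1$ in $X_i$ and weight $2$ in $X_{i-1}$, so that the trivial summand $\cO$ of the scroll normal bundle lands on the correct side of the contact pairing; the figure and the explicit projections $\pi_i,\pi_{i-1}$ pin this down, and once the sides match, the propagation of ``scroll-ness'' from $X_i$ to $X_{i-1}$ and the resulting triviality of $L|_{Y_{\beta_i}}$ are immediate.
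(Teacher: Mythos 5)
Your proof is correct, and it is actually more complete than the paper's own argument, which disposes of the corollary in one line: ``in the scroll case $\cN^-_{Y^j_{\beta_i}/X_i}\iso\cN^+_{Y^j_{\beta_i}/X_{i-1}}\iso\cO$ by Remark \ref{rem-normal_of_fixed_pt}, which contradicts Lemma \ref{normal_Y_beta}.'' Read literally, the second triviality $\cN^+_{Y_{\beta_i}/X_{i-1}}\iso\cO$ presupposes that $X_{i-1}$ is \emph{also} a scroll, which is not part of the hypothesis: the corollary asserts non-scrollness of each $X_i$ separately, and no symmetry of the hexagon is invoked to identify the structure of $X_i$ with that of $X_{i-1}$. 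Your proposal supplies exactly this missing propagation step. From $\cN^-_{Y_{\beta_i}/X_i}\iso\cO$ (Remark \ref{rem-normal_of_fixed_pt} applied to the weight-$1$ component of the scroll $X_i$) and the contact pairing of Lemma \ref{normal_Y_beta} you correctly deduce $\cN^+_{Y_{\beta_i}/X_{i-1}}\iso L_{|Y_{\beta_i}}$, hence $\rk^+_{X_{i-1}}(Y_{\beta_i})=1$; combined with $\rk^-_{X_{i-1}}(Y_{\beta_i})=1$ from Lemma \ref{bw3-components} this puts $Y_{\beta_i}$ in codimension $2$, so Proposition \ref{tau-first_estimate}(4) gives $\tau(X_{i-1},L_{i-1})\geq \dim X_{i-1}$ and Lemma \ref{scroll_onP1} makes $X_{i-1}$ a scroll; Remark \ref{rem-normal_of_fixed_pt} then yields $\cN^+_{Y_{\beta_i}/X_{i-1}}\iso\cO$ and hence $L_{|Y_{\beta_i}}\iso\cO$, absurd. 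Your orientation bookkeeping ($\beta_i$ carries weight $1$ in $X_i$ and weight $2$ in $X_{i-1}$, so the trivial summand sits on the correct side of the pairing) matches the paper's conventions exactly. One caveat, which your argument shares with the paper's: the final contradiction needs $\dim Y_{\beta_i}=n-3>0$, i.e.\ $n\geq 4$, since on a point the identity $\cO\iso\cO^*\otimes L$ is vacuous; you flag this, and it is harmless because the corollary is only invoked for $n=5,6$ in the proofs of Lemma \ref{central_components} and Proposition \ref{fixedpts_n=5-6}.
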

\begin{proof}
  In the scroll case $\cN_{Y^j_{\beta_i}/X_i}^-\iso
  \cN_{Y^j_{\beta_i}/X_{i-1}}^+\iso \cO$, see Remark
  \ref{rem-normal_of_fixed_pt}, which contradicts Lemma
  \ref{normal_Y_beta}.
\end{proof}

\begin{lemma} \label{central_components} Suppose that Assumptions \ref{A2_on_contact} are satisfied, and keep the same notation there introduced. Then:
\begin{enumerate}[leftmargin=*]
\item if $n=5$ there are no central components;
\item if $n=6$ one has $Y_0=\PP^1\sqcup \PP^1$, and the compass $\cC(\PP^1, X_\sigma, \widehat{H})$ is given by all the vectors $\pm \beta_i$ for $i=0,1,2$, where each element occurs with multiplicity two.
\end{enumerate}
\end{lemma}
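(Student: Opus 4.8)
The plan is to combine the contact symmetry of compasses with the local data already recorded for the inner components $Y_{\beta_i}$, and then to read off the central components from the structure of the descending Bia\l ynicki--Birula cells at weight $0$. First I would extract the basic constraint coming from the contact form. A central component $Y_0$ has weight $\mu_{L_\sigma}(Y_0)=0$, so the pairing $d\sigma\colon F_\sigma\times F_\sigma\to L_\sigma$ (as in \cite[Lemma 4.1]{B_W}, used already in Lemma \ref{normal_Y_beta}) forces the compass $\cC(Y_0,X_\sigma,\widehat{H})$ to be invariant under $v\mapsto -v$ and to consist of weights lying in $F_\sigma$, while the Reeb direction is tangent to $Y_0$. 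In particular $|\cC(Y_0)|$ is even and $\dim Y_0=(2n+1)-|\cC(Y_0)|$.

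Next I would pin down which directions can occur. A compass vector of $Y_0$ is the weight of an orbit issuing from $Y_0$, hence points towards another fixed component; by the shape of $\Delta(G)$ the only weights reachable from $0$ by a one-dimensional orbit are the $\beta_i$ and the $\alpha_i$. The extremal points contribute nothing: by (\ref{compass_alpha}) the only compass vector at $y_{\alpha_i}$ proportional to $\alpha_i$ is $-\alpha_i$, so the central orbit issuing from $y_{\alpha_i}$ runs straight to the antipodal point $y_{\alpha_{i+3}}$ without stopping at weight $0$. On the other hand (\ref{compass_inner}) shows that each $Y_{\beta_i}$ of dimension $d$ carries the direction $-\beta_i$ with multiplicity $d+1$, that is, orbits descending from $Y_{\beta_i}$ to weight $0$. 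Hence every compass vector of $Y_0$ is one of the $\pm\beta_i$, and by the $G$-symmetry of the configuration all six directions occur with a common multiplicity $m\ge 0$; thus $\dim Y_0=(2n+1)-6m$.

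To determine $m$ and the isomorphism type I would analyse the descending cells $\overline{X^-(Y_{\beta_i})}$ in the weight-decreasing direction and their limits at weight $0$. Using the already-established quadric-bundle structure $X_i\cong\PP^1\times\mathcal{Q}^{n-2}$ of Lemma \ref{A2=>bandwidth3} (obtained from Theorem \ref{bw3classification} and Corollary \ref{contact-no_scroll}), one knows $Y_{\beta_i}=\{pt\}\sqcup\mathcal{Q}^{n-4}$, so the point component descends by a single orbit (multiplicity $d+1=1$), whereas the quadric component descends by an $(n-3)$-dimensional family. Matching these ascending and descending directions through the contact symmetry of the compass, and restricting the compass by (\ref{compass}) along a $\beta$-axis to realise $Y_0$ as a weight-one component of a bandwidth-two $\CC^*$-subvariety whose point-ends are pinned by Theorem \ref{bwleq2classification} and Proposition \ref{quadric-bw_leq2}, one feeds in the numerics. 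For $n=6$ this gives $\dim Y_0=13-6m=1$, whence $m=2$ (each $\beta_i$ occurring twice); since $Y_0$ is a smooth projective component of the rational variety $X_\sigma$ that is swept out by orbit closures, each one-dimensional piece is rational, so $Y_0\cong\PP^1\sqcup\PP^1$. For $n=5$ the same matching leaves no admissible value $m>0$, forcing $\cC(Y_0)=\emptyset$, i.e.\ no central component exists.

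The main obstacle I expect is precisely this last bookkeeping step: the dimension equation $\dim Y_0=(2n+1)-6m$ does not by itself exclude a positive-dimensional central component when $n=5$ (a value $m=1$ would nominally allow a five-dimensional $Y_0$, which would even inherit a contact structure since the Reeb direction is tangent). One must therefore genuinely exploit the finer data — the multiplicities $\dim Y_{\beta_i}+1$ of the descending directions in (\ref{compass_inner}), the explicit form of $X_i$, and a global consistency check such as equality of equivariant Euler characteristics computed from the $y_{\alpha_i}$ and the $Y_{\beta_i}$ (in the spirit of Remark \ref{isolated_points} and \cite[Theorem A.1]{B_W}) — to show that at weight $0$ the orbits emanating from the $Y_{\beta_i}$ close up among themselves when $n=5$ and assemble into exactly two projective lines when $n=6$. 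Proving that they organise into the asserted $\PP^1\sqcup\PP^1$, rather than into a single higher-dimensional component, is where the argument is most delicate.
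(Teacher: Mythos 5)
Your outline does follow the paper's strategy in broad strokes---contact symmetry of the compass at weight zero, exclusion of the $\alpha_i$ directions, and reduction along a $\beta$-axis---but the two steps carrying the actual content are not proved, and you concede the decisive one yourself. First, the exclusion of the $\alpha_i$ from the compass of a central component does not follow from the multiplicity-one occurrence of $-\alpha_i$ in (\ref{compass_alpha}): the unique orbit leaving $y_{\alpha_i}$ in that direction could a priori limit on a central component rather than ``run straight'' to $y_{\alpha_{i+3}}$; nothing in the compass at the extremal point rules that out. The paper's actual argument passes to the connected component $Z$ of the fixed locus of the subtorus killing $\alpha_0$ which contains $y_{\alpha_0}$, $y_{\alpha_3}$ and the putative $Y_{0,k}$ (this uses $\rho_{X_\sigma}=1$), and computes $\dim Z=1$ from $\cC(y_{\alpha_0},Z,\CC^*)=(-\alpha_0)$ via (\ref{compass}); then $Z\iso\PP^1$ leaves no room for a central component. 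Second, your assertion that ``by $G$-symmetry all six directions occur with a common multiplicity $m$'' is unjustified: the symmetry may permute the irreducible central components, so nothing forces the compass of a single component to be hexagon-symmetric. The contact pairing only yields that $\beta_i$ and $\beta_{i+3}$ occur with equal multiplicities, which is all the paper uses; the full statement with multiplicity two is a \emph{conclusion} obtained at the very end, not an input.

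The heart of the lemma---ruling out a positive-dimensional $Y_0$ when $n=5$ (your $m=1$ scenario) and identifying $Y_0=\PP^1\sqcup\PP^1$ together with its compass when $n=6$---is exactly what your last paragraph leaves open, and the tools you cite cannot close it: Theorem \ref{bwleq2classification} and Proposition \ref{quadric-bw_leq2} concern actions with isolated extremal points, whereas the relevant bandwidth-two subvariety here has both ends isomorphic to $\mathcal{Q}^{n-4}$ (the isolated-point ends of $Y_{\beta_0}$, $Y_{\beta_3}$ are excluded by the same one-dimensionality argument as above). The missing mechanism is the paper's: take the component $Z'$ of the fixed locus of the subtorus along the $\beta_0\beta_3$ axis containing $Y_{0,k}$; it is contact by \cite[Corollary 4.4]{B_W}; formulas (\ref{compass_inner}) and (\ref{compass}) give $\cC(Y_{\beta_0},Z',\CC^*)=(-\beta_0^{\,n-3})$, hence $\dim Z'=(n-4)+(n-3)=2n-7$. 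For $n=5$ one identifies $Z'\iso\PP^3$ carrying the weight-$(0,0,1,1)$ action on $(\PP^3,\cO(2))$, whose weight-one fixed locus is empty---this concrete identification, not any numerical matching of $m$, is what kills the five-dimensional central component. For $n=6$ one identifies $Z'\iso\PP(T\PP^3)$ and reads $Y_0=\PP^1\sqcup\PP^1$ with $\cC(\PP^1,Z',\CC^*)=(\beta_0,\beta_0,\beta_3,\beta_3)$ off the $SL_4$ root polytope, then sums over the three $\beta$-axes, using that $\dim X_\sigma=13$ forces twelve compass elements at each $\PP^1$, to obtain every $\pm\beta_i$ with multiplicity two. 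Without computing $\dim Z'$ and identifying $Z'$ and its action explicitly, your argument remains a plan rather than a proof.
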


\begin{proof} We first show that if $Y_{0,k}$ is an irreducible central component, then the elements $\alpha_i\not \in \cC(Y_{0,k},X_\sigma, \widehat{H})$. Assume by contradiction that one of these elements, say $\alpha_0$, belongs to the compass. Consider a subtorus $H_1\subset \widehat{H}$ corresponding to a projection $\pi\colon \widehat{M}\to \ZZ$ sending $\alpha_0$ to $0$. Take a variety $Z\subset X_\sigma^{H_1}$ which contains $Y_{0,k}$ and the extremal fixed points $y_{\alpha_0}$, $y_{\alpha_3}$. Applying \cite[Lemma 2.10 (2)]{B_W} such a variety $Z$ admits the action of ${\CC^{*}}=\widehat{H}/H_1$, with fixed locus $Z^{\CC^{*}}=y_{\alpha_0}\sqcup Y_{0,k}\sqcup y_{\alpha_3}$. Moreover, by Corollary \cite[Corollary 4.4]{B_W} the variety $Z$ is contact. Replacing $X_i$ with $Z$ and $\pi_i$ with $\pi$ in the formula (\ref{compass}), and using (\ref{compass_alpha}) we get $\cC(y_{\alpha_0}, Z, \CC^{*})=(-\alpha_0)$, therefore $\dim Z=1$. We then conclude $Z\iso \PP^1$, so that $Y_{0,k}=\emptyset$, a contradiction. Hence, if $Y_{0,k}\neq \emptyset$, applying \cite[Corollary 2.14]{B_W}, it follows that the only elements which belong to $\cC(Y_{0,k},X_\sigma, \widehat{H})$ are among the $\beta_i$'s. On the other hand, using Lemma \ref{A2=>bandwidth3}, Corollary \ref{contact-no_scroll}, and Theorem \ref{bw3classification} we deduce that if $n=5$ one has $Y_{\beta_i}=\{pt\} \sqcup \PP^1$; if $n=6$ we have $Y_{\beta_i}=\{pt\} \sqcup \PP^1\times \PP^1$.

By the above argument, if $Y_{0,k}\neq \emptyset$, we may assume that the element $\beta_0$ belongs to $\cC(Y_{0,k},X_\sigma, \widehat{H})$. Being $Y_{0,k}$ contact (see \cite[Corollary 4.4]{B_W}), its compass is symmetric (see \cite[Lemma 4.1]{B_W}), therefore also $\beta_3$ has to belong to the compass with the same multiplicity of $\beta_0$.

Now, take a subtorus associated to the projection of the lattice $\widehat{M}$ along the dotted line segment in Figure \ref{exagon_lattice}. Applying the reduction procedure explained above, we find a contact variety $Z^{\prime}\subset X_\sigma^{\widehat{H}}$ which contains an irreducible fixed component of $Y_{\beta_0}$, and of $Y_{\beta_3}$; these fixed components will be respectively the sink and the source of a bandwidth two $\CC^{*}$ action on $Z^{\prime}$. We may exclude the case in which the sink (or the source) of such an action is the isolated fixed point in $Y_{\beta_0}$ (resp. in $Y_{\beta_3}$); indeed in such a case, arguing as above, we would again have $Y_{0,k}= \emptyset$. Hence, the two extremal points of the $\CC^{*}$ action on $Z^{\prime}$ are both isomorphic to $\mathcal{Q}^{n-4}$.

Using (\ref{compass}) and (\ref{compass_inner}), we get $\cC(Y_{\beta_0}, Z^{\prime}, \CC^{*})=(-\beta_0^{n-3})$; therefore $\dim Z^{\prime}=2n-7$; so that if $n=5$ one has $Z^{\prime}\iso \PP^{3}$; if $n=6$ we get $Z^{\prime}\iso\PP(T\PP^{3})$.

In the first case, since the extremal components are isomorphic to $\PP^{1}$ with the restriction of $L_\sigma$ being $\cO(2)$ (see Theorem  \ref{bw3classification} (2)), we are considering a $\CC^{*}$ on $(\PP^{3}, \cO(2))$ having weights $(0,0,1,1)$, and such an action does not have central components. 

  If $n=6$, we consider the corresponding polytope of fixed points
  of the big torus in $SL_4$ acting on $Z^{\prime}$ (see the picture of
  \cite[Exercise 15.10]{FULTON_HARRIS}), and by a downgrading associated
  to the projection along one of the faces of the cube in which the
  polytope is inscribed we get $Y_{0}=\PP^{1}\sqcup \PP^{1}$. We have already observed that the compass at the central components is symmetric. Using this fact, and being $\dim Z^{\prime}=5$, one has $\cC(\PP^{1},Z^{\prime},\CC^{*})=(\beta_0,\beta_0,\beta_3, \beta_3)$. Since $\dim X_\sigma=13$, the compass $\cC(\PP^{1},X_\sigma,\widehat{H})$ contains $12$ elements counted with their multiplicity. Therefore, repeating the same argument with the weights $\beta_1$ and $\beta_2$, we obtain the claim.
\end{proof}

\begin{prop}\label{fixedpts_n=5-6}
  Suppose that Assumptions \ref{A2_on_contact} are satisfied. Then
  for $n=5, 6$ the grid data of the quadruple $(X_\sigma,L_\sigma,
  \widehat{H},\mu)$ is the same as for the quadruple
  $(G(1,\mathcal{Q}^{n+2}),L,H_2,\mu)$ from Proposition \ref{SL3-on-adjoint}.
\end{prop}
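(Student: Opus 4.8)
The plan is to verify, piece by piece, the three items of the grid data of Definition \ref{def-grid-data} for the quadruple $(X_\sigma,L_\sigma,\widehat{H},\mu)$ and to match each against the model $(G(1,\mathcal{Q}^{n+2}),L,H_2,\mu)$ recorded in Proposition \ref{SL3-on-adjoint}. The engine of the argument is the bandwidth $3$ classification applied to the one-parameter slices $X_i$. First I would fix $i\in\{0,\dots,5\}$ and invoke Lemma \ref{A2=>bandwidth3}: the slice $(X_i,L_i)$ is a smooth variety of dimension $n-1$ carrying an equalized bandwidth $3$ action of $\CC^*=\widehat{H}/H'$ with two pointed ends. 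By Corollary \ref{contact-no_scroll} it is not the scroll of Theorem \ref{bw3classification}(1); and since $\dim X_i=n-1\in\{4,5\}$ is neither $\geq 6$ nor divisible by $3$, case (3) of that theorem is excluded on purely dimensional grounds. Hence Theorem \ref{bw3classification}(2) and Lemma \ref{quadric-bdl-str} force $(X_i,L_i)\iso(\PP^1\times\mathcal{Q}^{n-2},\cO(1,1))$, whose inner fixed point sets $\cY_1,\cY_2$ each consist of an isolated point and a copy of $\mathcal{Q}^{n-4}$. Thus for $n=5$ the inner components are $\{pt\}\sqcup\PP^1$ and for $n=6$ they are $\{pt\}\sqcup(\PP^1\times\PP^1)$, as already recorded in Lemma \ref{central_components}.

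Next I would assemble item (1), namely the fixed locus together with the weight map. The extremal components $y_{\alpha_i}$ are isolated points by Assumptions \ref{A2_on_contact}, sitting at the vertices $\alpha_i$ of the root polytope $\Delta(G)=\Delta(X_\sigma,L_\sigma,\widehat{H},\mu)$. Letting $i$ run over the six slices identifies, for every weight $\beta_i$, the inner component $Y^j_{\beta_i}=\{pt\}\sqcup\mathcal{Q}^{n-4}$, while Lemma \ref{central_components} settles the weight-$0$ locus: empty for $n=5$ and $\PP^1\sqcup\PP^1$ for $n=6$. Since the underlying weights $\alpha_i,\beta_i,0$ are prescribed by $\Delta(G)$, this pins down both the isomorphism classes of the components and the map $\mu$, and these coincide with the corresponding data for $G(1,\mathcal{Q}^{n+2})$ from Proposition \ref{SL3-on-adjoint}.

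For item (2), the compasses, I would substitute the now-known dimensions $d\in\{0,n-4\}$ of the inner components into the formulas (\ref{compass_alpha}) and (\ref{compass_inner}), obtaining the full $\widehat{H}$-compass at every extremal and inner component; the compass at the central components is supplied directly by Lemma \ref{central_components}(2). This recovers item (2) verbatim from the combinatorics of $\Delta(G)$ and matches the model.

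Finally, for item (3), the isomorphism classes of the eigen-splitting of the normal bundles, I expect the genuine work. For the isolated points the splitting is just the weighted tangent space, already encoded in the compass of item (2). For an inner component $Y^j_{\beta_i}$ one sees directly only the slice-normal bundles $\cN^{\pm}_{Y^j_{\beta_i}/X_i}$ inside each $X_i$; Remark \ref{rem-normal_of_fixed_pt} computes these in the quadric-bundle case (they are the summands $\cO(1)$ and $\cO(1)\oplus\cO$ over $\mathcal{Q}^{n-4}$), and Lemma \ref{normal_Y_beta} uses the contact pairing to glue the slice seen in $X_{i-1}$ to the one seen in $X_i$ through the identification $\cN^-_{Y/X_i}\iso(\cN^+_{Y/X_{i-1}})^*\otimes L_{|Y}$. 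The main obstacle will be to check that these slice data patch together into the full $\widehat{H}$-equivariant bundle $\cN_{Y^j_{\beta_i}/X_\sigma}$ with exactly the eigen-bundle isomorphism types (the prescribed $\cO(1)$-twists over $\mathcal{Q}^{n-4}$) occurring in $G(1,\mathcal{Q}^{n+2})$. Once this patching is confirmed, all three items of the grid data agree with those of the model, which proves the proposition.
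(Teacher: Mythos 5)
Your skeleton is the paper's: Lemma \ref{A2=>bandwidth3} plus Corollary \ref{contact-no_scroll} place each slice $(X_i,L_i)$ in case (2) of Theorem \ref{bw3classification} (your explicit dimensional exclusion of case (3) is a detail the paper leaves implicit, and it is correct since $\dim X_i=n-1\in\{4,5\}$), and Lemma \ref{central_components} together with the compass formulas (\ref{compass_alpha}), (\ref{compass_inner}) settles items (1) and (2) of the grid data exactly as in the paper. The gap is in item (3), and it is twofold. First, the ``patching'' you defer as the main obstacle is not actually what is needed, nor what the paper does: Definition \ref{def-grid-data} asks only for the isomorphism classes of the individual eigen-subbundles of $\cN_{Y/X_\sigma}$, one per compass direction, so there is nothing to glue into a global $\widehat{H}$-equivariant bundle. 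The correct mechanism is to cover each compass direction at $Y$ by an invariant slice in which the corresponding normal summand is uniquely determined for \emph{any} quadruple satisfying Assumptions \ref{A2_on_contact}; applied simultaneously to $X_\sigma$ and to $G(1,\mathcal{Q}^{n+2})$ this yields equality of the data without any patching. Note in this respect that the slices $X_{i-1},X_i$ alone do not suffice even at the inner components: the $-\beta_i$-directions (multiplicity $n-3$) are invisible there, since $\pi_{i-1}(\beta_i)=\pi_i(\beta_i)=1\neq 0$, so $-\beta_i\notin\ker\pi_{i\pm}$; in the paper they are accounted for by the bandwidth-two slice $Z'$ with $\cC(Y_{\beta_0},Z',\CC^*)=(-\beta_0^{\,n-3})$ constructed in the proof of Lemma \ref{central_components}, not by Lemma \ref{normal_Y_beta} alone.

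Second, and more seriously, you never address item (3) at the \emph{central} components, which is precisely where the paper's proof does its real work. For $n=6$ one has $Y_0=\PP^1\sqcup\PP^1$, which lies in none of the six bandwidth-three slices $X_i$, and whose compass consists of the twelve elements $\pm\beta_j$, each with multiplicity two (Lemma \ref{central_components}(2)). The paper handles this by observing that $Y_0$ sits inside the three bandwidth-two varieties $Z_j$ (with sink and source at the weights $\beta_j$ and $\beta_{j+3}$, for $j=0,1,2$) produced in the proof of Lemma \ref{central_components}; each $Z_j\iso\PP(T\PP^3)$ accounts for the four compass elements $\pm\beta_j$ of multiplicity two, the three slices jointly exhaust the compass, and since $\cN_{\PP^1/Z_j}$ is uniquely determined, so is its decomposition under the $\CC^*$-weights. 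Without this step (or an equivalent one) your comparison of grid data is incomplete exactly on the datum that feeds the localized Riemann--Roch and the equivariant Euler characteristic comparison in Theorem \ref{contact_dimleq13}, so the proposal as written does not yet prove the proposition.
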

\begin{proof}
  We need to compare the information about the fixed point components
  for $X_\sigma$ with the result obtained in Proposition
  \ref{SL3-on-adjoint}. The information about inner components
  $Y_{\beta_i}$ are given by downgrading to subvarieties
  $X_i$. Indeed, because of Lemma \ref{A2=>bandwidth3} and Corollary
  \ref{contact-no_scroll}, we are in the situation of case (2) of Theorem
  \ref{bw3classification}; see also Remark
  \ref{rem-normal_of_fixed_pt}. Therefore, the isomorphism classes of
  the components and their normal subbundles in $X_i$ are uniquely
  determined. We now observe that the same holds for the central components. To this end, we apply Lemma \ref{central_components} to $(X_\sigma,L_\sigma)$ and to $(G(1,\mathcal{Q}^{n+2}),L)$. 
In particular, when $n=6$, by the proof of the same lemma we recall that the central component $Y_0=\PP^{1}\sqcup \PP^{1}$ is contained in three distinct varieties $Z_{j}$ admitting a bandwidth two $\CC^{*}$ action whose sink and source are respectively associated to the weights $\beta_0$ and $\beta_3$; $\beta_1$ and $\beta_4$; $\beta_2$ and $\beta_5$.  Since the normal bundle of each copy of $\PP^{1}$ in $Z_{j}$ is uniquely determined, also its decomposition
  according to the weights of the $\CC^{*}$ action will be uniquely determined, and the statement follows.
\end{proof}

\appendix
\section{Embedding $SL_3$ into classical linear groups}\label{appendix}
In this appendix we summarize information about the structure of the adjoint
variety $X_{adj}$ of a classical simple algebraic group $G$ from the viewpoint
of a linear embedding of the group $SL_3$ into the group in question. Let us
recall that $X_{adj}$ is the closed orbit in the projectivisation of the adjoint
representation of $G$.

We focus on the case of a linear embedding $SL_3\hookrightarrow SO_m$ which
yields the action of $SL_3$ on the adjoint variety of $SO_m$. The results of
this section are stated in Proposition \ref{SL3-on-adjoint}. The conclusion is
that the associated bandwidth 3 variety which we described in Subsection
\ref{$SL_3$ action on contact manifolds} (see Lemma \ref{A2=>bandwidth3}) yields the case (2) in Theorem
\ref{bw3classification}.

First, let us recall that the root systems of $SL_4$ and $SO_6$ coincide and
their adjoint variety is $\PP(T\PP^3)$. We consider a natural embedding
$SL_3\hookrightarrow SL_4$ which comes with the linear embedding of the standard
representation $W_3$ of $SL_3$ into the standard representation of $SL_4$, that is
$W_4=W_3\oplus\CC$, as representation of $SL_3$, where  $\CC$ denotes the
trivial representation of $SL_3$. The adjoint representation $adj(SL_4)$ of
$SL_4$ is an irreducible summand of $W_4\oplus W_4^*=adj(SL_4)\oplus\CC$; thus as a representation of $SL_3$ it decomposes as  $$adj(SL_4)=W_3\oplus
adj(SL_3)\oplus W_3^*\oplus\CC$$

On the other hand, an embedding $SL_3\hookrightarrow SO_6$ is defined as follows
$$SL_3\ni A\longrightarrow \widehat{A}=
\left(\begin{array}{cc}0&(A^\intercal)^{-1}\\ A&0\end{array}\right)\in SO_6$$
Where $SO_6$ is understood as the group of matrices preserving the form
$\left(\begin{array}{cc}0&I\\ I&0\end{array}\right)$, or a quadric in $\PP^{5}$
given by the equation $x_1y_1+\cdots +x_3y_3=0$. If $V_6$ is the standard
representation of $SO_6$, then as a representation of $SL_3\hookrightarrow
SO_6$ it decomposes as $W_3\oplus W_3^*$. The second exterior power $\bigwedge^2
V_6$ is the adjoint representation of $SO_{6}$ and, as the representation of
$SL_3$, it can be written again as $$\bigwedge^2(W_3\oplus W_3^*)= \bigwedge^2
W_3\oplus W_3\otimes W_3^*\oplus\bigwedge^2W_3^*\oplus\CC=W_3\oplus
adj(SL_3)\oplus W_3^*\oplus\CC$$

In terms of the action of the Cartan torus $H_3$ of both $SL_4$ and $SO_6$, the
fixed points of the action on the adjoint variety via the standard linearization
map are mapped to roots in the associated rank three lattice of weights $M_3$.
As points in the space $M_3\otimes\RR$ they are vertices of a cuboctahedron
(rectified cube). The embedding of $SL_3$ in each of these groups yields an
embedding of Cartan tori $H_2\hookrightarrow H_3$, with $H_2$ the 2-dimensional
torus contained in $SL_3$. Thus we get the projection of the corresponding
lattices of weights $M_3\rightarrow M_2$. The diagram below describes the
projection of the roots visualized as a projection of the cuboctahedron. The front
and the rear faces are associated to representation $W_3$ and $W_3^*$, while the
hexagonal cross-section is associated to the representation $adj(SL_3)$.

\begin{center}
\includegraphics[width=380pt]{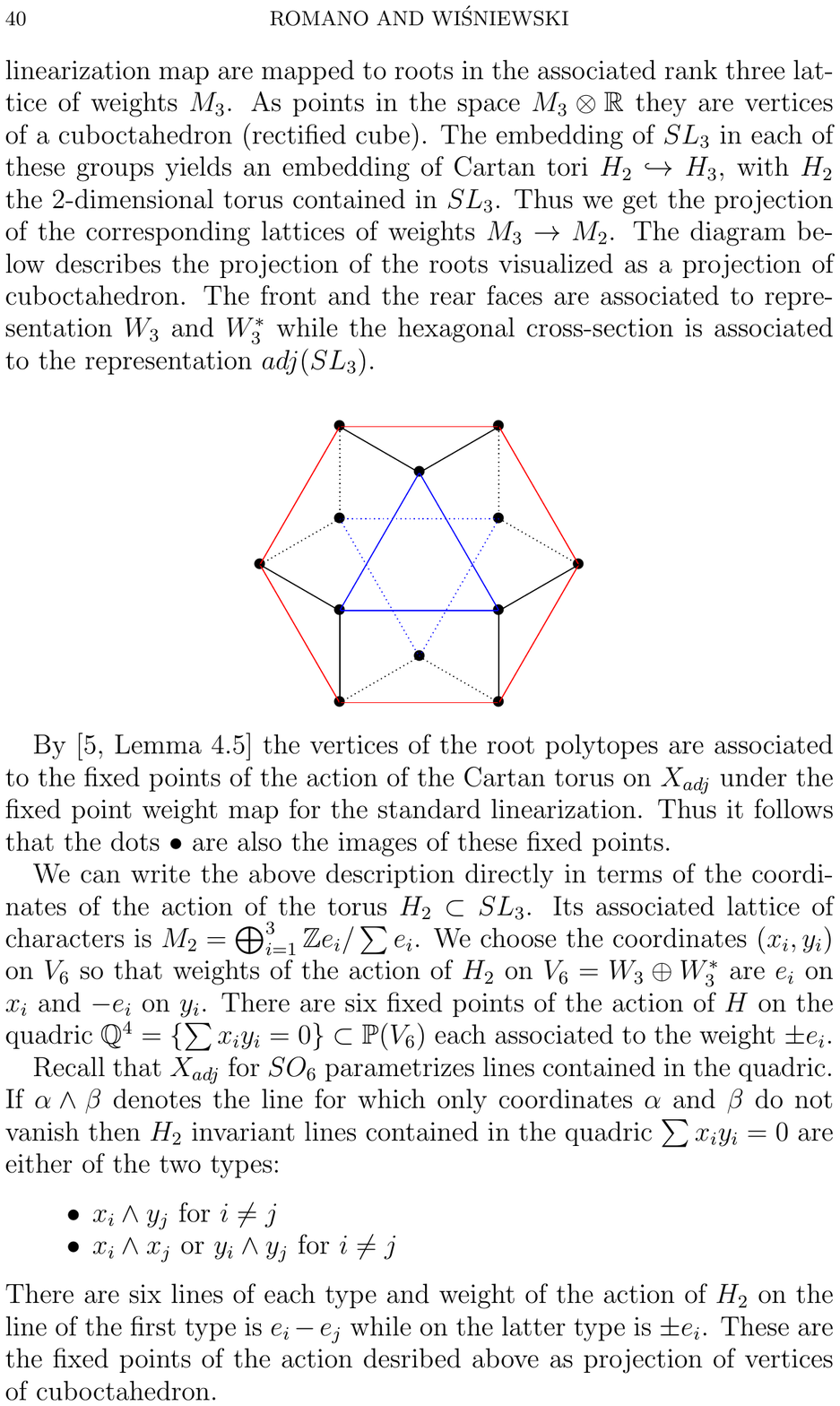}
\end{center}

By \cite[Lemma 4.5]{B_W}, the vertices of the root polytopes are associated to
the fixed points of the action of the Cartan torus on $X_{adj}$ under the fixed
point weight map for the standard linearization. Thus, it follows that the dots
$\bullet$ are also the images of these fixed points.

We can write the above description directly in terms of the coordinates of the
action of the torus $H_2\subset SL_3$.  Its associated lattice of characters is 
$M_2=\bigoplus_{i=1}^3 \ZZ e_i/\sum e_i$. We choose the coordinates $(x_i,y_i)$
on $V_6$ so that the weights of the action of $H_2$ on $V_6=W_3\oplus W_3^*$ are
$e_i$ on $x_i$ and $-e_i$ on $y_i$. There are six fixed points of the action of
$H$ on the quadric $\mathcal{Q}^4=\{\sum x_iy_i=0\}\subset\PP(V_6)$, each associated to
the weight $\pm e_i$.

Recall that $X_{adj}$ for $SO_6$ parametrizes the lines contained in the quadric. If
$\alpha\wedge\beta$ denotes the line for which only coordinates $\alpha$ and
$\beta$ do not vanish, then the $H_2$ invariant lines contained in the quadric $\sum
x_iy_i=0$ are either of the two types:
  \begin{itemize}
    \item $x_i\wedge y_j$ for $i\ne j$
    \item $x_i\wedge x_j$, or $y_i\wedge y_j$ for $i\ne j$.
  \end{itemize}
There are six lines of each type; the weight of the action of $H_2$ on the line of
the first type is $e_i-e_j$, while on the latter type is $\pm e_i$.  These are the
fixed points of the action described above as projection of vertices of
the cuboctahedron.

When $m\geq 7$ we take a natural inclusion  $$SL_3\hookrightarrow SO_6\times
SO_{m-6}\hookrightarrow SO_m$$ for a suitable decomposition of the standard $SO_m$
representation $V_m=V_6\oplus V_{m-6}$. As before, we decompose the resulting
$SL_3$ representation:
  $$\bigwedge^2 V_m = W_3\oplus adj(SL_3) \oplus \CC \oplus W_3^* \oplus
  \left(W_3\otimes V_{m-6}\right) \oplus \left(W_3^*\otimes V_{m-6}\right)
  \oplus \bigwedge^2 V_{m-6}$$
where the representation $V_{m-6}$ is trivial as $SL_3$ representation.

We extend the preceding discussion to the $SO_m$ invariant quadric
$\widehat{\mathcal{Q}}\subset\PP(V_m)$ such that $\widehat{\mathcal{Q}}\cap\PP(V_6)
=\{\sum x_iy_i=0\}=\mathcal{Q}$. That is, for suitably chosen coordinates $(z_i)$ in
$V_{m-6}$ we have
$$\widehat{\mathcal{Q}}=\{x_1y_1+x_2y_2+x_3y_3+z_1^2+\cdots+ z_{m-6}^2=0\}$$
By $\mathcal{Q}^\perp$ we denote the intersection $\widehat{\mathcal{Q}}$ with
$\PP(V_{m-6})=\{x_i=y_j=0\}$. That is $\mathcal{Q}^\perp=\{z_1^2+\cdots +z_{m-6}^2=0\}$
is a quadric of dimension $m-8$.
Now, apart of the lines contained in $\mathcal{Q}$, we have extra components in the fixed
point locus of the action of $H_2\subset SL_3$ on the grassmannian of lines on
$\widehat{\mathcal{Q}}$. They are as follows:
\begin{itemize}
\item lines joining fixed points of the action of $H_2$ on $\mathcal{Q}$ with any point
in $\mathcal{Q}^\perp$; they are contained in the subspaces $W_3\otimes V_{m-6}$ and
$W^*_3\otimes V_{m-6}$ in the decomposition of $\bigwedge^2V_{m-6}$ above;
therefore there are six of such components associated to the weights $\pm e_i$;
\item lines contained in $\mathcal{Q}^\perp$ for $m\geq 9$; they are contained in the
subspace $\bigwedge^2V_{m-6}$ in the above decomposition; therefore these
fixed point component(s) are associated to the weight 0.
\end{itemize} We summarize the discussion in the following.
\begin{prop}\label{SL3-on-adjoint}
Assume the situation as above.   The following is the list of the fixed
components of the fixed point locus of the action of the 2-dimensional torus
$H_2\subset SL_3\subset SO_m$ on the Grassmannian of lines in the quadric
$\mathcal{Q}^{m-2}$ denoted by $G(1,\mathcal{Q}^{m-2})$, which is a contact variety of dimension
$2m-7$. Let us denote by $L$ an ample line bundle generating $\Pic
G(1,\mathcal{Q}^{m-2})$.
  \begin{enumerate}
  \item For $m\geq 6$ one point extremal components associated to weights $\pm
    e_i+\pm e_j$, $i\ne j$;
  \item for $m\geq 6$ one point components associated to weights $\pm e_i$;
  \item for $m\geq 8$ additional components associated to weights $\pm
    e_i$ which are quadrics of dimension $m-8$; in particular they are
    \begin{enumerate}
    \item two points for each weight, for $m=8$;
    \item a conic, that is $\PP^1$ with $L_{|\PP^1}\iso\cO(2)$, for $m=9$;
    \item a quadric $\PP^1\times\PP^1$ with restriction of $L$ being
      $\cO(1,1)$, for $m=10$;
    \item a quadric $\mathcal{Q}^{m-8}$ with restriction of $L$ being $\cO(1)$,
      for $m\geq 11$;
    \end{enumerate}
  \item central component(s) for $m\geq 10$ which is the grassmannian
    of lines in the quadric $\mathcal{Q}^{m-8}$, in particular
    \begin{enumerate}
    \item $\PP^1\sqcup\PP^1$ for $m=10$;
    \item an irreducible variety for $m\geq 11$.
    \end{enumerate}
  \end{enumerate}
\end{prop}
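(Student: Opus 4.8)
The plan is to compute the fixed locus directly, using that a point of $G(1,\mathcal{Q}^{m-2})$ fixed by $H_2$ is exactly an $H_2$-invariant $2$-plane $U\subset V_m$ which is isotropic for the quadratic form defining $\widehat{\mathcal{Q}}$. First I would record the $H_2$-weight decomposition of the standard representation, $V_m=\bigoplus_{i=1}^3\langle x_i\rangle\oplus\bigoplus_{i=1}^3\langle y_i\rangle\oplus V_{m-6}$, where $\langle x_i\rangle$ has weight $e_i$, $\langle y_i\rangle$ has weight $-e_i$, and all of $V_{m-6}$ sits in weight $0$. Since any $H_2$-invariant subspace is the direct sum of its intersections with the weight spaces, an invariant $2$-plane $U$ falls into exactly one of three types, according to how $\dim U=2$ is distributed among these spaces.

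Then I would run through the three types and impose isotropy. If $U$ is spanned by two of the one-dimensional weight lines, isotropy excludes $\langle x_i,y_i\rangle$ and leaves $\langle x_i,x_j\rangle$, $\langle y_i,y_j\rangle$ and $\langle x_i,y_j\rangle$ with $i\neq j$; these are the twelve isolated fixed points, of which the six lines $x_i\wedge y_j$ sit at the extremal (root) weights and yield $(1)$, while the remaining six, at the weights $\pm e_i$, yield $(2)$. If $U=\langle v\rangle\oplus\langle z\rangle$ with $v\in\langle x_i\rangle$ or $\langle y_i\rangle$ and $z\in V_{m-6}$, then isotropy amounts to $z\in\mathcal{Q}^\perp$, so as $[z]$ varies we obtain six components each isomorphic to $\mathcal{Q}^\perp=\mathcal{Q}^{m-8}$ at weights $\pm e_i$, nonempty exactly for $m\geq 8$; this gives $(3)$. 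Finally, if $U\subset V_{m-6}$ then $\PP(U)$ is a line contained in $\mathcal{Q}^\perp$, so the component is $G(1,\mathcal{Q}^{m-8})$ at weight $0$, nonempty exactly when $\mathcal{Q}^{m-8}$ carries lines, i.e.\ $m\geq 10$; this gives $(4)$.

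It remains to pin down the isomorphism classes and the restriction of $L$. Here I would use that the Pl\"ucker embedding $G(1,\mathcal{Q}^{m-2})\hookrightarrow\PP(\Wedge{2}V_m)$ sends the component $x_i\wedge\mathcal{Q}^\perp$ to the image of the linear map $[z]\mapsto[x_i\wedge z]$, so that $L$ restricts to $\cO_{\PP(V_{m-6})}(1)$ pulled back to $\mathcal{Q}^\perp$; the low-dimensional quadrics are then read off by hand: $\mathcal{Q}^0$ is two points $(m=8)$, the conic $\mathcal{Q}^1\subset\PP^2$ is $\PP^1$ with $L_{|\PP^1}\iso\cO(2)$ $(m=9)$, $\mathcal{Q}^2\iso\PP^1\times\PP^1$ carries $\cO(1,1)$ $(m=10)$, and $\mathcal{Q}^{m-8}$ carries $\cO(1)$ for $m\geq 11$. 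Likewise the central component $G(1,\mathcal{Q}^{m-8})$ is the union of the two rulings $\PP^1\sqcup\PP^1$ for $m=10$ and is irreducible for $m\geq 11$.

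I expect the main obstacle to be the weight bookkeeping rather than the geometry: keeping straight the coordinate systems (the $SL_3$-weights in $M_2=\bigoplus_{i=1}^3\ZZ e_i/\sum e_i$, the ambient $SO_m$-weights, and the relation $\sum e_i=0$), so that the two distinct components lying over each weight $\pm e_i$ — the isolated point of $(2)$ and the quadric of $(3)$ — are correctly separated, and checking that no further invariant isotropic $2$-planes occur. Since the list of invariant lines has essentially been assembled in the discussion preceding the statement, the residual work is the case-by-case identification of the polarizations on the small quadrics and the ruling decomposition of $G(1,\mathcal{Q}^2)$.
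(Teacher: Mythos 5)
Your proposal is correct and takes essentially the same route as the paper: the appendix proves the proposition by exactly the same enumeration of $H_2$-invariant isotropic $2$-planes (equivalently, invariant lines on $\widehat{\mathcal{Q}}$) via the weight decomposition $V_m=W_3\oplus W_3^*\oplus V_{m-6}$ and the induced splitting of $\bigwedge^2 V_m$, yielding the twelve isolated points, the six components $x_i\wedge\mathcal{Q}^\perp$, $y_i\wedge\mathcal{Q}^\perp\iso\mathcal{Q}^{m-8}$ at weights $\pm e_i$, and the central $G(1,\mathcal{Q}^{m-8})$. Your explicit trichotomy of invariant $2$-planes and the Pl\"ucker-embedding computation of $L_{|\mathcal{Q}^\perp}$ merely spell out completeness and the polarization claims that the paper's surrounding discussion leaves implicit (and you correctly get the bound $m\geq 10$ for the central component, where the paper's text momentarily says $m\geq 9$).
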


A similar discussion can be made in the case of classical linear groups. In the
following table we present adjoint varieties as well as bandwidth 3 varieties
and central components associated to the downgrading of the action to a linear
embedding of $SL_3$. Notation: $G$ is the group, $X_{adj}$ is the adjoint
variety for the group, $\dim X_{adj}=2n+1$. In the spirit of Subsection
\ref{$SL_3$ action on contact manifolds}, $X_i$ is the bandwidth 3 variety associated to
restricting and downgrading of the group action following the embedding
$SL_3\hookrightarrow G$, $\dim X_i=n-1$; moreover $\cY_*$ is the set of
fixed point components in $\cY_1$ or $\cY_2$.  Finally, $Y_0$ is the union of fixed point components associated to the weight 0.\par\bigskip
$$\begin{array}{c|c|c|c|c|c|c}
    n&G&\rk G&X_{adj}&X_i&Y_*&Y_0\\ \hline
    3&SO_7&3&G(1,\mathcal{Q}^5)&\PP^1\times\PP^1&\bullet&\emptyset\\
    4&SO_8&4&G(1,\mathcal{Q}^6)&\PP^1\times\PP^1\times\PP^1&\bullet\sqcup\bullet\sqcup\bullet&\emptyset\\
    5&SO_{9}&4&G(1,\mathcal{Q}^7)&\PP^1\times\mathcal{Q}^3&\bullet\sqcup\PP^1&\emptyset\\
    6&SO_{10}&5&G(1,\mathcal{Q}^8)&\PP^1\times\mathcal{Q}^4&\bullet\sqcup\PP^1\times\PP^1&\PP^1\sqcup\PP^1\\
    7&SO_{11}&5&G(1,\mathcal{Q}^9)&\PP^1\times\mathcal{Q}^5&\bullet\sqcup\mathcal{Q}^3&\PP^3\\
    \geq8&SO_{n+4}&\lfloor\frac{n}{2}\rfloor&G(1,\mathcal{Q}^{n+2})&\PP^1\times\mathcal{Q}^{n-2}&\bullet\sqcup\mathcal{Q}^{n-4}&G(1,\mathcal{Q}^{n-4})\\
    \geq3&Sp_{2n+2}&n+1&\PP^{2n+1}&\PP^1&\emptyset&\PP^{2n-5}\\
    \geq3&SL_{n+2}&n+1&\PP(T\PP^{n+1})&\PP^{n-2}\sqcup\PP^{n-2}&\PP^{n-3}&\PP(T\PP^{n-2})\\
  \end{array}$$
\ignore{


$$
\begin{xy}<40pt,0pt>:
(0,0)*={\cdot},        
(0,1)*={\bullet}="b0", 
(0.866,0.5)*={\bullet}="b1",  
(0.866,-0.5)*={\bullet}="b2", 
(0,-1)*={\bullet}="b3",
(-0.866,-0.5)*={\bullet}="b4", 
(-0.866,0.5)*={\bullet}="b5", 
(0.866,1.5)*={\bullet}="a0", 
(1.732,0)*={\bullet}="a1", 
(0.866,-1.5)*={\bullet}="a2", 
(-0.866,-1.5)*={\bullet}="a3", 
(-1.732,0)*={\bullet}="a4", 
(-0.866,1.5)*={\bullet}="a5", 
"a0";"a1"**[red]@{-},"a1";"a2"**[red]@{-},"a2";"a3"**[red]@{-},"a3";"a4"**[red]@{-},
"a4";"a5"**[red]@{-},"a5";"a0"**[red]@{-},
"b0";"b2"**[blue]@{-},"b2";"b4"**[blue]@{-},"b4";"b0"**[blue]@{-},
"b0";"a5"**@{-},"b0";"a0"**@{-},"b2";"a1"**@{-},"b2";"a2"**@{-},
"b4";"a3"**@{-},"b4";"a4"**@{-},
"b1";"b3"**[blue]@{.},"b3";"b5"**[blue]@{.},"b5";"b1"**[blue]@{.},
"b1";"a0"**@{.},"b1";"a1"**@{.},"b3";"a2"**@{.},"b3";"a3"**@{.},
"b5";"a4"**@{.},"b5";"a5"**@{.}
\end{xy}
$$


$$
\begin{xy}<30pt,0pt>:
(2,2)*={}="a", (2,-2)*={}="b", (-2,-2)*={}="c", (-2,2)*={}="d",
(3,2.6)*={}="a1", (3,-1.4)*={}="b1", (-1,-1.4)*={}="c1", (-1,2.6)*={}="d1",
"a";"b" **[blue]@{.}, "a";"d" **[blue]@{.}, "a";"a1" **[blue]@{.},
"c";"b" **[blue]@{.}, "c";"d" **[blue]@{.}, "c";"c1" **[blue]@{.},
"b1";"b" **[blue]@{.}, "b1";"c1" **[blue]@{.}, "b1";"a1" **[blue]@{.},
"d1";"d" **[blue]@{.}, "d1";"c1" **[blue]@{.}, "d1";"a1" **[blue]@{.},
(0,0)*={\circ}="s1", (0.5,2.3)*={\circ}="s2", (1,0.6)*={\circ}="s3",
(0.5,-1.7)*={\circ}="s4", (2.5,0.3)*={\circ}="s5",
(-1.5,0.3)*={\circ}="s6",
(0,2)*={\bullet}="l1", (0,-2)*={\bullet}="l2", (2,0)*={\bullet}="l3",
(-2,0)*={\bullet}="l4", (1,2.6)*={\bullet}="l5",
(1,-1.4)*={\bullet}="l6", (3,0.6)*={\bullet}="l7",
(-1,0.6)*={\bullet}="l8", (-1.5,2.3)*={\bullet}="l9",
(-1.5,-1.7)*={\bullet}="l10", (2.5,2.3)*={\bullet}="l11",
(2.5,-1.7)*={\bullet}="l12",
"l4";"l9" **[red]@{-}, "l9";"l5" **[red]@{-}, "l5";"l7" **[red]@{-}, "l7"; "l12" **[red]@{-},
"l12";"l2" **[red]@{-}, "l2";"l4" **[red]@{-},
"l1";"l3"**[blue]@{-},"l1";"l11"**[blue]@{-},"l3";"l11"**[blue]@{-},
"l6";"l10"**[blue]@{-},"l6";"l8"**[blue]@{-},"l8";"l10"**[blue]@{-},
"s1";"l10" **[red]@{.}, "s2";"l8" **[red]@{.}, "s3";"l11" **[red]@{.}, "s4";"l3" **[red]@{.},
"s5"; "l6" **[red]@{.}, "s6";"l1" **[red]@{.},
\end{xy}
$$


$$
\begin{xy}<30pt,0pt>:
(2,2)*={}="a", (2,-2)*={}="b", (-2,-2)*={}="c", (-2,2)*={}="d",
(3,2.6)*={}="a1", (3,-1.4)*={}="b1", (-1,-1.4)*={}="c1", (-1,2.6)*={}="d1",
(0,0)*={\circ}="s1", (0.5,2.3)*={\circ}="s2", (1,0.6)*={\circ}="s3",
(0.5,-1.7)*={\circ}="s4", (2.5,0.3)*={\circ}="s5",
(-1.5,0.3)*={\circ}="s6",
(0,2)*={\bullet}="l1", (0,-2)*={\bullet}="l2", (2,0)*={\bullet}="l3",
(-2,0)*={\bullet}="l4", (1,2.6)*={\bullet}="l5",
(1,-1.4)*={\bullet}="l6", (3,0.6)*={\bullet}="l7",
(-1,0.6)*={\bullet}="l8", (-1.5,2.3)*={\bullet}="l9",
(-1.5,-1.7)*={\bullet}="l10", (2.5,2.3)*={\bullet}="l11",
(2.5,-1.7)*={\bullet}="l12",
"l4";"l9" **[red]@{-}, "l9";"l5" **[red]@{-}, "l5";"l7" **[red]@{-}, "l7"; "l12" **[red]@{-},
"l12";"l2" **[red]@{-}, "l2";"l4" **[red]@{-},
"s1";"l10" **[red]@{.}, "s2";"l8" **[red]@{.}, "s3";"l11" **[red]@{.}, "s4";"l3" **[red]@{.},
"s5"; "l6" **[red]@{.}, "s6";"l1" **[red]@{.},
"l9";"l1" **[green]@{-}, "l1";"l2" **[green]@{-}, "l2";"l10" **[green]@{-}, "l10";"l9" **[green]@{-},
"l5";"l11" **[green]@{-}, "l11";"l12" **[green]@{-}, "l12";"l6" **[green]@{-}, "l6";"l5" **[green]@{-},
"l3";"l8" **[green]@{-}
\end{xy}
$$

}

\bibliography{biblio}
\bibliographystyle{plain}

\end{document}